\numberwithin{equation}{section}
\declaretheorem[style=plain,sibling=equation]{theorem}
\declaretheorem[style=plain,sibling=equation]{lemma}
\declaretheorem[style=plain,sibling=equation]{corollary}
\declaretheorem[style=plain,sibling=equation]{proposition}
\declaretheorem[style=definition,sibling=equation]{definition}
\declaretheorem[style=definition,sibling=equation]{example}
\declaretheorem[style=definition,sibling=equation]{construction}
\declaretheorem[style=remark,sibling=equation]{remark}
\declaretheorem[style=definition, sibling=equation]{notation}
\newcounter{mainresults}
\declaretheorem[style=plain,numberlike=mainresults,
name=Theorem,
refname={theorem,theorems},
Refname={Theorem,Theorems}
]{mainresult}
\newcommand{\field}{\mathbb{k}}
\newcommand{\category}[1]{\mathsf{#1}}
\newcommand{\catMod}{\category{Mod}}
\newcommand{\catmod}{\category{mod}}
\newcommand{\kos}{/\!\!/}
\newcommand{\bnd}{\mathsf{b}}
\newcommand{\per}{\mathsf{per}}
\DeclareMathOperator{\Hom}{Hom}
\newcommand{\limit}{\varprojlim}
\newcommand{\colim}{\varinjlim}
\def\varholim@#1#2{%
    \vtop{\m@th\ialign{##\cr
    \hfil$#1\operator@font holim$\hfil\cr
    \noalign{\nointerlineskip\kern1.5\ex@}#2\cr
    \noalign{\nointerlineskip\kern-\ex@}\cr}}%
}
\def\hocolim{%
  \mathop{\mathpalette\varholim@{\rightarrowfill@\textstyle}}\nmlimits@
}
\def\holim{%
  \mathop{\mathpalette\varholim@{\leftarrowfill@\textstyle}}\nmlimits@
}
\DeclareMathOperator{\id}{Id}
\DeclareMathOperator{\cone}{cone}
\DeclareMathOperator{\emorp}{End}
\DeclareMathOperator{\thick}{thick}
\DeclareMathOperator{\Loc}{Loc}
\DeclareMathOperator{\Spec}{Spec}
\DeclareMathOperator{\Tor}{Tor}
\DeclareMathOperator{\Ext}{Ext}
\DeclareMathOperator{\add}{add}
\DeclareMathOperator{\Add}{Add}
\DeclareMathOperator{\im}{im}
\DeclareMathOperator{\coker}{coker}
\DeclareFontFamily{U}{dmjhira}{}
\DeclareFontShape{U}{dmjhira}{m}{n}{
  <-> dmjhira
}{}
\newcommand{\ideal}[1]{\mathfrak{#1}}
\newcommand{\ida}{\ideal{a}}
\newcommand{\idb}{\ideal{b}}
\newcommand{\comp}[1]{#1^{\wedge}}
\newcommand{\intHom}{\underline{\Hom}}
\newcommand{\SW}[1]{{#1}^{\mathrm{SW}}}
\title[Torsion and complete dualizables in Noetherian tt-categories]%
{Torsion and complete dualizable objects in tensor-triangulated categories over a Noetherian ring}
\author{Jun Maillard and Jan Šťovíček}
\keywords{Tensor-triangulated categories, Power torsion modules, Derived complete modules, Compact and dualizable objects, Completion of triangulated categories}
\subjclass[2020]{
    18G80, 
    18M05, 
    13B35, 
    13D30
    }
\thanks{The research was supported by the Czech Science Foundation grant GA~{\v C}R 23-05148S}
\begin{document}

\begin{abstract}
We study categories of dualizable torsion and complete objects for compactly-rigidly generated tensor-triangulated categories $\category{T}$ with a Noetherian central action of a graded commutative Noetherian ring $R$. We show that they always admit a natural Noetherian action of the completed graded ring $\comp{R}$ and that the categories of dualizable torsion and complete objects can be abstractly reconstructed as tensor-triangulated $\comp{R}$-linear categories from the category of compact torsions objects with the corresponding structure.
If the category of compact objects of $\category{T}$ in addition admits a strong generator $g$, we show that the torsion coreflection (resp.\ complete reflection) of $g$ is a strong generator for the category of dualizable torsion (resp.\ dualizable complete) objects.
In that case, we also show that the categories of dualizable torsion and compact torsion objects determine each other in terms of Brown-type representability theorems.
\end{abstract}

\maketitle
\tableofcontents

\section{Introduction}\label{sec:Introduction}

In the seminal paper \autocite{balmerTTGeometryFoundation}, Balmer defined a ringed space, nowadays called the Balmer spectrum, associated with any small tensor-triangulated category. This evolved into the field of tensor-triangular geometry \autocite{balmerTensorTriangularGeometry2010}, which allows one to study a large class of seemingly unrelated examples from algebraic geometry, homotopy theory, representation theory or motivic theory using geometric intuition and unified homological tools.
However, one useful and common technique in the study of commutative Noetherian rings was not available at the level of small tensor-triangulated categories---the adic completion which would abstract the completion functor $\category{D}^\per(R)\to\category{D}^\per(\comp{R}_\ida)$ for an ideal $\ida$ in a ring $R$ (albeit it was known how to model various aspects of completion by recollements of big tensor-triangulated categories, e.g.\ in~\autocite[\S3.3]{hoveyPalmieriStricklandMemoir1997} or~\autocite[\S2]{greenleesTate2001}). In their recent work, Benson, Iyengar, Krause and Pevtsova \autocite{bensonLocallyDualisableLocalAlgebra2024,bensonLocallyDualisableModular2024}, Balmer and Sanders \autocite{balmerSandersPerfectComplexesCompletion2024,balmerSandersTateIVT2025}, and Nauman, Pol and Ramzi \autocite{naumanPolRamziFractureSquare2024} suggested and supported by classes of well behaved examples a small tensor-triangulated version of the completion, which we are going to explain.

\subsection{Adic completion in tensor-triangular geometry}

Rigid (in the sense that every object is dualizable) small tensor-triangulated categories $(\category{K},\otimes,\mathbb{1})$ arise in practice, up to idempotent completion, as the categories of compact objects in compactly-rigidly generated tensor-triangulated categories $(\category{T},\otimes,\mathbb{1})$. In the realm of stable $\infty$-categories, where the authors of \autocite{naumanPolRamziFractureSquare2024} worked, this is always the case since we can Ind-complete (the underlying stable $\infty$-category of) $\category{K}$ to $\category{T}$. If we fix any Zariski closed subset $\mathcal{V}$ with a quasi-compact complement in the Balmer spectrum of $\category{K}$ (which is the same as choosing a thick $\otimes$-ideal of $\category{K}$ with a single generator by \autocite[Proposition~2.14]{balmerTTGeometryFoundation}), we obtain a recollement
\[
    \begin{tikzcd}
        \Gamma_{\mathcal{V}}\category{T} 
            \arrow[r, bend left,  tail, "i_!", pos=0.4]
            \arrow[r, bend right, tail, "i_*"', pos=0.4] &
        \category{T}
            \arrow[l, "i^*"' description] 
            \arrow[r, bend left, "p^*", pos=0.6]
            \arrow[r, bend right, "p^!"', pos=0.6] &
        L_{\mathcal{V}}\category{T}
            \arrow[l, tail, "p_*"' description] 
    \end{tikzcd}
\]
In analogy to algebraic geometry, $\Gamma_\mathcal{V}\category{T}$ is the category of objects `supported on $\mathcal{V}$', also known as `$\mathcal{V}$-torsion objects', while $L_\mathcal{V}\category{T}$ is the localization to the open complement of $\mathcal{V}$. Just by formal properties of the recollement, there is one more natural full subcategory to consider, $\Lambda^{\mathcal{V}} \category{T}=i_*i^*\category{T}$, whose objects are usually referred to as `$\mathcal{V}$-complete'. The terminology here is motivated by the Matlis-Greenlees-May duality \autocite{greenleesMayLocalHomology1992}, whose commutative-algebraic aspects will be discussed in \S\ref{subsec:torsion-complete-modules}.
This is all standard and we refer to \autocite[\S3]{balmerSandersTateIVT2025} for a nice account.

The interesting part is that the subcategories $\Gamma_{\mathcal{V}} \category{T}$ and $\Lambda^{\mathcal{V}} \category{T}$ are naturally compactly generated tensor-triangulated and equivalent as such, but their monoidal units are generally not compact. In particular, their subcategories of compact objects, which are even equal as subcategories of $\category{T}$, are not tensor-triangulated. The natural `small' tensor-triangulated subcategory, which is a candidate for the $\mathcal{V}$-adic completion in the tensor-triangular geometry, is the category of dualizable objects, $(\Lambda^{\mathcal{V}}\category{T})^d$ (or equivalently $(\Gamma_{\mathcal{V}}\category{T})^d$), together with the completion functor $\Lambda^{\mathcal{V}}=i_*i^*\colon\category{T}^c\to(\Lambda^{\mathcal{V}}\category{T})^d$ (resp.\ $\Gamma_{\mathcal{V}}=i_!i^*\colon\category{T}^c\to(\Gamma_{\mathcal{V}}\category{T})^d$)

\subsection{Problems about complete dualizable objects}

The main challenge which is treated in the references above is how to understand the complete (or torsion) dualizable objects and how exactly they are related to $\category{K}=\category{T}^c$. First of all, one may hope to reconstruct the category of complete dualizable objects $(\Lambda^{\mathcal{V}}\category{T})^d$ from the category of torsion/complete compact objects $(\Lambda^{\mathcal{V}}\category{T})^c=(\Gamma_{\mathcal{V}}\category{T})^c\subset\category{T}$ by some sort of completion, similarly as the adic completion of a ring $\comp{R}_\ida$ is obtained as a limit of its torsion quotients $R/\ida^n$. Albeit this might seem naive at first, there are instances of metric completion of triangulated categories initiated by Krause~\autocite{krauseCompletingDper2020} and perfected by Neeman in~\autocite{neemanMetrics2020}. Unfortunately, these techniques heavily depend on the presence of a non-degenerate $t$-structure and interesting classes of examples, e.g.\ coming from modular representation theory of finite groups, do not admit any of these (as it often happens that some power of the suspension functor is isomorphic to the identity). However, as we will explain below, one still can achieve complete reconstruction of $(\Lambda^{\mathcal{V}}\category{T})^d$ from $(\Lambda^{\mathcal{V}}\category{T})^c$ at the level of tensor-triangulated categories (so without reference to any model such as $\infty$-categories) even in a $t$-structure unfriendly environment.

Another previously intensely studied problem is about thick generation.
As mentioned, since the completion functor $\Lambda^\mathcal{V}=i_*i^*\colon\category{T}\to\Lambda^\mathcal{V}\category{T}$ is monoidal, the images of the compact objects of $\category{T}$ are dualizable in $\Lambda^{\mathcal{V}}\category{T}$. We hence have sequence of inclusions of categories
\[
    (\Lambda^{\mathcal{V}}\category{T})^c \subsetneq \thick(\Lambda^{\mathcal{V}}\category{T}^c) \subset (\Lambda^{\mathcal{V}}\category{T})^d,
\]
where the first inclusion is typically proper. On the contrary, the second inclusion is known to be an equality in at least the following cases:
\begin{enumerate}[label=(\arabic*)]
\item \autocite{mathewGaloisHomotopy2016} verified the equality for Morava E-theories in~\autocite[Proposition~10.11]{mathewGaloisHomotopy2016}.

\item \autocite[\S11]{naumannPolSeparableAlgebras2024} considered the case the homotopy category $\category{T}$ of a connective $\mathbb{E}_\infty$-ring spectrum $A$ which is complete with respect to a finitely generated ideal $\ida\subset\pi_0(A)$.

\item \autocite[\S5]{stricklandLocCompSpectra} established this for the $p$-completion of the stable homotopy category of finite spectra; see~\autocite[Proposition~8.6]{balmerSandersTateIVT2025} for more  explanation.

\item \autocite{balmerSandersPerfectComplexesCompletion2024} verified the equality in the case of the derived category $\category{T} = \category{D}(R)$ over a commutative ring $R$ satisfying a so-called Koszul-completeness condition, which is always true for commutative Noetherian rings.

\item\label{item:modularCompletionExpl} \autocite{bensonLocallyDualisableModular2024} proved the equality for a compactly-rigidly generated tensor-triangulated category $\category{T}$ with an action of a graded commutative local Noetherian ring $R$ and completion with respect to the maximal ideal $\mathfrak{m}\subset R$, under the additional assumption of a so-called local regularity condition.
\end{enumerate}

Despite all these encouraging results, the second inclusion still can be proper. An example from the homotopy theory, in the category of $K(1)$-local spectra at an odd prime was presented in~\autocite[\S15.1]{hoveyStricklandMoravaKTheories1999} (and translated to our language in~\autocite[\S5]{bensonLocallyDualisableModular2024}).
Besides being much less tractable, this also brings conceptual issues; see \autocite[Remark~5.4]{balmerSandersPerfectComplexesCompletion2024} or \autocite[Remark~3.20]{balmerSandersTateIVT2025}. 
So some conditions for the equality are needed and we will exhibit a natural and often satisfied one in terms of strong generation of $\category{T}^c$.

\subsection{The triangulated reconstruction result}
Let us first explain our setting, which is inspired by that of~\autocite{bensonLocallyDualisableModular2024}.
We will assume an action of a graded commutative Noetherian (but not necessarily local) ring $R$ on $\category{T}$ so that (graded) Hom-groups for all pairs of compact objects are finitely generated $R$-modules. This encompasses both $\category{T}=\category{D}(\catMod R)$ for a commutative Noetherian ring $R$ and $\category{T}=\category{KInj}(\field G)$ where $\field$ is a field, $G$ is a finite group and the acting ring is the group cohomology $R=H^*(G,\field)$ (see \autocite[\S10]{bensonLocalCohomologySupport2008}). In fact, there is a far-reaching common generalization of these two settings: the Ind-completion $\category{T}=\category{Rep}(G,S)$ of the bounded derived category $\category{D}^\bnd(G,S)$ of $G$-lattices for a finite flat group scheme over a commutative Noetherian ring $S$, which is acted on again by $R=H^*(G,S)$. This category was the main character of \autocite{barthelLattices2023}, where the required finiteness conditions come from \autocite{vdKallenNoetherianBase2023} (see \autocite[Proposition 3.15]{lauBSpectrumStacks2023} for an easier special case).
But we can also consider the derived category of any dg algebra or the homotopy category of any ring spectrum whose cohomology (respectively homotopy) ring is Noetherian. For instance, any graded commutative Noetherian algebra viewed as a formal dg algebra with zero differential is an example.

Compared to \autocite{bensonLocallyDualisableModular2024}, the completion which we consider is more general, not only at maximal ideals of $R$, but at arbitrary homogeneous ideals $\ida\subset R$.
In fact, the assumptions from the previous paragraph ensure, using~\autocite{balmerSpectraCube2010}, the existence of a continuous map from the Balmer spectrum of $\category{T}^c$ to $\Spec R$, the homogeneous spectrum of the graded ring $R$. If $R=\emorp(\mathbb{1})$ is the homogeneous endomorphism ring of the tensor unit, this map is even surjective by~\autocite[Theorem 7.3]{balmerSpectraCube2010}, and in many cases it is known to be bijective. In our paper, we will restrict our attention only to closed sets in the Balmer spectrum which are preimages of Zariski closed sets $V(\ida)\subset\Spec R$ under this map.

Under these assumptions, we prove that the categorical completion of $\category{T}$ very nicely interacts with completion of the acting ring. 

\begin{mainresult}[see \Cref{thm:finite-gen-hom-dualizables} and \Cref{corollary:completion-on-homs}]
\label{main:completion-on-homs}
Let $\category{T}$ be an $R$-linear, Noetherian, tensor-triangulated, compactly-rigidly generated category and let $\ida\subset R$ be a homogeneous ideal. Then:

\begin{enumerate}[label=(\alph*)]
\item both $(\Gamma_\ida\category{T})^d$ and $(\Lambda^\ida\category{T})^d$ have a canonical action of the $\ida$-adic completion $\comp{R}_\ida$ and all Hom-groups are finitely generated over $\comp{R}_\ida$;
\item if $c,d\in\category{T}^c$, we have canonical isomorphisms
\[
\Hom_\category{T}(c,d)\otimes_R\comp{R}_\ida\cong
\Hom_{\Gamma_\ida\category{T}}(\Gamma_\ida c,\Gamma_\ida d)\cong
\Hom_{\Lambda^\ida\category{T}}(\Lambda^\ida c,\Lambda^\ida d).
\]
\end{enumerate}
\end{mainresult}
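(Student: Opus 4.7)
The plan is to prove (b) first and use it as a stepping stone for (a). Both halves of (b) reduce to a single computation: using the recollement $(i_!,i^*,i_*)$ with $i_!,i_*$ fully faithful and $i^*i_!=\id=i^*i_*$, one finds that $\Hom_\category{T}(\Gamma_\ida c,\Gamma_\ida d)$ and $\Hom_{\Lambda^\ida\category{T}}(\Lambda^\ida c,\Lambda^\ida d)$ both identify canonically with $\Hom_{\Gamma_\ida\category{T}}(i^*c,i^*d)\cong\Hom_\category{T}(c,\Lambda^\ida d)$. So everything reduces to identifying $\Hom_\category{T}(c,\Lambda^\ida d)$ with $\Hom_\category{T}(c,d)\otimes_R\comp{R}_\ida$.

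To do this, I would fix homogeneous generators $\ida=(a_1,\ldots,a_n)$ and model $\Lambda^\ida d$ as a homotopy limit of a Koszul-type tower $\{d_k\}_k$, where $d_k$ is built from $d$ by iteratively taking cofibers of multiplication by $a_i^k$, so that $\Hom_\category{T}^*(c,d_k)\cong\Hom_\category{T}^*(c,d)/\ida^k$. By Milnor's sequence, $\Hom_\category{T}^*(c,\Lambda^\ida d)$ fits in a short exact sequence with $\varprojlim^1$ on the left and $\varprojlim$ on the right of this quotient tower; because $\Hom_\category{T}^*(c,d)$ is a finitely generated module over the Noetherian ring $R$ and the transition maps are surjective, Mittag--Leffler makes the $\varprojlim^1$ term vanish. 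Thus $\Hom_\category{T}^*(c,\Lambda^\ida d)\cong\varprojlim_k \Hom_\category{T}^*(c,d)/\ida^k=\comp{\Hom_\category{T}^*(c,d)}_\ida$, which for a finitely generated module over a Noetherian ring coincides with $\Hom_\category{T}^*(c,d)\otimes_R\comp{R}_\ida$. This settles (b).

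For (a), specialising (b) to $c=d=\mathbb{1}$ yields $\emorp_\category{T}(\Gamma_\ida\mathbb{1})\cong\emorp_\category{T}(\mathbb{1})\otimes_R\comp{R}_\ida$, so the central action $R\to\emorp_\category{T}(\mathbb{1})$ factors through a canonical map $\comp{R}_\ida\to\emorp_{\Gamma_\ida\category{T}}(\mathbb{1}_{\Gamma_\ida\category{T}})$, giving the canonical $\comp{R}_\ida$-linear structure on $\Gamma_\ida\category{T}$ (and symmetrically on $\Lambda^\ida\category{T}$). The harder claim in (a) is finite generation of $\Hom_{\Gamma_\ida\category{T}}(X,Y)$ over $\comp{R}_\ida$ when $X,Y$ are arbitrary dualizable objects rather than compacts, since $(\Gamma_\ida\category{T})^d$ may strictly contain $\thick((\Gamma_\ida\category{T})^c)$. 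Via the duality isomorphism $\Hom_{\Gamma_\ida\category{T}}(X,Y)\cong\Hom_{\Gamma_\ida\category{T}}(\mathbb{1}_{\Gamma_\ida\category{T}},X^\vee\otimes Y)$ this reduces to showing that the graded cohomology of a dualizable object is finitely generated over $\comp{R}_\ida$; my plan is to filter $X^\vee\otimes Y$ by the Koszul tower above, control each graded piece using (b) applied to compacts, and use Noetherianness of $\comp{R}_\ida$ (inherited from $R$) to propagate finite generation to the limit. The main obstacle is precisely this last step: the naive $\varprojlim^1$-vanishing from (b) no longer applies outside $\thick((\Gamma_\ida\category{T})^c)$, so one must engineer a pro-presentation of $X^\vee\otimes Y$ by compacts whose Koszul completion is compatible with the filtration and whose stage-wise Hom-groups assemble into a finitely generated $\comp{R}_\ida$-module in the limit. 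This is where the real work lies.
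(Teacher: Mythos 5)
Your plan for part (b) is in the same spirit as the paper's (compare \Cref{prop:completion-on-homs} and \Cref{corollary:completion-on-homs}: a Koszul tower, a Milnor sequence, and a Mittag--Leffler vanishing of $\limit^{(1)}$), but contains an error that happens to be repairable. The identification $\Hom^*_\category{T}(c,d_k)\cong\Hom^*_\category{T}(c,d)/\ida^k$ is false. Even for a principal ideal $\ida=(r)$, applying $\Hom(c,-)$ to the cofiber sequence $\Sigma^{-|r^m|}d\xrightarrow{r^m}d\to d\kos r^m$ produces a \emph{short exact sequence} with $M/r^mM$ on one side and a shift of the torsion submodule $M[r^m]$ on the other (where $M=\Hom_\category{T}(c,d)$); the Koszul quotient is not simply the mod-$\ida^k$ reduction, and consequently the transition maps of the tower are not surjective. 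What saves the argument is that $R$ is Noetherian and $M$ is finitely generated, so the chain $M[r]\subset M[r^2]\subset\cdots$ stabilizes and the torsion subsystem is pro-zero; the middle system is then Mittag--Leffler by closure properties of ML systems, not by surjectivity. You should be explicit about this, as without the Noetherian hypothesis the claim is simply wrong, and your proof as written silently assumes a statement that is false.

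For part (a) you have a genuine gap, and you essentially say so yourself. Your reduction $\Hom_{\Gamma_\ida\category{T}}(X,Y)\cong\Hom(\Gamma_\ida\mathbb{1},X^\vee\otimes Y)$ matches what the paper does in \Cref{thm:finite-gen-hom-dualizables}, but the remaining step --- showing $\Hom_\category{T}(\Gamma_\ida\mathbb{1},z)$ is finitely generated over $\comp{R}_\ida$ for an arbitrary dualizable $z\in(\Gamma_\ida\category{T})^d$ --- is not achievable by the ``pro-presentation of $X^\vee\otimes Y$ by compacts'' strategy you sketch, because precisely when $(\Gamma_\ida\category{T})^d\neq\thick((\Gamma_\ida\category{T})^c)$ there is no a priori compatible presentation to feed into a $\limit^{(1)}$ argument. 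The paper avoids this via a different device entirely: it first proves (\Cref{thm:charPTandDC}) that for $y\in\Lambda^\ida\category{T}$ all $\Hom_\category{T}(c,y)$ are \emph{derived $\ida$-complete} $R$-modules, establishes a Nakayama lemma for derived complete modules (\Cref{lemma:nakayamaDC}, \Cref{cor:derivedCompleteQuotNoeth}), and then bootstraps finite generation from the \emph{torsion compacts} up to all compacts (\Cref{prop:complAllCompactNoeth}): one first shows $\Hom(c,\Lambda^\ida d)$ is Noetherian over $R$ for $c$ torsion compact (\Cref{prop:dualNoethComp}, using that $\SW{c}\otimes_{\Lambda^\ida\category{T}}d$ is compact), then applies Nakayama to lift this to all compacts and to $\comp{R}$. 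This derived-complete-module machinery is what your proposal is missing, and without it the finiteness claim in (a) does not follow from (b) alone. The logical order in the paper is also the reverse of yours: \Cref{thm:finite-gen-hom-dualizables} (your (a)) is proved before \Cref{corollary:completion-on-homs} (your (b)), and the two are largely independent rather than (b) implying (a).
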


Now we can consider the restricted Yoneda functor $h\colon \Gamma_\ida\category{T}\to\catMod(\Gamma_\ida\category{T})^c$, which has proved to be a powerful tool to study compactly generated triangulated categories. In fact, as noted in~\autocite{balmerKrauseStevensonSmashingFrame2020}, the tensor product $\otimes\colon (\Gamma_\ida\category{T})^c\times(\Gamma_\ida\category{T})^c\to(\Gamma_\ida\category{T})^c$ extends through the Day convolution construction to $\catMod(\Gamma_\ida\category{T})^c$ in such a way that $h$ becomes a strong monoidal functor. An additional technical issue arises in our situation as the tensor unit is not contained in $(\Gamma_\ida\category{T})^c$, but $(\Gamma_\ida\category{T})^c$ still contains a countable direct system of `approximate units' as explained in \Cref{remark:approximate-unitality}, and this suffices. The key point for our next main result is that the restriction of $h$ to the category $(\Gamma_\ida\category{T})^d$ of torsion dualizable objects is fully faithful by~\Cref{prop:yoneda-restricted-fully-faithful}. This leads to one of our main results.

\begin{mainresult}[see \Cref{thm:reconstruction}]
\label{main:reconstruction}
Let $\category{T}$ be an $R$-linear, Noetherian, tensor-tri\-an\-gu\-lated, compactly-rigidly generated category, and fix a homogeneous ideal $\ida\subset R$.
Then we can recover $(\Gamma_\ida\category{T})^d$ with its tensor-triangulated $\comp{R}_\ida$-linear category structure from the approximate unital $R$-linear tensor-triangulated category $(\Gamma_\ida\category{T})^c$.

More precisely, the restricted Yoneda functor $h\colon \Gamma_\ida\category{T}\to\catMod(\Gamma_\ida\category{T})^c$ restricts to an equivalence between $(\Gamma_\ida\category{T})^d$ and the full subcategory of $\catMod(\Gamma_\ida\category{T})^c$ formed by the dualizable objects with respect to the Day convolution product. The triangles in $(\Gamma_\ida\category{T})^d$ are recovered as suitable colimits of triangles in $(\Gamma_\ida\category{T})^c$.
\end{mainresult}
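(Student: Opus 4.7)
Write $\mathcal{S}=(\Gamma_\ida\category{T})^c$ for brevity. The plan is to verify four things about the restricted Yoneda $h\colon\Gamma_\ida\category{T}\to\catMod\mathcal{S}$: fully faithfulness on $(\Gamma_\ida\category{T})^d$, strong monoidality for the Day convolution product (so that its image lands among the dualizables), essential surjectivity onto the Day-convolution dualizables of $\catMod\mathcal{S}$, and the triangle recovery assertion.

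Fully faithfulness of $h$ on $(\Gamma_\ida\category{T})^d$ is supplied directly by \Cref{prop:yoneda-restricted-fully-faithful}. For monoidality, the plan is to follow the Balmer--Krause--Stevenson framework, adapted to the approximate-unital setting of \Cref{remark:approximate-unitality}: endow $\catMod\mathcal{S}$ with the Day convolution whose monoidal unit is forced to be $\colim_i h(e_i)$ for $\{e_i\}$ the chosen approximate unit of $\mathcal{S}$. Since tensoring in $\Gamma_\ida\category{T}$ commutes with filtered colimits and with arbitrary coproducts, the extension of $h$ to $\Gamma_\ida\category{T}$ will be strong monoidal; hence $h((\Gamma_\ida\category{T})^d)\subseteq(\catMod\mathcal{S})^d$ is automatic.

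The heart of the argument, and the main obstacle, is essential surjectivity onto $(\catMod\mathcal{S})^d$. Given $M\in(\catMod\mathcal{S})^d$ with dual $M^\vee$, coevaluation $\eta$ and evaluation $\varepsilon$, write $M=\colim_i h(c_i)$ and $M^\vee=\colim_j h(d_j)$ as filtered colimits of representables, and form lifts $X=\hocolim_i c_i$ and $Y=\hocolim_j d_j$ in $\Gamma_\ida\category{T}$. Commutation of $h$ with filtered colimits of compacts gives $h(X)\cong M$ and $h(Y)\cong M^\vee$. The next step is to lift $\eta$ and $\varepsilon$ through $h$ to morphisms in $\Gamma_\ida\category{T}$; here we rely on fully faithfulness extending to Hom-groups between such filtered colimits of compacts, and on transferring the triangle identities, which hold in $\catMod\mathcal{S}$ by hypothesis, back to $\Gamma_\ida\category{T}$ by the same faithfulness. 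This exhibits $X$ as dualizable with dual $Y$, so $X\in(\Gamma_\ida\category{T})^d$ and $h(X)\cong M$.

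Finally, any $X\in(\Gamma_\ida\category{T})^d$ is a filtered colimit of compacts in $\mathcal{S}$, so a distinguished triangle in $(\Gamma_\ida\category{T})^d$ can be presented as a filtered colimit of compatible triangles between compact presentations of its vertices; uniqueness of the triangle is enforced by its fully faithful image in $\catMod\mathcal{S}$, and the cohomological nature of $h$ guarantees compatibility with the triangulated structure. The delicate points throughout are the treatment of Day convolution with an approximate unit and the verification that the lifted evaluation and coevaluation satisfy the triangle identities on the nose in $\Gamma_\ida\category{T}$ --- a step that hinges on how far fully faithfulness of $h$ extends beyond $(\Gamma_\ida\category{T})^d$ to the filtered colimits of compacts we need to form.
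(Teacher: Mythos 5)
Your outline shares the correct overall structure with the paper---fully faithfulness via \Cref{prop:yoneda-restricted-fully-faithful}, Day convolution with an approximate unit as in \Cref{remark:approximate-unitality}, then characterizing the essential image---but the heart of your essential-surjectivity argument has a genuine circularity that the paper carefully avoids, and the triangle reconstruction is too sketchy to be checked.

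Concretely: to recognize a Day-dualizable $M$ as $h(X)$, you propose lifting the coevaluation and evaluation through $h$ to morphisms in $\Gamma_\ida\category{T}$ and then transferring the triangle identities back. But \Cref{prop:yoneda-restricted-fully-faithful} only gives fully faithfulness of $h$ on $(\Gamma_\ida\category{T})^d$, and $h$ is certainly \emph{not} fully faithful on arbitrary homotopy colimits of compacts (there are nontrivial phantom maps in $\Gamma_\ida\category{T}$). You cannot use fully faithfulness on $X=\hocolim c_i$ and $Y=\hocolim d_j$ until you already know they lie in $(\Gamma_\ida\category{T})^d$---which is precisely what you are trying to prove. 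You flag this as ``the delicate point,'' but it is not a point one can simply push through; the argument as stated is circular. The paper sidesteps this entirely: in \Cref{prop:essential-image-dualizable} it never lifts the duality data. Instead it uses the much weaker criterion of \Cref{prop:compactRigid}\,\ref{item:prodbyCC} (if $-\otimes d$ preserves compactness then $d$ is dualizable), checked via the computation $h(c)\boxtimes F\cong F(\SW{c}\otimes-)$ of \Cref{prop:day-conv-dualizable}. That computation also shows why the relevant colimit of representables is the specific sequential one indexed by Koszul objects, rather than an arbitrary filtered colimit of representables (which you cannot assume exists---only flat functors admit such presentations).

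Your sketch of the triangle reconstruction also does not engage with the real difficulty. Saying that ``uniqueness of the triangle is enforced by its fully faithful image in $\catMod\mathcal{S}$'' conflates two different problems: $h$ being fully faithful controls which maps exist, but does not tell you which candidate triangles are distinguished. The paper's \Cref{prop:koszul-triangles} proves the precise statement that a diagram is a triangle in $(\Gamma_\ida\category{T})^d$ iff its tensors with $\mathbb{1}\kos\ida^{[m]}$ are all triangles in $(\Gamma_\ida\category{T})^c$, and the proof requires the $\Ext^1$ analysis of \Cref{prop:ext-complete} and \Cref{lem:ext-box-injection}, which in turn rests on the Mittag-Leffler property established in \Cref{cor:koszul-ML}. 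None of this machinery appears in your sketch, and without it there is no way to conclude that the cone built in $(\Gamma_\ida\category{T})^d$ agrees with the given $z$ rather than differing by a phantom.
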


If we assume in addition that $\category{T}^c$ is strongly generated, or in other terminology, it has a finite dimension in the sense of Rouquier \autocite{rouquierDimensions2008}, there is another way in which $(\Gamma_\ida\category{T})^c$ and $(\Gamma_\ida\category{T})^d$ determine each other. In order to state it, note that given any $c\in(\Gamma_\ida\category{T})^c$ and $d\in(\Gamma_\ida\category{T})^d$, the Hom-group $\Hom_\category{T}(c,d)$ is annihilated by some power of $\ida$. If we denote the category of finitely generated $\ida$-power torsion $R$-modules by $\category{tors} R$, we obtain the following result which is reminiscent of~\autocite[Theorem~0.4]{neeman2Brown2025}.

\begin{mainresult}[see \Cref{thm:perfectPairing}]
\label{main:perfectPairing}
Suppose that $\category{T}$ is an $R$-linear, Noetherian, tensor-tri\-an\-gu\-lated, compactly-rigidly generated category such that $\category{T}^c$ admits a strong generator, and that $\ida\subset R$ is any homogeneous ideal. Then $\Hom_\category{T}\colon (\Gamma_\ida\category{T})^{c,\mathrm{op}}\times(\Gamma_\ida\category{T})^d\to\category{tors} R$ is a perfect pairing in the sense that
\begin{enumerate}[label=(\arabic*)]
\item the restricted Yoneda functor
$(\Gamma_\ida\category{T})^d \to \category{Fun}_R((\Gamma_\ida\category{T})^{c,\mathrm{op}}, \category{tors} R)$
is fully faithful and the essential image consists precisely of the cohomological functors, and

\item the restricted Yoneda functor
$(\Gamma_\ida\category{T})^{c,\mathrm{op}} \to \category{Fun}_R((\Gamma_\ida\category{T})^d, \category{tors} R)$
is fully faithful and the essential image consists precisely of the homological functors.
\end{enumerate}
\end{mainresult}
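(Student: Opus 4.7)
The plan is to handle the two parts separately after checking that $\Hom_\category{T}(c,d)\in\category{tors} R$ for $c\in(\Gamma_\ida\category{T})^c$ and $d\in(\Gamma_\ida\category{T})^d$. The $\ida$-power torsion property is immediate because $\id_c$ is killed by some power of $\ida$ (standard for compacts of $\Gamma_\ida\category{T}$). Finite generation over $R$ reduces via \Cref{main:completion-on-homs} to finite generation over $\comp{R}_\ida/\ida^n\cong R/\ida^n$, which is automatic once we know the $\comp{R}_\ida$-module is finitely generated and killed by $\ida^n$.

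For part~(1), fully faithfulness follows immediately from \Cref{main:reconstruction}: the restricted Yoneda functor is already fully faithful into $\catMod(\Gamma_\ida\category{T})^c$, and its values land in $\category{tors} R$ by the preceding paragraph. For the essential image, I would take a cohomological $R$-linear functor $F\colon(\Gamma_\ida\category{T})^{c,\mathrm{op}}\to\category{tors} R$, compose with the forgetful to abelian groups, extend by filtered colimits to a cohomological functor on $\Gamma_\ida\category{T}^{\mathrm{op}}$, and apply Brown representability in the compactly generated category $\Gamma_\ida\category{T}$ to obtain a representing object $d\in\Gamma_\ida\category{T}$. The essential step is then to upgrade $d$ to a dualizable object; this is where the strong-generator hypothesis enters, since $\Gamma_\ida g$ is a strong generator of $(\Gamma_\ida\category{T})^d$, so dualizability reduces to a bounded-iterated-cone description in terms of shifts of $\Gamma_\ida g$. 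A Rouquier-style dimension argument, using that $F(\Gamma_\ida g[n])$ is finitely generated torsion for every $n$, should place $d$ inside this bounded build.

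Part~(2) fully faithfulness is a Yoneda argument inside the domain: a natural transformation $\eta\colon\Hom_\category{T}(c_1,-)\to\Hom_\category{T}(c_2,-)$ on $(\Gamma_\ida\category{T})^d$ is determined by $\eta_{c_1}(\id_{c_1})\in\Hom_\category{T}(c_2,c_1)$, since $c_1\in(\Gamma_\ida\category{T})^c\subseteq(\Gamma_\ida\category{T})^d$ lies in the domain. The essential image is what I expect to be the main obstacle. My plan is to lift a homological $R$-linear functor $H\colon(\Gamma_\ida\category{T})^d\to\category{tors} R$ through the reconstruction of \Cref{main:reconstruction}, which presents $(\Gamma_\ida\category{T})^d$ as the dualizables in $\catMod(\Gamma_\ida\category{T})^c$ under Day convolution, to a functor on the ambient module category where standard representability is at hand; and then to use a graded Matlis-type duality on $\category{tors} R$---built from the injective cogenerators---to convert the homological problem into a cohomological one, apply part~(1) to obtain a dualizable representing object, and dualize back to produce a compact torsion $c$. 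The main obstacle is setting up this duality so that it really sends finitely generated torsion functors to finitely generated torsion functors, and controlling the passage from the dualizable object of part~(1) back to a compact one---both of which rely essentially on the strong generator of $\category{T}^c$ and on the Noetherianity of $R$.
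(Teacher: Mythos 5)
Fully faithfulness of both Yoneda functors is handled correctly (via \Cref{main:reconstruction}, respectively by ordinary Yoneda in the domain since $(\Gamma_\ida\category{T})^c\subset(\Gamma_\ida\category{T})^d$), and your first paragraph checking that $\Hom_\category{T}(c,d)$ lands in $\category{tors}R$ is fine. The problems are in the essential-image arguments of both parts.

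For part~(1), the plan ``extend $F$ by filtered colimits to $\Gamma_\ida\category{T}$ and apply Brown representability'' has a gap. Brown representability requires a cohomological functor on the compactly generated $\Gamma_\ida\category{T}^{\mathrm{op}}$ which sends coproducts to products, and there is no reason the left Kan extension of $F$ along $(\Gamma_\ida\category{T})^{c,\mathrm{op}}\hookrightarrow\Gamma_\ida\category{T}^{\mathrm{op}}$ has this property (a colimit over compact approximations of a coproduct does not generally agree with the product of the corresponding colimits). One also cannot instead apply a Bondal--Van den Bergh-style finite representability theorem directly to $(\Gamma_\ida\category{T})^{c}$: that category is emphatically \emph{not} strongly generated (the paper points this out at the start of \Cref{sec:strong-gen}, citing Steen--Stevenson). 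The paper's proof works around exactly this obstruction by first tensoring with $h(c)$, rewriting $h(c)\boxtimes H$ as $H(\SW{c}\otimes-)$ via \Cref{prop:day-conv-dualizable}, observing (via \Cref{prop:compactRigid}) that this functor is well-defined on the \emph{dualizables}, and then applying Letz's finitely-valued representability theorem on the strongly generated category $(\Gamma_\ida\category{T})^{d,\mathrm{op}}$ (\Cref{thm:strongGenDual}). Your step~3 (upgrading $d$ to dualizable using the strong generator) is essentially \Cref{thm:characDualizableTor} and would work, but the representing object never arrives by the route you propose. You also need to show the resulting representing object is \emph{compact}, not merely dualizable, which the paper does by a Koszul-split argument (\Cref{lemma:koszul-split}); your sketch does not address this.

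For part~(2), the plan to ``use a graded Matlis-type duality on $\category{tors}R$ to convert the homological problem into a cohomological one'' fails for general $\ida$. The paper explicitly lists Matlis and Brown--Comenetz dualities among the tools that are simply not available outside the local setting (\S\ref{sec:Introduction}), which is precisely the regime this theorem covers: $\ida$ is an arbitrary homogeneous ideal, so $R/\ida^n$ is Noetherian but not Artinian, and $\category{tors}R$ admits no finitely-generated-preserving duality. The paper's actual argument for part~(2) is much shorter and more robust than what you propose: apply Letz's theorem directly to $\tilde{H}$ on the strongly generated category $(\Gamma_\ida\category{T})^d$ to obtain a representing $y\in(\Gamma_\ida\category{T})^d$, and then observe that $\emorp(y)\cong\tilde{H}(y)$ is $\ida$-power torsion, which forces $y$ to be a retract of $y\kos\ida^{[n]}$ and hence compact. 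No duality is needed.
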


\subsection{The thick generation result}
Our next aim is to give a comparatively simple proof of the equality $\thick(\Lambda^\ida\category{T}^c)=(\Lambda^\ida\category{T})^d$.
To do so, we need an important additional assumption which was already mentioned in the previous section: that $\category{T}^c$ admits a strong generator.
 As explained in \Cref{example:strongly-generated=regular-finite-dim}, this condition can be viewed as a regularity condition. Compared to the local regularity of \autocite[Definition 7.1]{bensonLocallyDualisableModular2024}, it is a condition of geometrically global nature, which is expected since we deal with non-local rings. In fact, we prove in~\S\ref{subsec:local-regularity} that the existence of a strong generator always implies local regularity in our setup, and the advantage is that strong generators are much better studied and in many cases easier to construct. All the assumptions that we require are met in the following situations, as well for any categories obtained by localization of these at a multiplicative set of the acting ring $R$ in the sense of \autocite[Theorem~3.6]{balmerSpectraCube2010}:

\begin{enumerate}
\item For $\category{T}=\category{KInj}(\field G)$ where $\field$ is a field, $G$ is a finite group and the acting ring is the group cohomology ring $R=H^*(G,\field)$ (see~\S\ref{subsec:KInjkG}).
\item For $\category{T}=\category{Rep}(G,S)$, where $G$ is a commutative finite flat group scheme over a regular commutative Noetherian ring $S$, assuming also that $S$ is of finite type over an excellent base ring $\field$ of Krull dimension $\le2$. In this case, the `group algebra' $SG$ (in the sense of \autocite[\S4]{barthelLattices2023}) is a commutative algebra of finite type over $\field$ and $\category{T}^c=\category{D}^\bnd(\catmod SG)$ since any finitely generated $S$-module has finite projective dimension over $S$ (see \Cref{example:strictly-finite=regular}). The strong generation assumption then follows from~\autocite[Theorem~0.15 and Reminder~0.13]{neemanStrongGenerators2021}, while the finiteness properties of the action of $R=H^*(G,S)$ come from~\autocite{vdKallenNoetherianBase2023}, as already mentioned.
\item For $\category{T}=\category{D}(R)$, the derived category of a graded commutative dg ring or an $\mathbb{E}_\infty$-ring spectrum whose cohomology (respectively homotopy) ring $H^*(R)$ is a regular Noetherian ring of finite (graded) Krull dimension. We stress that we assume nothing about the connectivity of $R$, so this includes in particular the above mentioned example from~\autocite{mathewGaloisHomotopy2016}. In this case, $R$ itself is a strong generator of $\category{T}^c=\category{D}^\per(R)$ by \autocite[Propositions~1.1 and~1.3]{hoveyLockridge2011} (the setup is almost that of affine weakly regular tt-categories in the sense of~\autocite{dellAmbrogioStanleyWeaklyRegular2016}, but we need the additional assumption of finiteness of the Krull dimension).
\end{enumerate}

Now, we can state our final main result applicable to all the classes of examples above.

\begin{mainresult}[see \Cref{thm:strongGenDual,thm:characDualizableComp}]
\label{main:strongGenConsequences}
    Let $\category{T}$ be an $R$-linear, Noetherian, tensor-tri\-an\-gu\-lated, compactly-rigidly generated category. Assume that $g$ is a strong generator of $\category{T}^c$ and $\ida$ a homogeneous ideal of $R$. Then:
\begin{enumerate}[label=(\alph*)]
\item $\Gamma_{\ida}g$ is a strong generator of $(\Gamma_{\ida}\category{T})^d$ and $\Lambda^{\ida}g$ is a strong generator of $(\Lambda^{\ida}\category{T})^d$;
\item an object $x\in\Lambda^{\ida}\category{T}$ is dualizable if and only if $\Hom_{\category{T}}(c, x)$ is a finitely generated $\comp{R}_\ida$-module for each $c\in\category{T}^c$.
\end{enumerate}
\end{mainresult}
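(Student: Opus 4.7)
For part (a), I would first handle the small-category statement: the completion $\Gamma_\ida\colon\category{T}\to\Gamma_\ida\category{T}$ is a coproduct-preserving tensor-triangulated functor, so applying it to $\category{T}^c=\langle g\rangle_n$ sends the latter into $\langle\Gamma_\ida g\rangle_n$; combined with $(\Gamma_\ida\category{T})^c=\thick(\Gamma_\ida\category{T}^c)$ and the stability of $\langle-\rangle_n$ under thick completion, this yields $(\Gamma_\ida\category{T})^c\subseteq\langle\Gamma_\ida g\rangle_n$. To extend to dualizables, I would exploit the approximate-unit resolution $\Gamma_\ida\mathbb{1}=\hocolim t_k$ with $t_k\in(\Gamma_\ida\category{T})^c$: for any dualizable $x$, each $x\otimes t_k$ is compact in $\Gamma_\ida\category{T}$ (since $\Hom(x\otimes t_k,-)=\Hom(t_k,x^\vee\otimes-)$ preserves coproducts) and so lies in $\langle\Gamma_\ida g\rangle_n$; the Milnor triangle associated with $x=\hocolim(x\otimes t_k)$ then realizes $x$ from $\langle\Gamma_\ida g\rangle_n$-objects with at most one additional cone. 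The statement for $\Lambda^\ida g$ transports along the local-duality equivalence $\Gamma_\ida\category{T}\simeq\Lambda^\ida\category{T}$, which preserves dualizables and sends $\Gamma_\ida g$ to $\Lambda^\ida g$ in the sense that $\Gamma_\ida\Lambda^\ida g=\Gamma_\ida g$.

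For part (b), the forward implication is a consequence of (a) and Main Result A: if $x\in(\Lambda^\ida\category{T})^d$, then $x\in\langle\Lambda^\ida g\rangle_N$ for some $N$, and the isomorphism $\Hom_{\Lambda^\ida\category{T}}(\Lambda^\ida c,\Lambda^\ida g)\cong\Hom_\category{T}(c,g)\otimes_R\comp{R}_\ida$ gives finitely generated $\comp{R}_\ida$-modules, a property that propagates through the finite operations building $\langle\Lambda^\ida g\rangle_N$; since $x$ is $\ida$-complete, this descends to $\Hom_\category{T}(c,x)=\Hom_{\Lambda^\ida\category{T}}(\Lambda^\ida c,x)$. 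For the converse, I would transport $x$ across the local-duality equivalence to $y\in\Gamma_\ida\category{T}$, so that $\Hom_{\Gamma_\ida\category{T}}(\Gamma_\ida c,y)\cong\Hom_\category{T}(c,x)$ for every $c\in\category{T}^c$. This is finitely generated over $\comp{R}_\ida$ by hypothesis and is $\ida$-power torsion because $\Gamma_\ida c$ is, so it lies in $\category{tors} R$. Extending by thickness from $\Gamma_\ida\category{T}^c$ to all of $(\Gamma_\ida\category{T})^c$, the cohomological functor $\Hom_{\Gamma_\ida\category{T}}(-,y)\colon(\Gamma_\ida\category{T})^{c,\mathrm{op}}\to\category{tors} R$ is well-defined, and Main Result C forces $y$ to be dualizable, so $x$ is too.

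The delicate point is in part (a): the Milnor construction exhibits $x$ as a countable Milnor colimit in the big category, whereas strong generation of $(\Gamma_\ida\category{T})^d$ is a small-category assertion using only finite sums. Bridging this likely requires establishing the companion thick-generation statement $(\Gamma_\ida\category{T})^d=\thick((\Gamma_\ida\category{T})^c)$ in tandem, so that the Milnor colimit may be replaced by a finite retract and the bound on the Rouquier degree survives inside the dualizable subcategory.
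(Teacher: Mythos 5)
Your outline of part (a) correctly reaches the point where each dualizable $x \in (\Gamma_\ida\category{T})^d$ lies in $\Loc^{2D+1}(\Gamma_\ida g)$ via the Milnor triangle over the Koszul system, and you correctly identify the remaining difficulty: passing from a bound in the localizing sense to a bound in the thick sense inside the small category. However, your proposed fix — ``establishing the companion thick-generation statement $(\Gamma_\ida\category{T})^d=\thick((\Gamma_\ida\category{T})^c)$ in tandem'' — does not actually close the gap. Even knowing $(\Gamma_\ida\category{T})^d\subseteq\thick(\Gamma_\ida g)$ (without a level) would not give \emph{strong} generation, which requires a uniform bound $N$ with $(\Gamma_\ida\category{T})^d\subseteq\thick^N(\Gamma_\ida g)$. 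The mechanism that converts ``$x\in\Loc^{2D+1}(\Gamma_\ida g)$ and $\Hom(\Gamma_\ida g,x)$ finitely generated over $\comp{R}_\ida$'' into ``$x\in\thick^{2D+1}(\Gamma_\ida g)$'' is the ghost-vanishing argument: the paper's \Cref{constr:genGammaGen}, \Cref{lemma:constrGenGamma} and \Cref{prop:finHomGen}. One builds a tower $x=x_0\to x_1\to\cdots$ in $(\Gamma_\ida\category{T})^d$ with $\thick^0(\Gamma_\ida g)$-cones and $\Hom(\Gamma_\ida g,\phi_i)=0$, deduces from $x\in\Loc^{2D+1}(\Gamma_\ida g)$ that a finite composite vanishes, and then realizes $x$ as a retract of a finite-level cone. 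Without this step (and the input from \Cref{cor:finiteGenOverCompR} that $\Hom(\Gamma_\ida g,x)$ is finitely generated over $\comp{R}_\ida$), the proof of (a) is incomplete.

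For part (b), your forward direction works and is close in spirit to the paper's \Cref{prop:dualNoethComp} combined with \Cref{prop:complAllCompactNoeth}, though the paper argues directly for an arbitrary dualizable $d$ rather than propagating from $\Lambda^\ida g$. Your converse takes a genuinely different route than the paper: you invoke \Cref{thm:perfectPairing} (Main Result~C) to represent the functor $\Hom_{\Gamma_\ida\category{T}}(-,y)$ by some $d\in(\Gamma_\ida\category{T})^d$, whereas the paper gives a more elementary argument via \Cref{prop:strictRightNoethDual}, using that strong generation of $\category{T}^c$ forces $\category{T}$ to be strictly Noetherian (\Cref{prop:strictlyRightNoetherian}) so that finite generation of $\Hom$ already detects compactness. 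Your route is not circular (\Cref{thm:perfectPairing} only uses \Cref{thm:strongGenDual}, not \Cref{thm:characDualizableComp}), but it is heavier machinery, and it leaves a small additional step implicit: once you know $h(y)\cong h(d)$ in $\catMod(\Gamma_\ida\category{T})^c$, you still need to lift this natural isomorphism to an isomorphism $y\cong d$ in $\Gamma_\ida\category{T}$, since $h$ is a priori fully faithful only on dualizable objects. This can be done (use that $d$ is a minimal weak colimit of its Koszul objects and that $\Gamma_\ida\category{T}$ is compactly generated), but should be spelled out.
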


Note that this proves more then the promised equality $\thick(\Lambda^{\ida}\category{T}^c)=(\Lambda^{\ida}\category{T})^d$. The categories of torsion and complete dualizable objects turn out to be even strongly generated. This agrees with the fact that an adic completion of a commutative Noetherian regular ring of finite Krull dimension is again such. 

Although this part is heavily influenced by \autocite{bensonLocallyDualisableModular2024} and some results could be either directly transposed to our setting or only slightly adapted (\Cref{prop:complAllCompactNoeth}, \Cref{thm:characDualizableComp}), several tools are simply not available outside of the local setting:
\begin{itemize}
\item Matlis and Brown-Comenetz dualities;
\item any description in terms of Artinian or finite length modules.
\end{itemize}
To replace them, we rely on a part of commutative algebra, which is perhaps not that well known - the theory of derived $\ida$-complete modules that we quickly review in~\S\ref{subsec:torsion-complete-modules}. This in particular allows us to directly detect $\ida$-complete objects in $\category{T}$ in terms of the homomorphism $R$-modules into them (\Cref{thm:charPTandDC}). We are not aware of any such characterization in the existing literature.

\subsection{The structure of the article}

\begin{itemize}
\item \Cref{sec:ttSetting} introduces the categorical notions used throughout the article (compactness, dualizability, linearity, torsion and completion) and presents necessary preliminary material. Most of this is taken from the existing literature, but we also include facts about the interaction of the monoidal structure with sequential homotopy colimits in~\S\ref{subsec:hocolim} for which we were unable to find a reference, as well as a criterion for the existence of a strong generator in \Cref{prop:finHomGen}.

\item \Cref{sec:torClosedIdeal} focuses on basic properties of dualizable objects in $\Gamma_\ida\category{T}$ and $\Lambda^\ida\category{T}$, where $\ida\subset R$ is a homogeneous ideal of $R$. After introducing the key concepts of Koszul objects and derived $\ida$-complete $R$-modules, we cohomologically characterize in \Cref{thm:charPTandDC} the objects of $\Lambda^\ida\category{T}$ as those for which the Hom-groups from the compact objects of $\category{T}$ are derived $\ida$-complete $R$-modules. The existing theory around derived $\ida$-complete modules and derived completion also allows us to prove \Cref{main:completion-on-homs} (see~\Cref{thm:finite-gen-hom-dualizables} and \Cref{corollary:completion-on-homs}) as well as a criterion for thick generation of one object of $(\Gamma_\ida\category{T})^d$ from another in~\Cref{thm:genDual}.

\item \Cref{sec:reconstruction} is devoted to the proof of \Cref{main:reconstruction} and explaining all its steps.

\item \Cref{sec:strong-gen} collects all results that depend on strong generators. This involves showing that the existence of a strong generator is inherited from $\category{T}^c$ to both $(\Gamma_\ida\category{T})^d$ and $(\Lambda^\ida\category{T})^d$ in \Cref{thm:strongGenDual}, and a cohomological characterization of dualizable torsion/complete objects inside the categories of all torsion/complete objects in \Cref{thm:characDualizableTor,thm:characDualizableComp}. A combination of these results constitutes \Cref{main:strongGenConsequences}. We also prove the two representability results from \Cref{main:perfectPairing} (see \Cref{thm:perfectPairing}) here, and conclude by a short discussion of the local regularity condition from \autocite{bensonLocallyDualisableModular2024} and an illustration of the theory in representation theory of finite groups.
\end{itemize}

\subsection*{Acknowledgment}

We are grateful to Paul Balmer, Srikanth Iyengar, Henning Krause, Amnon Neeman, Thomas Peirce, Leonid Positselski and Jordan Williamson for helpful discussions and references.

\section{Preliminaries on the tensor-triangulated setting}\label{sec:ttSetting}

A triangulated category $\category{T}$ is \emph{tensor-triangulated} if it is endowed with a symmetric monoidal structure $(\otimes, \mathbb{1})$ such that $- \otimes -$ is exact in both variable (see~\autocite[Definition~3]{balmerTensorTriangularGeometry2010} or~\autocite[Definition~A.2.1]{hoveyPalmieriStricklandMemoir1997}). We will also assume that $- \otimes -$ preserves any existing coproduct in each variable. An internal hom-object $\intHom(y,z)$ between two objects $y,z \in \category{T}$ is defined, when it exists, by the adjunction isomorphism
\[
    \Hom_{\category{T}}(x, \intHom(y,z)) \cong \Hom_{\category{T}}(x \otimes y, z)
\]
natural in $x$. As a consequence of Brown Representability Theorem~\autocite[Theorem~3.1]{neemanBousfieldTechniques1996}, internal hom-objects exist whenever $\category{T}$ is compactly generated.

\subsection{Grading} Throughout this paper, all rings and modules are \emph{graded}, and ideals are \emph{homogeneous}. Recall that a ring $R$ is called \emph{graded-commutative} if $yx=(-1)^{|x||y|}xy$ for all homogeneous elements $x,y\in R$. Such rings behave analogously to ordinary commutative rings; see~\autocite[\S1.5]{brunsHerzogCMBook1993} for specific aspects of the grading.
If $R$ is graded-commutative, we denote by $\Spec R$ the homogeneous spectrum, that is, the set of all prime homogeneous ideals. If $\category{T}$ is a triangulated category, 
\[
    \Hom_{\category{T}}(x, y) = \bigoplus_{i \in \mathbb{Z}} \Hom_{\category{T}}^i(x,y)
\]
denotes the \emph{graded} Hom-group, with $\Hom_{\category{T}}^i(x,y) = \Hom_{\category{T}}^0(\Sigma^{-i} x,y)$. Morphisms appearing in our commutative diagrams are always degree $0$ morphisms.

\subsection{Compactness and dualizability}\label{subsec:compacts} Compact and dualizable objects are classical classes of `small' objects definable in a tensor-triangulated category.

A \textit{compact object} of a triangulated category $\category{T}$ with arbitrary coproducts is an object $c \in \category{T}$, whose representable functor $\Hom_{\category{T}}(c, -)$ commutes with arbitrary coproducts in the sense that the following canonical map is an isomorphism:
\[
		\bigoplus_{i \in \mathcal{I}} \Hom_{\category{T}}(c, x_i) \overset{\sim}\to
    \Hom_{\category{T}}(c, \bigoplus_{i \in \mathcal{I}}x_i)
\]
A set $\category{G}\subset\category{T}$ of compact objects is called \emph{generating} if $\category{T}$ is the smallest triangulated subcategory of $\category{T}$ closed under arbitrary coproducts and containing $\category{G}$.
A compact object $c$ is a \emph{generator} of $\category{T}$ if $\category{G}=\{c\}$ is generating.
This can equivalently be tested by the following criterion; see~\autocite[Theorem~2.1]{neemanBousfieldTechniques1996}:

\begin{proposition}
    A set of compact objects $\category{G}\subset\category{T}^c$ is generating if and only if, for any object $x$ in $\category{T}$, $\Hom_{\category{T}}(c, x) = 0$ for each $c\in\category{G}$ implies that $x = 0$.
\end{proposition}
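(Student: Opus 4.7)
The claim is the classical Neeman criterion for a set of compact generators, so the proof has two directions with asymmetric difficulty.

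For the forward direction, assume $\category{G}\subset\category{T}^c$ is generating and suppose $\Hom_\category{T}(c,x)=0$ for every $c\in\category{G}$ (recall these are graded Hom-groups, so this automatically handles all suspensions). I would introduce the full subcategory
\[
    \category{S}:=\{y\in\category{T}\mid \Hom_\category{T}(y,x)=0\}.
\]
A routine check (using exactness of $\Hom_\category{T}(-,x)$ in the first variable, stability under suspension from the graded convention, and the fact that $\Hom_\category{T}(-,x)$ turns coproducts into products) shows that $\category{S}$ is triangulated and closed under arbitrary coproducts. By hypothesis $\category{G}\subset\category{S}$, so by minimality $\category{S}=\category{T}$, in particular $x\in\category{S}$, forcing $\mathrm{id}_x=0$ and thus $x=0$.

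For the converse, assume the vanishing condition and let $\category{T}':=\Loc(\category{G})$ be the smallest triangulated subcategory of $\category{T}$ closed under coproducts and containing $\category{G}$; I want to show $\category{T}'=\category{T}$. Fix $x\in\category{T}$ and construct a sequence $x_0\to x_1\to x_2\to\cdots$ in $\category{T}'$ together with a compatible cone map to $x$ by the standard iterative procedure: set $x_0=\bigoplus c_i$ taken over all $c_i\in\category{G}$ and all graded maps $c_i\to x$, with the canonical evaluation map $x_0\to x$; complete to a triangle $K_0\to x_0\to x$, replace $x$ by $K_0$ and repeat to obtain $x_{n+1}$ from $x_n$ by adjoining a coproduct of elements of $\category{G}$ hitting the cofiber of $x_n\to x$. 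Let $x_\infty:=\hocolim x_n$, which lies in $\category{T}'$, and fit the induced map $x_\infty\to x$ into a triangle $x_\infty\to x\to y\to\Sigma x_\infty$.

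The payoff step uses compactness: for every $c\in\category{G}$, $\Hom_\category{T}(c,-)$ commutes with the sequential homotopy colimit, and by construction every element of $\Hom_\category{T}(c,x)$ is killed at some finite stage, so $\Hom_\category{T}(c,x_\infty)\to \Hom_\category{T}(c,x)$ is surjective with kernel exhausted by the construction---an easy $\varinjlim$ argument on the long exact sequences of the triangles $K_n\to x_n\to x$ shows this map is in fact an isomorphism. Consequently $\Hom_\category{T}(c,y)=0$ for every $c\in\category{G}$, so by hypothesis $y=0$ and $x\cong x_\infty\in\category{T}'$. The main obstacle is this last step: it is where compactness of the generators is genuinely used, and one must handle the sequential homotopy colimit carefully (which is unproblematic here since coproducts exist in $\category{T}$ and the forthcoming \S\ref{subsec:hocolim} will in any case develop the needed interaction of $\hocolim$ with other structures).
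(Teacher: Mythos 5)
The paper does not prove this proposition; it simply cites \autocite[Theorem~2.1]{neemanBousfieldTechniques1996}, and your argument is the standard one from that reference. The forward direction is fine as written. In the converse, the description of the tower is slightly garbled: ``replace $x$ by $K_0$ and repeat'', taken literally, would begin building a tower over $K_0$ rather than over $x$, and the coproduct should run over maps from $\category{G}$ into the \emph{fiber} $K_n$ of $x_n\to x$ (not the cofiber --- though since Hom-groups here are graded this is only a shift), setting $x_{n+1}=\cone\bigl(\bigoplus c\to K_n\to x_n\bigr)$ so that the maps $x_n\to x$ factor compatibly through $x_{n+1}$. With that reading your compactness ``payoff step'' (surjectivity from stage $0$, injectivity because each kernel class is killed one stage later) is exactly the Bousfield--Neeman argument and goes through.
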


The full subcategory of compact objects is then the smallest thick subcategory of $\category{T}$ containing $\category{G}$, also by~\autocite[Theorem~2.1]{neemanBousfieldTechniques1996}. We denote it by $\category{T}^c$.
Note also that compactly generated triangulated categories have products since for any family of objects $x_i\in\category{T}$, $i\in I$, the functor $\prod_{i\in I}\Hom(-,x_i)$ is representable by the  Brown Representability Theorem~\autocite[Theorem~3.1]{neemanBousfieldTechniques1996}.

A \emph{dualizable} (sometimes also called \emph{rigid}) object in a tensor-triangulated category $\category{T}$ is an object $d$ such that the canonical map
\[
    \intHom_{\category{T}}(d, \mathbb{1}) \otimes x \overset{\sim}\to \intHom_{\category{T}}(d, x)
\]
is an isomorphism. The object $\SW{d} = \intHom_{\category{T}}(d, \mathbb{1})$ is then called the \emph{Spanier-Whitehead dual} of $d$, and is dualizable as well, with Spanier-Whitehead dual $d$.
We refer to e.g.\ to~\autocite[\S III.1]{lewisMaySteinbergerEquivariantHomotopy1986} for this and other basic facts about dualizable (there called `finite') objects.
 The full subcategory of dualizable objects, denoted by $\category{T}^d$, is a thick subcategory of $\category{T}$. We will say that $\category{T}$ is \emph{rigidly generated} if $\category{T}$ is the smallest triangulated subcategory of $\category{T}$ closed under arbitrary coproducts and containing $\category{T}^d$.

Basic relations between these notions are recorded in \autocite[\S2.1]{hoveyPalmieriStricklandMemoir1997}.
Dualizable and compact objects are also related by the following tensor duality.

\begin{proposition}\label{prop:compactRigid}
    Let $\category{T}$ be a tensor-triangulated category.
    \begin{enumerate}[label=(\alph*)]
    \item\label{item:prodCR} For any compact object $c \in \category{T}^c$ and any dualizable object $d \in \category{T}^d$, $c \otimes d$ is compact.
    \item\label{item:prodbyCC} Assume $\category{T}$ is both compactly generated and rigidly generated. Let $d$ be an object in $\category{T}$ such that $- \otimes d$ preserves compactness. Then $d$ is dualizable.
    \item\label{item:prodbyRC} Let $c$ be an object in $\category{T}$ such that $c \otimes -$ maps dualizable objects to compact ones. Then $c$ is compact.
    \end{enumerate}
\end{proposition}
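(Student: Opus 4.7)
My overall strategy is to treat the three parts in order of increasing difficulty. For (a), I would simply unwind the definition of dualizability of $d$, which says the canonical map $\SW{d} \otimes (-) \to \intHom(d, -)$ is an isomorphism of functors; composing with the tensor-hom adjunction gives $\Hom_\category{T}(c \otimes d, -) \cong \Hom_\category{T}(c, \SW{d} \otimes (-))$, and since $\SW{d} \otimes (-)$ preserves coproducts (the standing assumption on $\otimes$) and $c$ is compact, the composite preserves coproducts. For (c), the key observation is that the unit $\mathbb{1}$ is always dualizable with $\SW{\mathbb{1}} = \mathbb{1}$, so $c \cong c \otimes \mathbb{1}$ is compact by hypothesis.

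Part (b) is where the real work lies, and I would attack it in three steps. First, I would show that the hypothesis upgrades to the statement that $\intHom(d, -)$ preserves arbitrary coproducts. For any compact $c$ and any family $(y_i)_{i \in I}$, the tensor-hom adjunction together with the compactness of $c \otimes d$ (by hypothesis) give
\[
\Hom_\category{T}\!\bigl(c, \intHom(d, \textstyle\bigoplus_i y_i)\bigr) \cong \Hom_\category{T}\!\bigl(c \otimes d, \textstyle\bigoplus_i y_i\bigr) \cong \textstyle\bigoplus_i \Hom_\category{T}(c \otimes d, y_i) \cong \Hom_\category{T}\!\bigl(c, \textstyle\bigoplus_i \intHom(d, y_i)\bigr),
\]
and compact generation promotes this to an isomorphism in $\category{T}$. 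Second, I would verify that the canonical map $\SW{d} \otimes x \to \intHom(d, x)$ is an isomorphism when $x$ is dualizable; this is the standard symmetric monoidal closed yoga: both sides corepresent $y \mapsto \Hom_\category{T}(y \otimes \SW{x} \otimes d, \mathbb{1})$, and a coherence check identifies the induced iso with the canonical map. Third, I would conclude by a localization argument: the class of $x \in \category{T}$ for which the canonical map is an iso is a localizing subcategory (both $\SW{d} \otimes (-)$ and $\intHom(d, -)$ are exact and preserve coproducts, the latter by the first step), contains $\category{T}^d$ by the second step, and by rigid generation is therefore all of $\category{T}$.

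The main obstacle is the second step of (b): one has to confirm that the Yoneda-identification of the two expressions really is induced by the canonical evaluation map $\SW{d} \otimes x \to \intHom(d, x)$, which is essentially a coherence check in the closed symmetric monoidal structure. The other steps are direct applications of definitions, the Brown Representability Theorem, and the fact that compacts detect isomorphisms in a compactly generated triangulated category.
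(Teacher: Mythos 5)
Your proposal is correct and follows essentially the same three-step route as the paper: parts (a) and (c) are handled identically, and for (b) both arguments first show $\intHom(d,-)$ preserves coproducts via compact generation, then verify the canonical map $\SW{d}\otimes g \to \intHom(d,g)$ is an isomorphism for $g$ dualizable (the paper exhibits the same commutative square you allude to rather than phrasing it via Yoneda, but the coherence content is the same), and finally extend to all of $\category{T}$ by rigid generation using that a natural transformation between exact coproduct-preserving functors which is invertible on a generating set is invertible everywhere.
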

\begin{proof}
    If $c$ is a compact object and $d$ is a dualizable object, then
    \[
        \Hom_{\category{T}}(c \otimes d, -) \cong \Hom_{\category{T}}(c, \SW{d} \otimes -)
    \]
    Both functors $\Hom_{\category{T}}(c, -)$ and $\SW{d} \otimes -$ preserve coproducts, hence $c \otimes d$ is compact. This proves \ref{item:prodCR}.

    Fix an object $d$ in $\category{T}$, such that $c \otimes d$ is compact for any compact object $c$. Then $\intHom(d, -)$ preserves coproducts. Indeed, fix a family of objects $(x_i)_i$. Then for any compact object $c$, we have a commutative diagram
		\[
				\begin{tikzcd}[column sep=small]
						\bigoplus_i \Hom_{\category{T}}(c, \intHom(d, x_i))
						\arrow[d, swap, "\sim"] \arrow[r, "\sim"]
						& \Hom_{\category{T}}(c, \bigoplus_i \intHom(d, x_i))
						\arrow[r, ]
						& \Hom_{\category{T}}(c, \intHom(d, \bigoplus_i x_i))
						\arrow[d, "\sim"] \\
						\bigoplus_i \Hom_{\category{T}}(c \otimes d, x_i)
						\arrow[rr, "\sim"]
						&&\Hom_{\category{T}}(c \otimes d, \bigoplus_i x_i),
				\end{tikzcd}
		\]
		where the vertical isomorphisms are given by the adjunction and the horizontal morphisms are the canonical ones. Two of these are  also isomorphisms as indicated, since $c$ and $c\otimes d$ are compact.
    So, if $\phi\colon \bigoplus_i \intHom(d, x_i)\to \intHom(d, \bigoplus_i x_i)$ is the canonical homomorphism, we have proved that $\Hom_{\category{T}}(c,\phi)$ is an isomorphism for each $c$ compact.
    Since $\category{T}$ is compactly generated, it follows that $\intHom(d, -)$ preserves coproducts, as claimed.
		
		Now fix a dualizable object $g$ of $\category{T}$ and consider the following commutative diagram:
    \[
        \begin{tikzcd}
            \intHom(d, \mathbb{1}) \otimes g \arrow[d, "\sim"] \arrow[rr] & & \intHom(d, g) \arrow[d, "\sim"] \\
            \intHom(\SW{g}, \intHom(d, \mathbb{1})) \arrow[r, "\sim"] &
            \intHom(\SW{g} \otimes d, \mathbb{1}) \arrow[r, "\sim"] & 
            \intHom(d, \intHom(\SW{g}, \mathbb{1}))
        \end{tikzcd}
    \]
    The top arrow must thus be an isomorphism. The natural transformation
    \[
        \intHom(d, \mathbb{1}) \otimes - \to \intHom(d, -)
    \]
    is a natural transformation between exact, coproducts-preserving functors and an isomorphism on a generating set of objects, hence it is an isomorphism. This proves~\ref{item:prodbyCC}.

    Statement \ref{item:prodbyRC} is a direct consequence of the fact that $\mathbb{1}$ is always dualizable.
\end{proof}

All the `large' tensor-triangulated categories considered in this article are compactly generated, and the compact objects are dualizable. In particular, the hypotheses of \Cref{prop:compactRigid}~\ref{item:prodbyCC} are satisfied.

\begin{definition}
A tensor-triangulated category $\category{T}$ is called \emph{rigidly-compactly generated} if its compact and dualizable objects coincide, $\category{T}^c = \category{T}^d$, and they generate~$\category{T}$.
\end{definition}

\begin{remark}
If the monoidal unit $\mathbb{1}$ of $\category{T}$ is compact, then $\category{T}^d\subset\category{T}^c$; see the proof of~\autocite[Theorem~2.1.3(d)]{hoveyPalmieriStricklandMemoir1997}.
So for tensor-triangulated $\category{T}$ to be compactly-rigidly  generated, it suffices to verify that it is compactly generated and $\mathbb{1}\in\category{T}^c\subset\category{T}^d$.
\end{remark}

\subsection{Sequential homotopy colimits}\label{subsec:hocolim} 

If $\category{T}$ be a compactly generated triangulated category, we can define the sequential homotopy colimit $\hocolim_i x_i$ of a chain morphisms
\[
  x_0 \xrightarrow{\phi_0} x_1 \xrightarrow{\phi_1} x_2 \xrightarrow{\phi_2} \cdots
\]
as a cone of the morphism $\id-\phi\colon \bigoplus_i x_i\to\bigoplus_i x_i$, see~\autocite[Definition~2.2.3]{hoveyPalmieriStricklandMemoir1997} or~\autocite[\S1.6]{neemanBook2001}. Of course, one cannot expect this construction to be functorial, but using the notion of minimal weak colimits from~\autocite[Definition~2.2.1]{hoveyPalmieriStricklandMemoir1997}, we can see that it is determined uniquely up to a non-canonical isomorphism just by the underlying category of $\category{T}$, independently of the triangulation.

\begin{definition}
If $F\colon I\to\category{T}$ is a diagram, we say that a cocone $(f_i\colon F(i)\to x)$ is a \emph{weak colimit} if for any other cocone $(g_i\colon F(i)\to y)$, there exists a (not necessarily unique) morphism $h\colon x\to y$ such that $g_i=hf_i$ for each $i$. In other words, we require that the canonical morphism
\[ \Hom_\category{T}(x,y)\to\limit_{i\in I}\Hom_\category{T}(F(i),y) \]
is surjective for each $y\in\category{T}$.
A weak colimit $(f_i\colon F(i)\to x)$ is said to be \emph{minimal} if it induces for each $c\in\category{T}^c$ an isomorphism of abelian groups
\[ \colim_{i\in I}\Hom_\category{T}(c,F(i))\to\Hom_\category{T}(c,x). \]
\end{definition}

\begin{remark}
\autocite[Definition~2.2.1]{hoveyPalmieriStricklandMemoir1997} appears more restrictive as it requires that $\colim_{i\in I} H(F(i))\to H(x)$ be an isomorphism for each coproduct preserving homological functor $H\colon\category{T}\to\category{Ab}$. However, this is actually an equivalent condition by~\autocite[Corollary 2.4]{krauseTelescope2000}.
\end{remark}

\begin{lemma} Let $F\colon I\to\category{T}$ be a diagram in compactly generated triangulated category.
\begin{enumerate}
\item If $(f_i\colon F(i)\to x)$ and $(g_i\colon F(i)\to x)$ are both minimal weak colimits, there exists a (non-canonical) isomorphism $h\colon x\to y$ such that $g_i=hf_i$ for each~$i$.
\item
If $x_0 \xrightarrow{\phi_0} x_1 \xrightarrow{\phi_1} x_2 \xrightarrow{\phi_2} \cdots$ is a sequence of maps and $(f_i\colon x_i\to\hocolim x_i)$ is the cocone coming from a defining triangle, then $(f_i\colon x_i\to\hocolim x_i)$ is a minimal weak colimit.
\end{enumerate}
\end{lemma}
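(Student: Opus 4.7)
For part (1), I would first assume (what appears to be a typo in the statement) that the two minimal weak colimits are $(f_i\colon F(i)\to x)$ and $(g_i\colon F(i)\to y)$. The weak colimit property of the first cocone, applied to the second cocone viewed as a cocone on $y$, produces a (non-canonical) morphism $h\colon x\to y$ with $g_i=hf_i$ for all $i$. To show $h$ is an isomorphism, I would test with compact objects $c\in\category{T}^c$: minimality of both cocones gives a commutative triangle
\[
\begin{tikzcd}[column sep=small, row sep=small]
\colim_{i\in I}\Hom_\category{T}(c,F(i)) \arrow[r,"\sim"] \arrow[rd,"\sim"'] & \Hom_\category{T}(c,x) \arrow[d,"h_*"] \\
& \Hom_\category{T}(c,y)
\end{tikzcd}
\]
in which both diagonal/horizontal arrows are isomorphisms, forcing $h_*$ to be an isomorphism. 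Since compact objects generate $\category{T}$, a morphism inducing isomorphisms on all $\Hom_\category{T}(c,-)$ for $c$ compact is itself an isomorphism (apply this to $\cone(h)$), hence $h$ is the desired iso.

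For part (2), I would start from the defining triangle
\[
\bigoplus_i x_i \xrightarrow{\id-\phi} \bigoplus_i x_i \to \hocolim_i x_i \to \Sigma\bigoplus_i x_i,
\]
where $f_i\colon x_i\to\hocolim_i x_i$ is the composition of the canonical inclusion with the third map. To get the weak colimit property, apply $\Hom_\category{T}(-,y)$: the resulting long exact sequence shows that $\Hom_\category{T}(\hocolim_i x_i, y)$ surjects onto the kernel of $(\id-\phi)^*$ on $\prod_i\Hom_\category{T}(x_i,y)$, which is exactly $\limit_i\Hom_\category{T}(x_i,y)$; this is precisely the surjectivity of $\Hom_\category{T}(\hocolim_i x_i,y)\to\limit_i\Hom_\category{T}(F(i),y)$ required by the definition of weak colimit.

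For minimality, I would apply $\Hom_\category{T}(c,-)$ for $c$ compact; compactness converts the coproducts into direct sums of groups, giving an exact sequence
\[
\bigoplus_i\Hom_\category{T}(c,x_i) \xrightarrow{\id-\phi_*} \bigoplus_i\Hom_\category{T}(c,x_i) \to \Hom_\category{T}(c,\hocolim_i x_i) \to \bigoplus_i\Hom_\category{T}(c,\Sigma x_i) \xrightarrow{\id-\phi_*} \bigoplus_i\Hom_\category{T}(c,\Sigma x_i).
\]
The main (and only nontrivial) point is the elementary algebraic fact that for a sequence of abelian groups $A_0\to A_1\to\cdots$, the map $\id-\phi$ on $\bigoplus_i A_i$ has kernel zero (by induction on the maximal index of nonzero support) and cokernel $\colim_i A_i$. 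Applied to both endpoints of the exact sequence, this forces $\Hom_\category{T}(c,\hocolim_i x_i)\cong\colim_i\Hom_\category{T}(c,x_i)$, and one checks that the induced isomorphism is the canonical one, yielding minimality. I expect the main care to be required in verifying the kernel-vanishing step and confirming that the identifications are indeed via the canonical maps; everything else is a direct application of the triangulated/compact machinery already set up in the paper.
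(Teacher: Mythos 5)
Your proposal is correct and gives a direct proof of a lemma the paper disposes of by citation to Hovey--Palmieri--Strickland \autocite[Proposition~2.2.4(c),(d)]{hoveyPalmieriStricklandMemoir1997}; your argument is essentially the one found there, so the two ``approaches'' agree in substance. You also rightly flag the typo in the statement (the second cocone should land in $y$).

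One small slip: the injectivity of $\id-\phi$ on $\bigoplus_i A_i$ is proved by induction on the \emph{minimal}, not the \emph{maximal}, index of nonzero support. Explicitly, if $(a_i)\neq 0$ and $m$ is the least index with $a_m\neq 0$, then $\bigl((\id-\phi)(a)\bigr)_m = a_m-\phi_{m-1}(a_{m-1})=a_m\neq 0$. The inductive step fails at the maximal index $n$ because $\bigl((\id-\phi)(a)\bigr)_{n+1} = -\phi_n(a_n)$ can vanish. (You also write ``the third map'' when you mean the second map $\bigoplus_i x_i\to\hocolim_i x_i$ of the defining triangle, but this is clearly a slip of the pen.) Apart from these details, the argument is sound: the long exact sequence identifies $\Hom(\hocolim x_i,y)\to\prod_i\Hom(x_i,y)$ as surjecting onto $\ker(\id-\phi)^{*}=\limit_i\Hom(x_i,y)$, giving the weak colimit property, and compactness together with the kernel/cokernel computation of $\id-\phi$ on $\bigoplus_i\Hom(c,x_i)$ gives minimality.
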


\begin{proof}
See~\autocite[Proposition~2.2.4(c) and~(d)]{hoveyPalmieriStricklandMemoir1997}.
\end{proof}

Suppose now that we have a commutative diagram in $\category{T}$ of the shape $\mathbb{N}\times\mathbb{N}$:
\[
\begin{tikzcd}
x_{00} \arrow[r, "\phi_{00}"] \arrow[d, "\psi_{00}"] & x_{10} \arrow[r, "\phi_{10}"] \arrow[d, "\psi_{10}"] & x_{20} \arrow[r, "\phi_{20}"] \arrow[d, "\psi_{20}"] & \cdots
\\
x_{01} \arrow[r, "\phi_{01}"] \arrow[d, "\psi_{01}"] & x_{11} \arrow[r, "\phi_{11}"] \arrow[d, "\psi_{11}"] & x_{21} \arrow[r, "\phi_{21}"] \arrow[d, "\psi_{21}"] & \cdots
\\
x_{02} \arrow[r, "\phi_{02}"] \arrow[d, "\psi_{02}"] & x_{12} \arrow[r, "\phi_{12}"] \arrow[d, "\psi_{12}"] & x_{22} \arrow[r, "\phi_{20}"] \arrow[d, "\psi_{22}"] & \cdots
\\
\vdots & \vdots & \vdots
\end{tikzcd}
\]
Denoting $z_{i}=\hocolim_j x_{ij}$, we obtain a compatible chain of morphisms
\begin{equation}\label{eq:partial-hocolim}
\begin{tikzcd}
z_{0} \arrow[r, "\chi_{0}"] & z_{1} \arrow[r, "\chi_{1}"] & z_{2} \arrow[r, "\chi_{2}"] & \cdots
\end{tikzcd}
\end{equation}
and a cocone $(f_{ij}\colon x_{ij}\to\hocolim z_i)$ given by the compositions $x_{ij}\to z_i\to \hocolim_j z_j$. The cocone of course restricts to the cocone of the diagonal $(f_{ii}\colon x_{ii}\to\hocolim z_i)$.
On the other hand, we can consider the diagonal $x_{00}\to x_{11}\to\cdots$.

\begin{lemma} 
For any choice of minimal weak colimit cocones and a compatible chain~\eqref{eq:partial-hocolim} as above,
there is a (non-canonical) isomorphism $h\colon \hocolim_i x_{ii}\overset{\sim}\to\hocolim_i z_i$ such that $f_{ii}=hg_i$ for each $i$.
\end{lemma}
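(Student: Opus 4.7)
The plan is to construct $h$ by invoking the weak colimit property of $\hocolim_i x_{ii}$ applied to the cocone $(f_{ii})_i$, and then verify that $h$ is an isomorphism by testing against compact objects; the identity $f_{ii}=hg_i$ will hold by construction.

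First, I would observe that the restriction of the double cocone $(f_{ij})_{i,j}$ to the diagonal is automatically a cocone on the chain $x_{00}\to x_{11}\to\cdots$: writing $d_i=\psi_{i+1,i}\phi_{ii}$ for the diagonal map, the two cocone squares give
\[
f_{i+1,i+1}\,d_i=f_{i+1,i+1}\,\psi_{i+1,i}\,\phi_{ii}=f_{i+1,i}\,\phi_{ii}=f_{ii}.
\]
Since $\hocolim_i x_{ii}$ is a weak colimit, being a minimal weak colimit by the previous lemma, this cocone factors through a morphism $h\colon \hocolim_i x_{ii}\to\hocolim_i z_i$ satisfying $hg_i=f_{ii}$ for all $i$.

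To show that $h$ is an isomorphism, since $\category{T}$ is compactly generated, it suffices to check that $\Hom_\category{T}(c,h)$ is bijective for each compact $c$. The minimality of all three sequential hocolims involved yields
\[
\Hom_\category{T}(c,\hocolim_i x_{ii})\cong\colim_i\Hom_\category{T}(c,x_{ii})
\]
and
\[
\Hom_\category{T}(c,\hocolim_i z_i)\cong\colim_i\colim_j\Hom_\category{T}(c,x_{ij})\cong\colim_{(i,j)\in\mathbb{N}\times\mathbb{N}}\Hom_\category{T}(c,x_{ij}),
\]
and a direct trace of a map $g\colon c\to x_{ii}$ through $h$ shows that under these identifications $\Hom_\category{T}(c,h)$ is the canonical comparison induced by the diagonal inclusion $\mathbb{N}\hookrightarrow\mathbb{N}\times\mathbb{N}$. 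The diagonal is a cofinal subcategory of $\mathbb{N}\times\mathbb{N}$---for every $(i,j)$, the pairs $(k,k)$ with $k\ge\max(i,j)$ form a non-empty connected subcategory receiving a unique map from $(i,j)$---so this comparison is an isomorphism. The only genuinely non-formal ingredient is the cofinality of the diagonal; the cocone verification, the existence of $h$, and the passage from compacts back to $\category{T}$ are all formal consequences of the preceding material.
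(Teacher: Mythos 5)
Your proof is correct and takes essentially the same approach as the paper: obtain $h$ from the weak colimit universal property of the diagonal chain, then test $\Hom(c,h)$ against compact objects and use cofinality of the diagonal in $\mathbb{N}\times\mathbb{N}$ to see that the comparison of filtered colimits of abelian groups is an isomorphism. The only differences are cosmetic—you verify explicitly that $(f_{ii})$ is a cocone on the diagonal chain (the paper asserts this as obvious in the lead-in) and you name the cofinality of the diagonal where the paper just writes down the chain of isomorphisms.
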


\begin{proof}
Since $(g_i\colon x_{ii}\to \hocolim_i x_{ii})$ is a minimal weak colimit cocone, there exists a (possibly non-unique) morphism $h\colon \hocolim_i x_{ii}\to\hocolim_i z_i$ such that $f_{ii}=hg_i$ for each $i$.
Note that if $c\in\category{T}^c$, then there are canonical isomorphisms
\begin{multline*}
\colim_{i}\Hom_\category{T}(c,x_{ii})\cong
\colim_{i,j}\Hom_\category{T}(c,x_{ij})\cong
\colim_{i}\colim_{j}\Hom_\category{T}(c,x_{ij})\cong
\\
\cong
\colim_{i}\Hom_\category{T}(c,z_{i})\cong
\Hom_\category{T}(c,\hocolim_{i}z_i).
\end{multline*}
Since $(g_i\colon x_{ii}\to \hocolim_i x_{ii})$ is minimal, we obtain that $\Hom_\category{T}(c,h)$ is an isomorphism for each $c\in\category{T}^c$. Hence, $h$ is an isomorphism.
\end{proof}

If $\category{T}$ is in addition a tensor-triangulated category, we can deduce the following compatibility between the tensor product and sequential homotopy colimits.

\begin{lemma}\label{lemma:tensor-and-hocolim}
If
$x_0 \xrightarrow{\phi_0} x_1 \xrightarrow{\phi_1} x_2 \xrightarrow{\phi_2} \cdots$ and 
$y_0 \xrightarrow{\psi_0} y_1 \xrightarrow{\psi_1} y_2 \xrightarrow{\psi_2} \cdots$
are two sequences of morphisms in $\category{T}$, then there is a (non-canonical) isomorphism
\[ \hocolim (x_i\otimes y_i)\cong(\hocolim x_i)\otimes(\hocolim y_j). \]
Moreover, when $(f_i\colon x_i\to\hocolim_i x_i)$ and $(g_i\colon y_i\to\hocolim_i y_i)$ are two minimal weak colimit cocones, then the tensor product
\[ (f_i\otimes g_i\colon x_i\otimes y_i\to(\hocolim x_i)\otimes(\hocolim y_j)) \]
is a minimal weak colimit cocone.
\end{lemma}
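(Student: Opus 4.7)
The plan is to reduce the assertion to the $\mathbb{N}\times\mathbb{N}$ iterated-hocolim lemma just proved, exploiting the fact that the tensor bifunctor is exact and preserves coproducts in each variable. First, I would observe that for any object $x\in\category{T}$, tensoring the defining triangle
\[
\bigoplus_j y_j \xrightarrow{\id - \psi}\bigoplus_j y_j\longrightarrow \hocolim_j y_j\longrightarrow
\]
by $x$ produces a triangle of the same shape for the sequence $(x\otimes y_j)_j$. Hence $x\otimes\hocolim_j y_j$ realises $\hocolim_j(x\otimes y_j)$ with cocone $(\id_x\otimes g_j)_j$, which the preceding lemma shows is automatically a minimal weak colimit. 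Setting $z_i := x_i\otimes\hocolim_j y_j$, the symmetric argument (tensoring the defining triangle of $\hocolim_i x_i$ by $\hocolim_j y_j$) identifies $(\hocolim_i x_i)\otimes(\hocolim_j y_j)$ as $\hocolim_i z_i$ via the minimal weak colimit cocone $(f_i\otimes\id)_i$.

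I would then organise the data into the $\mathbb{N}\times\mathbb{N}$ diagram $(x_i\otimes y_j)_{i,j}$, whose column-wise minimal weak colimits are precisely the $z_i$. The previous lemma, applied to the diagonal $x_{ii}=x_i\otimes y_i$, supplies a non-canonical isomorphism
\[
\hocolim_i(x_i\otimes y_i)\;\cong\;\hocolim_i z_i\;\cong\;(\hocolim_i x_i)\otimes(\hocolim_j y_j),
\]
which establishes the first assertion.

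For the second assertion, I would unwind the isomorphism $h\colon\hocolim_i x_{ii}\to\hocolim_i z_i$ supplied by the previous lemma: by construction, it satisfies $f_{ii}=hg_i$, where the component $f_{ii}$ factors as $x_i\otimes y_i \xrightarrow{\id\otimes g_i} z_i \xrightarrow{f_i\otimes\id} \hocolim_i z_i$, which is precisely $f_i\otimes g_i$. Since $(g_i)$ is a minimal weak colimit cocone and precomposition by a morphism postcomposed with an isomorphism preserves both the weak colimit property and the condition of inducing isomorphisms on $\Hom_{\category{T}}(c,-)$ for all $c\in\category{T}^c$, the cocone $(f_i\otimes g_i)_i=(hg_i)_i$ is a minimal weak colimit of the diagonal as well. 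The step requiring the most care is this identification of the transferred cocone with $f_i\otimes g_i$; it amounts to tracking the structure maps through the construction used in the proof of the $\mathbb{N}\times\mathbb{N}$-lemma, which is the only nontrivial bookkeeping in the argument.
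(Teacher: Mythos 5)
Your proof is correct and takes essentially the same route as the paper: both reduce to the preceding $\mathbb{N}\times\mathbb{N}$-lemma applied to the diagram $(x_i\otimes y_j)_{i,j}$ with the choice $z_i = x_i\otimes\hocolim_j y_j$, justified by exactness and coproduct-preservation of $\otimes$ applied to the defining triangle. You additionally spell out the identification of the transferred cocone with $f_i\otimes g_i$, which the paper's one-line proof leaves implicit.
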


\begin{proof}
This follows by applying the previous lemma to the $\mathbb{N}\times\mathbb{N}$-indexed diagram $x_i\otimes y_j$ in $\category{T}$ with the specific choice of~\eqref{eq:partial-hocolim} given by $z_i=x_i\otimes\hocolim_j y_j$ and $\chi_i=\phi_i\otimes\hocolim_j y_j$ for each $i$.
\end{proof}

\subsection{Action of a graded ring and Noetherian objects}
Let $\category{T}$ be a tensor-triangulated category and $R$ a graded-commutative Noetherian ring. A map of graded rings $R \to \emorp(\mathbb{1})$ endows every graded Hom-set with a structure of graded $R$-module:
\[
    R \otimes_{\mathbb{Z}} \Hom_{\category{T}}(x, y) \to \Hom_{\category{T}}(\mathbb{1}, \mathbb{1}) \otimes_{\mathbb{Z}} \Hom_{\category{T}}(x, y) \xrightarrow{\otimes} \Hom_{\category{T}}(x, y)
\]
Under these actions, the compositions are $R$-linear.
If $\category{T}$ is compactly generated, we can follow the idea from~\autocite[\S4]{bensonLocallyDualisableModular2024} of transferring finiteness properties from $R$-modules to objects of $\category{T}$ using homomorphism $R$-modules from the compacts.

\begin{definition}\label{def:finiteness-conditions}
If $\category{T}$ is a compactly generated tensor-triangulated category with an action of a graded-commutative Noetherian ring $R$, we say that:
\begin{itemize}
\item an object $x\in\category{T}$ is \emph{Noetherian} if $\Hom_\category{T}(c,x)$ is a Noetherian $R$-module for each $c\in\category{T}^c$,
\item the category $\category{T}$ is \emph{Noetherian} if all compact objects are Noetherian, i.e.\ $\Hom_{\category{T}}(c, c')$ is Noetherian over $R$ for any pair of compact objects $c, c' \in \category{T}^c$;
\item \emph{strictly Noetherian} if the classes of compact and Noetherian objects coincide.
\end{itemize}
\end{definition}

\begin{remark}
If $\category{T}$ has a compact generator $g\in\category{T}^c$, then $x\in\category{T}$ is Noetherian if and only if $\Hom_\category{T}(g,x)$ is a Noetherian $R$-module.
\end{remark}

\begin{example}\label{example:strictly-finite=regular}
It is instructive to consider the $\category{T}=\category{D}(\catMod R)$, where $R$ is commutative Noetherian, as an example. Then $\category{T}^c$ is known to coincide with the class $\category{D}^\per(R)$ of perfect complexes, while the class of Noetherian objects is easily seen to coincide with $\category{D}^\bnd(\catmod R)$. Hence $\category{T}=\category{D}(\catMod R)$ is strictly Noetherian if and only if each finitely generated $R$-modules has finite projective dimension, which is equivalent to $R$ being regular (see e.g.\ \autocite[Theorem~20.2.11]{foxbyHolmJoergensen2024}).
\end{example}

\subsection{Abstract torsion and completion}\label{subsec:torsion-and-completion} Let $\category{T}$ be a tensor-triangulated category, with arbitrary coproducts, and endowed with an action of a graded-commutative Noetherian ring $R$. We assume $\category{T}$ is compactly-rigidly  generated. For any specialization closed subset $\mathcal{V}$ of $\Spec(R)$, one can consider the full subcategory of $\mathcal{V}$-torsion objects of~$\category{T}$,
\[
    \Gamma_{\mathcal{V}}\category{T} = \{ x \in \category{T} \: | \:
        \forall c \in \category{T}^c, \forall \ideal{p} \in \Spec(R) \backslash \mathcal{V},
        \Hom_{\category{T}}(c, x)_{\ideal{p}} = 0
    \}.
\]
Its right Hom-orthogonal is the category of $\mathcal{V}$-local objects
\[
    L_{\mathcal{V}}\category{T} = \{ x \in \category{T} \: | \:
        \forall t \in \Gamma_{\mathcal{V}}\category{T},
        \Hom_{\category{T}}(t, x) = 0
    \}.
\]
The categories $\Gamma_{\mathcal{V}}(\category{T})$ and $L_{\mathcal{V}}(\category{T})$ fit in a recollement; see~\autocite[Proposition~2.3]{bensonColocalizingSubcategoriesCosupport2012} (or also \autocite[\S3.3]{hoveyPalmieriStricklandMemoir1997} or~\autocite[\S2]{greenleesTate2001}):
\[
    \begin{tikzcd}
        \Gamma_{\mathcal{V}}\category{T} 
            \arrow[r, bend left,  tail, "i_!"]
            \arrow[r, bend right, tail, "i_*"'] &
        \category{T}
            \arrow[l, "i^*"'] 
            \arrow[r, bend left, "p^*"]
            \arrow[r, bend right, "p^!"'] &
        L_{\mathcal{V}}\category{T}
            \arrow[l, tail, "p_*"'] 
    \end{tikzcd}
\]
where $i_!$ and $p_*$ are the canonical inclusions.
We will focus our attention on the endofunctors of $\category{T}$:
\[
    \Gamma_{\mathcal{V}} = i_!i^* \qquad L_{\mathcal{V}} = p_*p^* \qquad V^{\mathcal{V}} = p_*p^! \qquad \Lambda^{\mathcal{V}} = i_*i^*
\] 
In particular, we have $\Gamma_\mathcal{V}\category=\im i_!$ and $L_\mathcal{V}=\im p_*$. We will denote the essential image of the third fully faithful functor in the recollement by
\[
\Lambda^\mathcal{V}\category{T} = \im i_*.
\]
Inspired by~\autocite{greenleesMayLocalHomology1992} and also by similar terminology of other references of ours, we name the objects in the just defined classes as follows.

\begin{definition}
The objects in $\Gamma_{\mathcal{V}}\category{T}$ are called \emph{$\mathcal{V}$-torsion}, the objects of $L_{\mathcal{V}}\category{T}$ are said to be \emph{$\mathcal{V}$-local}, and the objects of $\Lambda^{\mathcal{V}}\category{T}$ will be called \emph{$\mathcal{V}$-complete}.
\end{definition}

There is an adjunction $\Gamma_{\mathcal{V}} \dashv \Lambda^{\mathcal{V}}$. Moreover, for any $x \in \category{T}$, the pair $(\Gamma_{\mathcal{V}}x, L_{\mathcal{V}}x)$ is the unique pair $(x^{\text{tor}}, x^{\text{loc}})$, with $x^{\text{tor}} \in \Gamma_{\mathcal{V}}\category{T}$ and $x^{\text{loc}} \in L_{\mathcal{V}}X$, fitting in an exact triangle
\[
    x^{\text{tor}} \to x \to x^{\text{loc}} \to \Sigma x^{\text{tor}}.
\]
Similarly, the pair $(V^{\mathcal{V}}x, \Lambda^{\mathcal{V}}x)$ is the unique pair $(x^{\text{coloc}}, x^{\text{comp}})$, with $x^{\text{celoc}} \in L_{\mathcal{V}}\category{T}$ and $x^{\text{comp}} \in \Lambda^{\mathcal{V}}\category{T}$, fitting in an exact triangle
\[
    x^{\text{coloc}} \to x \to x^{\text{comp}} \to \Sigma x^{\text{coloc}}.
\]

The calculus with these functors is controlled by certain tensor idempotents in~$\category{T}$ by \autocite[\S3]{balmerFaviTelescope2011}, which allows us to easily deduce certain rules of interaction between these functors. Most notably, if $\mathcal{V},\mathcal{W}\subset\Spec{R}$ are two closed subsets, then there are natural isomorphisms
\[
\Gamma_{\mathcal{V}}\Gamma_{\mathcal{W}}\cong
\Gamma_{\mathcal{V}\cap\mathcal{W}} \cong
\Gamma_{\mathcal{W}}\Gamma_{\mathcal{V}}
\quad\text{and}\quad
\Gamma_{\mathcal{V}}L_{\mathcal{W}}\cong
L_{\mathcal{W}}\Gamma_{\mathcal{V}},
\]
and by passing to right adjoints, we also obtain
\[
\Lambda^{\mathcal{V}}\Lambda^{\mathcal{W}}\cong
\Lambda^{\mathcal{V}\cap\mathcal{W}} \cong
\Lambda^{\mathcal{W}}\Lambda^{\mathcal{V}}
\quad\text{and}\quad
\Lambda^{\mathcal{V}}V^{\mathcal{W}}\cong
V^{\mathcal{W}}\Lambda^{\mathcal{V}}.
\]

The closed monoidal structure on $\category{T}$ also induces one on both $\Gamma_\mathcal{V}\category{T}$ and $\Lambda^\mathcal{V}\category{T}$.

\begin{proposition}\label{prop:tensorCompTor}
    For any $\mathcal{V} \subset \Spec(R)$ specialization closed:
    \begin{enumerate}[label=(\alph*)]
    \item\label{item:tensorCompTor-torsion} The category $\Gamma_{\mathcal{V}}\category{T}$ is a compactly generated $\otimes$-ideal of $\category{T}$. Its tensor product is the restriction of the one of $\category{T}$, and it has an internal Hom-functor $\intHom_{\Gamma_{\mathcal{V}}\category{T}}(x, y) = \Gamma_{\mathcal{V}} \intHom_{\category{T}}(x,y)$. The class of compact objects is given by $(\Gamma_\mathcal{V}\category{T})^c=\category{T}^c\cap\Gamma_\mathcal{V}\category{T}$.
    \item\label{item:tensorCompTor-complete} The category $\Lambda^{\mathcal{V}}\category{T}$ is a compactly generated thick subcategory of $\category{T}$. It has a tensor product $x \otimes_{\Lambda^{\mathcal{V}}\category{T}} y = \Lambda^{\mathcal{V}}(x \otimes_{\category{T}} y)$ and an internal Hom-functor $\intHom_{\Lambda^{\mathcal{V}}\category{T}}(x,y) = \intHom_{\category{T}}(x,y)$.
    \item\label{item:tensorCompTor-dualizables} For any compact object $c \in \category{T}^c$, $\Gamma_{\mathcal{V}}c$ is dualizable in $\Gamma_{\mathcal{V}}\category{T}$ and $\Lambda^{\mathcal{V}}c$ is dualizable in $\Lambda^{\mathcal{V}}\category{T}$.
    \end{enumerate}
\end{proposition}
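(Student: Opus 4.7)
The plan is to exploit the Balmer--Favi smashing description~\autocite[\S3]{balmerFaviTelescope2011}, under which $\Gamma_\mathcal{V}\cong\Gamma_\mathcal{V}\mathbb{1}\otimes(-)$ is governed by the torsion idempotent $\Gamma_\mathcal{V}\mathbb{1}$, together with the standard fact that the adjunction $\Gamma_\mathcal{V}\dashv\Lambda^\mathcal{V}$ restricts to an equivalence of tensor-triangulated categories $\Gamma_\mathcal{V}\category{T}\simeq\Lambda^\mathcal{V}\category{T}$ (a formal consequence of the recollement, as already mentioned in the introduction).

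For part~(a), the $\otimes$-ideal property is immediate from the idempotent description: for $x\in\Gamma_\mathcal{V}\category{T}$ and $y\in\category{T}$ one has $x\otimes y\cong\Gamma_\mathcal{V}\mathbb{1}\otimes x\otimes y\in\Gamma_\mathcal{V}\category{T}$. The internal Hom formula is checked by the chain
\[
\Hom_\category{T}(z,\Gamma_\mathcal{V}\intHom_\category{T}(x,y))\cong\Hom_\category{T}(z,\intHom_\category{T}(x,y))\cong\Hom_\category{T}(z\otimes x,y)
\]
valid for $z\in\Gamma_\mathcal{V}\category{T}$ via the coreflection identity $\Hom_\category{T}(z,\Gamma_\mathcal{V}w)\cong\Hom_\category{T}(z,w)$. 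For part~(b), the equivalence $\Gamma_\mathcal{V}\category{T}\simeq\Lambda^\mathcal{V}\category{T}$ transfers compact generation directly from~(a), while thickness of $\Lambda^\mathcal{V}\category{T}$ inside $\category{T}$ and the Hom formula $\intHom_{\Lambda^\mathcal{V}\category{T}}(x,y)=\intHom_\category{T}(x,y)$ both follow from the Hom-orthogonal description $\Lambda^\mathcal{V}\category{T}=(L_\mathcal{V}\category{T})^\perp$: since $L_\mathcal{V}\category{T}$ is itself a $\otimes$-ideal, its right Hom-orthogonal is preserved by every functor $\intHom_\category{T}(x,-)$.

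For part~(c), given the Spanier--Whitehead dual $\SW{c}$ of $c\in\category{T}^c=\category{T}^d$, I claim $\Gamma_\mathcal{V}\SW{c}$ is a dual of $\Gamma_\mathcal{V}c$ in $\Gamma_\mathcal{V}\category{T}$. Indeed $\Gamma_\mathcal{V}c\otimes\Gamma_\mathcal{V}\SW{c}\cong\Gamma_\mathcal{V}\mathbb{1}\otimes c\otimes\SW{c}$, and tensoring the evaluation and coevaluation maps for $c$ in $\category{T}$ with $\Gamma_\mathcal{V}\mathbb{1}$ (the tensor unit of $\Gamma_\mathcal{V}\category{T}$) provides the required structure maps, whose triangle identities descend from those of $c$. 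Dualizability of $\Lambda^\mathcal{V}c$ then transports along the equivalence of~(b), using $\Lambda^\mathcal{V}\Gamma_\mathcal{V}c=i_*i^*i_!i^*c\cong i_*i^*c=\Lambda^\mathcal{V}c$ via $i^*i_!\cong\id$.

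The most delicate step is the equality $(\Gamma_\mathcal{V}\category{T})^c=\category{T}^c\cap\Gamma_\mathcal{V}\category{T}$ in~(a). Abstractly this is Neeman's localization theorem, but applying it requires exhibiting a generating set of compact objects for $\Gamma_\mathcal{V}\category{T}$ that already lies inside $\category{T}^c$. I would construct these as Koszul-type objects $c\otimes\mathbb{1}\kos(r_1,\dots,r_n)$, with $c\in\category{T}^c$ and $(r_1,\dots,r_n)$ generating a homogeneous ideal $\ideal{a}$ satisfying $V(\ideal{a})\subseteq\mathcal{V}$: each such object is compact in $\category{T}$, visibly torsion, and the totality assembles into $\Gamma_\mathcal{V}\mathbb{1}$ as a sequential homotopy colimit via \Cref{lemma:tensor-and-hocolim}, so they collectively generate $\Gamma_\mathcal{V}\category{T}$.
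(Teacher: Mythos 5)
Your proposal takes essentially the same route the paper does, but where the paper simply cites \autocite[Proposition~3.6]{bensonLocallyDualisableModular2024} for part~(a) and the torsion half of~(c), then transports part~(b) and the complete half of~(c) across the monoidal equivalence $\Gamma_\mathcal{V}\category{T}\simeq\Lambda^\mathcal{V}\category{T}$ (following \autocite[Corollary~3.8]{bensonLocallyDualisableModular2024}), you have unpacked the cited arguments using the Balmer--Favi idempotent description and filled in the adjunction calculations. The idempotent-based verification of the $\otimes$-ideal property, the coreflection argument for the internal Hom of $\Gamma_\mathcal{V}\category{T}$, the observation that $\Lambda^\mathcal{V}\category{T}=(L_\mathcal{V}\category{T})^\perp$ is closed under $\intHom_\category{T}(x,-)$ because $L_\mathcal{V}\category{T}$ is a $\otimes$-ideal, and the transport of the duality data through $\Lambda^\mathcal{V}\Gamma_\mathcal{V}\cong\Lambda^\mathcal{V}$ are all correct and are more detailed than the paper. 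For the internal Hom of $\Lambda^\mathcal{V}\category{T}$ you show $\intHom_\category{T}(x,y)$ lands in $\Lambda^\mathcal{V}\category{T}$; you should also note that the adjunction with $\otimes_{\Lambda^\mathcal{V}\category{T}}=\Lambda^\mathcal{V}(-\otimes-)$ holds because $\Lambda^\mathcal{V}(z\otimes x)\to z\otimes x$ induces an isomorphism on $\Hom(-,y)$ when $y\in\Lambda^\mathcal{V}\category{T}$, but this is a one-line verification.

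One imprecision worth flagging: you assert that the Koszul objects ``assemble into $\Gamma_\mathcal{V}\mathbb{1}$ as a \emph{sequential} homotopy colimit.'' This is only true when $\mathcal{V}=V(\ideal{a})$ is Zariski closed (the situation of \Cref{prop:torsionColimit}); the statement allows any specialization-closed $\mathcal{V}$, and for such $\mathcal{V}$ the object $\Gamma_\mathcal{V}\mathbb{1}$ is a \emph{filtered} (not sequential) colimit of the $\Gamma_{V(\ideal{a})}\mathbb{1}$ over all finitely generated homogeneous $\ideal{a}$ with $V(\ideal{a})\subseteq\mathcal{V}$, each of which is then a sequential colimit of Koszul objects. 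This does not break the argument: the generation claim is still correct (one can verify it directly by checking that an object right-orthogonal to all $c\kos\ideal{a}$ with $V(\ideal{a})\subseteq\mathcal{V}$ lies in $L_\mathcal{V}\category{T}$), and then Neeman's localization theorem applies as you describe to give $(\Gamma_\mathcal{V}\category{T})^c=\category{T}^c\cap\Gamma_\mathcal{V}\category{T}$. Just phrase the colimit claim at the right level of generality.
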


\begin{proof}
Part~\ref{item:tensorCompTor-torsion} and the claim about torsion objects in part~\ref{item:tensorCompTor-dualizables} can be found in~\autocite[Proposition~3.6]{bensonLocallyDualisableModular2024}.
Part~\ref{item:tensorCompTor-complete} and the other half of~\ref{item:tensorCompTor-dualizables} is obtained by transporting the closed monoidal structure on $\Gamma_{\mathcal{V}}\category{T}$ using the category equivalence $\Gamma_{\mathcal{V}}\category{T}\simeq\Lambda^{\mathcal{V}}\category{T}$ from \autocite[Proposition 2.4]{bensonLocallyDualisableModular2024} and the natural isomorphism $\Lambda^{\mathcal{V}}\Gamma_\mathcal{V}\cong\Lambda^{\mathcal{V}}$ from~\autocite[Proposition~2.3]{bensonColocalizingSubcategoriesCosupport2012}. This is completely analogous to~\autocite[Corollary~3.8]{bensonLocallyDualisableModular2024}.
\end{proof}

\begin{remark}\label{remark:torsion-compacts}
In part~\ref{item:tensorCompTor-complete} of the previous proposition, we have the inclusion
$(\Lambda^\mathcal{V}\category{T})^c\subset\category{T}^c\cap\Lambda^\mathcal{V}\category{T}$.
This follows from the inclusion $(\Lambda^\mathcal{V}\category{T})^c=(\Gamma_\mathcal{V}\category{T})^c\subset\category{T}^c$ as subcategories of $\category{T}$ by the same argument as for~\autocite[Lemma~2.7]{bensonLocallyDualisableModular2024}; see also \Cref{prop:compactKoszul} below.
However, unlike in part~\ref{item:tensorCompTor-torsion}, this is not an equality in general. 
We will see an example of this phenomenon in the modular representation theory of finite groups in \S\ref{subsec:KInjkG}.
\end{remark}

Note that by \autocite[Proposition 2.4]{bensonLocallyDualisableModular2024} and the above argument, $\Gamma_{\mathcal{V}}$ and $\Lambda^{\mathcal{V}}$ are idempotent and restrict to mutual inverse monoidal equivalence

\[
    \begin{tikzcd}
        \Gamma_{\mathcal{V}}\category{T} 
            \arrow[r, "\Lambda^{\mathcal{V}}", "\sim"', shift left=0.12cm] &
        \Lambda^{\mathcal{V}}\category{T}.
            \arrow[l, "\Gamma_{\mathcal{V}}", shift left=0.12cm]
    \end{tikzcd}
\]

\subsection{Bounded generation}

We fix a triangulated category $\category{T}$ with arbitrary coproducts. For any full subcategory $\category{S} \subset \category{T}$, we denote by $\add(\category{S})$ the full subcategory of $\category{T}$ closed under finite coproducts and summands, and by $\Add(\category{S})$ the full subcategory of $\category{T}$ closed under arbitrary coproducts and summands. If $\category{A}$ and $\category{B}$ are two full subcategories, we denote by $\category{A} * \category{B}$ the full subcategory whose objects are the $t \in \category{T}$ fitting in an exact triangle $a \rightarrow t \rightarrow b \rightarrow \Sigma a$, with $a \in \category{A}$ and $b \in \category{B}$. For a full subcategory $\category{S} \subset \category{T}$, we define inductively $\thick^n(\category{S})$ by
\[
    \thick^0(\category{S}) = \add(\Sigma^i\category{S}, i \in \mathbb{Z}), \qquad \thick^{n+1}(\category{S}) = \add(\thick^0(\category{S}) * \thick^n(\category{S}))
\]
and $\Loc^n(\category{S})$
\[
    \Loc^0(\category{S}) = \Add(\Sigma^i\category{S}, i \in \mathbb{Z}), \qquad \Loc^{n+1}(\category{S}) = \Add(\Loc^0(\category{S}) * \Loc^n(\category{S})).
\]

\begin{definition} \label{definition:strong-generator}
    Let $\category{T}$ be a triangulated category. An object $g \in \category{T}$ is a \emph{strong generator} if there is an integer $D$ such that 
    \[
        \thick^D(g) = \category{T}.
    \]
\end{definition}

\begin{construction}\label{constr:genGammaGen}
    Let $\category{T}$ be an $R$-linear triangulated category. Fix $g \in \category{T}$ such that $\emorp(g)$ is finitely generated over $R$. We denote
    \[
        \category{F} = \{ x \in \category{T} \: | \: \Hom(g, x) \text{ is finitely generated} \}
    \]
    
    Let $x \in \category{F}$. Then there is a sequence of maps
    \[
        x = x_0 \xrightarrow{\phi_0} x_1 \xrightarrow{\phi_1} x_2 \xrightarrow{\phi_2} \cdots
    \]
    such that:
    \begin{itemize}
    \item for any $i \in \mathbb{N}$, $x_i$ belongs to $\category{F}$;
    \item for any $i \in \mathbb{N}$, $\Hom(\thick^0(g), \phi_i) = 0$;
    \item for any $i \in \mathbb{N}$, $\cone(\phi_i)$ belongs to $\thick^0(g)$.
    \end{itemize}
\end{construction}
\begin{proof}
    We construct the objects $x_i$ and maps $\phi_i$ inductively starting with $x_0 = x$. Assume, by induction, that $x_i \in \category{F}$ has been constructed. Let $f_1, \ldots, f_n$ be generators of the $R$-module $\Hom(g, x_i)$. We consider the triangle:
    \[
        \bigoplus_{k = 0}^n \Sigma^{-|f_k|}g \xrightarrow{(f_k)_k} x_i \xrightarrow{\phi_i} x_{i+1} \rightarrow
    \]
    Clearly $x_{i+1}$ belongs to $\category{F}$ and $\cone(\phi_i)$ to $\thick^0(g)$. Moreover, any map $h \colon g \to x_i$ factors through $(f_k)_k$, thus $\phi_i \circ h = 0$. Hence $\phi_i$ acts by zero on $\Hom(\thick^0(g), x_i)$.
\end{proof}

\begin{lemma}\label{lemma:constrGenGamma}
    In \Cref{constr:genGammaGen},
    \begin{enumerate}[label=(\arabic*)]
    \item\label{item:genGammaGen-orthogonal} for any $i \leq j$, $\Hom(\Loc^{j-i}(g), \phi_j\phi_{j-1}\ldots\phi_i) = 0$;
    \item\label{item:genGammaGen-thick} for any $i$, $\cone(\phi_i\ldots\phi_0)$ belongs to $\thick^i(g)$.   
    \end{enumerate}
\end{lemma}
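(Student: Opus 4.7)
My plan is to prove the two claims independently by induction: \ref{item:genGammaGen-thick} by straightforward induction on $i$ using the octahedral axiom, and \ref{item:genGammaGen-orthogonal} by a slightly more delicate induction on $j-i$ that upgrades the bare orthogonality provided by the construction to an orthogonality against the iterated $\Loc$-filtration.

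For \ref{item:genGammaGen-thick}, set $\psi_i=\phi_i\phi_{i-1}\cdots\phi_0$, so that $\psi_i=\phi_i\circ\psi_{i-1}$. Applying the octahedral axiom to this composition produces an exact triangle $\cone(\psi_{i-1})\to\cone(\psi_i)\to\cone(\phi_i)\to\Sigma\cone(\psi_{i-1})$. By the inductive hypothesis $\cone(\psi_{i-1})\in\thick^{i-1}(g)$ and by the construction $\cone(\phi_i)\in\thick^0(g)$, so $\cone(\psi_i)\in\thick^{i-1}(g)*\thick^0(g)$. The small subtlety is that the recursion in the excerpt defines $\thick^i(g)=\add(\thick^0(g)*\thick^{i-1}(g))$ with the factors in the opposite order, which I would handle by citing the standard fact (Bondal--Van den Bergh / Rouquier) that the star product is associative and commutative up to direct summands on shift-closed subcategories.

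For \ref{item:genGammaGen-orthogonal}, I proceed by induction on $n:=j-i$. The base case $n=0$ asks that $\Hom(\Loc^0(g),\phi_i)=0$, which upgrades the assumed equality $\Hom(\thick^0(g),\phi_i)=0$ from finite to arbitrary coproducts: since $\Hom(-,\phi_i)$ converts coproducts into products and the class of objects annihilated by it is closed under summands, it must contain $\Loc^0(g)=\Add(\Sigma^k g:k\in\mathbb{Z})$. For the inductive step, fix $y\in\Loc^0(g)*\Loc^n(g)$ sitting in a triangle $a\to y\to b\to\Sigma a$ with $a\in\Loc^0(g)$ and $b\in\Loc^n(g)$, and take any $h\colon y\to x_i$. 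By the base case applied to $\phi_i$, the composite $\phi_i\circ h\circ(a\to y)$ vanishes, so $\phi_i\circ h$ factors as $h'\circ(y\to b)$ for some $h'\colon b\to x_{i+1}$. Applying the inductive hypothesis to the sequence shifted by one and to $b\in\Loc^n(g)$ gives $\phi_j\cdots\phi_{i+1}\circ h'=0$, hence $\phi_j\cdots\phi_i\circ h=0$. To pass from $y\in\Loc^0(g)*\Loc^n(g)$ to an arbitrary $y\in\Loc^{n+1}(g)=\Add(\Loc^0(g)*\Loc^n(g))$, I again use that the class of $y$ on which a fixed morphism acts trivially is closed under coproducts and summands.

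The only part that requires attention, rather than being purely formal bookkeeping, is the factorisation argument in the inductive step of \ref{item:genGammaGen-orthogonal}: one must carefully compose the base case (to cancel the piece coming from $\Loc^0(g)$) with the inductive hypothesis applied to the truncated sequence starting at $x_{i+1}$ (to handle the piece coming from $\Loc^n(g)$), and the intermediate map $h'$ is only well-defined because of the vanishing provided by the base case. Everything else is either the octahedral axiom or a routine stability-under-coproducts-and-summands argument.
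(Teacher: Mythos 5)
Your proof is correct and takes essentially the same approach as the paper, which for part~\ref{item:genGammaGen-orthogonal} simply cites \autocite[Proposition~3.3]{christensenIdeals1998} for the ghost-ideal induction that you spell out explicitly, and for part~\ref{item:genGammaGen-thick} invokes the octahedral axiom. The only optional improvement: the ordering subtlety you flag in~\ref{item:genGammaGen-thick} disappears if you peel off $\phi_0$ from the right instead of $\phi_i$ from the left, i.e.\ write $\psi_i = (\phi_i\cdots\phi_1)\circ\phi_0$ and apply the inductive hypothesis to the shifted sequence starting at $x_1$ (exactly as you already do in~\ref{item:genGammaGen-orthogonal}); the octahedron then places $\cone(\psi_i)$ in $\thick^0(g)*\thick^{i-1}(g)$, matching the recursive definition $\thick^i(g)=\add(\thick^0(g)*\thick^{i-1}(g))$ directly without any appeal to reordering of the star product.
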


\begin{proof}
Clearly $\Hom(\Loc^{0}(g), \phi_j) = 0$ for any $j$. The first part then follows by a standard argument, see~\autocite[Proposition~3.3]{christensenIdeals1998}. The second part follows from the octahedral axiom.
\end{proof}

\begin{proposition}\label{prop:finHomGen}
    Let $\category{T}$ be a $R$-linear triangulated category. Let $\category{U}$ be a thick subcategory of $\category{T}$ and $g \in \category{U}$ be such that:
    \begin{enumerate}
    \item $\category{U} \subset \bigcup_{n \in \mathbb{N}}\Loc^n(g)$
    \item for any $u \in \category{U}$, $\Hom(g,u)$ is finitely generated over $R$. 
    \end{enumerate}
    Then $\category{U} = \thick(g)$.

    Moreover, if $\category{U} \subset \Loc^D(g)$ for some $D \in \mathbb{N}$, then $g$ is a strong generator of $\category{U}$:
    \[
        \thick^D(g) = \thick(g) = \category{U}.
    \]
\end{proposition}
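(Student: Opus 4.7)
The plan is to take an arbitrary $u \in \category{U}$, apply \Cref{constr:genGammaGen} to it in order to manufacture a telescope $u = u_0 \xrightarrow{\phi_0} u_1 \xrightarrow{\phi_1} u_2 \to \cdots$, and then use the hypothesis $u \in \Loc^n(g)$ to force the composite $\phi_n \cdots \phi_0$ to vanish. A direct cone-splitting argument will then place $u$ inside $\thick^n(g)$.

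More concretely, first I would check that \Cref{constr:genGammaGen} applies. Since $g \in \category{U}$, hypothesis (2) instantiated at $g$ gives that $\emorp(g) = \Hom(g,g)$ is finitely generated over $R$, which is precisely the standing assumption of the construction. Any $u \in \category{U}$ then lies in the category $\category{F}$, again by hypothesis (2), so the telescope is available. Fix $n \in \mathbb{N}$ such that $u \in \Loc^n(g)$, which exists by hypothesis (1), and set $f := \phi_n \cdots \phi_0 \colon u \to u_{n+1}$. By \Cref{lemma:constrGenGamma}\ref{item:genGammaGen-orthogonal} applied with $i = 0$ and $j = n$, we have $\Hom(\Loc^n(g), f) = 0$. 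Evaluating on $\id_u \in \Hom(u,u)$ yields $f = f \circ \id_u = 0$.

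Now I use the standard fact that a zero morphism in a triangulated category splits its cone: the defining triangle of $f$ becomes
\[
    u \xrightarrow{0} u_{n+1} \to \cone(f) \to \Sigma u,
\]
so $\cone(f) \cong u_{n+1} \oplus \Sigma u$. By \Cref{lemma:constrGenGamma}\ref{item:genGammaGen-thick}, $\cone(f)$ lies in $\thick^n(g)$, and since this subcategory is closed under direct summands and under $\Sigma^{-1}$, we conclude $u \in \thick^n(g) \subset \thick(g)$. This establishes $\category{U} \subset \thick(g)$; the reverse inclusion is automatic because $g \in \category{U}$ and $\category{U}$ is thick. For the moreover clause, if $\category{U} \subset \Loc^D(g)$ uniformly, the very same argument run with $n = D$ for every $u$ yields $\category{U} \subset \thick^D(g)$, and combined with the obvious chain $\thick^D(g) \subset \thick(g) \subset \category{U}$ this gives all three equalities.

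No genuine obstacle is anticipated: the bulk of the work has already been packaged into \Cref{constr:genGammaGen} and \Cref{lemma:constrGenGamma}. The only points one has to be mildly attentive to are applicability of the construction (handled by the $g \in \category{U}$ trick above) and the fact that finiteness of $\Loc^n$-membership combined with finite generation of $\Hom(g,u)$ is exactly what lets one truncate the telescope after finitely many steps while still controlling the cone.
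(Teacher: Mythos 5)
Your proof is correct and follows the paper's own argument essentially verbatim: apply \Cref{constr:genGammaGen}, invoke \Cref{lemma:constrGenGamma}\ref{item:genGammaGen-orthogonal} to force the composite to vanish once $u \in \Loc^n(g)$, use \Cref{lemma:constrGenGamma}\ref{item:genGammaGen-thick} to locate the cone in $\thick^n(g)$, and extract $u$ as a summand of the (de)suspended cone. The only additions over the paper are the explicit check that hypothesis (2) applied to $g$ itself supplies the standing assumption of \Cref{constr:genGammaGen}, and spelling out the cone-splitting step, both of which are harmless.
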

\begin{proof}
    Since $\category{U}$ is a thick subcategory of $\category{T}$ and $g \in \category{U}$, $\thick(g) \subset \category{U}$. Conversely, fix $x \in \category{U}$. Since $\Hom(g, x)$ is finitely generated, we obtain a sequence $(x_i, \phi_i)$ from \Cref{constr:genGammaGen}. Since $x$ belongs to $\Loc^n(g)$, for some $n \in \mathbb{N}$, $\phi_{n}\ldots\phi_{0}$ acts by zero on $\Hom(x,x)$ by \Cref{lemma:constrGenGamma}~\ref{item:genGammaGen-orthogonal}, hence $\phi_{n}\ldots\phi_{0}=0$. Thus $x$ is direct summand of $\Sigma^{-1}\cone(\phi_n\ldots\phi_0)$ and $\cone(\phi_n\ldots\phi_0) \in \thick^n(g)$ by \Cref{lemma:constrGenGamma}~\ref{item:genGammaGen-thick}. This proves the first part of the statement.

    The second part of the statement follows from the fact that, under the hypothesis $\category{U} \subset \Loc^D(g)$, $n$ can be uniformly chosen to be equal to $D$ in the previous paragraph. 
\end{proof}

\begin{proposition}\label{prop:strictlyRightNoetherian}
    Let $\category{T}$ be a $R$-linear, Noetherian, tensor-triangulated and compactly generated category. Suppose that $\category{T}^c$ has a strong generator $g$. Then $\category{T}$ is strictly Noetherian.
\end{proposition}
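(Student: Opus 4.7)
The plan is to apply \Cref{prop:finHomGen} to the full subcategory $\category{U}\subset\category{T}$ of Noetherian objects. That $\category{U}$ is a thick subcategory is routine, since Noetherian $R$-modules are closed under submodules, quotients and extensions, and the long exact sequence associated to a triangle transfers this to $\category{T}$. Because $\category{T}$ is assumed Noetherian, $\category{T}^c\subset\category{U}$ and in particular $g\in\category{U}$. Condition~(2) of \Cref{prop:finHomGen} holds since Noetherian modules over the Noetherian ring $R$ are finitely generated.

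The real work is to verify condition~(1): every Noetherian $x$ lies in $\Loc^n(g)$ for some $n\in\mathbb{N}$. Let $D$ be the exponent of strong generation, so $\thick^D(g)=\category{T}^c$. I would show that $x$ admits a presentation as a sequential homotopy colimit of compact objects, $x=\hocolim_n c_n$, by a cellular-approximation dual to \Cref{constr:genGammaGen}: take $c_0$ to be a finite sum of shifts of $g$ mapping to $x$ via generators of the finitely generated $R$-module $\Hom(g,x)$, and inductively, given a compact $c_n$ with a map $c_n\to x$ and cofiber $F_n=\cone(c_n\to x)$, pick a finite sum $a_n$ of shifts of $g$ mapping to $F_n$ via generators of $\Hom(g,F_n)$ and form $c_{n+1}=\cone(\Sigma^{-1}a_n\to c_n)$, where $\Sigma^{-1}a_n\to c_n$ is the composition $\Sigma^{-1}a_n\to\Sigma^{-1}F_n\to c_n$. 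Each $c_n$ is compact, and the computation from \Cref{lemma:constrGenGamma} yields that $\Hom(g,F_n)\to\Hom(g,F_{n+1})$ is zero, so $\hocolim F_n=0$ by compact generation and hence $\hocolim c_n\cong x$. Since every $c_n\in\thick^D(g)\subset\Loc^D(g)$ and $\Loc^D(g)$ is closed under arbitrary coproducts, the defining triangle of the hocolim places $x\in\Loc^D(g)*\Sigma\Loc^D(g)\subset\Loc^{2D+1}(g)$, using the inclusion $\Loc^n(g)*\Loc^m(g)\subset\Loc^{n+m+1}(g)$ that follows by induction on $n$ from the definition of $\Loc^{n+1}(g)$.

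With both hypotheses in hand, \Cref{prop:finHomGen} gives $\category{U}=\thick(g)=\category{T}^c$, which is exactly the statement that Noetherian and compact objects coincide. The only delicate point is the construction of the compact cellular approximation $x=\hocolim_n c_n$; once this is established, the rest is formal.
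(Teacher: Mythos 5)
Your proof is correct and follows essentially the same approach as the paper's: approximate $x$ by a directed system of compact objects $c_n$ built via \Cref{constr:genGammaGen} (your $(c_n, F_n)$ is the paper's $(\Sigma^{-1}\cone(\phi_n\cdots\phi_0), x_{n+1})$, identified via the octahedral axiom, so "dual" is a slight overstatement --- it is the same construction with attention shifted to the compact layers rather than the quotients), then use the uniform bound $c_n\in\thick^D(g)$ from strong generation to place $x\in\Loc^{2D+1}(g)$ and invoke \Cref{prop:finHomGen}. The only places you are terser than the paper are that $F_n$ remains Noetherian (needed so $\Hom(g,F_n)$ stays finitely generated) and that $\hocolim c_n\cong x$ is detected via $\Hom(g,-)$ applied to the tower of triangles; both are routine and the paper spells them out.
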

\begin{proof}
    Let $\category{N}$ be the full subcategory of Noetherian objects of $\category{T}$, i.e.\ those $y\in\category{T}$ such that for any $c \in \category{T}^c$, the $R$-module $\Hom_{\category{T}}(c,y)$ is Noetherian. We fix $x \in \category{N}$. From \Cref{constr:genGammaGen}, we build inductively a sequence
    \[
        x = x_0 \xrightarrow{\phi_0} x_1 \xrightarrow{\phi_1} x_2 \xrightarrow{\phi_2} \cdots
    \]
    with $x_i \in \category{N}$, $\cone(\phi_i) \in \thick^0(g)$, $\Hom_{\category{T}}(\thick^0(g), \phi_i) = 0$. Denoting $c_i = \Sigma^{-1}\cone(\phi_i \ldots \phi_0)$, $c_i$ is compact.
		
		We claim that $x = \hocolim c_i$. Indeed, there is a direct system of distinguished triangles of the form
		\begin{equation}\label{eq:systemOfTrianglesForGeneration}
				\begin{tikzcd}[column sep=3em]
						\Sigma^{-1}x_{i} \arrow[d, swap, "\Sigma^{-1}\phi_i"] \arrow[r] & c_{i-1} \arrow[d] \arrow[r, "\psi_{i-1}"] & x \arrow[d, equal] \arrow[r, "\phi_{i-1}\cdots\phi_0"] & x_{i} \arrow[d, "\phi_i"] \\
						\Sigma^{-1}x_{i+1} \arrow[r] & c_i \arrow[r, "\psi_i"] & x \arrow[r, "\phi_i\cdots\phi_0"] & x_{i+1},
				\end{tikzcd}
		\end{equation}
which induces a homomorphism $\psi\colon\hocolim_i c_i \to x$. Since $g\in\category{T}$ is a generator, it suffices to prove that $\Hom(g, \psi)$ is an isomorphism. To that end, notice that the canonical homomorphism $\colim_i \Hom(g, x_i) \to \Hom(g, \hocolim_i x_i)$ is an isomorphism by~\autocite[Lemma~5.8(1)]{beligiannisRelativeHA2000}, so $\Hom(g, \psi)$ identifies with $\colim_i (\psi_{i})_*$, where $(\psi_i)_*=\Hom(g, \psi_i)$. The latter map is an isomorphism since an application of $\Hom(g, -)$ to~\eqref{eq:systemOfTrianglesForGeneration} yields a direct system of exact sequences of abelian groups of the form
		\[
				\begin{tikzcd}
						\Hom(g, \Sigma^{-1}x_{i}) \arrow[d, swap, "0"] \arrow[r] & \Hom(g, c_{i-1}) \arrow[d] \arrow[r, "(\psi_{i-1})_*"] & \Hom(g, x) \arrow[d, equal] \arrow[r] & \Hom(g, x_{i}) \arrow[d, "0"] \\
						\Hom(g, \Sigma^{-1}x_{i+1}) \arrow[r] & \Hom(g, c_i) \arrow[r, "(\psi_{i})_*"] & \Hom(g, x) \arrow[r] & \Hom(g, x_{i+1}).
				\end{tikzcd}
		\]
This proves the claim.

    Now fix $D \in \mathbb{N}$, such that for any $i$, $c_i$ is in $\thick^D(g)$. Then, by definition of the homotopy colimit,
    \[ 
        x = \cone(\bigoplus_i c_i \to \bigoplus_i c_i) \in \Loc^{2D + 1}(g)
    \]
    Hence by \Cref{prop:finHomGen}, $x$ is in $\thick^{2D+1}(g)$, hence compact. This shows that $\category{T}$ is strictly Noetherian.
\end{proof}

\begin{example}\label{example:strongly-generated=regular-finite-dim}
As in \Cref{example:strictly-finite=regular}, it is instructive to understand what precisely \Cref{prop:strictlyRightNoetherian} says for $\category{T}=\category{D}(\catMod R)$, where $R$ is commutative Noetherian. We already know from the above-mentioned example that $\category{T}$ being strictly Noetherian is equivalent to $R$ being regular. On the other hand, $\category{T}^c$ being strongly generated is equivalent to $R$ being regular of finite Krull dimension. Indeed, $\category{T}^c$ has a strong generator if and only if $R$ has finite global dimension by~\autocite[Theorem~6]{stevensonRouquierGlobalDimension2025} (which corrects \autocite[Proposition~7.26]{rouquierDimensions2008}), and the condition that $R$ has finite global dimension is equivalent to $R$ being regular of finite Krull dimension by~\autocite[\href{https://stacks.math.columbia.edu/tag/00OE}{Tag 00OE}]{thestacksprojectauthorsStacksProject}.

All in all, we see that the implication \Cref{prop:strictlyRightNoetherian} cannot be reversed in general: $\category{T}^c$ having a strong generator is a stronger condition than $\category{T}$ being strictly Noetherian.
\end{example}

\subsection{Compact reflections and cogeneration}

In \Cref{prop:strictlyRightNoetherian}, we have given a criterion for recognition of compact objects in terms of smallness of Hom-groups from the compacts. Here we will discuss the possibility of recognizing compacts in terms of Hom-groups to the compacts.
As one might suspect, the situation is more complicated than in the previous section, but we obtain a criterion which is still useful for some classes of examples. The key observation is the following.

\begin{proposition} \label{prop:compact-reflection}
Let $\category{T}$ be an $R$-linear, Noetherian, compactly generated tensor-triangulated category. If $\category{T}^c$ has a strong generator $g$, then the following are equivalent for an object $x\in\category{T}$:
\begin{enumerate}[label=(\alph*)]
\item\label{item:left-noetherian_Tc} $\Hom_\category{T}(x,c)$ is a Noetherian $R$-module for each $c\in\category{T}^c$;
\item\label{item:left-noetherian_generator} $\Hom_\category{T}(x,g)$ is a Noetherian $R$-module;
\item\label{item:compact-reflection} there exists a triangle $y\to x\to c\to$ such that $c\in\category{T}^c$ and $\Hom(y,-)_{|\category{T}^c}=0$ (or in other words, $x$ admits a reflection with respect to $\category{T}^c\subset\category{T}$).
\end{enumerate}
\end{proposition}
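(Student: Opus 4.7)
Plan:

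The plan is to handle the three implications (a)$\Leftrightarrow$(b), (c)$\Rightarrow$(a), and (a)$\Rightarrow$(c) separately, saving the bulk of the work for the last.

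The equivalence (a)$\Leftrightarrow$(b) reduces to a thick-subcategory argument. Specializing to $c=g$ gives (a)$\Rightarrow$(b). For the converse, I would consider the full subcategory $\category{S} := \{c \in \category{T}^c \mid \Hom_\category{T}(x, c) \text{ is Noetherian over } R\}$: it is thick in $\category{T}^c$, since Noetherian $R$-modules form a subcategory of $\catMod R$ closed under kernels, cokernels, and extensions, so the long exact sequence gives closure under cones, shifts, and direct summands. As $g \in \category{S}$ by (b) and $g$ strongly generates $\category{T}^c$, this forces $\category{S} = \category{T}^c$. For (c)$\Rightarrow$(a), applying $\Hom_\category{T}(-, c')$ for any $c' \in \category{T}^c$ to the triangle $y \to x \to c \to \Sigma y$ and using that both $\Hom(y, c') = 0$ (by hypothesis) and $\Hom(\Sigma y, c') = \Hom(y, \Sigma^{-1}c') = 0$ (since $\Sigma^{-1}c' \in \category{T}^c$), the long exact sequence yields $\Hom(x, c') \cong \Hom(c, c')$, which is Noetherian by the Noetherian hypothesis on $\category{T}$.

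The substantive content lies in (a)$\Rightarrow$(c). The plan is to build a compact reflection of $x$ iteratively, using a construction dual to \Cref{constr:genGammaGen}. Starting from $y_0 = x$, at step $i$ I pick homogeneous generators $f_1^{(i)}, \ldots, f_{n_i}^{(i)}$ of the finitely generated $R$-module $\Hom_\category{T}(y_i, g)$, form $c_i = \bigoplus_k \Sigma^{|f_k^{(i)}|} g$, and let $y_{i+1}$ be the fiber of the resulting morphism $y_i \to c_i$. The long exact sequence immediately gives that the map $\Hom(y_i, g) \to \Hom(y_{i+1}, g)$ is zero and that $\Hom(y_{i+1}, g)$ embeds into the finitely generated module $\Hom(\Sigma^{-1} c_i, g)$, so the iteration can be continued and each $y_i$ still satisfies (b), hence (a). By the octahedral axiom, the cone $C_i$ of the composition $y_{i+1} \to x$ lies in $\thick^i(g)$; in particular $C_D$ is compact once $\thick^D(g) = \category{T}^c$.

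The main obstacle is to upgrade the dual of \Cref{lemma:constrGenGamma}~\ref{item:genGammaGen-orthogonal}---which shows, together with $\thick^D(g) = \category{T}^c$, that the transition map $\Hom(x, c') \to \Hom(y_{D+1}, c')$ is zero for every $c' \in \category{T}^c$---to the actual vanishing $\Hom(y_N, c') = 0$ required in (c). A useful reduction is the observation that $\Hom(y, -)$ vanishes on all of $\category{T}^c$ as soon as it vanishes on $g$ as a graded $R$-module, by thick-subcategory induction along $\thick^\bullet(g)$. This converts the task into forcing $\Hom(y_N, g) = 0$ at some finite stage. I would attempt this either by a careful bookkeeping of the degrees in which fresh generators of $\Hom(\Sigma^{-1} c_i, g)$ can appear at each iteration---which should become vacuous in a uniform number of steps controlled by $D$---or by invoking Brown-type representability to identify the finite-type cohomological functor $\Hom_\category{T}(x, -)|_{\category{T}^c}$ with $\Hom_\category{T}(c, -)|_{\category{T}^c}$ for some compact $c$, in which case the fiber of the induced morphism $x \to c$ directly provides (c).
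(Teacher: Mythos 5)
Your treatment of the easy implications matches the paper: (a)$\Rightarrow$(b) is trivial, (b)$\Rightarrow$(a) follows from $\category{T}^c=\thick(g)$, and the long-exact-sequence argument for (c)$\Rightarrow$(a) is the same as the paper's (c)$\Rightarrow$(b). These parts are fine.

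For (a)$\Rightarrow$(c), your primary route --- the dual of \Cref{constr:genGammaGen}, building an inverse tower $\cdots\to y_2\to y_1\to y_0=x$ --- has a genuine gap that you yourself flag but do not close. The dual ghost lemma gives only that the \emph{composite} transition maps $\Hom(y_i,c')\to\Hom(y_{i+D},c')$ eventually vanish; this is much weaker than the vanishing $\Hom(y_N,-)|_{\category{T}^c}=0$ that (c) requires. In the ``forward'' direction used by \Cref{prop:strictlyRightNoetherian} this is enough, because there one forms a \emph{homotopy colimit}, exhibits a specific morphism $\hocolim c_i\to x$, and shows it is an isomorphism after applying $\Hom(g,-)$; the eventual-ghost property is used only to deduce splitting of $x$ off a single compact object, with a uniform bound. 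In your dual picture there is no homotopy limit of the $y_i$ available (general compactly generated triangulated categories do not have sequential homotopy limits that interact well with $\Hom$), so there is no object for the vanishing to be asserted about, and no mechanism forcing $\Hom(y_N,g)=0$ at any finite stage. The ``degree bookkeeping'' idea is not a remedy as stated: the generators $f_k^{(i)}$ of $\Hom(y_i,g)$ can sit in arbitrary degrees, so the receiving module $\Hom(\Sigma^{-1}c_i,g)\cong\bigoplus_k\Hom(\Sigma^{|f_k^{(i)}|-1}g,g)$ is a finite sum of shifts of $\emorp(g)$ and carries no degree constraint that would force it, or its submodule $\Hom(y_{i+1},g)$, to become zero after $O(D)$ steps.

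Your alternative route --- Brown-type representability for the finite-type cohomological functor $\Hom_\category{T}(x,-)|_{\category{T}^c}$ --- is exactly the paper's proof. The paper invokes Letz's representability theorem (a graded, $R$-linear version of Bondal--Van den Bergh, valid over a strongly generated $\category{T}^c$) to produce a compact $c$ with $\Hom_{\category{T}^c}(c,-)\cong\Hom_\category{T}(x,-)|_{\category{T}^c}$, notes that this isomorphism is induced by a morphism $f\colon x\to c$ (since $c$ is itself in $\category{T}^c$, one can evaluate at $c$ and chase the identity through Yoneda), and completes $f$ to a triangle. You mention this only as a fallback, but it should be the main argument; the constructive route, as written, does not terminate.
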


\begin{proof}
    The fact that~\ref{item:left-noetherian_Tc} implies~\ref{item:left-noetherian_generator} is obvious, while the converse follows from the fact that $\category{T}=\thick(g)$. Clearly also~\ref{item:compact-reflection} implies~\ref{item:left-noetherian_generator} as then $\Hom_\category{T}(x,g)\cong\Hom_\category{T}(c,g)$ is Noetherian.

    It remains to prove that~\ref{item:left-noetherian_Tc} implies~\ref{item:compact-reflection}. 
    Fix $x \in \category{T}$ such that $\Hom_{\category{T}}(x,g)$ is finitely generated over $R$. The functor $\Hom_{\category{T}}(x, -)_{|\category{T}^c}$ is homological and objectwise finitely-generated over $R$. Then, by a result of Letz~\autocite{letzBrownRepresentabilityTriangulated2023}, which is a variant of~\autocite[Theorem~1.3]{bondalVanDenBergh2003} suitable for our setting, it is representable by a compact object $c \in \category{T}^c$: 
    \[
        \Hom_{\category{T}^c}(c, -) \cong \Hom_{\category{T}}(x, -)_{|\category{T}^c}.
    \]
    This isomorphism is induced by precomposition by a map $f \colon x \to c$, and by completing $f$ to a triangle, we obtain~\ref{item:compact-reflection}.
\end{proof}

\begin{corollary}\label{corollary:compactness-by-cogeneration}
Let $\category{T}$ be as in \Cref{prop:compact-reflection} and $x\in\category{T}$ be an object such that $\Hom_\category{T}(y,x)=0$ for each $y$ such that $\Hom(y,-)_{|\category{T}^c}=0$ (e.g.\ if $x$ is in the smallest triangulated subcategory of $\category{T}$ containing $\category{T}^c$ and closed under products). Then $x$ is compact if and only if $\Hom_\category{T}(x,g)$ is a Noetherian $R$-module.
\end{corollary}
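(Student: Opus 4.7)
The forward implication is immediate: if $x\in\category{T}^c$, then $\Hom_\category{T}(x,g)$ is Noetherian over $R$ by the Noetherian hypothesis on $\category{T}$. So the real content lies in the converse, and my plan is to feed the hypothesis $\Hom_\category{T}(x,g)$ Noetherian directly into \Cref{prop:compact-reflection}.

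Assume $\Hom_\category{T}(x,g)$ is Noetherian. First I would upgrade this to the statement that $\Hom_\category{T}(x,c')$ is Noetherian for \emph{every} $c'\in\category{T}^c$: this is immediate because the full subcategory of those $c'\in\category{T}^c$ with $\Hom_\category{T}(x,c')$ Noetherian over $R$ is thick, contains $g$, and $\thick(g)=\category{T}^c$ by the strong generation hypothesis. Thus condition \ref{item:left-noetherian_Tc} of \Cref{prop:compact-reflection} is satisfied, so condition~\ref{item:compact-reflection} produces a distinguished triangle
\[
    y \xrightarrow{\alpha} x \xrightarrow{f} c \to \Sigma y
\]
with $c\in\category{T}^c$ and $\Hom_\category{T}(y,-)_{|\category{T}^c}=0$. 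The standing orthogonality hypothesis on $x$ now applies to this particular $y$ and gives $\Hom_\category{T}(y,x)=0$. Applying $\Hom_\category{T}(-,x)$ to the triangle produces a surjection $\Hom_\category{T}(c,x)\twoheadrightarrow\Hom_\category{T}(x,x)$, so $\id_x$ lifts through $f$ to a retraction $c\to x$. Hence $x$ is a direct summand of the compact object $c$, and because $\category{T}^c$ is thick in $\category{T}$ we conclude $x\in\category{T}^c$.

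For the parenthetical remark, I would verify that the class $\category{X}$ of objects satisfying the orthogonality hypothesis is a triangulated subcategory of $\category{T}$ closed under arbitrary products and containing $\category{T}^c$. Containment of $\category{T}^c$ is tautological. For any fixed $y\in\category{T}$ with $\Hom_\category{T}(y,-)_{|\category{T}^c}=0$, the functor $\Hom_\category{T}(y,-)$ is a product-preserving homological functor, so the vanishing $\Hom_\category{T}(y,x)=0$ is preserved under extensions, shifts, summands, and products; this makes $\category{X}$ a triangulated subcategory closed under products. Consequently $\category{X}$ contains the smallest triangulated subcategory of $\category{T}$ containing $\category{T}^c$ and closed under products, which is exactly the parenthetical class.

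I do not anticipate any serious obstacle. The only substantive observation is that \Cref{prop:compact-reflection}, combined with the orthogonality hypothesis on $x$, degenerates the reflection triangle into a splitting of $f\colon x\to c$; once this is spotted, the remainder is a direct consequence of thickness of $\category{T}^c$ together with the formal closure properties of $\category{X}$.
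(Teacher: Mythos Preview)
Your proof is correct and is exactly the argument the paper has in mind; the corollary is stated without proof because it follows immediately from \Cref{prop:compact-reflection} via the splitting of the reflection triangle that you describe. One minor simplification: since $\Hom_\category{T}(y,x)=0$ forces the map $\alpha\colon y\to x$ itself to vanish, the triangle already splits as $c\cong x\oplus\Sigma y$, so you can bypass the surjectivity argument for $f^*$ if you wish.
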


The orthogonality condition on $x$ from the corollary is automatically satisfied if $\category{T}$ is compactly cogenerated in the following sense.

\begin{definition}\label{definition:compactly-cogenerated}
A triangulated category $\category{T}$ with coproducts is \emph{compactly cogenerated} if for any $y \in \category{T}$, we have $y=0$ whenever $\Hom_{\category{T}}(y, -)_{|\category{T}^c} = 0$.
\end{definition}

The obvious question is which tensor-triangulated categories are compactly cogenerated. On the positive side, we have the following classes of examples.

\begin{example}\label{examples:compactly-cogenerated}
Let $\category{T}=\category{D}(\catMod R)$, where $R$ is commutative Noetherian ring such that
\begin{enumerate}
\item $R$ is countable, or
\item $R$ is a finitely generated algebra over a field.
\end{enumerate}
Then the smallest colocalizing subcategory of $\category{T}$ (i.e.\ the smallest triangulated subcategory of $\category{T}$ closed under products) which contains $R$ is all of $\category{T}$. Indeed, Neeman \autocite{neemanColocalizing2011} classified colocalizing subcategories of $\category{T}=\category{D}(\catMod R)$ in terms of subsets of $\Spec(R)$. More precisely, each colocalizing subcategory is determined by the residue fields $k(\ideal{p})$, $\ideal{p}\in\Spec(R)$, which it contains, and for each subset of $\Spec(R)$ there exists a colocalizing subcategory containing precisely the residue fields parametrized by the elements of that subset. In particular, proving that the smallest colocalizing subcategory containing $R$ is all of $\category{T}$ reduces to proving that $\mathbf{R}\!\Hom_R(k(\ideal{p}),R)\ne 0$ for all $\ideal{p}\in\Spec(R)$ (see also \autocite[Remark~1.3]{nakamuraCosupports2019}). This was done in the first case above (and for some other classes of rings) in~\autocite[Theorem~1]{thompsonCosupportComputations2018} and for the second case in \autocite[Corollary~1.2]{nakamuraCosupports2019}.
\end{example}

On the negative side, there are many natural compactly generated (and often even tensor-triangulated, $R$-linear and strictly Noetherian) categories which are not compactly cogenerated.

\begin{example} ~
\begin{enumerate}
\item If $R$ is a complete local commutative Noetherian ring, then $\category{T}=\category{D}(\catMod R)$ is not compactly cogenerated unless $R$ is Artinian. Indeed, if $\ideal{p}\in\Spec(R)$ is non-maximal, then $\mathbf{R}\!\Hom_R(k(\ideal{p}),R)=0$ (and so $\Hom_\category{T}(k(\ideal{p}),c)=0$ for each $c\in\category{T}^c$) by \autocite[Proposition~4.19]{bensonColocalizingSubcategoriesCosupport2012} or \autocite[Example~5.3]{thompsonCosupportComputations2018}.

\item Suppose $R$ is a (not necessarily commutative) Noetherian ring and $\category{T}=\category{KInj}(R)$. If there exists a finitely generated $R$-module of infinite projective dimension, then $\category{KInj}(R)$ is not compactly cogenerated. This in particular applies to $R=\field G$, where $G$ is a finite group and $\field$ is a field of positive characteristic which divides the order of $G$ (see \S\ref{subsec:KInjkG} for a more detailed discussion of this situation).

Indeed, in this case Krause~\autocite[Proposition~2.3]{krauseStableDerived2005} proved that, up to homotopy equivalence, the compact objects of $\category{T}$ are precisely injective resolutions of bounded complexes of finitely generated modules. In particular, $\Hom(y,-)_{|\category{T}^c}=0$ for any acyclic complex of injectives $y\in\category{T}$. We only have to make sure that there is an acyclic complex $y$ which is not contractible, i.e.\ such that $y\ne 0$ in $\category{T}$. However, the full subcategory $\category{S}\subset\category{T}$ consisting of acyclic complexes is also compactly generated by~\autocite[Corollary~5.4]{krauseStableDerived2005} and the category $\category{S}^c$ of compacts is up to direct factors equivalent to the Verdier quotient $\category{D}^\bnd(\catmod R)/\category{D}^\per(\catmod R)$. So, if there exists a finitely generated module of infinite projective dimension, then $\category{D}^\bnd(\catmod R)/\category{D}^\per(\catmod R)\ne 0$, so $\category{S}\ne 0$ and, consequently, $\category{T}$ cannot be compactly cogenerated.
\end{enumerate}
\end{example}

\section{Torsion and completion at an ideal}\label{sec:torClosedIdeal}

Throughout this section, we fix a compactly-rigidly generated category $\category{T}$.
We assume that a graded commutative Noetherian ring $R$ acts on $\category{T}$, and $\category{T}$ is Noetherian under this action. We fix a closed subset $\mathcal{V}$ of $\Spec(R)$. Equivalently, it can be written, for some (not necessarily prime) ideal $\ideal{a}$ of $R$, as
\[
    \mathcal{V} = \mathcal{V}(\ideal{a}) = \{ \ideal{p} \in \Spec(R) \: | \: \ideal{a} \subset \ideal{p} \}
\]
We will use the shorthand notations $\Gamma_{\ideal{a}} = \Gamma_{\mathcal{V}(\ideal{a})}$, $L_{\ideal{a}} = L_{\mathcal{V}(\ideal{a})}$ and $\Lambda^{\ideal{a}} = \Lambda^{\mathcal{V}(\ideal{a})}$, and call the objects of $\category{T}$ in the essential images of these functors \emph{$\ida$-torsion}, \emph{$\ida$-local} and \emph{$\ida$-complete}, respectively.
Beware that when $\ideal{a}$ is a non-maximal prime ideal, there is an unfortunate clash of notations with those used in~\autocite{bensonLocallyDualisableModular2024} and our notation does not match with theirs.

\subsection{Koszul objects}\label{subsec:Koszul-obj}

Let $x$ be an object in $\category{T}$. For any $r \in R$ homogeneous of degree $d$, the Koszul object $x \kos r$ is the cone of the action of $r$ on $x$:
\[
    x \kos r = \cone(\Sigma^{-d}x \xrightarrow{r} x)
\]
For any sequence $\mathbf{r} = (r_1, \ldots, r_k)$ of homogeneous elements whose sum of degrees amounts to degree $d$, $x \kos \mathbf{r}$ is the iterated Koszul object $(\ldots(x \kos r_1) \kos r_2 \ldots ) \kos r_k$. Note that
\[
x \kos \mathbf{r} \cong x \otimes (\mathbb{1} \kos \mathbf{r}) \cong
x \otimes (\mathbb{1} \kos r_1) \otimes \cdots \otimes (\mathbb{1} \kos r_k)
\]
for any $x\in\category{T}$. Consequently, Koszul objects always come equipped with maps $x\to x\kos\mathbf{r}$ and $\Sigma^{d-k}x\kos\mathbf{r}\to x$ which can be chosen to be functorial in $x$. 
For any ideal $\ideal{a} = (r_1, \ldots, r_k)$, we write
\[
    x \kos \ideal{a} = x \kos (r_1, \ldots, r_k) \qquad x \kos \ideal{a}^{[n]} = x \kos (r_1^n, \ldots, r_k^n)
\]
Both of these notations depend on the choice of the sequence of generators of $\ideal{a}$. Therefore, we always implicitly assume that we have chosen a set of generators for any ideal that we use in the context of Koszul objects.

\begin{lemma} \label{lemma:koszul-generation-time}
Suppose that $\ida=(a_1,\ldots,a_k)\subset R$ is a homogeneous ideal generated by a sequence of $k$ elements and $x\in\category{T}$. Then $x\kos\ida\in\thick^{2^k-1}(x)$.
\end{lemma}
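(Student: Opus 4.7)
The plan is to proceed by induction on $k$, using the inductive characterization of $x \kos (a_1,\ldots,a_k)$ as $(x \kos (a_1,\ldots,a_{k-1})) \kos a_k$ together with a standard ``sub-additivity'' property of the filtration $\thick^n$.

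For the base case $k=1$, the defining triangle $\Sigma^{-|a_1|}x \xrightarrow{a_1} x \to x \kos a_1 \to \Sigma^{1-|a_1|}x$ witnesses (after rotation) that $x \kos a_1$ sits in a triangle with both ends in $\thick^0(x)$, so $x \kos a_1 \in \thick^0(x) * \thick^0(x) \subset \thick^1(x) = \thick^{2^1-1}(x)$.

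The key auxiliary fact I would invoke (or quickly prove by a secondary induction using the octahedral axiom, as done in \autocite[Lemma~3.4]{rouquierDimensions2008}) is that for any $y$, one has the inclusion
\[
\thick^a(y) * \thick^b(y) \;\subset\; \thick^{a+b+1}(y)
\]
for all $a, b \geq 0$. Granted this, the inductive step is a short computation. Set $y = x \kos (a_1,\ldots,a_{k-1})$, which by the inductive hypothesis lies in $\thick^{2^{k-1}-1}(x)$. Then the base-case argument applied to $y$ yields $x \kos \ida = y \kos a_k \in \thick^0(y) * \thick^0(y)$. Because $y \in \thick^{2^{k-1}-1}(x)$ implies $\thick^0(y) \subset \thick^{2^{k-1}-1}(x)$, we conclude
\[
x \kos \ida \in \thick^{2^{k-1}-1}(x) * \thick^{2^{k-1}-1}(x) \subset \thick^{2(2^{k-1}-1)+1}(x) = \thick^{2^k-1}(x),
\]
completing the induction.

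The only real obstacle is the auxiliary estimate $\thick^a * \thick^b \subset \thick^{a+b+1}$, but this is a well-known property of Rouquier's filtration and follows from a routine induction on $a$ using the inductive definition $\thick^{n+1} = \add(\thick^0 * \thick^n)$ together with the octahedral axiom; I would either cite \autocite{rouquierDimensions2008} directly or dispatch it in one sentence. Once that is in hand, the proof reduces to the arithmetic identity $(2^{k-1}-1) + (2^{k-1}-1) + 1 = 2^k - 1$.
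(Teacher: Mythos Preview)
Your proof is correct and takes essentially the same approach as the paper: induction on $k$, using that $z\kos r\in\thick^{1}(z)$ for any $z$ and $r$. The paper's proof is a one-line sketch of exactly this argument; you have simply made explicit the subadditivity $\thick^{a}*\thick^{b}\subset\thick^{a+b+1}$ (standard from \autocite{rouquierDimensions2008}) and the arithmetic $2(2^{k-1}-1)+1=2^{k}-1$.
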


\begin{remark}
The constant $2^k-1$ can be very often improved considerably by~\autocite[Theorem~A]{letzStephanGenerationTime2025}, but this plays no role in our discussion here.
\end{remark}

\begin{proof}
This is an easy inductive argument on $k$, taking into account that $z\kos r\in\thick_1(z)$ for each $z\in\category{T}$ and $r\in R$.
\end{proof}

We will record the following consequence for a later use.

\begin{lemma}\label{lemma:koszul-split}
Let $x\in\category{T}$ and $\ida\subset R$ be an ideal acting by zero on $x$ (that is, $\ida\cdot\id_x=0$) and suppose that the chosen generating sequence for $\ida$ is of length $k$ and the degrees sum to $d$. Then for each $n\ge 2^k-1$, the above maps $x\to x\kos\ida^{[n]}$ and $\Sigma^{nd-k}x\kos\ida^{[n]}\to x$ are a split monomorphism and a split epimorphism, respectively.
\end{lemma}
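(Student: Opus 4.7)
The plan is to prove the lemma by induction on $k$, the length of the chosen generating sequence $r_1, \ldots, r_k$ of $\ida$. The strategy is to exploit the iterative construction of both maps together with the tensor decomposition $x \kos \ida^{[n]} \cong x \otimes \bigotimes_{i=1}^k (\mathbb{1} \kos r_i^n)$: under the assumption $\ida \cdot \id_x = 0$, every Koszul triangle appearing in the iterated construction of $x \kos \ida^{[n]}$ will be shown to split, from which the two statements follow by composing the resulting splittings.

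For the base case $k=1$ with $\mathbf{r} = (r)$, the hypothesis $r \cdot \id_x = 0$ forces $r^n \cdot \id_x = 0$ for every $n \ge 1$, so that the first map in the defining triangle
\[
    \Sigma^{-n|r|}x \xrightarrow{r^n} x \xrightarrow{\alpha} x \kos r^n \xrightarrow{\beta} \Sigma^{1-n|r|}x
\]
is zero; this forces the triangle to split, with $\alpha$ identified as the inclusion of a direct summand and $\beta$ as the projection onto $\Sigma^{1-n|r|}x$. Desuspending $\beta$ then produces a split epimorphism $\Sigma^{n|r|-1}(x \kos r^n) \to x$. For the inductive step, set $y_0 = x$ and $y_i = y_{i-1} \kos r_i^n$, so that $y_k = x \kos \ida^{[n]}$ and the canonical map $x \to x \kos \ida^{[n]}$ factors as the composite of the Koszul maps $\alpha_i \colon y_{i-1} \to y_i$. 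By the base case applied at each step, it suffices to verify $r_i^n \cdot \id_{y_{i-1}} = 0$ for every $i$; invoking centrality of the $R$-action together with $y_{i-1} \cong x \otimes \bigotimes_{j<i}(\mathbb{1} \kos r_j^n)$, the action of $r_i$ on $y_{i-1}$ factors through its action on $x$, so $r_i \cdot \id_{y_{i-1}} = (r_i \cdot \id_x) \otimes \id = 0$, whence also $r_i^n \cdot \id_{y_{i-1}} = 0$. Since a composite of split monomorphisms is a split monomorphism, the first claim follows.

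The second map $\Sigma^{nd-k}(x \kos \ida^{[n]}) \to x$ is built from the connecting maps $\beta_i \colon y_i \to \Sigma^{1-n|r_i|}y_{i-1}$ of the individual Koszul triangles, desuspended and composed in such a way that the shifts accumulate to $\sum_{i=1}^k (n|r_i|-1) = nd-k$. Each such desuspended connecting map is a split epimorphism by the same splitting argument used above, and composites of split epimorphisms are split epimorphisms. The main points requiring care are the centrality identity $r_i \cdot \id_{x \otimes z} = (r_i \cdot \id_x) \otimes \id_z$ used to propagate the vanishing of the $R$-action through the iteration, and the suspension bookkeeping in the composite so that the total shift exactly matches $nd-k$ as prescribed in the statement.
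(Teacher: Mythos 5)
Your proof is correct and follows the same overall strategy as the paper's---iteratively splitting the partial Koszul triangles $y_{i-1} \to y_i = y_{i-1}\kos r_i^n$---but it justifies each splitting by a more direct observation. The paper argues that the partial Koszul objects lie in $\thick^{2^k-1}(x)$ (via \Cref{lemma:koszul-generation-time}) and then invokes the numerical hypothesis $n\ge 2^k-1$ to conclude that $\ida^{[n]}$ annihilates these objects, whence the connecting maps in the relevant triangles vanish. You instead use the tensor decomposition $y_{i-1}\cong x\otimes\bigotimes_{j<i}(\mathbb{1}\kos r_j^n)$ together with the fact that the central $R$-action satisfies $r\cdot\id_{x\otimes w}=(r\cdot\id_x)\otimes\id_w$ (which holds because the action is by tensoring the image of $r$ in $\emorp(\mathbb{1})$), so $\ida$ itself---not just some power---already acts by zero on every $y_{i-1}$. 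This sidesteps the thick-generation bookkeeping entirely, and in fact shows that the numerical restriction $n\ge 2^k-1$ in the statement is not needed for the lemma: your argument produces the splittings for every $n\ge 1$.
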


\begin{proof}
Let $\mathbf{r} = (r_1, \ldots, r_k)$ be the chosen generating sequence for $\ida$ and denote $\mathbf{s} = (s_1, \ldots, s_k)$, where $s_i=r_i^{n}$. Then $\mathbf{s}$ has total degree $nd$ and generates the ideal $\ida^{[n]}$. Moreover, if $n\ge 2^k-1$, then $\ida^{[n]}$ acts by zero on all objects of $\thick^{2^k-1}(x)$ and in particular on the ``partial'' Koszul objects $x_l := x \otimes (\mathbb{1} \kos s_1) \otimes \cdots \otimes (\mathbb{1} \kos s_l)$ for $l\le k$. Hence all the maps $x_l\to x_{l+1}\cong x_l\kos s_{l+1}$ are split monomorphisms and the map $x \to x\kos\ida$ is a just a composition of them. The argument for $\Sigma^{nd-k}x\kos\ida^{[n]} \to x$ is similar.
\end{proof}

Koszul objects also provide a description of compacts in $\Gamma_{\ideal{a}}\category{T}$ and $\Lambda^{\ideal{a}}\category{T}$
\begin{proposition}\label{prop:compactKoszul}
    The compact objects of $\Gamma_{\ideal{a}}\category{T}$ and $\Lambda^{\ideal{a}}\category{T}$ coincide and
    \[
        (\Gamma_{\ideal{a}}\category{T})^c = (\Lambda^{\ideal{a}}\category{T})^c = \Gamma_{\ideal{a}}\category{T} \cap \category{T}^c = \thick(c \kos \ideal{a},  c \in \category{T}^c) 
    \]
    Moreover, for any $c \in (\Gamma_{\ideal{a}}\category{T})^c$, there exists $n \in \mathbb{N}$ such that
    \[
        \ida^n \cdot \id_c = 0
    \]
\end{proposition}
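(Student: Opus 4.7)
First I would dispose of the initial equalities $(\Gamma_\ida\category{T})^c = (\Lambda^\ida\category{T})^c = \Gamma_\ida\category{T} \cap \category{T}^c$. The second equality is the content of \Cref{prop:tensorCompTor}, while the first is immediate from the monoidal equivalence $\Gamma_\ida\category{T} \simeq \Lambda^\ida\category{T}$ at the end of \S\ref{subsec:torsion-and-completion}, which preserves compact objects and identifies both sides as the same subcategory of $\category{T}$. The remainder of the proposition amounts to identifying this common class with $\thick(c \kos \ida, c \in \category{T}^c)$ and to the annihilation assertion.

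For the easy inclusion $\thick(c \kos \ida, c \in \category{T}^c) \subseteq \Gamma_\ida\category{T} \cap \category{T}^c$, each $c \kos \ida$ is compact as an iterated cone of compacts. For each chosen generator $r_i$ of $\ida$ one has $r_i \cdot \id_{\mathbb{1} \kos r_i} = 0$, so from the tensor factorization $c \kos \ida = c \otimes (\mathbb{1} \kos r_1) \otimes \cdots \otimes (\mathbb{1} \kos r_k)$ one obtains $\ida \cdot \id_{c \kos \ida} = 0$. By centrality of the $R$-action, $\ida$ then annihilates $\Hom_\category{T}(d, c \kos \ida)$ for every $d$, and such a graded module localizes to zero at any prime not containing $\ida$, confirming that $c \kos \ida$ is $\ida$-torsion. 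The annihilation assertion at the end of the proposition I would handle in parallel: the class of objects $x$ admitting some $n$ with $\ida^n \cdot \id_x = 0$ is closed under shifts and summands, and it is also closed under extensions, since for a triangle $x \to y \to z \to$ with $\ida^m\cdot\id_x = 0$ and $\ida^n\cdot\id_y = 0$, centrality of the $R$-action implies that for any $r\in\ida^n$ the endomorphism $r\cdot\id_z$ factors through $\Sigma x$, and postcomposing with $s\in\ida^m$ then vanishes, giving $\ida^{m+n}\cdot\id_z = 0$. This property therefore propagates to all of $\thick(c\kos\ida, c\in\category{T}^c)$.

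The hard inclusion $\Gamma_\ida\category{T}\cap\category{T}^c \subseteq \thick(c\kos\ida, c\in\category{T}^c)$ is where the real work lies. Given $x$ compact and $\ida$-torsion, the $R$-module $E = \Hom_\category{T}(x,x)$ is finitely generated by the Noetherian hypothesis on $\category{T}$, and $\ida$-torsionness ensures its support lies in $V(\ida)$; finite generation then forces $\sqrt{\mathrm{ann}(E)} \supseteq \sqrt{\ida}$, so some common power of the chosen generators kills $\id_x$. Enlarging $N$ if necessary so that also $N \ge 2^k - 1$ (with $k$ the length of the generating sequence), \Cref{lemma:koszul-split} realizes $x$ as a direct summand of $\Sigma^{Nd - k} x \kos \ida^{[N]}$. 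To finish, I need to show $x \kos \ida^{[N]} \in \thick(c \kos \ida, c \in \category{T}^c)$; iterating the octahedral axiom on the factorization $r_i^N = r_i \cdot r_i^{N-1}$ gives $\mathbb{1} \kos r_i^N \in \thick(\mathbb{1} \kos r_i)$, and tensoring these relations with $x$ successively yields $x \kos \ida^{[N]} \in \thick(x \kos \ida)$. Since $x$ is compact, the summand manoeuvre then places $x$ itself in $\thick(c \kos \ida, c \in \category{T}^c)$.

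The main obstacle I expect is the bookkeeping at the transition from $\ida^{[N]}$-Koszul objects back to $\ida$-Koszul objects: \Cref{lemma:koszul-split} inherently needs the $N$-th powers of the generators in order to exploit the annihilator $\ida^{[N]}$ of $x$, whereas the target $\thick(c \kos \ida, c\in\category{T}^c)$ is built from the generators themselves. The octahedral reduction $\mathbb{1} \kos r^N \in \thick(\mathbb{1} \kos r)$ is routine, but must be run through each of the $k$ generators and combined with the tensor factorization of the Koszul object, which is the only nontrivial bit of bookkeeping in the argument.
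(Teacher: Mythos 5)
Your proof has a genuine gap at the very first step, precisely where the paper's own (otherwise citation-based) proof invests its only detailed argument. You assert that the first equality $(\Gamma_\ida\category{T})^c = (\Lambda^\ida\category{T})^c$ ``is immediate from the monoidal equivalence $\Gamma_\ida\category{T}\simeq\Lambda^\ida\category{T}$\dots which preserves compact objects and identifies both sides as the same subcategory of $\category{T}$.'' This is not what the equivalence does. The equivalence is $\Lambda^\ida\colon\Gamma_\ida\category{T}\to\Lambda^\ida\category{T}$, which sends a compact $c\in(\Gamma_\ida\category{T})^c$ to the object $\Lambda^\ida c$, \emph{a priori a different object of $\category{T}$}. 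The claim that $(\Gamma_\ida\category{T})^c$ and $(\Lambda^\ida\category{T})^c$ coincide \emph{as subcategories of $\category{T}$} amounts to showing $\Lambda^\ida c\cong c$ for all such $c$, i.e.\ that torsion compact objects are already $\ida$-complete. This is exactly the nontrivial content: the paper's streamlined argument proves by induction on the number of generators of $\ida$ that $\Lambda^\ida(x\kos\ida)\cong x\kos\ida$, using for a single generator that $\Hom(y,x\kos\ida)$ is simultaneously an $R[r^{-1}]$-module and $r$-power-torsion for $y\in L_\ida\category{T}$. You have replaced this argument by a false claim of immediacy, and nothing in the rest of your proof supplies it. Note that even your own identification $\Gamma_\ida\category{T}\cap\category{T}^c=\thick(c\kos\ida)$ does not help here without knowing that the Koszul objects $c\kos\ida$ are $\ida$-complete.

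Two further points are cosmetic rather than fatal, but worth flagging. First, $r_i\cdot\id_{\mathbb{1}\kos r_i}=0$ is not what the standard factorization argument gives you directly; what you get for free is $r_i^2\cdot\id_{\mathbb{1}\kos r_i}=0$ (the map $r_i\cdot\id$ factors both through $\iota$ and through $\pi$, and the two factorizations compose to zero). Either $r_i^2$ suffices for every use you make of this, or one must argue more carefully that the degree-one statement holds. Second, your application of \Cref{lemma:koszul-split} with the same $N$ for which $\ida^N\cdot\id_x=0$ is a misreading: the lemma requires the ideal itself (not a power) to annihilate $x$, so one must apply it to the ideal $(r_1^N,\dots,r_k^N)$ and then take the $(2^k-1)$-th bracketed power, producing $x$ as a summand of a shift of $x\kos\ida^{[(2^k-1)N]}$, not of $x\kos\ida^{[N]}$. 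Your remaining argument — reducing $\mathbb{1}\kos r^m$ to $\thick(\mathbb{1}\kos r)$ by the octahedron and tensoring — absorbs this larger exponent without difficulty, so the hard inclusion and the annihilation statement are salvageable; the first equality is not, without an argument of the kind the paper gives.
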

\begin{proof}
    This is proven in \autocite[Lemma 2.7]{bensonLocallyDualisableModular2024} for a point $\ideal{p} \in \Spec R$, but the proof holds as well for an arbitrary ideal $\ideal{a}$. 
		
		For the convenience of the reader, we mention that the argument for that $\Lambda^{\ida}(x \kos \ida) \cong x \kos \ida$ for any $x\in\category{T}$ can be streamlined to some extent. Indeed, suppose that $\ida=(r_1,\ldots,r_k)$ and $x\in\category{T}$, and we will prove that $\Lambda^{\ida}(x \kos \ida) \cong x \kos \ida$ by induction on $k$. For $k=1$, we use that for any $y\in L_\ida\category{T}$, the $R$-module $\Hom_\category{T}(y,x\kos\ida)$ is on the one hand naturally an $R[r_1^{-1}]$-module (since $r_1$ acts by an isomorphism on $y$) and on the other hand it is annihilated by a certain power of $r_1$ by \autocite[Lemma 5.11]{bensonLocalCohomologySupport2008}. So $\Hom_\category{T}(y,x\kos\ida)=0$ for each $y\in L_\ida\category{T}$, or in other words $x\kos\ida\in\Lambda^\ida\category{T}$.
		
If $k>1$, we put $\ida'=(r_1,\dots,r_{k-1})$ and $\idb=(r_k)$ and $z=x\kos\ida'$. As $\Lambda^{\ida'}(z)\cong z$ by the inductive hypothesis and as $\Lambda^{\ida'}$ is an exact functor, we also have $\Lambda^{\ida'}(z\kos\idb)\cong z\kos\idb$. Using the inductive hypothesis for $\idb$ this time, we obtain that
\[
\Lambda^\ida(x\kos\ida) \cong
\Lambda^\idb(\Lambda^{\ida'}(z\kos\idb)) \cong
\Lambda^\idb(z\kos\idb) \cong
z\kos\idb =
x\kos\ida,
\]
as required.
\end{proof}

The functors $\Gamma_{\ida}$ and $\Lambda^{\ida}$ can be presented as a homotopy colimit and limit, respectively, of Koszul objects, as pointed out in similar situations in~\autocite[Lemma~3.6]{greenleesMayCompletions1995} or~\autocite[Proposition~2.9]{bensonStratifyingTria2011}.

\begin{construction}\label{constr:transitionKoszul}
    Let $x$ be an object in $\category{T}$ and write $\ida = (a_1, \ldots, a_k)$, with $a_i$ of degree $d_i$. For each $m\ge 1$ and $i=1,\dots,k$, consider a morphism $\psi_m^i$ fitting into the diagram
\[
  \begin{tikzcd}
	  \Sigma^{md_i-1}\mathbb{1}
		  \arrow[r]
			\arrow[d, "a_i"] &
    \Sigma^{md_i-1}\mathbb{1} \kos a_i^m
		  \arrow[r]
			\arrow[d, dotted, "\psi_m^{i}"] &
	  \mathbb{1}
		  \arrow[r, "a_i^m"]
		  \arrow[d, equal] &
	  \Sigma^{md_i}\mathbb{1}
			\arrow[d, "a_i"] \\
	  \Sigma^{(m+1)d_i-1}\mathbb{1}
		  \arrow[r] &
    \Sigma^{(m+1)d_i-1}\mathbb{1} \kos a_i^{(m+1)}
		  \arrow[r] &
	  \mathbb{1}
		  \arrow[r, "a_i^{(m+1)}"] &
	  \Sigma^{(m+1)d_i}\mathbb{1}
  \end{tikzcd}
\]
Then, if we denote $d=\sum_{i=1}^kd_i$ the sum of the degrees of the chosen generators of $\ida$, the morphisms $\phi_m=x\otimes \psi_m^1\otimes\cdots\phi_m^k$ form a compatible cocone of a countable direct system in $\category{T}$:
\begin{equation}\label{eq:Koszul-telescope}
  \begin{tikzcd}
	  \Sigma^{d-k} x\kos\ida
		  \arrow[r, "\phi_1"]
			\arrow[rrrrd, bend right=7, shift right=2] &
	  \Sigma^{2d-k} x\kos\ida^{[2]}
		  \arrow[r, "\phi_2"]
			\arrow[rrrd, bend right=3, shift right=1] &
	  \Sigma^{3d-k} x\kos\ida^{[3]}
		  \arrow[r, "\phi_3"]
			\arrow[rrd] &
		\cdots
		\\
		&&&& x
  \end{tikzcd}
\end{equation}
\end{construction}

\begin{proposition}\label{prop:torsionColimit}
    Let $x$ be an object in $\category{T}$ and fix the direct system and its cocone
    as in \Cref{constr:transitionKoszul}. Then homotopy colimit of the system~\eqref{eq:Koszul-telescope}
    together with a suitable map $\phi$ compatible with the cocone fits into a distinguished triangle
    \[
        \hocolim_m \Sigma^{md-k}x\kos\ida^{[m]} \overset{\phi}\to x \to l \to,
    \]
    where $\hocolim_m(\Sigma^{md-k}x\kos\ida^{[m]})\in\Gamma_{\ida}\category{T}$ and $l \in L_{\ida}\category{T}$.
    Consequently, the homotopy colimit is isomorphic to $\Gamma_{\ida}x$.
\end{proposition}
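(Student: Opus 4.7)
The first step is to reduce to the case $x=\mathbb{1}$. Since $x\kos\ida^{[m]}\cong x\otimes(\mathbb{1}\kos\ida^{[m]})$ and the transition maps $\phi_m$ are $\id_x$ tensored with the corresponding transition maps for the unit (by functoriality of the Koszul construction in the ambient variable), \Cref{lemma:tensor-and-hocolim} yields an isomorphism
\[
\hocolim_m\Sigma^{md-k}x\kos\ida^{[m]}\;\cong\; x\otimes\hocolim_m\Sigma^{md-k}\mathbb{1}\kos\ida^{[m]},
\]
identifying $\phi$ with $\id_x\otimes\phi^\mathbb{1}$. Once the statement is known for $\mathbb{1}$, tensoring the resulting triangle $\Gamma_\ida\mathbb{1}\to\mathbb{1}\to L_\ida\mathbb{1}\to$ with $x$ gives the general case: $\Gamma_\ida$ is a tensor idempotent so $\Gamma_\ida\mathbb{1}\otimes x\cong\Gamma_\ida x$, while $L_\ida\category{T}$ is a tensor ideal (being the kernel of the tensor idempotent $\Gamma_\ida$), keeping $L_\ida\mathbb{1}\otimes x$ $\ida$-local.

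For $x=\mathbb{1}$, set $H=\hocolim_m\Sigma^{md-k}\mathbb{1}\kos\ida^{[m]}$. Each $\mathbb{1}\kos\ida^{[m]}$ is compact and $\ida$-torsion by \Cref{prop:compactKoszul}, and $\Gamma_\ida\category{T}$, as a localizing subcategory of $\category{T}$, is closed under sequential homotopy colimits, so $H\in\Gamma_\ida\category{T}$. It remains to show that the cone $l^\mathbb{1}$ of $\phi^\mathbb{1}\colon H\to\mathbb{1}$ is $\ida$-local. Since the compact $\ida$-torsion objects generate $\Gamma_\ida\category{T}$ (\Cref{prop:compactKoszul}), it suffices to verify that $\Hom(T,\phi^\mathbb{1})$ is an isomorphism for every $T\in(\Gamma_\ida\category{T})^c$, i.e.\ that the natural map
\[
\colim_m\Hom(T,\Sigma^{md-k}\mathbb{1}\kos\ida^{[m]})\longrightarrow\Hom(T,\mathbb{1})
\]
(where compactness of $T$ allows commuting Hom past the homotopy colimit) is an isomorphism.

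The heart of the argument, and where I expect the chief difficulty, is this filtered colimit computation. By \Cref{prop:compactKoszul}, some power $\ida^n$ annihilates $\id_T$, so for every $m\ge n$ and every $i$, the element $r_i^m$ acts by zero on $T$; consequently the Koszul triangles for $\mathbb{1}\kos r_i^m$ tensored with $T$ split, and iterating produces a decomposition
\[
T\otimes\mathbb{1}\kos\ida^{[m]}\;\cong\;\bigoplus_{I\subseteq\{1,\ldots,k\}}\Sigma^{|I|-m\sum_{i\in I}d_i}T.
\]
After applying $\Sigma^{md-k}$ the summand for $I=\{1,\ldots,k\}$ is $T$ itself and $\alpha_m\otimes\id_T$ restricts to the identity on it, whereas the remaining summands are shifts of $T$ whose gradings drift to infinity with $m$. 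The transition maps $\psi_m^i\otimes\id_T$ involve multiplications by the $r_i$, which eventually annihilate all off-diagonal pieces in a standard telescoping fashion (see \Cref{lemma:koszul-split} applied with the ideal $\ida^n$ to formalize the splittings at the cofinal subsystem), so only the diagonal $T$-summand survives in the colimit, yielding $\Hom(T,\mathbb{1})$ as claimed. This parallels the classical Čech-type computation of $\Gamma_\ida\mathbb{1}$ in \autocite[Proposition~2.9]{bensonStratifyingTria2011}, to which the argument essentially reduces.
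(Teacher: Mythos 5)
Your reduction to $x=\mathbb{1}$ and the translation of ``$l$ is $\ida$-local'' into the colimit statement are both fine, and the plan is genuinely different from the paper's: the paper proves the proposition by induction on the number $k$ of generators of $\ida$, citing Benson--Iyengar--Krause for $k=1$ and then using \Cref{lemma:tensor-and-hocolim} plus the octahedral axiom together with the tensor-ideal property of $L_{\idb}\category{T}$ to handle the inductive step. Your proposal instead attempts a direct computation of $\colim_m\Hom(T,\Sigma^{md-k}\mathbb{1}\kos\ida^{[m]})$.

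However, there is a genuine gap exactly where you flag the ``chief difficulty.'' After tensoring with $T$ (or rather $\SW{T}$, since $\Hom(T,-)\cong\Hom(\mathbb{1},\SW{T}\otimes-)$; you silently conflate the two, though both are compact torsion so the slip is minor) and splitting, the transition maps $\psi_m^i$ are \emph{not} simply ``multiplications by the $r_i$'' on the off-diagonal summands. Working through \Cref{constr:transitionKoszul}, one finds that with respect to the splitting $T\kos\ida^{[m]}\cong\bigoplus_I\Sigma^{\ast}T$ the transition map has a block upper-triangular matrix: on the ``diagonal'' the $I$-block carries $\prod_{i\notin I}a_i$, while the strictly-upper-triangular entries are \emph{undetermined} morphisms coming from the non-uniqueness of the chosen splittings (and of the fill-in maps $\psi_m^i$ themselves). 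The assertion that ``only the diagonal $T$-summand survives'' therefore requires showing that a long enough composition of such block upper-triangular matrices vanishes on the complement of the $I=\{1,\ldots,k\}$ block. This is true, but it needs a path-counting or nilpotence argument tracking that each path of length $n$ between two summands $I\subsetneq\{1,\ldots,k\}$ has at most $k$ ``jumps'' and hence at least $n-k$ diagonal factors lying in $\ida$; none of this is in your sketch. Moreover, your closing reference to \autocite[Proposition~2.9]{bensonStratifyingTria2011} only covers principal ideals, so it does not discharge the multi-generator case, which is precisely where the extra work lies and which the paper's inductive proof deals with cleanly. As written, the proposal does not constitute a proof.
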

\begin{proof}
    We proceed by induction on $k\ge 1$. For $k=1$, this is~\autocite[Proposition~2.9]{bensonStratifyingTria2011}.
		So assume that $k \geq 1$, write $\ida' = (a_1, \ldots, a_{k-1})$, $\ida''=(a_k)$, and let $d'$ be the sum of the degrees of the generators of $\ida'$ and $d''$ be the degree of $a_k$. Note that then, by \Cref{lemma:tensor-and-hocolim}, we have
		\[
		  \hocolim_m \Sigma^{md-k}x\kos\ida^{[m]} \cong
			(\hocolim_m \Sigma^{md'-k+1}x\kos\ida'^{[m]}) \otimes (\hocolim_m \Sigma^{md''-1}\mathbb{1}\kos\ida''^{[m]}).
		\]
		By the inductive hypothesis, we have triangles
    \begin{align*}
        \hocolim_m \Sigma^{md'-k+1}x\kos\ida'^{[m]} \overset{\phi'}\to x \to l' \to,
				\\
        \hocolim_m \Sigma^{md''-1}\mathbb{1}\kos\ida''^{[m]} \overset{\phi''}\to \mathbb{1} \to l'' \to
    \end{align*}
with $\hocolim_m(\Sigma^{md'-k+1}x\kos\ida'^{[m]})\in\Gamma_{\ida'}\category{T}$, $\hocolim_m(\Sigma^{md''-1}\mathbb{1}\kos\ida''^{[m]})\in\Gamma_{\ida''}\category{T}$, $l'\in L_{\ida'}\category{T}$, and $l'\in L_{\ida''}\category{T}$. We claim that we can take $\phi=\phi'\otimes\phi''$. The domain of $\phi$ is contained in $(\Gamma_{\ida'}\category{T})\otimes(\Gamma_{\ida''}\category{T})\subset\Gamma_\ida\category{T}$ and we can express $\phi$ as the composition
\[ \phi=\phi'\circ((\hocolim_m \Sigma^{md'-k+1}x\kos\ida'^{[m]})\otimes\phi''). \]
Thus, the cone of $\phi$ fits by the octahedral axiom into a triangle of the form
\[  (\hocolim_m \Sigma^{md'-k+1}x\kos\ida'^{[m]})\otimes l''\to \cone(\phi) \to l' \to. \]
Since $L_{\ida''}\category{T}$ is a tensor ideal and both $L_{\ida''}\category{T}$ and $L_{\ida'}\category{T}$ are contained in $L_\ida\category{T}$, we have $l:=\cone(\phi)\in L_\ida\category{T}$, as required.
\end{proof}

\subsection{Power-torsion and derived complete modules}
\label{subsec:torsion-complete-modules}

The key task we will need to achieve to understand dualizable $\ida$-torsion or $\ida$-complete objects in $\category{T}$ is to get a better control of Hom-modules of $\Lambda^\ida\category{T}$. This is achieved via the concept of derived $\ida$-complete $R$-module, going back to Greenlees and May~\autocite{greenleesMayLocalHomology1992}. 
Our definition of derived complete modules is very similar to~\autocite{salch2023}, while a thorough and much more general discussion of derived completion and related homological conditions is given in \autocite{positselskiRemarksDerivedComplete2023}.
Along with this, we will discuss the more familiar theory of $\ida$-power torsion modules, to stress the analogies and point up differences.

\begin{definition}
    Let $M$ be a $R$-module.
    \begin{enumerate}[label=(\alph*)]
    \item $M$ is $\ida$-\emph{power torsion} if on any element $m \in M$, some power $n$ of $\ida$ acts trivially: $\ida^n m = 0$.
    \item $M$ is $\ida$-\emph{complete} if the canonical map
    \[
        M \to \limit_n M/\ida^nM
    \]
    is an isomorphism.
    \item $M$ is \emph{derived $\ida$-complete} if it is a cokernel of a morphism of $\ida$-complete modules.
    \end{enumerate}
\end{definition}

The following characterization of power torsion modules is well known. We will denote by $\comp{R}=\limit_n{R/\ida^n}$ the usual $\ida$-adic completion of $R$.

\begin{lemma}\label{lemma:powerTorsionSupport}
    Let $M$ be an $R$-module. The following statements are equivalent:
    \begin{enumerate}[label=(\roman*)]
    \item $M$ is $\ida$-power torsion;
    \item for all $\ideal{p} \in \Spec(R) \backslash \mathcal{V}(\ida)$, $M_{\ideal{p}} = 0$;
    \item for all $r \in \ida$, $\Tor_{*}^R(R[r^{-1}], M) = 0$.
    \item for all $r \in \ida$, $R[r^{-1}]\otimes_R M = 0$.
    \end{enumerate}
		The class of $\ida$-power torsion modules is an hereditary torsion class (i.e.\ a Serre subcategory closed under coproducts) in $\catMod{R}$.
		
		Moreover, the $R$-module structure of any $\ida$-power torsion $R$-module uniquely extends to an $\comp{R}$-module structure, and the restriction functor induces an equivalence between the categories of $\ida\comp{R}$-power torsion $\comp{R}$-modules and $\ida$-power torsion $R$-modules.
\end{lemma}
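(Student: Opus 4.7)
The plan is to prove the equivalence of (i)--(iv) via a small hub of implications, then to deduce the Serre/coproduct closure from (i) and (iv), and finally to construct the $\comp{R}$-action from the description of an $\ida$-power torsion module as a union of annihilator submodules.

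First I would establish (i) $\Leftrightarrow$ (iv) by a direct argument. For (i) $\Rightarrow$ (iv): any element of $R[r^{-1}] \otimes_R M$ has the form $r^{-n} \otimes m$, and if $r^N$ kills $m$ then $r^{-n} \otimes m = r^{-(n+N)} \otimes r^N m = 0$. For the converse, I would fix $m \in M$, choose a generating set $\ida = (r_1,\ldots,r_k)$, and note that $1 \otimes m = 0$ in $R[r_i^{-1}] \otimes_R M$ gives $r_i^{n_i} m = 0$; a sufficiently large power of $\ida$ then annihilates $m$. The equivalence (iii) $\Leftrightarrow$ (iv) is automatic since the localization $R \to R[r^{-1}]$ is flat, so higher Tor groups vanish as soon as $\Tor_0$ does.

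Next, for (i) $\Leftrightarrow$ (ii), the forward direction is immediate because any $r \in \ida \setminus \ideal{p}$ is inverted in $M_{\ideal{p}}$, so elements killed by a power of $r$ vanish after localization. For the converse, I would consider the cyclic submodule $Rm \subset M$: since $R$ is Noetherian, $Rm$ is finitely generated and $\supp(Rm) = V(\operatorname{ann}_R m)$. The hypothesis forces $V(\operatorname{ann}_R m) \subset V(\ida)$, hence $\ida \subset \sqrt{\operatorname{ann}_R m}$, and finite generation of $\ida$ yields $\ida^N m = 0$ for some $N$. This is the only step where Noetherianness is used.

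The hereditary-torsion claim drops out from (iv): closure under sub- and quotient modules is tautological from (i), closure under extensions follows from the exactness of $R[r^{-1}] \otimes_R -$, and closure under arbitrary coproducts comes from the fact that the tensor product commutes with coproducts. For the last sentence, I would write any $\ida$-power torsion $M$ as the filtered union $M = \bigcup_n M[\ida^n]$ of its $\ida^n$-annihilators; each $M[\ida^n]$ is naturally an $R/\ida^n$-module, and the surjections $\comp{R} \to R/\ida^n$ upgrade these compatibly into an $\comp{R}$-action on $M$, unique because each $M[\ida^n]$ admits a unique $R/\ida^n$-structure extending its $R$-structure. Conversely, restriction along $R \to \comp{R}$ sends $\ida\comp{R}$-power torsion modules to $\ida$-power torsion modules since $\ida$ maps into $\ida\comp{R}$, and the two constructions are mutually inverse, giving the claimed equivalence. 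I do not foresee any real obstacle; the only non-formal input is Noetherianness in (ii) $\Rightarrow$ (i), and all other steps are formal.
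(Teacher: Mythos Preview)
Your argument is correct and supplies exactly the standard verification of this lemma. The paper itself does not give a proof at all: it simply cites \autocite[\href{https://stacks.math.columbia.edu/tag/0953}{Tag 0953}]{thestacksprojectauthorsStacksProject}, so there is no approach to compare against beyond noting that your sketch is the expected unpacking of that reference. One small inaccuracy in your commentary: you say Noetherianness is used only in (ii) $\Rightarrow$ (i), but your proof of (iv) $\Rightarrow$ (i) also relies on choosing a \emph{finite} generating set $r_1,\ldots,r_k$ for $\ida$, and the isomorphism $\comp{R}/\ida^n\comp{R}\cong R/\ida^n$ implicit in the uniqueness step of the $\comp{R}$-action likewise uses it. Since $R$ is Noetherian throughout the paper this is harmless, but the dependence is broader than you indicate.
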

\begin{proof}
    See~\autocite[\href{https://stacks.math.columbia.edu/tag/0953}{Tag 0953}]{thestacksprojectauthorsStacksProject}.
\end{proof}

Quite non-trivially, derived complete $R$-modules admit an almost perfectly formally dual characterization.
 We mostly rely on \autocite{positselskiDedualizingMGM2016,positselskiRemarksDerivedComplete2023} and explain where to find the claims of the following lemma there (see also \autocite[Propositions~3.10 and~7.2]{williamsonPolHtpyComplete2022} for a similar summary).

\begin{lemma}\label{lemma:derivedComplete}
    Let $M$ be an $R$-module. The following statements are equivalent:
    \begin{enumerate}[label=(\roman*)]
    \item\label{item:derived-a-complete} $M$ is derived $\ida$-complete;
    \item\label{item:a-contramodules-all-Ext} for all $r \in \ida$, $\Ext^{*}_R(R[r^{-1}], M) = 0$.
    \item\label{item:a-contramodules} for all $r \in \ida$, $\Hom_R(R[r^{-1}], M) = 0 = \Ext^{1}_R(R[r^{-1}], M) = 0$.
    \end{enumerate}
		The class of derived $\ida$-complete modules is an exact Abelian subcategory of $\catMod{R}$ which is closed under extensions and products.

		Moreover, the $R$-module structure of any derived $\ida$-complete $R$-module uniquely extends to an $\comp{R}$-module structure, and the restriction functor induces an equivalence between the categories of derived $\ida\comp{R}$-complete $\comp{R}$-modules and derived $\ida$-complete $R$-modules.
\end{lemma}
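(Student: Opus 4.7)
The plan is to reduce the lemma, in large part, to results from Positselski's systematic treatment \autocite{positselskiDedualizingMGM2016, positselskiRemarksDerivedComplete2023}, dispatching the remaining bookkeeping by elementary homological arguments. I would first deal with the equivalence (ii) $\Leftrightarrow$ (iii). The implication (ii) $\Rightarrow$ (iii) is trivial. For the converse, the key point is that $R[r^{-1}]$ has projective dimension at most one over $R$, since the telescope construction yields a two-term free resolution
\[ 0 \to \bigoplus_{n \in \mathbb{N}} R \to \bigoplus_{n \in \mathbb{N}} R \to R[r^{-1}] \to 0. \]
Consequently $\Ext_R^i(R[r^{-1}], -) = 0$ for $i \ge 2$ automatically, and the vanishing of $\Hom$ and $\Ext^1$ already captures all of (ii).

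For the core equivalence (i) $\Leftrightarrow$ (iii), I would appeal to Positselski's theory of $\ida$-contramodules, in which condition (iii) (tested on any chosen generating set of $\ida$) is one of the standard equivalent definitions of an $\ida$-contramodule, and is shown to coincide with derived $\ida$-completeness in the Noetherian setting; see \autocite{positselskiRemarksDerivedComplete2023}. The description as an exact abelian full subcategory of $\catMod{R}$ closed under extensions and products then drops out either from the same reference or directly from the long exact sequence of $\Ext_R^*(R[r^{-1}], -)$ combined with the degree-$\geq 2$ vanishing above: closure under products follows from left-exactness, closure under kernels and cokernels within $\catMod R$ reduces to the vanishing in degrees $0$, $1$ and $2$, and closure under extensions is the usual three-term exactness.

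The final claim, that the $R$-action extends uniquely to an $\comp{R}$-action and that restriction yields an equivalence between derived $\ida\comp{R}$-complete $\comp{R}$-modules and derived $\ida$-complete $R$-modules, is again taken from Positselski's framework \autocite{positselskiRemarksDerivedComplete2023}. The underlying mechanism is that the vanishing of $\Hom_R(R[r^{-1}], M)$ for all $r \in \ida$ is exactly what is needed to give an unambiguous meaning to sums of series $\sum_{n \ge 0} r_n \cdot m$ with $r_n \in \ida^n$, producing the canonical extension of scalars to $\comp{R}$. The main obstacle here is not mathematical but bibliographic: one has to match Positselski's setup to our graded-commutative Noetherian situation and to verify that the various equivalent definitions of $\ida$-contramodule are insensitive to the chosen generating set for $\ida$. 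Both are routine adaptations in the Noetherian case, but are worth making explicit since they will be used throughout the rest of the paper.
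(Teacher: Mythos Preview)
Your proposal is correct and follows essentially the same route as the paper: the projective-dimension-one fact for $R[r^{-1}]$ handles (ii)$\Leftrightarrow$(iii), the core equivalence (i)$\Leftrightarrow$(iii) and the moreover clause are both deferred to Positselski, and the abelian-subcategory claim is obtained either from the same references or from the long exact sequence (the paper cites Geigle--Lenzing here instead). The only place where the paper is noticeably more explicit is in unpacking (i)$\Leftrightarrow$(iii): since the paper's definition of ``derived $\ida$-complete'' is the somewhat nonstandard one (cokernel of a map of $\ida$-complete modules), it spells out why this matches Positselski's $\ida$-contramodule condition via the coreflector $\Delta_\ida$ and the identification $\Delta_\ida(P)\cong\comp{P}$ for projective $P$---you would want to make this matching explicit too rather than leave it as a black-box citation.
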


\begin{proof}
The equivalence between conditions \ref{item:derived-a-complete}--\ref{item:a-contramodules} follows from \autocite[\S2]{positselskiDedualizingMGM2016}, where the modules satisfying condition~\ref{item:a-contramodules} are called $\ida$-contramodule $R$-modules. Adopting this terminology temporarily, note that the class of $\ida$-contramodule modules contains all $\ida$-complete modules by the discussion at the beginning of \autocite[\S2]{positselskiDedualizingMGM2016}, and is coreflective by \autocite[Proposition~2.1]{positselskiDedualizingMGM2016}. In particular, this class is closed under taking cokernels and so contains all derived $\ida$-complete modules in our sense. On the other hand, given any $\ida$-contramodule module $M$, we can take a projective presentation $P_1\to P_0\to M\to 0$ and apply the $\ida$-contramodule coreflector, which we denote by $\Delta_\ida$, to obtain $\Delta_\ida(P_1)\to\Delta_\ida(P_0)\to \Delta_\ida(M)\cong M\to 0$. Since $\Delta_\ida(P)\cong\comp{P}$ for any projective (or ever flat) $R$-module $P$ by~\autocite[Lemma~2.5]{positselskiDedualizingMGM2016}, this shows the equivalence between~\ref{item:derived-a-complete} and~\ref{item:a-contramodules}. The equivalence between~\ref{item:a-contramodules-all-Ext} and~\ref{item:a-contramodules} follows from the fact that the localizations $R[r^{-1}]$ have projective dimension at most one over $R$.

The properties of the full subcategory of derived $\ida$-complete $R$-modules follows from \autocite[Proposition~1.1]{geigleLenzingPerpendicular1991}, using the fact that $R[r^{-1}]$ has projective dimension at most one over $R$ for any $r\in R$.

    Finally, the moreover part follows from \autocite[Proposition~1.5]{positselskiRemarksDerivedComplete2023}. Regarding the terminology used there, we only remark that each $\ida$-contramodule $R$-module is quotseparated (i.e.\ a quotient of an $\ida$-complete $R$-module) in our case since $R$ is Noetherian; cf.~\autocite[Corollary 3.7]{positselskiRemarksDerivedComplete2023}.
\end{proof}

Modules annihilated by a fixed power of $\ida$ are both $\ida$-power torsion and (derived) $\ida$-complete.

\begin{lemma}\label{lemma:annPowTorsionComplete}
    Let $M$ be an $R$-module such that $\ida^n M = 0$, for some $n \geq 1$. Then $M$ is $\ida$-power torsion and $\ida$-complete.
\end{lemma}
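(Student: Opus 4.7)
The plan is to verify both conditions directly from their definitions, since the hypothesis $\ida^n M = 0$ makes the inverse system defining $\ida$-completion eventually constant. There is no real obstacle here; this is essentially a definition-chase.

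First, I would show $M$ is $\ida$-power torsion. By definition, this requires that for every $m \in M$, some power of $\ida$ annihilates $m$. But the hypothesis $\ida^n M = 0$ says that $\ida^n$ annihilates every element of $M$ simultaneously, so in particular each individual element is killed. This immediately gives the first claim.

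Next, I would verify that $M$ is $\ida$-complete, i.e.\ that the canonical map $M \to \varprojlim_k M/\ida^k M$ is an isomorphism. The key observation is that for every $k \geq n$, we have $\ida^k M \subseteq \ida^n M = 0$, so $\ida^k M = 0$ and thus $M/\ida^k M = M$, with the transition maps being identities from that index onward. The inverse system is therefore eventually constant equal to $M$, so the limit equals $M$ and the canonical map is an isomorphism.

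In particular, although not required by the statement, this shows that $M$ is automatically derived $\ida$-complete as well, being itself $\ida$-complete (and therefore trivially the cokernel $M = \coker(0 \to M)$ of a morphism of $\ida$-complete modules).
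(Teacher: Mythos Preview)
Your proof is correct and follows essentially the same approach as the paper: both arguments observe that the inverse system $(M/\ida^k M)_k$ is eventually constant equal to $M$, making the completion map an isomorphism. The paper phrases this as the system being equivalent to a finite (initial) subsystem with limit $M$, which is the same observation.
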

\begin{proof}
    The module $M$ is obviously $\ida$-power torsion.
    Furthermore, for any $k \geq n$, $M / \ida^kM = M$ and the transition map $M / \ida^{k+1}M \to M / \ida^{k}M$ is the identity. Hence the infinite linear system $(M/\ida^{k}M)_{k \geq 1}$ is equivalent to a finite subsystem, whose limit is $M$ by initiality. 
\end{proof}

A key property of derived $\ida$-complete modules is that they satisfy a variation of Nakayama's lemma. Note that it holds for \emph{all} derived-complete modules, without any assumption of finite generation.

\begin{lemma}[Nakayama for derived $\ida$-complete modules]\label{lemma:nakayamaDC}
    Let $M$ be a derived $\ida$-complete module. Then $M = 0$ if and only if $M / \ida M = 0$.
\end{lemma}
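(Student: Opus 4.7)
The plan is to reduce to the case of a principal ideal and then use the explicit two-term projective resolution of $R[r^{-1}]$.

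\medskip

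\noindent\emph{Reduction to the principal case.} Write the chosen generators of $\ida$ as $r_1,\dots,r_k$ and induct on $k$. Let $\ida'=(r_1,\dots,r_{k-1})$. The hypothesis $\ida M=M$ rewrites as $\ida' M+r_kM=M$, i.e.\ $r_k\cdot(M/\ida'M)=M/\ida'M$. I claim $M/\ida'M$ is derived $\ida$-complete: it is the cokernel of the map
\[
M^{\oplus (k-1)}\longrightarrow M,\qquad (m_1,\dots,m_{k-1})\mapsto\sum_{i<k}r_im_i,
\]
both the domain and codomain of which are derived $\ida$-complete (using closure under finite products from \Cref{lemma:derivedComplete}), and the class is closed under cokernels by the same lemma. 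Since derived $\ida$-completeness implies derived $(r_k)$-completeness (the characterization in \Cref{lemma:derivedComplete}\ref{item:a-contramodules} is a condition on each $r\in\ida$ individually), the principal case applied to $M/\ida'M$ gives $\ida'M=M$, and the inductive hypothesis then yields $M=0$.

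\medskip

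\noindent\emph{The principal case $\ida=(r)$.} Use the standard presentation $R[r^{-1}]=\colim(R\xrightarrow{r}R\xrightarrow{r}\cdots)$, which yields the two-term free resolution
\[
0\longrightarrow\bigoplus_{n\ge 0}R\xrightarrow{\;1-r\sigma\;}\bigoplus_{n\ge 0}R\longrightarrow R[r^{-1}]\longrightarrow 0,
\]
where $\sigma$ is the shift $e_n\mapsto e_{n+1}$. Applying $\Hom_R(-,M)$ gives the exact sequence
\[
0\to\Hom_R(R[r^{-1}],M)\to\prod_{n\ge 0}M\xrightarrow{\;1-rT\;}\prod_{n\ge 0}M\to\Ext^1_R(R[r^{-1}],M)\to 0,
\]
with $T$ the left shift $(x_0,x_1,\dots)\mapsto(x_1,x_2,\dots)$. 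By \Cref{lemma:derivedComplete}\ref{item:a-contramodules}, derived $(r)$-completeness of $M$ is exactly the assertion that $1-rT$ is a bijection on $\prod_n M$.

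Now pick any $m\in M$. Using $rM=M$ iteratively, choose $m_0=m$ and $m_{i+1}\in M$ with $rm_{i+1}=m_i$ for every $i\ge 0$. Then
\[
(1-rT)(m_0,m_1,m_2,\dots)=(m_0-rm_1,\,m_1-rm_2,\dots)=(0,0,\dots),
\]
so the sequence lies in $\ker(1-rT)=\Hom_R(R[r^{-1}],M)=0$. In particular $m=m_0=0$, proving $M=0$.

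\medskip

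\noindent\emph{Main obstacle.} The real content is the principal case; the delicate point there is identifying the derived-completeness condition with the bijectivity of an explicit telescope map, which is what lets one manufacture a witness in $\prod_n M$ out of the hypothesis $rM=M$. The inductive step is then formal, provided one is careful to derive $\ida$-completeness of $M/\ida'M$ by realizing it as a cokernel (rather than a submodule) of derived $\ida$-complete modules, since submodules of derived complete modules need not be derived complete.
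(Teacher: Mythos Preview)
Your proof is correct and coincides with the paper's alternative argument, made fully explicit: the paper observes that for $r\in\ida$ the system $z_k - r z_{k+1} = m_k$ is uniquely solvable in $M$ (equivalently, your $1 - rT$ is bijective on $\prod_n M$) and then cites Positselski's lemmas for the Nakayama deduction, which is precisely your injectivity argument applied to the sequence $(m, m_1, m_2, \ldots)$ built from $rM = M$. Your explicit induction on the number of generators to reduce to the principal case is a clean detail the paper leaves to those references; note also that only the injectivity of $1-rT$ (i.e.\ $\Hom_R(R[r^{-1}],M)=0$) is actually used in your principal step.
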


\begin{proof}
Since any derived $\ida$-complete module is a cokernel of completions of free modules, one case reduce the problem to the Nakayama lemma for $\ida$-complete modules; see \autocite[Corollary~0.3]{portaShaulYekutieli2015} for this argument.

Alternatively, one can use that if $M$ is derived $\ida$-complete, $r\in\ida$ and $m_0, m_1, \dots $ is any sequence of elements of $M$, one can naturally define infinite sums $z_k=\sum_{n\ge 0}r^nm_{n+k}$ for all $k\ge 0$. More precisely, the countable system of equations $z_k-rz_{k+1}=m_k$, $k\ge 0$, has a unique solution $z_0, z_1, z_2, \cdots \in M$; see \autocite[Lemma~2.1]{positselskiContraadjustedModulesContramodules2017}. Now the Nakayama lemma follows easily by~\autocite[Lemmas~3.2, 4.2 and Remark~4.3]{positselskiContraadjustedModulesContramodules2017}.
\end{proof}

\begin{corollary}\label{cor:derivedCompleteQuotNoeth}
    Assume $R$ is $\ida$-complete. Let $M$ be a derived $\ida$-complete $R$-module and $r$ be an element of $\ida$. If $M/rM$ is Noetherian, so is $M$. 
\end{corollary}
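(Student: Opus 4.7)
The plan is to reduce the problem to showing that $M$ itself is finitely generated as an $R$-module, since together with the Noetherianity of $R$ this immediately gives the conclusion. First, since $M/rM$ is Noetherian (hence finitely generated), I would pick finitely many elements $m_1,\dots,m_n\in M$ whose residues generate $M/rM$, and set $N=Rm_1+\cdots+Rm_n\subseteq M$. By construction $N+rM=M$, so $r(M/N)=M/N$, and since $r\in\ida$ this forces $\ida(M/N)=M/N$.

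The next step is to verify that $M/N$ is derived $\ida$-complete so that the Nakayama lemma \Cref{lemma:nakayamaDC} can be applied. Because derived $\ida$-complete modules form an Abelian subcategory of $\catMod R$ closed under cokernels by \Cref{lemma:derivedComplete}, and $M$ is already derived $\ida$-complete by hypothesis, it suffices to check that the finitely generated submodule $N$ is derived $\ida$-complete. For this I would invoke the classical fact that over an $\ida$-adically complete Noetherian ring $R$, every finitely generated $R$-module is automatically $\ida$-adically complete (its completion coincides with $N\otimes_R\comp R=N$), and every $\ida$-adically complete module is derived $\ida$-complete.

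Granting this, \Cref{lemma:nakayamaDC} forces $M/N=0$, whence $M=N$ is finitely generated over the Noetherian ring $R$ and hence Noetherian. The step I expect to require the most care is the derived $\ida$-completeness of the finitely generated submodule $N$: it is precisely the hypothesis that $R$ is $\ida$-complete (rather than merely that $R$ sits inside its own completion) that makes $N$ complete by the standard completeness-of-finitely-generated-modules result. The rest of the argument is the familiar lift-and-Nakayama pattern.
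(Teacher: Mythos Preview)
Your proof is correct and follows essentially the same lift-and-Nakayama pattern as the paper. The only difference is in how you establish that the quotient $M/N$ is derived $\ida$-complete: the paper bypasses your intermediate step about $N$ by writing the quotient directly as $\coker(R^k\xrightarrow{(m_i)}M)$ and observing that $R^k$ itself is derived $\ida$-complete (since $R$ is $\ida$-complete), so no appeal to the completeness of finitely generated modules is needed.
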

\begin{proof}
    Let $m_1, \ldots, m_k \in M$ be representatives of generators of $M / rM$, and let $N = \coker(R^k \xrightarrow{(m_1, \ldots, m_k)} M)$. We see that $N / rN = 0$, hence $N / \ida N = 0$. Since $M$ and $R$ are derived $\ida$-complete, $N$ is derived $\ida$-complete, thus $N = 0$; hence $M$ is finitely generated.
\end{proof}

Now we can characterize $\ida$-power torsion, but more importantly also $\ida$-complete objects of $\category{T}$ as follows.

\begin{theorem} \label{thm:charPTandDC}
Let $\category{T}$ be a compactly-rigidly generated tensor-triangulated category, Noetherian over a graded commutative ring Noetherian ring $R$.
    Let $y$ be an object in $\category{T}$. Then:
    \begin{enumerate}[label=(\alph*)]
    \item\label{item:charPowerTorsion} $y\in\Gamma_{\ida}\category{T}$ if and only if $\Hom_{\category{T}}(c, y)$ is $\ida$-power torsion for each $c\in\category{T}^c$;
    \item\label{item:charDerivedComplete} $y\in\Lambda^{\ida}\category{T}$ if and only if $\Hom_{\category{T}}(c, y)$ is derived $\ida$-complete for each $c\in\category{T}^c$ if and only if $\Hom_{\category{T}}(x, y)$ is derived $\ida$-complete for each $x\in\category{T}$.
\end{enumerate}
\end{theorem}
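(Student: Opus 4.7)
Part~\ref{item:charPowerTorsion} is immediate upon unwinding the definition of $\Gamma_{\mathcal{V}(\ida)}\category{T}$ and applying \Cref{lemma:powerTorsionSupport}. For part~\ref{item:charDerivedComplete} the middle implication is trivial, so only two directions remain, and we treat them in turn.

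For the forward direction, we would leverage the adjunction $\Gamma_\ida\dashv\Lambda^\ida$: since $y\cong\Lambda^\ida y$, we obtain $\Hom_\category{T}(x,y)\cong\Hom_\category{T}(\Gamma_\ida x,y)$ for every $x\in\category{T}$, reducing the question to derived $\ida$-completeness of the latter. Using \Cref{prop:torsionColimit} to express $\Gamma_\ida x$ as a sequential homotopy colimit of Koszul objects $\Sigma^{md-k}x\kos\ida^{[m]}$, the Milnor $\limit^1$-short exact sequence presents $\Hom_\category{T}(\Gamma_\ida x,y)$ as an extension of $\limit_m$ and $\limit^1_m$ of $\Hom_\category{T}(x\kos\ida^{[m]},y)$ (with the appropriate shift on the latter). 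Each such Hom-group is annihilated by a power of $\ida$: the generator $r_i^m$ acts by zero on the tensor factor $\mathbb{1}\kos r_i^m$ of $x\kos\ida^{[m]}=x\otimes(\mathbb{1}\kos r_1^m)\otimes\cdots\otimes(\mathbb{1}\kos r_k^m)$, so $\ida^{[m]}\cdot\id_{x\kos\ida^{[m]}}=0$ and some large $\ida^N$ lies in $\ida^{[m]}$. By \Cref{lemma:annPowTorsionComplete} each such Hom-group is in fact $\ida$-complete, and hence derived $\ida$-complete. The closure of derived $\ida$-complete $R$-modules under products, cokernels, and extensions from \Cref{lemma:derivedComplete} then propagates the property through $\limit$, $\limit^1$, and the Milnor exact sequence.

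For the backward direction, we would work one Koszul generator at a time. Fix a generating sequence $\ida=(r_1,\ldots,r_k)$: derived $\ida$-completeness of $\Hom_\category{T}(c,y)$ for all $c\in\category{T}^c$ implies, by \Cref{lemma:derivedComplete}, derived $(r_i)$-completeness for each $i$. Applying $\Hom_\category{T}(-,y)$ to the telescope triangle defining $c[r_i^{-1}]=\hocolim_n(c\xrightarrow{r_i}\Sigma^{|r_i|}c\xrightarrow{r_i}\cdots)$ and unwinding the Milnor sequence identifies $\Hom_\category{T}(c[r_i^{-1}],y)$ with an extension of $\Hom_R(R[r_i^{-1}],\Hom_\category{T}(c,y))$ and $\Ext_R^1(R[r_i^{-1}],\Hom_\category{T}(c,y))$ (up to shift), both of which vanish by hypothesis. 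Since $\Hom_\category{T}(c[r_i^{-1}],y)\cong\Hom_\category{T}(c,V^{(r_i)}y)$, compact generation of $\category{T}$ forces $V^{(r_i)}y=0$, i.e.\ $y\in\Lambda^{(r_i)}\category{T}$. Appealing to the identity $\Lambda^\ida\cong\Lambda^{(r_k)}\circ\cdots\circ\Lambda^{(r_1)}$ recorded in \S\ref{subsec:torsion-and-completion}, a short induction then gives $y\cong\Lambda^\ida y$. The only genuinely delicate step throughout is controlling derived $\ida$-completeness under $\limit^1$; this is precisely why the proof routes through derived complete modules, the nontrivial input being that they form an Abelian subcategory of $\catMod R$ closed under cokernels, which comes from Positselski's theory of $\ida$-contramodules and is recorded in \Cref{lemma:derivedComplete}.
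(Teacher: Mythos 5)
Your proof is correct but takes a genuinely different route from the paper's in both directions of part~(b). For the forward implication, the paper instead fixes an injective cogenerator $I$ of $\catMod R$ and shows that the Brown--Comenetz-type objects $t_{c\kos\ida}(I)$ perfectly cogenerate $\Lambda^\ida\category{T}$, then checks derived completeness on those; your argument via $\Hom_\category{T}(\Gamma_\ida x,y)$, \Cref{prop:torsionColimit}, and the Milnor $\limit^{(1)}$ sequence avoids the perfect-cogeneration input from~\autocite{bensonColocalizingSubcategoriesCosupport2012} entirely and is more self-contained (it only needs that derived $\ida$-complete modules form an exact Abelian subcategory closed under products and cokernels, i.e.~the Positselski input the paper records in \Cref{lemma:derivedComplete}). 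For the backward implication, the paper sets $z_0=V^\ida y$, applies $\Hom(c,-)$ to the Koszul triangles $z_{i-1}\to z_i$, and uses the Nakayama \Cref{lemma:nakayamaDC} together with $z_k\in\Gamma_\ida\category{T}\cap L_\ida\category{T}=0$ in a descending induction to get $\Hom(c,z_0)=0$; you instead work one generator $r_i$ at a time, identify $\lim/\limit^{(1)}$ of the telescope Hom-system with $\Hom_R(R[r_i^{-1}],-)$ and $\Ext^1_R(R[r_i^{-1}],-)$ to kill $V^{(r_i)}y$ directly, and then compose the $\Lambda^{(r_i)}$ via $\Lambda^{\mathcal{V}}\Lambda^{\mathcal{W}}\cong\Lambda^{\mathcal{V}\cap\mathcal{W}}$. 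Both routes are valid; yours makes the role of the characterization in \Cref{lemma:derivedComplete}\ref{item:a-contramodules} more transparent, while the paper's Nakayama-based argument has the advantage of not requiring the adjunction gymnastics identifying $\Hom(L_{(r_i)}c,y)$ with $\Hom(c,V^{(r_i)}y)$.

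One small inaccuracy that does not affect the conclusion: you assert $\ida^{[m]}\cdot\id_{x\kos\ida^{[m]}}=0$, but an element $r_i^m$ need not kill $\id_{\mathbb{1}\kos r_i^m}$ — only $(r_i^m)^2$ is guaranteed to. What is actually true (and all you need) is that \emph{some} power of $\ida$ annihilates $\id_{\mathbb{1}\kos\ida^{[m]}}$, hence annihilates $\Hom_\category{T}(x\kos\ida^{[m]},y)$, as recorded in \Cref{prop:compactKoszul}; \Cref{lemma:annPowTorsionComplete} then applies as you say.
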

\begin{proof}
    Statement \ref{item:charPowerTorsion} is a direct consequence of \Cref{lemma:powerTorsionSupport}.

    In order to prove \ref{item:charDerivedComplete}, assume first that $y\in\Lambda^\ida\category{T}$ and $x\in\category{T}$, and we will prove that $\Hom_{\category{T}}(x,y)$ is derived $\ida$-complete. In order to do this, we fix an injective cogenerator $I$ of $\catMod(R)$. For any compact object $c \in \category{T}^c$, we consider the object $t_{c \kos \ida}(I)$ defined using Brown Representability:
    \[
        \Hom_{\category{T}}(-, t_{c \kos \ida}(I)) \cong \Hom_R(\Hom_{\category{T}}(c \kos \ida, -), I).
    \]
    These objects have been studied in \autocite{bensonColocalizingSubcategoriesCosupport2012}. 
		In particular, by an adaptation of \autocite[Proposition 5.4]{bensonColocalizingSubcategoriesCosupport2012}, the set $\{ t_{c \kos \ida}(I) \mid c \in \category{T}^c \}$ perfectly cogenerates $\Lambda^{\ida}\category{T}$ in the sense of~\autocite[\S1]{krauseBrownRepresentabilityCoherent2002}. Consequently, \autocite[Proposition 5.2]{bensonColocalizingSubcategoriesCosupport2012} tells us that the smallest triangulated subcategory of $\category{T}$ containing the set $\{ t_{c \kos \ida}(I) \mid c \in \category{T}^c \}$and closed under products is precisely $\Lambda^\ida\category{T}$.
		
 Since by \Cref{lemma:derivedComplete}, the category of derived $\ida$-complete modules is an extension closed exact Abelian subcategory of the category of $R$-modules which is also closed under products, it is sufficient to check that $\Hom_{\category{T}}(c', t_{c \kos \ida}(I))$ is (derived) $\ida$-complete for any compact objects $c, c' \in \category{T}^c$. The $R$-module $\Hom_{\category{T}}(c \kos \ida, c')$ is annihilated by a power of $\ida$, hence so is
    \[
        \Hom_{\category{T}}(c', t_{c \kos \ida}(I)) \cong \Hom_{R}(\Hom_{\category{T}}(c \kos \ida, c'), I)
    \]
    By \Cref{lemma:annPowTorsionComplete}, $\Hom_{\category{T}}(c', t_{c \kos \ida}(I))$ is $\ida$-complete.
    
		Suppose conversely that $y\in\category{T}$ is such that $\Hom_{\category{T}}(c,x)$ is derived $\ida$-complete for each $c\in\category{T}^c$, and consider a triangle
		\[ V^\ida y \to y \to \Lambda^\ida y \to \Sigma V^\ida y \]
		with $V^\ida y\in L_\ida\category{T}$ and $\Lambda^\ida y\in \Lambda^\ida\category{T}$ as in \S\ref{subsec:torsion-and-completion},
		and denote $z_0=V^\ida y$ for brevity.
		Since $\Hom_{\category{T}}(c,y)$ is derived $\ida$-complete by assumption and $\Hom_{\category{T}}(c,\Lambda^\ida y)$ is derived $\ida$-complete by the previous part, it follows that $\Hom_{\category{T}}(c,z_0)$ is derived $\ida$-complete. If we write $\ida=(a_1, \dots, a_k)$ and $z_i=z\kos(a_1,\dots, a_i)$ for all $1\le i\le k$ and apply $\Hom_{\category{T}}(c, -)$ to the exact triangles
    \[
        \Sigma^{-|a_i|}z_{i-1} \xrightarrow{a_i} z_{i-1} \rightarrow z_i \rightarrow \Sigma^{-|a_i|+1}z_{i-1},
    \]
    we obtain long exact sequences of derived $\ida$-complete $R$-modules
    \[
        \Hom_{\category{T}}(c, \Sigma^{-|a_i|}z_{i-1}) \xrightarrow{a_i} \Hom_{\category{T}}(c, z_{i-1}) \rightarrow \Hom_{\category{T}}(c, z_i) \rightarrow \Hom_{\category{T}}(c, \Sigma^{-|a_i|+1}z_{i-1}).
    \]

		Now $z_k=z_0\kos(a_1, \dots, a_k)\in\Gamma_\ida\category{T}\cap L_\ida\category{T}=\{0\}$, and using the Nakayama \Cref{lemma:nakayamaDC} and the exact sequences, we prove by induction for $i=k, k-1, \dots, 0$ that $\Hom_{\category{T}}(c, z_{i-1})=0$. In particular, $\Hom_{\category{T}}(c, z_0)=0$ for all $c\in\category{T}^c$, so $z_0=0$ and $y\cong \Lambda^\ida y\in\Lambda^\ida\category{T}$, as required.
\end{proof}

\begin{corollary}\label{cor:tPTcDC}
    Let $x, y$ be two objects in $\category{T}$ and $c$ be a compact object in $\category{T}^c$.
    \begin{enumerate}[label=(\alph*)]
    \item\label{item:torsionPowerTorsion} $\Hom_{\category{T}}(c, \Gamma_{\ida}y)$ is $\ida$-power torsion.
    \item\label{item:completeDerivedComplete} $\Hom_{\category{T}}(x, \Lambda^{\ida}y)$ is derived $\ida$-complete.
    \end{enumerate}
    In particular, the action of $R$ on $\Hom_{\category{T}}(c, \Gamma_{\ida}y)$ and $\Hom_{\category{T}}(x, \Lambda^{\ida}y)$ lifts uniquely to an action of $\comp{R}$.
\end{corollary}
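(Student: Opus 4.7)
The plan is to read off this corollary directly from \Cref{thm:charPTandDC} together with the module-theoretic lemmas on $\ida$-power torsion and derived $\ida$-complete modules. Both statements are essentially tautological restatements in the special case where the second variable already lies in the relevant essential image, so no further triangulated-category technology should be needed.

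For part~\ref{item:torsionPowerTorsion}, I would observe that by definition of the notation $\Gamma_\ida=i_!i^*$ (see \S\ref{subsec:torsion-and-completion}), the object $\Gamma_\ida y$ lies in $\Gamma_\ida\category{T}$. Applying \Cref{thm:charPTandDC}~\ref{item:charPowerTorsion} to $\Gamma_\ida y$ immediately yields that $\Hom_\category{T}(c,\Gamma_\ida y)$ is $\ida$-power torsion for every $c\in\category{T}^c$. Similarly, for part~\ref{item:completeDerivedComplete}, since $\Lambda^\ida y=i_*i^*y\in\Lambda^\ida\category{T}$, applying \Cref{thm:charPTandDC}~\ref{item:charDerivedComplete} to $\Lambda^\ida y$ shows that $\Hom_\category{T}(x,\Lambda^\ida y)$ is derived $\ida$-complete for \emph{every} $x\in\category{T}$ (not only compact $x$), which is exactly the claimed strengthening.

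For the final \emph{in particular} clause, I would invoke the moreover parts of \Cref{lemma:powerTorsionSupport} and \Cref{lemma:derivedComplete}: the $R$-module structure on any $\ida$-power torsion module, respectively any derived $\ida$-complete module, extends uniquely to an $\comp{R}$-module structure compatible via restriction of scalars. Applying this to the Hom-groups produced in parts~\ref{item:torsionPowerTorsion} and~\ref{item:completeDerivedComplete} gives the desired canonical $\comp{R}$-action. There is no real obstacle here, as the work has already been done in \Cref{thm:charPTandDC}; the corollary is just a naming convention for the outputs of the torsion coreflection and completion reflection.
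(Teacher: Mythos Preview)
Your proposal is correct and follows essentially the same approach as the paper: both derive parts~\ref{item:torsionPowerTorsion} and~\ref{item:completeDerivedComplete} immediately from \Cref{thm:charPTandDC} applied to $\Gamma_\ida y\in\Gamma_\ida\category{T}$ and $\Lambda^\ida y\in\Lambda^\ida\category{T}$, and then invoke the moreover clauses of \Cref{lemma:powerTorsionSupport} and \Cref{lemma:derivedComplete} for the lift to $\comp{R}$.
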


\begin{proof}
    Parts \ref{item:torsionPowerTorsion} and \ref{item:completeDerivedComplete} are immediate from \Cref{thm:charPTandDC}.
    The action of $R$ uniquely lifts to an action of the completed ring $\comp{R}$ by \Cref{lemma:powerTorsionSupport,lemma:derivedComplete}.
\end{proof}

\begin{remark}\label{remark:homFromTorsionIsComplete}
    The adjunction isomorphism, for any $x,y \in \category{T}$,
    \[
        \Hom_{\category{T}}(x, \Lambda^{\ida}y) \cong \Hom_{\category{T}}(\Gamma_{\ida}x, y)
    \]
    is $R$-linear, hence $\Hom_{\category{T}}(\Gamma_{\ida}x, y)$ is derived $\ida$-complete, and the adjunction isomorphism induces an isomorphism of $\comp{R}$-modules by \Cref{lemma:derivedComplete}.
\end{remark}

Now we can use Nakayama's lemma to lift finite generation properties from $\ida$-complete objects in $\category{T}$.

\begin{proposition}\label{prop:complAllCompactNoeth}
    Let $x$ be an object in $\Lambda^{\ida}\category{T}$. The following statements are equivalent:
    \begin{enumerate}[label=(\roman*)]
    \item\label{stmt:complCompactNoeth} for any $c\in(\Lambda^{\ida}\category{T})^c$, $\Hom_{\category{T}}(c, x)$ is Noetherian over $R$;
    \item\label{stmt:allCompactNoeth} for any $c\in\category{T}^c$, $\Hom_{\category{T}}(c, x)$ is Noetherian over $\comp{R}$.
    \end{enumerate} 
\end{proposition}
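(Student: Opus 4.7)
The plan is to prove (ii)$\Rightarrow$(i) by a direct reduction modulo a power of $\ida$, and (i)$\Rightarrow$(ii) by a downward induction along iterated Koszul objects, using the Nakayama lemma for derived $\ida$-complete modules (\Cref{cor:derivedCompleteQuotNoeth}) at each step.

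For (ii)$\Rightarrow$(i), I would note that by \Cref{prop:compactKoszul}, any $c \in (\Lambda^{\ida}\category{T})^c = \Gamma_\ida\category{T} \cap \category{T}^c$ satisfies $\ida^n \cdot \id_c = 0$ for some $n$, so $\Hom_\category{T}(c, x)$ is annihilated by $\ida^n$. Both its $R$- and $\comp{R}$-module structures then factor through the common quotient $R/\ida^n = \comp{R}/\ida^n\comp{R}$, so Noetherianity over $\comp{R}$ is equivalent to Noetherianity over $R$.

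For the converse, I would fix $c \in \category{T}^c$ and a generating sequence $\ida = (a_1, \dots, a_k)$, and consider the iterated Koszul objects $c_i = c \kos (a_1, \dots, a_i)$ (with $c_0 = c$ and $c_k = c \kos \ida$) together with the graded modules $N_i = \Hom_\category{T}(c_i, x)$. Since each $c_i$ is compact in $\category{T}$, \Cref{cor:tPTcDC}~\ref{item:completeDerivedComplete} together with \Cref{lemma:derivedComplete} equips every $N_i$ with a canonical $\comp{R}$-module structure making it derived $\ida\comp{R}$-complete. Because $c_k \in (\Lambda^{\ida}\category{T})^c$, hypothesis (i) gives that $N_k$ is Noetherian over $R$, and, as in the previous paragraph, also over $\comp{R}$.

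The induction runs from $i = k$ down to $i = 0$. Applying $\Hom_\category{T}(-,x)$ to the defining triangle $\Sigma^{-|a_i|} c_{i-1} \xrightarrow{a_i} c_{i-1} \to c_i \to$ and reassembling graded pieces yields a short exact sequence of graded $\comp{R}$-modules, up to grading shifts,
\[
    0 \to N_{i-1}/a_i N_{i-1} \to N_i \to \mathrm{Ann}_{a_i}(N_{i-1}) \to 0.
\]
Assuming inductively that $N_i$ is Noetherian over $\comp{R}$, the submodule $N_{i-1}/a_i N_{i-1}$ inherits this property, and \Cref{cor:derivedCompleteQuotNoeth}—applied with the $\ida\comp{R}$-complete ring $\comp{R}$, the derived $\ida\comp{R}$-complete module $N_{i-1}$, and the element $a_i \in \ida\comp{R}$—upgrades this to Noetherianity of $N_{i-1}$ itself. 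The induction terminates at $N_0 = \Hom_\category{T}(c, x)$, finishing the proof. I expect no serious obstacle beyond checking compatibility with the graded setting, which is automatic since $\comp{R}$ is graded Noetherian and the submodule passage and Nakayama step are both degree-wise.
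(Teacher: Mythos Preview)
Your proposal is correct and follows essentially the same route as the paper's proof: the (ii)$\Rightarrow$(i) direction is handled identically via the annihilation by a power of $\ida$, and the (i)$\Rightarrow$(ii) direction is the same downward induction along Koszul objects, invoking \Cref{cor:derivedCompleteQuotNoeth} at each step after reading off that $N_{i-1}/a_iN_{i-1}$ embeds in $N_i$. The only cosmetic difference is that you package the long exact sequence into an explicit short exact sequence, whereas the paper just notes the submodule inclusion.
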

\begin{proof}
    Assume \ref{stmt:complCompactNoeth} holds. Fix $c \in \category{T}^c$. Then $c \kos \ida$ is in $(\Lambda^\ida\category{T})^c$ and by assumption $\Hom_{\category{T}}(c \kos \ida, x)$ is Noetherian over $R$, thus over $\comp{R}$. By induction, it suffices to check that, if for some $r \in \ida$, $\Hom_{\category{T}}(c \kos r, x)$ is Noetherian over $\comp{R}$, so is $\Hom_{\category{T}}(c, x)$.
    From the exact triangle
    \[
        \Sigma^{-|r|}c \xrightarrow{r} c \rightarrow c \kos r \rightarrow \Sigma^{-|r|+1}c
    \]
    we obtain a long exact sequence of derived $\ida$-complete $\comp{R}$-modules
    \[
        \Hom_{\category{T}}(\Sigma c, x) \xrightarrow{r} \Hom_{\category{T}}(\Sigma^{-|r|+1}c, x) \to \Hom_{\category{T}}(c \kos r, x).
    \]
    Taking $M = \Hom_{\category{T}}(c, x)$, $M / rM$ is isomorphic to a submodule of the Noetherian $\comp{R}$-module $\Hom_{\category{T}}(c \kos r, x)$, hence is Noetherian itself. Thus by \Cref{cor:derivedCompleteQuotNoeth}, $M$ is Noetherian over $\comp{R}$.

    Conversely, assume \ref{stmt:allCompactNoeth} holds. Then for any compact object $c \in (\Lambda^{\ida}T)^c$, the module $\Hom_{\category{T}}(c, x)$ is annihilated by some power of $\ida$, by \Cref{prop:compactKoszul}. Thus, if a family of elements $\mathcal{F}$ generates $\Hom_{\category{T}}(c, x)$ as an $\comp{R}$-module, it also generates it as an $R$-module.    
\end{proof}

The dual proof holds for $\ida$-torsion modules.

\begin{proposition}\label{prop:torAllCompactNoeth}
    Let $x$ be an object in $\Gamma_{\ida}\category{T}$. Then the following propositions are equivalent:
    \begin{enumerate}[label=(\roman*)]
    \item for any $c \in (\Gamma_{\ida}\category{T})^c$, $\Hom_{\category{T}}(x, c)$ is finitely generated over $R$;
    \item for any $c \in \category{T}^c$, $\Hom_{\category{T}}(x, c)$ is finitely generated over $\comp{R}$.
    \end{enumerate}
\end{proposition}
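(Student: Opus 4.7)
The plan is to mirror the proof of \Cref{prop:complAllCompactNoeth} by swapping the roles of the two arguments of $\Hom_{\category{T}}$. The starting point is the key observation from \Cref{remark:homFromTorsionIsComplete}: since $x\in\Gamma_{\ida}\category{T}$, the adjunction isomorphism $\Hom_{\category{T}}(x, c)\cong\Hom_{\category{T}}(x,\Lambda^{\ida}c)$ shows that $\Hom_{\category{T}}(x,c)$ is derived $\ida$-complete for every $c\in\category{T}$, and therefore carries a unique $\comp{R}$-module structure by \Cref{lemma:derivedComplete}. This puts us in a position to apply the Nakayama-type statement \Cref{cor:derivedCompleteQuotNoeth} (read over the $\ida\comp{R}$-complete ring $\comp{R}$).

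For the implication (i) $\Rightarrow$ (ii), I would fix an arbitrary $c\in\category{T}^c$, choose generators $\ida=(a_1,\ldots,a_k)$, and set $c_0=c$, $c_j=c\kos(a_1,\ldots,a_j)$. Then $c_k=c\kos\ida$ belongs to $(\Gamma_{\ida}\category{T})^c$ by \Cref{prop:compactKoszul}, so by hypothesis (i) the module $\Hom_{\category{T}}(x,c_k)$ is finitely generated over $R$. Since \Cref{prop:compactKoszul} also yields $\ida^n\cdot\id_{c_k}=0$ for some $n$, this module is annihilated by $\ida^n$, and via the identification $R/\ida^n=\comp{R}/\ida^n\comp{R}$ it is finitely generated over $\comp{R}$ as well. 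This is the base case of a downward induction on $j$: applying $\Hom_{\category{T}}(x,-)$ to the triangle
\[
    \Sigma^{-|a_j|}c_{j-1}\xrightarrow{a_j} c_{j-1}\to c_j\to\Sigma^{-|a_j|+1}c_{j-1}
\]
produces an embedding of graded $\comp{R}$-modules
\[
    \Hom_{\category{T}}(x,c_{j-1})/a_j\Hom_{\category{T}}(x,c_{j-1})\hookrightarrow\Hom_{\category{T}}(x,c_j).
\]
Because $\Hom_{\category{T}}(x,c_{j-1})$ is derived $\ida\comp{R}$-complete over $\comp{R}$, the variant of \Cref{cor:derivedCompleteQuotNoeth} applied over $\comp{R}$ upgrades the Noetherianity of the quotient to Noetherianity of $\Hom_{\category{T}}(x,c_{j-1})$ over $\comp{R}$. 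Iterating down to $j=0$ yields (ii).

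The reverse implication (ii) $\Rightarrow$ (i) is immediate: given $c\in(\Gamma_{\ida}\category{T})^c$, \Cref{prop:compactKoszul} furnishes $n\geq 1$ with $\ida^n\cdot\id_c=0$, so $\Hom_{\category{T}}(x,c)$ is killed by $\ida^n$, and any finite $\comp{R}$-generating set is automatically an $R$-generating set.

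I do not anticipate any serious obstacle, as the argument is formally dual to the one already given. The only point demanding attention is that the Nakayama-style corollary \Cref{cor:derivedCompleteQuotNoeth} must be applied with base ring $\comp{R}$ rather than $R$, which is permitted because $\comp{R}$ is $\ida\comp{R}$-complete and the natural $\comp{R}$-action on derived $\ida$-complete $R$-modules is provided by \Cref{lemma:derivedComplete}.
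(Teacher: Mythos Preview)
Your proposal is correct and matches the paper's approach exactly: the paper simply states ``The dual proof holds for $\ida$-torsion modules,'' and what you have written is precisely that dual of the proof of \Cref{prop:complAllCompactNoeth}, including the key ingredients (derived $\ida$-completeness of $\Hom_{\category{T}}(x,-)$ via \Cref{remark:homFromTorsionIsComplete}, downward induction along the Koszul tower, and the application of \Cref{cor:derivedCompleteQuotNoeth} over $\comp{R}$).
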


\subsection{Finiteness on homomorphisms from dualizability}

In this section we collect finiteness properties involving of homomorphism modules involving torsion and complete dualizable objects.
In particular, we will show that the homomorphism groups between torsion or complete dualizable modules are finitely generated over the adic completion $\comp{R}$ of $R$.

\begin{proposition}\label{prop:dualNoethComp}
    Let $d \in (\Lambda^{\ida}\category{T})^d$ be a dualizable object in $\Lambda^{\ida}\category{T}$. For any compact object $c \in (\Lambda^{\ida}\category{T})^c$, $\Hom_{\category{T}}(c, d)$ and $\Hom_{\category{T}}(d,c)$ are Noetherian over $R$.
\end{proposition}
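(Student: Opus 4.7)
The plan is to reduce both Hom-groups to Hom-groups between compact objects of $\category{T}$ by passing through $\Gamma_\ida\category{T}$, where the tensor product restricts from $\category{T}$ and tensor-Hom duality is readily available. By \Cref{prop:tensorCompTor}, the functor $\Gamma_\ida\colon\Lambda^\ida\category{T}\xrightarrow{\sim}\Gamma_\ida\category{T}$ is a monoidal equivalence, so it carries $d$ to a dualizable object $d':=\Gamma_\ida d\in(\Gamma_\ida\category{T})^d$; since $c\in(\Lambda^\ida\category{T})^c=(\Gamma_\ida\category{T})^c$ already lies in $\Gamma_\ida\category{T}$, we have $\Gamma_\ida c=c$, and the equivalence together with the fullness of $\Gamma_\ida\category{T},\Lambda^\ida\category{T}\hookrightarrow\category{T}$ yields
\[
    \Hom_\category{T}(c,d)\cong\Hom_\category{T}(c,d')\quad\text{and}\quad\Hom_\category{T}(d,c)\cong\Hom_\category{T}(d',c).
\]

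Setting $\SW{d'}:=\intHom_{\Gamma_\ida\category{T}}(d',\Gamma_\ida\mathbb{1})$ and using that the tensor product of $\Gamma_\ida\category{T}$ restricts from $\category{T}$ (\Cref{prop:tensorCompTor}\ref{item:tensorCompTor-torsion}), dualizability of $d'$ gives
\[
    \Hom_\category{T}(c,d')\cong\Hom_\category{T}(c\otimes\SW{d'},\Gamma_\ida\mathbb{1}),\quad\Hom_\category{T}(d',c)\cong\Hom_\category{T}(\Gamma_\ida\mathbb{1},\SW{d'}\otimes c).
\]
The object $c\otimes\SW{d'}$ is a tensor product of a compact and a dualizable object inside $\Gamma_\ida\category{T}$, so by \Cref{prop:compactRigid}\ref{item:prodCR} it lies in $(\Gamma_\ida\category{T})^c$, and by \Cref{prop:compactKoszul} it belongs to $\category{T}^c\cap\Lambda^\ida\category{T}$. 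It remains to trade $\Gamma_\ida\mathbb{1}$ for $\mathbb{1}$: since $c\otimes\SW{d'}\in\Gamma_\ida\category{T}$ is left-orthogonal to $L_\ida\category{T}$, the triangle $\Gamma_\ida\mathbb{1}\to\mathbb{1}\to L_\ida\mathbb{1}$ forces $\Hom_\category{T}(c\otimes\SW{d'},\Gamma_\ida\mathbb{1})\cong\Hom_\category{T}(c\otimes\SW{d'},\mathbb{1})$, while the adjunction $\Gamma_\ida\dashv\Lambda^\ida$ combined with $\SW{d'}\otimes c\in\Lambda^\ida\category{T}$ gives $\Hom_\category{T}(\Gamma_\ida\mathbb{1},\SW{d'}\otimes c)\cong\Hom_\category{T}(\mathbb{1},\SW{d'}\otimes c)$. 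Since $\mathbb{1}$ and $c\otimes\SW{d'}$ are both compact in $\category{T}$ and $\category{T}$ is Noetherian, both resulting Hom-groups are Noetherian $R$-modules.

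The main conceptual step is the first one: the monoidal equivalence $\Lambda^\ida\category{T}\simeq\Gamma_\ida\category{T}$ lets us trade dualizability in $\Lambda^\ida\category{T}$---where the tensor product $\Lambda^\ida(-\otimes-)$ and the unit $\Lambda^\ida\mathbb{1}$ are less amenable to direct computation---for dualizability in $\Gamma_\ida\category{T}$, whose tensor product and class of compact objects sit directly inside $\category{T}$. After that, the argument is routine tensor-Hom manipulation; the only mild subtlety is to confirm that $c\otimes\SW{d'}$ is genuinely compact in $\category{T}$ (not merely in $\Gamma_\ida\category{T}$), which is precisely what \Cref{prop:compactKoszul} supplies.
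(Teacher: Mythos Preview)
Your proof is correct and follows essentially the same approach as the paper: use dualizability to rewrite the Hom-groups as $\Hom_\category{T}$ between compact objects of $\category{T}$, then invoke Noetherianity. The only cosmetic differences are that the paper stays in $\Lambda^\ida\category{T}$ rather than first passing to $\Gamma_\ida\category{T}$, and for $\Hom_\category{T}(c,d)$ it dualizes the compact $c$ rather than $d$; your detour through $\Gamma_\ida\category{T}$ is valid but not actually needed, so the ``main conceptual step'' you highlight can be bypassed.
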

\begin{proof}
    Note that the compact object $c$ is dualizable in each of $\Gamma_{\ida}\category{T}$, $\Lambda^{\ida}\category{T}$ and $\category{T}$, and the three Spanier-Whitehead duals coincide. Hence $\SW{c} \in \Gamma_{\ida}\category{T} \cap \category{T}^c = (\Gamma_{\ida}\category{T})^c = (\Lambda^{\ida}\category{T})^c$. Moreover,
    \[
        \Hom_{\category{T}}(c, d) \cong \Hom_{\category{T}}(\Lambda^{\ida}\mathbb{1}, \SW{c} \otimes_{\Lambda^{\ida}\category{T}} d).
    \]
    Since $c' = \SW{c} \otimes_{\Lambda^{\ida}\category{T}} d$ is a product of a compact and a dualizable object of $\Lambda^{\ida}\category{T}$, it is itself compact in $\Lambda^{\ida}\category{T}$ by \Cref{prop:compactRigid}, and
    \[
        \Hom_{\category{T}}(\Lambda^{\ida}\mathbb{1}, c') 
        \cong \Hom_{\category{T}}(\Gamma_{\ida}\mathbb{1}, \Gamma_{\ida}c') 
        \cong \Hom_{\category{T}}(\mathbb{1}, c')
    \]
    since $c'$ is in $\Lambda^{\ida}\category{T}$. Moreover, both $\mathbb{1}$ and $c'$ are compact in $\category{T}$, hence $\Hom_{\category{T}}(\mathbb{1}, c')$ is Noetherian over $R$.
		
		The argument for $\Hom_{\category{T}}(d, c)$ is similar, using the isomorphism
    \[
        \Hom_{\category{T}}(d, c) \cong \Hom_{\category{T}}(\Lambda^{\ida}\mathbb{1}, \SW{d} \otimes_{\Lambda^{\ida}\category{T}} c) \cong \Hom_{\category{T}}(\mathbb{1}, \SW{d} \otimes c),
    \]
		and the fact that $\SW{d} \otimes c\in\category{T}^c$.
\end{proof}

\begin{corollary}\label{cor:dualNoethTor}
    Let $d \in (\Gamma_{\ida}\category{T})^d$ be a dualizable object in $\Gamma_{\ida}\category{T}$. For any compact object $c \in (\Gamma_{\ida}\category{T})^c$, $\Hom_{\category{T}}(c, d)$ and $\Hom_{\category{T}}(d, c)$ are Noetherian over $R$.
\end{corollary}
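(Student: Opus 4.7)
The plan is to mimic the proof of \Cref{prop:dualNoethComp} inside the closed symmetric monoidal subcategory $\Gamma_\ida\category{T}$, which is actually slightly simpler than the complete case because $\Gamma_\ida\category{T}$ is a $\otimes$-ideal of $\category{T}$ and its tensor product is the restriction of that of $\category{T}$.

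First, \Cref{prop:tensorCompTor} tells us that every compact object of $\Gamma_\ida\category{T}$ is dualizable there; write $c^\vee$ for the Spanier-Whitehead dual of $c$ in $\Gamma_\ida\category{T}$. Since $c^\vee\otimes d$ is a tensor product of a compact and a dualizable object of $\Gamma_\ida\category{T}$, it is compact in $\Gamma_\ida\category{T}$ by \Cref{prop:compactRigid}\ref{item:prodCR}, and hence lies in $(\Gamma_\ida\category{T})^c=\Gamma_\ida\category{T}\cap\category{T}^c\subset\category{T}^c$. The key step is then the chain of $R$-linear isomorphisms
\[
\Hom_\category{T}(c,d)\cong\Hom_{\Gamma_\ida\category{T}}(\Gamma_\ida\mathbb{1},c^\vee\otimes d)\cong\Hom_\category{T}(\mathbb{1},c^\vee\otimes d),
\]
where the first uses the tensor-Hom adjunction in $\Gamma_\ida\category{T}$ (whose unit is $\Gamma_\ida\mathbb{1}$) together with fullness of $\Gamma_\ida\category{T}\subset\category{T}$, and the second is obtained by applying $\Hom_\category{T}(-,c^\vee\otimes d)$ to the triangle $\Gamma_\ida\mathbb{1}\to\mathbb{1}\to L_\ida\mathbb{1}\to$ and noting that $c^\vee\otimes d\in\Gamma_\ida\category{T}$ is Hom-orthogonal to $L_\ida\mathbb{1}$. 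The right-hand side is then a homomorphism module between two compact objects of $\category{T}$, hence Noetherian over $R$ by the standing hypothesis. The statement for $\Hom_\category{T}(d,c)$ follows by the mirror argument with $d^\vee\otimes c$ in place of $c^\vee\otimes d$.

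The only aspect requiring care (but no real obstacle) will be verifying $R$-linearity of each isomorphism, which is automatic from the $R$-linearity of $\otimes$, $\intHom$, and the (co)localisation functors. Conceptually, the whole statement is \Cref{prop:dualNoethComp} transported across the monoidal equivalence $\Gamma_\ida\category{T}\simeq\Lambda^\ida\category{T}$ recorded at the end of~\S\ref{subsec:torsion-and-completion}.
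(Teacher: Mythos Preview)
Your overall strategy is sound and your argument is essentially correct, but the justification for the second isomorphism is wrong. You write that $c^\vee\otimes d\in\Gamma_\ida\category{T}$ is Hom-orthogonal to $L_\ida\mathbb{1}$; however, the recollement only gives $\Hom_\category{T}(\Gamma_\ida\category{T},L_\ida\category{T})=0$, not $\Hom_\category{T}(L_\ida\category{T},\Gamma_\ida\category{T})=0$. The latter fails in general: for $\category{T}=\category{D}(\mathbb{Z})$ and $\ida=(p)$ one has $\Hom_{\category{D}(\mathbb{Z})}(\mathbb{Z}[1/p],\mathbb{Z}/p^\infty)\ne 0$. What you actually need (and already have the ingredients for) is that $c^\vee\otimes d\in(\Gamma_\ida\category{T})^c=(\Lambda^\ida\category{T})^c\subset\Lambda^\ida\category{T}$, and it is $\Lambda^\ida\category{T}$ that is the right Hom-orthogonal of $L_\ida\category{T}$ in the recollement. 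With this correction the chain of isomorphisms stands, and the mirror argument for $\Hom_\category{T}(d,c)$ needs the same fix.

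By comparison, the paper's proof is the one-line transport you mention at the end: apply the monoidal equivalence $\Lambda^\ida\colon\Gamma_\ida\category{T}\overset{\sim}\to\Lambda^\ida\category{T}$ and cite \Cref{prop:dualNoethComp} directly, using $\Hom_\category{T}(c,d)\cong\Hom_\category{T}(c,\Lambda^\ida d)$. Your route unpacks that computation inside $\Gamma_\ida\category{T}$; this is fine, but note that the step where it actually works still appeals to $\Lambda^\ida\category{T}$, so little is gained by avoiding the transport.
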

\begin{proof}
    Since $\Lambda^{\ida}d$ is dualizable in $\Lambda^{\ida}\category{T}$, for any $c \in (\Gamma_{\ida}\category{T})^c$,
    \[
        \Hom_{\category{T}}(c, d) \cong \Hom_{\category{T}}(\Lambda^{\ida}c, \Lambda^{\ida}d) \cong \Hom_{\category{T}}(c, \Lambda^{\ida}d)
    \]
    is Noetherian over $R$.
\end{proof}

As a consequence of the Nakayama \Cref{lemma:nakayamaDC} for derived complete $R$-modules, we have the following finiteness result.

\begin{corollary}\label{cor:finiteGenOverCompR}
    For any compact object $c \in \category{T}^c$ and dualizable torsion object $d \in (\Gamma_{\ida}\category{T})^d$, the $\comp{R}$-module $\Hom_{\category{T}}(\Gamma_{\ida}c, d)$ is finitely generated.
\end{corollary}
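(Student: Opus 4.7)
The plan is to reduce the claim to the already established finiteness result for complete dualizable objects via the torsion–completion adjunction. First I would rewrite the Hom-module by adjunction as
\[
    \Hom_{\category{T}}(\Gamma_{\ida}c, d) \cong \Hom_{\category{T}}(c, \Lambda^{\ida}d),
\]
and note that this isomorphism is $\comp{R}$-linear by \Cref{remark:homFromTorsionIsComplete} (both sides are derived $\ida$-complete $R$-modules, so the $R$-action uniquely extends to $\comp{R}$). The monoidal equivalence $\Lambda^{\ida}\colon \Gamma_{\ida}\category{T}\overset{\sim}\to\Lambda^{\ida}\category{T}$ from \S\ref{subsec:torsion-and-completion} sends dualizable objects to dualizable objects, so $\Lambda^{\ida}d\in(\Lambda^{\ida}\category{T})^d$.

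Next, \Cref{prop:dualNoethComp} applied to $\Lambda^{\ida}d$ ensures that $\Hom_{\category{T}}(c', \Lambda^{\ida}d)$ is Noetherian over $R$ for every $c'\in(\Lambda^{\ida}\category{T})^c$. This is the hypothesis needed to invoke \Cref{prop:complAllCompactNoeth}, which then promotes Noetherianness from compacts inside $\Lambda^{\ida}\category{T}$ to all compacts of $\category{T}$, at the cost of passing from the $R$-action to the $\comp{R}$-action: $\Hom_{\category{T}}(c, \Lambda^{\ida}d)$ is Noetherian, hence finitely generated, over $\comp{R}$ for every $c\in\category{T}^c$. Combining with the $\comp{R}$-linear adjunction isomorphism above yields the corollary.

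There is no serious obstacle here; the bulk of the work is carried by \Cref{prop:dualNoethComp} (finiteness over $R$ on the complete side) and by \Cref{prop:complAllCompactNoeth} (Nakayama-style transfer from $(\Lambda^{\ida}\category{T})^c$ to $\category{T}^c$, switching the coefficient ring to $\comp{R}$). The only point worth double-checking is that the adjunction isomorphism is indeed $\comp{R}$-linear and not merely $R$-linear, which is precisely the content of \Cref{remark:homFromTorsionIsComplete} together with \Cref{cor:tPTcDC}.
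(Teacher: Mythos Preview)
Your proof is correct and follows essentially the same approach as the paper: pass to $\Hom_{\category{T}}(c,\Lambda^{\ida}d)$ via the adjunction, note that $\Lambda^{\ida}d$ is dualizable in $\Lambda^{\ida}\category{T}$, apply \Cref{prop:dualNoethComp}, and conclude with \Cref{prop:complAllCompactNoeth}. The only addition on your side is the explicit remark about $\comp{R}$-linearity of the adjunction isomorphism, which the paper leaves implicit.
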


\begin{proof}
By adjunction, we can equivalently prove that $\Hom_{\category{T}}(c, \Lambda^{\ida}d)$ is finitely generated over $\comp{R}$. Since $\Lambda^{\ida}d$ is dualizable in $\Lambda^\ida\category{T}$ (with respect to the monoidal structure described in \Cref{prop:tensorCompTor}), $\Hom_{\category{T}}(c, \Lambda^{\ida}d)$ is Noetherian over $R$ for any $c$ in $(\Lambda^{\ida}\category{T})^c$ by~\Cref{prop:dualNoethComp}. The conclusion follows from \Cref{prop:complAllCompactNoeth}.
\end{proof}

Now we are in a position to prove that homomorphism groups between torsion or complete dualizable objects are finitely generated over the completed ring.

\begin{theorem}\label{thm:finite-gen-hom-dualizables}
    For any dualizable compact objects $d,d' \in (\Gamma_{\ida}\category{T})^d$, the $\comp{R}$-module $\Hom_{\category{T}}(d,d')$ is finitely generated. The same is true for $d,d' \in (\Lambda^{\ida}\category{T})^d$.
\end{theorem}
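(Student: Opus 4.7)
My plan is to imitate the argument from the proof of \Cref{prop:dualNoethComp}, simply replacing the compact object there by a second dualizable object, and then package the finiteness using the earlier results of this subsection. First, by the monoidal equivalence $\Gamma_\ida\colon \Lambda^\ida\category{T}\xrightarrow{\sim}\Gamma_\ida\category{T}$ recorded at the end of \S\ref{subsec:torsion-and-completion}, the classes $(\Gamma_\ida\category{T})^d$ and $(\Lambda^\ida\category{T})^d$ correspond and the Hom-groups are canonically identified. Hence it suffices to treat the case $d, d' \in (\Lambda^\ida\category{T})^d$.

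Setting $e = \SW{d} \otimes_{\Lambda^\ida\category{T}} d'$, which is again dualizable in $\Lambda^\ida\category{T}$ as a tensor product of dualizables, the dualizability of $d$ yields
\[
\Hom_\category{T}(d, d') \cong \Hom_\category{T}(\Lambda^\ida\mathbb{1}, e) \cong \Hom_\category{T}(\Gamma_\ida\mathbb{1}, \Gamma_\ida e) \cong \Hom_\category{T}(\mathbb{1}, e),
\]
where the middle isomorphism uses that the equivalence $\Lambda^\ida\category{T}\simeq\Gamma_\ida\category{T}$ is monoidal, and the last one combines the adjunction $\Gamma_\ida \dashv \Lambda^\ida$ with the identities $\Lambda^\ida\Gamma_\ida \cong \Lambda^\ida$ and $\Lambda^\ida e \cong e$, exactly as in the proof of \Cref{prop:dualNoethComp}. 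So the question reduces to showing that $\Hom_\category{T}(\mathbb{1}, e)$ is finitely generated over $\comp{R}$.

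Now I invoke the previous subsection. Since $e$ is dualizable in $\Lambda^\ida\category{T}$, \Cref{prop:dualNoethComp} ensures that $\Hom_\category{T}(c, e)$ is Noetherian over $R$ for every $c \in (\Lambda^\ida\category{T})^c$. Then \Cref{prop:complAllCompactNoeth} promotes this to finite generation of $\Hom_\category{T}(c, e)$ over $\comp{R}$ for every $c \in \category{T}^c$. Specializing to $c = \mathbb{1}$, which is compact because $\category{T}$ is compactly-rigidly generated, delivers the desired conclusion. No substantial obstacle arises: the real work is already packaged in \Cref{prop:dualNoethComp,prop:complAllCompactNoeth}, and the steps above are just the duality bookkeeping needed to trade two dualizable inputs for a single Hom-group with compact source.
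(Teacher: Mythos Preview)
Your proof is correct and takes essentially the same approach as the paper's: the paper works in $\Gamma_\ida\category{T}$ and cites \Cref{cor:finiteGenOverCompR} directly, while you work in $\Lambda^\ida\category{T}$ and unpack that corollary inline via \Cref{prop:dualNoethComp} and \Cref{prop:complAllCompactNoeth}. The duality bookkeeping reducing $\Hom_\category{T}(d,d')$ to $\Hom_\category{T}(\mathbb{1},e)$ with $e$ dualizable is identical to the paper's argument.
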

\begin{proof}
    There is an isomorphism of $\comp{R}$-modules:
    \[
        \Hom(d,d') \cong \Hom(\Gamma_{\ida}\mathbb{1}, \SW{d} \otimes d')
    \]
    By \Cref{cor:finiteGenOverCompR}, the second module is finitely generated over $\comp{R}$, hence so is the first one.
\end{proof}

\subsection{Completion for objects and for homomorphism modules}

Since the functors $\Gamma_\ida\colon\category{T}\to\Gamma_\ida\category{T}$ and $\Lambda^\ida\colon\category{T}\to\Lambda^\ida\category{T}$, they send objects of $\category{T}^c=\category{T}^d$ to the corresponding dualizable objects. So we have the following diagram of functors, which commutes up to natural equivalence:
\[
    \begin{tikzcd}
		    & \category{T}^c
				    \arrow[dl, "\Gamma_{\ida}"'] \arrow[dr, "\Lambda^{\ida}"]
		    \\
        (\Gamma_{\ida}\category{T})^d
            \arrow[rr, "\Lambda^{\ida}", "\sim"', shift left=0.12cm] &&
        (\Lambda^{\ida}\category{T})^d.
            \arrow[ll, "\Gamma_{\ida}", shift left=0.12cm]
    \end{tikzcd}
\]
We will show that the diagonal functors
induce the $\ida$-adic completions on homomorphism modules.

Here and also in \Cref{sec:reconstruction}, we will the notion of Mittag-Leffler inverse system in an abelian category.

\begin{definition}[{\autocite[13.1.2]{grothendieckEGA3a}}]\label{def:ML}
An inverse system $\mathcal{M}=(M_i, f_{ij}\colon M_j\to M_i)_{i,j\in I, j\ge i}$ in an abelian category is called a \emph{Mittag-Leffler} system if it satisfies the following condition: For each $m\in I$ there exists $l\ge m$ such that for each $k\ge l$, we have $\im(f_{mk}) = \im(f_{ml})$.
\end{definition}

\begin{lemma}[{\autocite[Proposition~13.2.1]{grothendieckEGA3a}}]\label{lem:ML-closure}
Suppose we have a directed partially ordered set $(I,\le)$ and a short exact sequence $0\to\mathcal{K}\to\mathcal{L}\to\mathcal{M}\to 0$ of three inverse systems indexed by $I$ in an abelian category. Then:
\begin{enumerate}[label=(\alph*)]
\item if $\mathcal{L}$ is Mittag-Leffler, so is $\mathcal{M}$;
\item if $\mathcal{K}$ and $\mathcal{M}$ are Mittag-Leffler, so is $\mathcal{L}$.
\end{enumerate}
\end{lemma}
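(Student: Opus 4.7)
The statement is a classical result of Grothendieck (EGA III, Proposition~13.2.1), and I would follow the proof sketched there. The general strategy is a diagram chase inside the short exact sequence of systems, using the two ML hypotheses to stabilize images in $\mathcal{K}$ and $\mathcal{M}$ and then transport that stability to $\mathcal{L}$. I denote the transition maps of $\mathcal{K}, \mathcal{L}, \mathcal{M}$ by $f^{K}_{ij}, f^{L}_{ij}, f^{M}_{ij}$, and for $x \in L_i$ write $\bar{x} \in M_i$ for its image under the (componentwise surjective) map $\mathcal{L} \twoheadrightarrow \mathcal{M}$.

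For part (a), the plan is a single observation. Componentwise surjectivity of $\mathcal{L} \twoheadrightarrow \mathcal{M}$ combined with naturality of the transition maps gives $\im(f^{M}_{mk}) \subseteq M_m$ equal to the image of $\im(f^{L}_{mk}) \subseteq L_m$ under the surjection $L_m \twoheadrightarrow M_m$. Thus any index $l \ge m$ that stabilizes $\im(f^{L}_{m,-})$ also stabilizes $\im(f^{M}_{m,-})$, which is (a).

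For part (b), the plan is as follows. Fix $m \in I$. By directedness of $I$, first use ML of $\mathcal{M}$ to pick $l \ge m$ with $\im(f^{M}_{mk}) = \im(f^{M}_{ml})$ for all $k \ge l$, and then use ML of $\mathcal{K}$ at the index $l$ to pick $n \ge l$ with $\im(f^{K}_{lk}) = \im(f^{K}_{ln})$ for all $k \ge n$. I claim $\im(f^{L}_{mk}) = \im(f^{L}_{mn})$ for all $k \ge n$. The inclusion $\subseteq$ is automatic. For $\supseteq$, take $x = f^{L}_{mn}(y)$ with $y \in L_n$. Since $f^{M}_{ln}(\bar{y}) \in \im(f^{M}_{ln}) = \im(f^{M}_{lk})$ by the stabilization of $\mathcal{M}$ at $l$, pick $\bar{z} \in M_k$ with $f^{M}_{lk}(\bar{z}) = f^{M}_{ln}(\bar{y})$ and lift it to $z \in L_k$. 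Then $\eta := f^{L}_{ln}(y) - f^{L}_{lk}(z)$ has vanishing image in $M_l$, so $\eta \in K_l$, and $f^{K}_{ml}(\eta) = x - f^{L}_{mk}(z)$. If one can show $\eta \in \im(f^{K}_{lk}) = \im(f^{K}_{ln})$ (the stable image of $\mathcal{K}$ at $l$), then lifting $\eta$ to some $w \in K_k \subseteq L_k$ yields $x = f^{L}_{mk}(z+w) \in \im(f^{L}_{mk})$ as needed.

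The main obstacle is the final step: showing that $\eta$ lies in the stable image $\im(f^{K}_{ln})$, and not merely in $K_l$. Writing $\eta = f^{L}_{ln}(u)$ for $u = y - f^{L}_{nk}(z) \in L_n$, the image $\bar{u} \in M_n$ lies in $\ker(f^{M}_{ln})$ but need not vanish, so $u$ may fail to lie in $K_n$. The delicate point is to modify $u$ within its coset modulo $\ker(f^{L}_{ln})$ to land in $K_n$ while preserving $f^{L}_{ln}(u) = \eta$; equivalently, via the snake lemma applied to the vertical maps between levels $l$ and $n$, one needs the connecting morphism $\ker(f^{M}_{ln}) \to \coker(f^{K}_{ln})$ to vanish on the class of $\bar{u}$, and this is arranged precisely by the two stabilization choices. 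Since the chase is standard and the published proof short, I would in practice simply cite \autocite[Proposition~13.2.1]{grothendieckEGA3a} and omit the details.
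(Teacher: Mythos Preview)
The paper gives no proof of this lemma; it simply cites \autocite[Proposition~13.2.1]{grothendieckEGA3a}. Your final recommendation---to cite EGA and omit the details---therefore matches the paper exactly, and your argument for part~(a) is correct.

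However, your sketch for part~(b) contains an error worth flagging. You stabilize $\mathcal{M}$ at the index $m$ (obtaining $l$) and then $\mathcal{K}$ at the index $l$ (obtaining $n$), but later invoke ``the stabilization of $\mathcal{M}$ at $l$'' to claim $\im(f^{M}_{ln}) = \im(f^{M}_{lk})$. That equality was never established: you only arranged $\im(f^{M}_{mk}) = \im(f^{M}_{ml})$. This is precisely why you run into the ``main obstacle'' in your final paragraph, and the vague appeal to the snake lemma does not resolve it: the Mittag--Leffler condition controls images of transition maps, not their kernels, so there is no reason for the connecting map to vanish on $\bar{u}$ with the choices you made.

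The fix is to swap the order: first stabilize $\mathcal{K}$ at $m$ to get $l$, then stabilize $\mathcal{M}$ at $l$ to get $n$. With this order, your chase goes through verbatim up to the point where $\eta \in K_l$, and then the obstacle disappears entirely: one has $f^{K}_{ml}(\eta) \in \im(f^{K}_{ml}) = \im(f^{K}_{mk})$ directly from the stabilization of $\mathcal{K}$ at $m$, so one lifts $f^{K}_{ml}(\eta)$ to some $w \in K_k$ and concludes $x = f^{L}_{mk}(z+w)$. No snake lemma is needed.
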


If we specialize the abelian category to the one of abelian groups, or more generally modules over a ring or a small preadditive category (since these are just additive functors into abelian groups), countably infinite Mittag-Leffler systems control exactness of the inverse limit functor. Here we will denote by $\limit^{(1)}$ the first right derived functor of the inverse limit functor (we refer to \autocite{jensenDerivedLim1972} or~\cite[\S3.5]{weibelIntro1994} for details).

\begin{lemma}\label{lem:ML-and-lim1}
Let $\category{A}$ be a small preadditive category, $\mathcal{K}=(K_i,f_{ij})_{i,j\in I,j\ge i}$ be a Mittag-Leffler inverse system in $\catMod\category{A}$ indexed by a countably infinite partially ordered set $(I,\le)$, and  denote by $f_i\colon\limit\mathcal{K}\to K_i$ the limit maps. Then:
\begin{enumerate}[label=(\alph*)]
\item for each $m\in I$ there exists $l\ge m$ such that $\im(f_{m}) = \im(f_{ml})$;
\item $\limit^{(1)}\mathcal{K}=0$;
\item any short exact sequence $0\to\mathcal{K}\to\mathcal{L}\to\mathcal{M}\to 0$ of inverse systems indexed by $(I,\le)$ in $\catMod\category{A}$ induces a short exact sequence of inverse limits
\[
  0 \to
	\limit\mathcal{K} \to
  \limit\mathcal{L} \to
  \limit\mathcal{M} \to 0.
\]
\end{enumerate}
\end{lemma}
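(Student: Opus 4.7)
The plan is to reduce to the classical Mittag-Leffler principle for $\mathbb{N}$-indexed inverse systems of abelian groups via two independent reductions.

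\emph{Reduction to $I=\mathbb{N}$.} Since $I$ is directed and countable, there exists a cofinal chain $i_0 \le i_1 \le i_2 \le \cdots$ in $I$. The limit and the derived functor $\limit^{(1)}$ agree when computed over this cofinal subchain, and the Mittag-Leffler property clearly passes to it. The conclusion of (a) for a general $m \in I$ can then be recovered from (a) applied to the cofinal chain by factoring $f_m = f_{m,i_n}\circ f_{i_n}$ for some $i_n \ge m$ and applying the stabilization inside $K_{i_n}$.

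\emph{Reduction to $\mathrm{Ab}$.} Since $\category{A}$ is a small preadditive category and $\catMod\category{A}$ is the corresponding $\mathrm{Ab}$-enriched functor category, products, limits, and cokernels are all computed pointwise at each object $a\in\category{A}$. For $\mathbb{N}$-indexed inverse systems, $\limit^{(1)}$ is the cokernel of the map $(x_i)_i \mapsto (x_i - f_{i,i+1}(x_{i+1}))_i$ on $\prod_i K_i$, and is therefore also pointwise. The Mittag-Leffler hypothesis for $\mathcal{K}$ in $\catMod\category{A}$ implies that each pointwise system $\mathcal{K}(a)=(K_i(a),f_{ij}(a))$ is Mittag-Leffler in $\mathrm{Ab}$, with a common stabilization bound $l=l(m)$ valid uniformly in $a\in\category{A}$. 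So it suffices to prove all three statements in the category of abelian groups.

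For $\mathbb{N}$-indexed Mittag-Leffler systems of abelian groups, all three statements are classical; see for instance \autocite[\S3.5]{weibelIntro1994}. For (a), set $K_k' = \bigcap_{j \ge k} \im(f_{kj})$; by the Mittag-Leffler hypothesis $K_k' = \im(f_{k,l(k)})$ for $l(k)$ large enough. A direct check shows that the restricted transition maps $f_{jk}\colon K_k' \to K_j'$ are surjective, whence $\limit\mathcal{K} = \limit\mathcal{K}'$ and the projections $\limit\mathcal{K} \to K_m$ have image exactly $K_m' = \im(f_{m,l(m)})$. Statement (b) is then the standard Mittag-Leffler theorem, proved by showing surjectivity of the shift map above via an inductive lifting argument exploiting the stabilization of images. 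Statement (c) is immediate from (b) and the long exact sequence
\[
0 \to \limit\mathcal{K} \to \limit\mathcal{L} \to \limit\mathcal{M} \to \limit^{(1)}\mathcal{K} \to \limit^{(1)}\mathcal{L} \to \limit^{(1)}\mathcal{M} \to 0.
\]
The main obstacle is organizational rather than mathematical: one must verify that the Mittag-Leffler hypothesis and both $\limit$ and $\limit^{(1)}$ behave compatibly with evaluation at objects of $\category{A}$ and with restriction to the cofinal chain. The heart of the argument, the classical $\mathbb{N}$-indexed Mittag-Leffler principle, is entirely standard.
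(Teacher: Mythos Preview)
Your proposal is correct and follows essentially the same approach as the paper: reduce to $I=\mathbb{N}$ via a cofinal chain, then invoke the classical Mittag-Leffler results (the paper cites \autocite[Lemma~4.5]{sarochStovicekTelescope2008}, \autocite[Proposition~3.5.7]{weibelIntro1994}, and \autocite[Proposition~13.2.2]{grothendieckEGA3a} rather than sketching the arguments). Your explicit pointwise reduction to $\mathrm{Ab}$ and your sketch of the stable-image argument for (a) are more detailed than the paper's treatment, but the strategy is identical.
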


\begin{proof}
For the first statement, note we can reduce the problem to the case $I=\mathbb{N}$ by passing to a cofinal subsystem of $\mathcal{K}$. Then the result follows by an easy induction argument, see the proof of~\autocite[Lemma~4.5]{sarochStovicekTelescope2008}.
We refer to~\autocite[Proposition~3.5.7]{weibelIntro1994} for the second statement, while the third one is a consequence and goes back at least to~\autocite[Proposition~13.2.2]{grothendieckEGA3a}.
\end{proof}

The main result of the section and some consequences follow.

\begin{proposition}\label{prop:completion-on-homs}
    Let $\category{T}$ a tensor-triangulated category with small coproducts, and endowed with an action of a graded commutative Noetherian ring $R$. Let $x, y \in \category{T}$ be such that the $R$-module $\Hom_\category{T}(x,y)$ is finitely generated. Finally, let $\ida$ be a homogeneous ideal in $R$ and consider a direct system as in \Cref{constr:transitionKoszul} for $x$ (with respect to some chosen set generators of $\ida$ of cardinality $k$ whose degrees sum up to $d$).
Then the diagram~\eqref{eq:Koszul-telescope} induces natural isomorphisms of $\comp{R}$-modules
    \[ 
    \comp{\Hom_{\category{T}}(x,y)}_\ida \overset{\sim}\to
		\limit_m \Hom_{\category{T}}(\Sigma^{md - k} x \kos \ida^{[m]},y) \overset{\sim}\leftarrow
		\Hom_{\category{T}}(\hocolim_m \Sigma^{md - k} x \kos \ida^{[m]},y).
    \]
\end{proposition}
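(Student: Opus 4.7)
The plan is to establish the two isomorphisms separately. Writing $A_m=\Sigma^{md-k}x\kos\ida^{[m]}$ and $M=\Hom_\category{T}(x,y)$, the right-hand isomorphism is the formal Milnor step while the left-hand identification with the $\ida$-adic completion is the substantive algebraic content.

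For the right isomorphism, I apply $\Hom_\category{T}(-,y)$ to the defining triangle $\bigoplus_m A_m\xrightarrow{\id-\phi}\bigoplus_m A_m\to\hocolim_m A_m\to$, producing the Milnor short exact sequence
\[
0 \to \limit{}^{(1)}_m \Hom_\category{T}(\Sigma A_m,y) \to \Hom_\category{T}(\hocolim_m A_m,y) \to \limit_m\Hom_\category{T}(A_m,y) \to 0.
\]
It suffices to show that $\{\Hom_\category{T}(A_m,y)\}_m$ is Mittag-Leffler and invoke \Cref{lem:ML-and-lim1}. Each term is a finitely generated $R$-module since $A_m\in\thick^{2^k-1}(x)$ by \Cref{lemma:koszul-generation-time} and $M$ is finitely generated by hypothesis; the Mittag-Leffler property itself will fall out of the explicit description of the tower produced in the next step.

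For the left isomorphism, postcomposing with the cocone maps $A_m\to x$ yields compatible $R$-linear homomorphisms $M\to\Hom_\category{T}(A_m,y)$. Since some power of $\ida$ annihilates each $A_m$ (iterating the standard fact that $r^{2m}\cdot\id=0$ on $\mathbb{1}\kos r^m$), and since $\{\ida^{[m]}\}_m$ is cofinal with $\{\ida^n\}_n$, the induced map factors as $\comp{M}_\ida\to\limit_m\Hom_\category{T}(A_m,y)$. To show this is an isomorphism, I induct on the number $k$ of chosen generators. The base case $k=1$, $\ida=(r)$, is direct: applying $\Hom_\category{T}(-,y)$ to the Koszul triangle $\Sigma^{-m|r|}x\xrightarrow{r^m}x\to x\kos r^m\to$ yields a natural short exact sequence of graded $R$-modules
\[
0 \to M/r^mM \to \Hom^*_\category{T}(\Sigma^{m|r|-1}x\kos r^m,y) \to T_m \to 0,
\]
where $T_m$ is a shifted copy of the $r^m$-torsion submodule $(0:_M r^m)$. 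The transition from index $m+1$ to $m$ acts by the canonical projection on the first factor and by multiplication by $r$ on $T_m$. Noetherianity of $M$ forces the ascending chain $(0:_M r^m)$ to stabilize, after which $r$ is nilpotent on the stable submodule, so $\{T_m\}$ is Mittag-Leffler with vanishing inverse limit. Combining via \Cref{lem:ML-closure,lem:ML-and-lim1} yields both the Mittag-Leffler property of the full tower and $\limit_m\Hom_\category{T}(A_m,y)\cong\limit_m M/r^mM=\comp{M}_{(r)}$. For the inductive step, dualizability of $\mathbb{1}\kos r_k^m$ (with Spanier-Whitehead dual $\mathbb{1}\kos r_k^m$ up to a shift) and the tensor factorization $A_m\cong A'_m\otimes\Sigma^{m|r_k|-1}\mathbb{1}\kos r_k^m$, where $\ida'=(r_1,\dots,r_{k-1})$, allow me to rewrite $\Hom_\category{T}(A_m,y)\cong\Hom_\category{T}(A'_m,y\kos r_k^m)$; combining the inductive hypothesis for $\ida'$ applied to the varying family of targets $y\kos r_k^m$ with the $k=1$ case then closes the induction.

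The main obstacle lies in the inductive step: because the auxiliary target $y\kos r_k^m$ varies with $m$, one must assemble a coherent double-limit argument, carefully tracking graded shifts and the transition maps $\phi_m=\bigotimes_i\psi_m^i$ of \Cref{constr:transitionKoszul}. Finite generation of $M$ over the Noetherian ring $R$ is indispensable throughout, since it is what guarantees that the Koszul-cohomology torsion tails stabilize uniformly in every graded degree, so that all the inverse limits collapse onto $\comp{M}_\ida$.
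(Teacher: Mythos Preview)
Your approach is the paper's: induction on the number $k$ of generators, an identical base case $k=1$ (same short exact sequences $0\to M/r^mM\to\Hom_\category{T}(\Sigma^{m|r|-1}x\kos r^m,y)\to T_m\to 0$, same Noetherian argument that the torsion tails are pro-zero, same Milnor step), and a double-limit argument for the inductive step. Your device of moving the last Koszul factor to the target via self-duality of $\mathbb{1}\kos r_k^m$ is equivalent, after undoing the dualization, to the paper's organization, which absorbs the first $k-1$ factors into $x':=\hocolim_m A'_m=\Gamma_{\ida'}x$ on the source side and then reruns the $k=1$ case for the pair $(x',y)$ with the principal ideal $(r_k)$.

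The one point you flag but do not resolve is exactly where the paper has to do something: to apply the $k=1$ argument to $(x',y)$ one needs $\Hom_\category{T}(x',y)$ finitely generated over a Noetherian ring, and this \emph{fails} over $R$ in general. The paper uses that, by the inductive hypothesis, $\Hom_\category{T}(x',y)\cong\comp{M}_{\ida'}$ is finitely generated over the Noetherian completion $\comp{R}_{\ida'}$, and reruns the $k=1$ computation in $\catmod\comp{R}_{\ida'}$. Once this change of base ring is made explicit, the double limit collapses via diagonal cofinality to $\comp{(\comp{M}_{\ida'})}_{(r_k)}\cong\comp{M}_\ida$ and the induction closes. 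Your sketch is correct, but this finiteness-over-the-completed-ring step is the actual content of the ``coherent double-limit argument'' you allude to.
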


\begin{proof}
To ease the notation and inspired by \Cref{prop:torsionColimit}, we denote $\Gamma_\ida x:=\hocolim_m \Sigma^{md - k} x \kos \ida^{[m]}$.

We will first prove the statement for a principal ideal $\ida=(r)$, where $r\in R$ is a homogeneous element of degree $d$.
Then the direct system from Construction~\ref{constr:transitionKoszul} is given by the following direct system of triangles:
    \[
        \begin{tikzcd}
						\Sigma^{-1}x \arrow[r, "r"] \arrow[d, equal] 
                        & \Sigma^{d-1} x \arrow[r] \arrow[d, "r"] 
                        & \Sigma^{d-1} x \kos r \arrow[r] \arrow[d] 
                        & x \arrow[d, equal]
						\\
						\Sigma^{-1}x \arrow[r, "r^2"] \arrow[d, equal] 
                        & \Sigma^{2d-1} x \arrow[r] \arrow[d, "r"] 
                        & \Sigma^{2d-1} x \kos r^2 \arrow[r] \arrow[d] 
                        & x \arrow[d, equal]
						\\
						\Sigma^{-1}x \arrow[r, "r^3"] \arrow[d, equal] 
                        & \Sigma^{3d-1} x \arrow[r] \arrow[d, "r"] 
                        & \Sigma^{3d-1} x \kos r^3 \arrow[r] \arrow[d] 
                        & x \arrow[d, equal]
						\\
						\vdots & \vdots & \vdots & \vdots
				\end{tikzcd}
		\]
If we apply $\Hom_{\category{T}}(-,y)$ to this system, we obtain an inverse system of long exact sequences of $R$-modules of the form
    \[
				\Hom_{\category{T}}(x,y) \to
				\Hom_{\category{T}}(\Sigma^{md-1}x\kos r^m,y) \to
				\Hom_{\category{T}}(\Sigma^{md-1}x,y) \xrightarrow{r^m}
				\Hom_{\category{T}}(\Sigma^{-1}x,y).
		\]
If we denote $M=\Hom_{\category{T}}(c,d)$ and $M[s]=\{m\in M\mid sm=0\}$ for a homogeneous element $s \in R$, the latter system induces an inverse system of short exact sequences of graded $R$-modules
		\begin{equation} \label{eq:completion-homs}
				\begin{tikzcd}[column sep=scriptsize]
						0 \arrow[r] & M/rM \arrow[r] \ar[d, <<-] 
                        & \Hom_{\category{T}}(\Sigma^{d-1}x\kos r, y) \arrow[r] \ar[d, <-] 
                        & \Sigma^{1-d}M[r] \arrow[r] \ar[d, <-, "r"] & 0
						\\
						0 \arrow[r] & M/r^2M \arrow[r] \ar[d, <<-] 
                        & \Hom_{\category{T}}(\Sigma^{2d-1}x\kos r^2, y) \arrow[r] \ar[d, <-] 
                        & \Sigma^{1-2d}M[r^2] \arrow[r] \ar[d, <-, "r"] & 0
						\\
						0 \arrow[r] & M/r^3M \arrow[r] \ar[d, <<-] 
                        & \Hom_{\category{T}}(\Sigma^{3d-1}x\kos r^3, y) \arrow[r] \ar[d, <-] 
                        & \Sigma^{1-3d}M[r^3] \arrow[r] \ar[d, <-, "r"] & 0
						\\
						& \vdots & \vdots & \vdots
				\end{tikzcd}
		\end{equation}
Since $M\in\catmod{R}$ and $R$ is assumed to be Noetherian, the chain
\[ M[r]\subset M[r^2]\subset M[r^3] \subset \cdots \]
stabilizes, and so there exists $D>0$ such that $r^D M[r^i] = 0$ for each $i\ge 1$. In particular, the inverse system
		\[
				\Sigma^{1-d}M[r] \overset{r}\leftarrow
				\Sigma^{1-2d}M[r^2] \overset{r}\leftarrow
				\Sigma^{1-3d}M[r^3] \overset{r}\leftarrow \dots
		\]
has a cofinal subsystem with all maps vanishing (one sometimes says that the system is pro-zero). Consequently, the system is Mittag-Leffler and
\[ \limit_i \Sigma^{1-md}M[r^m] = 0. \]
The inverse system $M/rM\twoheadleftarrow M/r^2M\twoheadleftarrow\cdots$ is also Mittag-Leffler, and so is the inverse system $(\Hom_{\category{T}}(\Sigma^{md-1}x\kos r^m, y))_{m\ge 1}$ by~\Cref{lem:ML-closure}. Furthermore, since the inverse limit functor is left exact and we know that the limit of the rightmost column in~\eqref{eq:completion-homs} vanishes, the canonical map
		\[
				\comp{M} \to \limit_m \Hom_{\category{T}}(\Sigma^{md-1}x \kos r^m,y)
		\]
is an isomorphism.

It remains to compare $\limit_m \Hom_{\category{T}}(\Sigma^{md-1}x\kos r^m,y)$ with the homomorphisms module $\Hom_{\category{T}}(\hocolim_m\Sigma^{md-1}x\kos r^m,y)$.
We apply \autocite[Proposition~2.2.4(d)]{hoveyPalmieriStricklandMemoir1997}, which says that the comparison map is a part of a short exact sequence
		\[
				0 \to
				{\limit_m}^{(1)} \Hom_{\category{T}}(\Sigma^{md}x\kos r^m,y) \to
				\Hom_{\category{T}}(\Gamma_\ida x,y) \to
				\limit_m \Hom_{\category{T}}(\Sigma^{md-1}x\kos r^m,y) \to
				0,
		\]
where $\limit^{(1)}$ is the first right derived functor of the inverse limit functor (we refer to \autocite{jensenDerivedLim1972} for details). Since the $\limit^{(1)}$ is vanishes on Mittag-Leffler inverse systems by \Cref{lem:ML-and-lim1}, we conclude that the natural homomorphisms of $R$-modules
		\begin{equation}\label{eq:completion-homs-principal}
				\comp{\Hom_{\category{T}}(x,y)} \overset{\sim}\to
				\limit_m \Hom_{\category{T}}(\Sigma^{md-1}x\kos r^m,y) \overset{\sim}\leftarrow
				\Hom_{\category{T}}(\Gamma_\ida x,y).
		\end{equation}
are bijective. Since all the modules are derived complete, they are in fact canonically homomorphisms of $\comp{R}$-modules by \Cref{lemma:derivedComplete}.
This proves the proposition for a principal ideal $\ida=(r)$.

In the general case where $\ida=(r_1, \dots, r_{k-1}, r_k)$, we proceed by induction on $k$. Let $\ida'=(r_1, \dots, r_{k-1})$, denote $d'$ the sum of the degrees of its generators, $d_k$ the degree of $r_k$, and put $x'=\Gamma_{\ida'} x$. The inductive hypothesis says applied to $x$ and $\Sigma^{ld_k+1}x\kos r_k^l$ gives a commutative diagram with isomorphisms in rows of the form
\[
  \begin{tikzcd}[column sep=1em, scale cd=.85]
    \comp{\Hom_{\category{T}}(x,y)}_{\ida'} \ar[r, "\sim"] \ar[d] &
	  \limit_m\Hom_{\category{T}}(\Sigma^{md'-k+1}x\kos\ida'^{[m]},y) \ar[r, <-, "\sim"] \ar[d] &
	  \Hom_{\category{T}}(x',y) \ar[d] 
		\\
    \comp{\Hom_{\category{T}}(\Sigma^{ld_k-1}x\kos r_k^l,y)}_{\ida'} \ar[r, "\sim"] &
	  \limit_m\Hom_{\category{T}}(\Sigma^{md'+ld_k-k}(x\kos\ida'^{[m]})\kos r_k^l,y) \ar[r, <-, "\sim"] &
	  \Hom_{\category{T}}(\Sigma^{ld_k-1}x'\kos r_k^l,y). 
	  \end{tikzcd}
	\]
Note that $\comp{R}_{\ida'}$ is Noetherian by~\autocite[\href{https://stacks.math.columbia.edu/tag/05GH}{Tag 05GH}]{thestacksprojectauthorsStacksProject} and all the terms in the diagram are finitely generated $\comp{R}_{\ida'}$-modules. So we can apply the same argument as above in $\catmod{\comp{R}_{\ida'}}$ to show that we have natural isomorphisms analogous to~\eqref{eq:completion-homs-principal} for the principal ideal $\idb=(r_k)\subset\comp{R}_{\ida'}$ and the objects $x',y$.

In particular, the $\idb$-completion of the first row in the previous diagram is canonically isomorphic to the inverse limit of the second rows over all $l\ge 1$. This gives us an isomorphism 
\[
  \comp{(\comp{\Hom_{\category{T}}(x,y)}_{\ida'})}_\idb \overset{\sim}\to
	\limit_{m,l}\Hom_{\category{T}}(\Sigma^{md'+ld_k-k}(x\kos\ida'^{[m]})\kos r_k^l,y)	
\]
The first isomorphism from the statement of the proposition for $\ida$ is obtained by composing this with the inverse of the canonical isomorphism 
$\comp{(\comp{\Hom_{\category{T}}(x,y)}_{\ida'})}_\idb \to \comp{\Hom_{\category{T}}(x,y)}_{\ida}$ on the left, and by passing to a cofinal inverse limit indexed over the diagonal $m=l$ on the right.

The second isomorphism is similar. The inductive hypothesis for $\ida'$ and the second isomorphism in~\eqref{eq:completion-homs-principal} for $\idb$ and $x',y$ combine to
\begin{align*}
  \Hom_\category{T}(\Gamma_\idb x',y) &\overset{\sim}\to
	\limit_l\Hom_\category{T}(\Sigma^{ld_k-1}x'\kos r_k^l,y) \\ 
	 & \overset{\sim}\to 
	 \limit_{m,l}\Hom_{\category{T}}(\Sigma^{md'+ld_k-k}(x\kos\ida'^{[m]})\kos r_k^l,y).
\end{align*}
Now we just use the identifications (where the last one is given by \Cref{lemma:tensor-and-hocolim})
\[
\Gamma_\idb x'= \hocolim_m \Sigma^{md - k} x' \kos r_k^m\cong x'\otimes(\hocolim_m \Sigma^{md - k} \mathbb{1} \kos r_k^m)\cong\Gamma_\ida x
\]
and pass to the cofinal diagonal of the double-indexed inverse limit.
\end{proof}

Now we can again assume that $\category{T}$ is compactly-rigidly generated tensor-triangu\-lated with a Noetherian central action of a commutative Noetherian graded ring $R$. 

\begin{corollary}\label{corollary:completion-on-homs}
Let $c,d\in\category{T}^c$ and $\ida\subset R$. Then the homomorphisms of $R$-modules
\begin{equation}\label{eq:torsion-completion}
\Hom_{\Gamma_\ida\category{T}}(\Gamma_\ida c,\Gamma_\ida d) \leftarrow
\Hom_{\category{T}}(c,d) \to
\Hom_{\Lambda^\ida\category{T}}(\Lambda^\ida c,\Lambda^\ida d)
\end{equation}
induce isomorphisms of $\comp{R}$-modules
\[
\Hom_{\Gamma_\ida\category{T}}(\Gamma_\ida c,\Gamma_\ida d) \overset{\sim}\longleftarrow
\Hom_{\category{T}}(c,d)\otimes_R\comp{R} \overset{\sim}\longrightarrow 
\Hom_{\Lambda^\ida\category{T}}(\Lambda^\ida c,\Lambda^\ida d). \]
\end{corollary}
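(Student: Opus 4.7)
My plan is to deduce the corollary from \Cref{prop:completion-on-homs} by identifying both outer Hom-groups with the intermediate group $\Hom_\category{T}(\Gamma_\ida c, d)$ supplied by that proposition. First, since $\category{T}$ is Noetherian, the $R$-module $M \coloneqq \Hom_\category{T}(c,d)$ is finitely generated, and so the canonical comparison map $M \otimes_R \comp{R}_\ida \to \comp{M}_\ida$ is an isomorphism by the standard behaviour of adic completion over a Noetherian ring. Applying \Cref{prop:completion-on-homs} to $x=c$, $y=d$, and invoking \Cref{prop:torsionColimit} to identify the displayed homotopy colimit with $\Gamma_\ida c$ up to a (non-canonical) isomorphism, I obtain a natural isomorphism $M \otimes_R \comp{R}_\ida \cong \Hom_\category{T}(\Gamma_\ida c, d)$.

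For the torsion side, I would apply $\Hom_\category{T}(\Gamma_\ida c, -)$ to the triangle $\Gamma_\ida d \to d \to L_\ida d \to$. The orthogonality $\Hom_\category{T}(\Gamma_\ida\category{T}, L_\ida\category{T}) = 0$ built into the definition of $L_\ida \category{T}$ forces the counit $\Gamma_\ida d \to d$ to induce an isomorphism
\[
\Hom_{\Gamma_\ida\category{T}}(\Gamma_\ida c, \Gamma_\ida d) \;\overset{\sim}\to\; \Hom_\category{T}(\Gamma_\ida c, d).
\]
For the complete side, the cleanest route is via the equivalence $\Gamma_\ida\category{T} \simeq \Lambda^\ida\category{T}$ recalled at the end of \S\ref{subsec:torsion-and-completion}, which identifies $\Hom_{\Lambda^\ida\category{T}}(\Lambda^\ida c, \Lambda^\ida d)$ with $\Hom_{\Gamma_\ida\category{T}}(\Gamma_\ida c, \Gamma_\ida d)$ and, being monoidal and natural on morphisms, identifies the map induced by $\Lambda^\ida$ on Hom-groups with the one induced by $\Gamma_\ida$.

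The final bookkeeping step is to confirm that the composite $M \to M \otimes_R \comp{R}_\ida \to \Hom_{\Gamma_\ida\category{T}}(\Gamma_\ida c, \Gamma_\ida d)$ really sends $f$ to $\Gamma_\ida f$; this is immediate from naturality of the counit $\epsilon\colon\Gamma_\ida \to \id$, which gives $f \circ \epsilon_c = \epsilon_d \circ \Gamma_\ida f$, so that $\Gamma_\ida f$ is the unique factorization of $f \circ \epsilon_c$ through $\epsilon_d$. The $\comp{R}_\ida$-linearity of the two resulting isomorphisms is automatic: by \Cref{cor:tPTcDC} the two outer Hom-groups are $\ida$-power torsion and derived $\ida$-complete respectively, so their $R$-module structures extend uniquely to $\comp{R}_\ida$-module structures by \Cref{lemma:powerTorsionSupport,lemma:derivedComplete}, and any $R$-linear isomorphism with $M \otimes_R \comp{R}_\ida$ must then be $\comp{R}_\ida$-linear. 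I do not anticipate any serious obstacle here: the substantive content has already been established in \Cref{prop:completion-on-homs}, and what remains is routine manipulation of the recollement together with naturality checks.
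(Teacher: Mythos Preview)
Your approach is essentially the same as the paper's: both deduce the result from \Cref{prop:completion-on-homs} and \Cref{prop:torsionColimit}, identify the torsion-side Hom-group with $\Hom_\category{T}(\Gamma_\ida c,d)$ via the orthogonality $\Hom_\category{T}(\Gamma_\ida\category{T},L_\ida\category{T})=0$, use finite generation of $\Hom_\category{T}(c,d)$ to replace completion by $-\otimes_R\comp{R}$, and handle the complete side via the equivalence $\Gamma_\ida\category{T}\simeq\Lambda^\ida\category{T}$.

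There is one slip in your final paragraph: the module $\Hom_{\Gamma_\ida\category{T}}(\Gamma_\ida c,\Gamma_\ida d)$ is \emph{not} $\ida$-power torsion in general. Part~\ref{item:torsionPowerTorsion} of \Cref{cor:tPTcDC} requires the first argument to be compact in $\category{T}$, and $\Gamma_\ida c$ typically is not (e.g.\ for $\category{T}=\category{D}(R)$ and $c=d=\mathbb{1}$ this Hom-group is $\comp{R}_\ida$). What you actually need---and what is true---is that this module is derived $\ida$-complete: by the adjunction $\Gamma_\ida\dashv\Lambda^\ida$ one has $\Hom_\category{T}(\Gamma_\ida c,\Gamma_\ida d)\cong\Hom_\category{T}(c,\Lambda^\ida\Gamma_\ida d)$, and then \Cref{cor:tPTcDC}\,\ref{item:completeDerivedComplete} (or \Cref{remark:homFromTorsionIsComplete}) applies. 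With this correction, your appeal to \Cref{lemma:derivedComplete} for the uniqueness of the $\comp{R}$-structure and hence $\comp{R}$-linearity goes through unchanged.
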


\begin{proof}
We will discuss only the left arrows, since the claim for the right arrows follows from the equivalence of $\Gamma_\ida\category{T}$ and $\Lambda^\ida\category{T}$.
\Cref{prop:torsionColimit,prop:completion-on-homs} tell us that the left arrow in~\eqref{eq:torsion-completion} identifies with the canonical completion morphism
$\Hom_{\category{T}}(c,d) \to \comp{\Hom_{\category{T}}(c,d)}$.
 Since $\Hom_{\category{T}}(c,d)$ is finitely generated as an $R$-module, it remains to use the well-known fact that the completion functor on $\catmod{R}$ is naturally equivalent to $-\otimes_R\comp{R}$; see \autocite[\href{https://stacks.math.columbia.edu/tag/00MA}{Tag 00MA}]{thestacksprojectauthorsStacksProject}.
\end{proof}

If we specialize the proposition to $c=d=\mathbb{1}$, we obtain

\begin{corollary}\label{corollary:completion-on-homs-emorph}
Given any homogeneous ideal $\ida\subset R$, there is a ring isomorphism
$\emorp_{\category{T}}(\Gamma_\ida\mathbb{1})\cong\emorp_{\category{T}}(\mathbb{1})\otimes_R\comp{R}_\ida$.
\end{corollary}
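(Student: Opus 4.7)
The plan is to specialize \Cref{corollary:completion-on-homs} to $c=d=\mathbb{1}$. Since $\category{T}$ is compactly-rigidly generated, the tensor unit $\mathbb{1}$ lies in $\category{T}^c$, so the corollary directly yields a canonical isomorphism of $\comp{R}_\ida$-modules
\[
\emorp_{\category{T}}(\mathbb{1})\otimes_R\comp{R}_\ida \overset{\sim}\longrightarrow \Hom_{\Gamma_\ida\category{T}}(\Gamma_\ida\mathbb{1},\Gamma_\ida\mathbb{1})=\emorp_{\category{T}}(\Gamma_\ida\mathbb{1}),
\]
where the last equality is automatic because $\Gamma_\ida\category{T}\subset\category{T}$ is a full subcategory. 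All the analytical content is therefore already in hand, and the only remaining task is to upgrade this to a ring isomorphism.

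For this, I would observe that the underlying map of \Cref{corollary:completion-on-homs} is the unique $\comp{R}_\ida$-linear extension of the assignment $f\mapsto\Gamma_\ida f$, which is a ring homomorphism $\emorp_{\category{T}}(\mathbb{1})\to\emorp_{\category{T}}(\Gamma_\ida\mathbb{1})$ by functoriality of $\Gamma_\ida$. On the target side, \Cref{cor:tPTcDC} gives that $\emorp_{\category{T}}(\Gamma_\ida\mathbb{1})$ is derived $\ida$-complete, hence carries a unique extension of its $R$-module structure to an $\comp{R}_\ida$-module structure by \Cref{lemma:derivedComplete}; by uniqueness this action necessarily coincides with the composition action coming from the central map $R\to\emorp_{\category{T}}(\Gamma_\ida\mathbb{1})$. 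Since $\Gamma_\ida\mathbb{1}$ is the monoidal unit of the symmetric monoidal subcategory $\Gamma_\ida\category{T}$, the ring $\emorp_{\category{T}}(\Gamma_\ida\mathbb{1})$ is graded-commutative and the $\comp{R}_\ida$-action is central in it.

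Thus both $\Gamma_\ida\colon\emorp_{\category{T}}(\mathbb{1})\to\emorp_{\category{T}}(\Gamma_\ida\mathbb{1})$ and $\comp{R}_\ida\to\emorp_{\category{T}}(\Gamma_\ida\mathbb{1})$ are $R$-algebra maps into a graded-commutative ring that agree on $R$, so the universal property of the tensor product of graded-commutative $R$-algebras produces a ring homomorphism
\[
\emorp_{\category{T}}(\mathbb{1})\otimes_R\comp{R}_\ida\longrightarrow\emorp_{\category{T}}(\Gamma_\ida\mathbb{1})
\]
which agrees with the $\comp{R}_\ida$-linear extension of $\Gamma_\ida$, i.e.\ with the isomorphism supplied by \Cref{corollary:completion-on-homs}. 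I do not expect any genuine obstacle here; the only point that requires a moment of care is the centrality of the $\comp{R}_\ida$-action in $\emorp_{\category{T}}(\Gamma_\ida\mathbb{1})$, which is needed so that the tensor product on the left-hand side carries a well-defined ring structure and receives a ring map from both factors.
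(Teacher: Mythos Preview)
Your proposal is correct and takes the same approach as the paper: specialize \Cref{corollary:completion-on-homs} to $c=d=\mathbb{1}$. The paper's own treatment is a single sentence to this effect and leaves the verification that the resulting bijection is multiplicative entirely implicit; your additional paragraph carefully justifying the ring structure via the functoriality of $\Gamma_\ida$, the centrality of the $\comp{R}_\ida$-action on $\emorp_{\category{T}}(\Gamma_\ida\mathbb{1})$, and the universal property of the tensor product of graded-commutative $R$-algebras is a welcome elaboration rather than a deviation.
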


A curious consequence of \Cref{prop:completion-on-homs} is that all dualizable objects can be realized not only as homotopy colimits (or limits) of compact $\ida$-torsion objects, but in a proper context as honest categorical colimits (or limits).
This is relatively unusual in the context of triangulated categories, but in some situations it happens; see \autocite[Lemma~5.2.9]{krauseHomologicalRT2022}.

\begin{corollary}\label{cor:koszul-colimit}
    Any dualizable object $x \in (\Gamma_{\ida}\category{T})^d$ can be presented as a classical categorical colimit of its associated Koszul objects in the category $(\Gamma_{\ida}\category{T})^d$:
    \[
        x \cong \colim_m \Sigma^{md-k}x \kos \ida^{[m]}
    \]
		(assuming that $\ida$ is generated by $k$ elements, with total degree $d$).
		Moreover, the functor $h \colon \Gamma_{\ida}\category{T} \to \catMod(\Gamma_{\ida}\category{T})^c$ preserves these colimits.
\end{corollary}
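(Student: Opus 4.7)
The plan is to upgrade the homotopy colimit presentation of $x$ obtained in \Cref{prop:torsionColimit} to a genuine categorical colimit by checking that the relevant Milnor exact sequence degenerates, which is exactly what happens when Hom-modules are finitely generated over a Noetherian ring. The restriction to $(\Gamma_\ida\category{T})^d$ in the target is essential: it is precisely there that we can apply the finiteness results of \Cref{thm:finite-gen-hom-dualizables}.

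First, by \Cref{prop:torsionColimit} we have $x \cong \hocolim_m \Sigma^{md-k} x \kos \ida^{[m]}$. To show this homotopy colimit is in fact a categorical colimit in $(\Gamma_\ida\category{T})^d$, I would check that for every $y \in (\Gamma_\ida\category{T})^d$ the canonical map
\[
\Hom_\category{T}(x,y) \longrightarrow \limit_m \Hom_\category{T}(\Sigma^{md-k} x \kos \ida^{[m]}, y)
\]
is an isomorphism. The Milnor short exact sequence from \autocite[Proposition~2.2.4(d)]{hoveyPalmieriStricklandMemoir1997} fits this map into an exact sequence with kernel given by a $\limit^{(1)}$-term, so it suffices to prove that the inverse system $(\Hom_\category{T}(\Sigma^{md-k} x \kos \ida^{[m]}, y))_m$ is Mittag-Leffler, which by \Cref{lem:ML-and-lim1} kills the $\limit^{(1)}$.

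The key input is that $\Hom_\category{T}(x,y)$ is finitely generated over the Noetherian ring $\comp{R}_\ida$ by \Cref{thm:finite-gen-hom-dualizables}. With this, the same analysis of the towers
\[
0 \to M/r^m M \to \Hom_\category{T}(\Sigma^{md-1}x\kos r^m, y) \to \Sigma^{1-md}M[r^m] \to 0
\]
carried out in the proof of \Cref{prop:completion-on-homs} applies verbatim (working over $\comp{R}_\ida$ rather than $R$): the chain $M[r]\subset M[r^2]\subset\cdots$ stabilises, so the rightmost tower is pro-zero, the quotient tower is trivially Mittag-Leffler, and \Cref{lem:ML-closure} gives the middle tower the Mittag-Leffler property. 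The multi-generator case is then handled by the same inductive argument (first reducing to $\ida' = (r_1,\dots,r_{k-1})$ and then adjoining $r_k$) as in \Cref{prop:completion-on-homs}. This gives the desired isomorphism and hence realises $x$ as a classical colimit in $(\Gamma_\ida\category{T})^d$.

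For the second assertion, recall that colimits in $\catMod(\Gamma_\ida\category{T})^c$ are computed pointwise as ordinary colimits of abelian groups. Thus $h$ preserves the colimit in question if and only if, for every compact $c \in (\Gamma_\ida\category{T})^c$, the natural map
\[
\colim_m \Hom_\category{T}(c, \Sigma^{md-k} x \kos \ida^{[m]}) \longrightarrow \Hom_\category{T}(c, x)
\]
is an isomorphism. This is immediate from the fact, recalled in \S\ref{subsec:hocolim}, that the cocone $(\Sigma^{md-k}x\kos\ida^{[m]} \to \hocolim_m \Sigma^{md-k}x\kos\ida^{[m]} \cong x)$ is a minimal weak colimit. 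The anticipated mild obstacle is only the bookkeeping needed to port the Mittag-Leffler argument from the $R$-Noetherian setting of \Cref{prop:completion-on-homs} to the $\comp{R}_\ida$-Noetherian setting relevant for dualizable $y$; no genuinely new idea is required.
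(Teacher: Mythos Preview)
Your approach is correct and essentially the same as the paper's, relying on the finite generation of $\Hom_\category{T}(x,y)$ over $\comp{R}_\ida$ from \Cref{thm:finite-gen-hom-dualizables} together with the machinery of \Cref{prop:completion-on-homs}. The only difference is efficiency: rather than re-running the tower analysis from the proof of \Cref{prop:completion-on-homs} over $\comp{R}_\ida$, the paper simply invokes the \emph{statement} of \Cref{prop:completion-on-homs} as a black box, applied to the category $\Gamma_\ida\category{T}$ with its canonical $\comp{R}_\ida$-action (from \Cref{remark:homFromTorsionIsComplete}). Since $\Hom_\category{T}(x,y)$ is finitely generated over the Noetherian $\ida$-adically complete ring $\comp{R}_\ida$, it is already $\ida$-complete, and the first isomorphism of the proposition collapses to $\Hom_\category{T}(x,y)\cong\limit_m\Hom_\category{T}(\Sigma^{md-k}x\kos\ida^{[m]},y)$, which is exactly the colimit universal property. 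Your unpacking of the Mittag-Leffler argument is not wrong, just redundant. On the other hand, you explicitly address the ``Moreover'' clause about $h$ preserving these colimits (via minimal weak colimits), which the paper's written proof leaves implicit.
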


\begin{proof}
    We apply \Cref{prop:completion-on-homs} to the category $\Gamma_\ida\category{T}$, which admits an action of $\comp{R}$ by \Cref{remark:homFromTorsionIsComplete}. If $x,y\in(\Gamma_\ida\category{T})^d$, then the $\comp{R}$-module $\Hom(x,y)$ is finitely generated by \Cref{thm:finite-gen-hom-dualizables}, so $\ida$-complete. Hence, the first isomorphism established by the proposition takes the form
$\Hom_{(\Gamma\category{T})^d}(x,y) \cong \limit_m \Hom_{(\Gamma\category{T})^d}(\Sigma^{md-1}x \kos \ida^{[m]},y)$,	
which is none other than the universal property for a honest categorical colimit.
\end{proof}

We finish with a technical statement, which will be important in \Cref{sec:reconstruction}.

\begin{corollary}\label{cor:koszul-ML}
    Given any pair of dualizable objects $x,y \in (\Gamma_{\ida}\category{T})^d$, the inverse system of abelian groups $(\Hom_{(\Gamma_\ida\category{T})^d}(x\kos\ida^{[m]},y))_{m\ge 1}$ is Mittag-Leffler.
\end{corollary}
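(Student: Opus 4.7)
My plan is to adapt the strategy in the proof of \Cref{prop:completion-on-homs}, which already establishes the Mittag-Leffler property in the principal case, and iterate it to handle a general~$\ida$. The essential input is \Cref{thm:finite-gen-hom-dualizables}: all Hom-modules between dualizable objects of $\Gamma_\ida\category{T}$ are finitely generated over the Noetherian ring $\comp{R}_\ida$.

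For the principal case $\ida=(r)$, the argument inside the proof of \Cref{prop:completion-on-homs} applies verbatim. Setting $M:=\Hom_{(\Gamma_\ida\category{T})^d}(x,y)$ (a Noetherian $\comp{R}_\ida$-module), the Koszul triangle yields a short exact sequence of inverse systems whose left term $(M/r^mM)_m$ has surjective transitions (hence is Mittag-Leffler) and whose right term $(M[r^m])_m$ is pro-zero (by Noetherianness of $M$, the ascending chain of $r$-torsion submodules stabilizes, and its stable value is annihilated by a power of $r$, which is the transition map). \Cref{lem:ML-closure} then gives Mittag-Leffler for the middle.

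For a general $\ida=(r_1,\dots,r_k)$, the plan is to iterate this argument using the iterated Koszul triangles. Setting $x_m^{(0)}=x$ and $x_m^{(i)}=x_m^{(i-1)}\kos r_i^m$, so that $x_m^{(k)}=x\kos\ida^{[m]}$, each step produces a short exact sequence of inverse systems
\[
    0 \to \bigl(M_m^{(i-1)}/r_i^m M_m^{(i-1)}\bigr)_m \to \bigl(M_m^{(i)}\bigr)_m[\text{shift}] \to \bigl(M_m^{(i-1)}[r_i^m]\bigr)_m[\text{shift}] \to 0,
\]
where $M_m^{(i)}:=\Hom(x_m^{(i)},y)$. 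Each $M_m^{(i)}$ is a finitely generated $\comp{R}_\ida$-module, since $x_m^{(i)}$ is dualizable in $\Gamma_\ida\category{T}$ (as a tensor of dualizables) and so is $y$.

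The hard part will be verifying at each stage that both outer systems are Mittag-Leffler. The base step $i=1$ is immediate because $M_m^{(0)}=M$ is constant in $m$. For $i\ge 2$ the underlying modules $M_m^{(i-1)}$ vary with $m$, so a uniform estimate across $m$ is needed. The plan is to exploit an Artin--Rees-type argument: the inverse limit $\limit_m M_m^{(i-1)}$ is itself a finitely generated $\comp{R}_\ida$-module (computable from \Cref{prop:completion-on-homs}), and combining this with the inductively established Mittag-Leffler property of $(M_m^{(i-1)})_m$ should allow one to uniformly control the $r_i$-torsion and $r_i$-cotorsion of the $M_m^{(i-1)}$. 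With this uniformity in place, iterated applications of \Cref{lem:ML-closure} close the argument.
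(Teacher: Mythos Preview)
Your overall strategy---iterate the Koszul-triangle short exact sequence along the diagonal---is viable and genuinely different from the paper's route, but the plan you sketch for the inductive step is not the right one, and as stated it is a gap.

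Once you have the short exact sequence of inverse systems
\[
0 \to \bigl(M_m^{(i-1)}/r_i^m M_m^{(i-1)}\bigr)_m \to \bigl(M_m^{(i)}\bigr)_m \to \bigl(M_m^{(i-1)}[r_i^m]\bigr)_m \to 0
\]
(shifts suppressed), the two outer systems can be handled directly, with no Artin--Rees argument, no ``uniform control'', and no appeal to the inverse limit. For the left system: it is the quotient of $(M_m^{(i-1)})_m$ by the subsystem $(r_i^m M_m^{(i-1)})_m$, and a quotient of a Mittag--Leffler system is Mittag--Leffler by \Cref{lem:ML-closure}(a); so the inductive hypothesis suffices. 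For the right system: the crucial observation you are missing is that the transition maps carry an extra factor of~$r_i$. Unwinding \Cref{constr:transitionKoszul}, the transition $M_{m+1}^{(i-1)}[r_i^{m+1}]\to M_m^{(i-1)}[r_i^m]$ is the restriction of $r_i\cdot\tau_m$, where $\tau_m$ is the transition on $(M_m^{(i-1)})_m$. Hence the composite from level $m_1$ to level $m_0$ first lands (via $\tau$) in the $r_i$-power torsion of $M_{m_0}^{(i-1)}$ and is then multiplied by $r_i^{m_1-m_0}$. Since $M_{m_0}^{(i-1)}$ is Noetherian over $\comp{R}_\ida$ by \Cref{thm:finite-gen-hom-dualizables}, its $r_i$-power torsion is annihilated by some $r_i^{D(m_0)}$; thus for $m_1\ge m_0+D(m_0)$ the transition vanishes, and the right system is pro-zero. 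No uniform bound across $m$ is needed, only Noetherianity of each individual term---so the Artin--Rees idea is a red herring.

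For comparison, the paper separates variables: it works with the doubly-indexed system $M_{m,l}=\Hom\bigl((x\kos\ida'^{[m]})\kos r_k^l,\,y\bigr)$, establishes Mittag--Leffler in $m$ for each fixed $l$ (inductive hypothesis applied to $x\kos r_k^l$) and in $l$ for the system of limits over $m$ (the principal case applied to $x$ and $y$, since $\Gamma_{\ida'}x\cong x$), and then assembles these via \Cref{lem:ML-and-lim1} to conclude Mittag--Leffler for the full bi-indexed system; the diagonal is cofinal. The paper's argument is more abstract and avoids analyzing the transition maps; yours is more hands-on but, once the $r_i$-factor is identified, arguably more transparent.
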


\begin{proof}
This is proved by induction on the number of generators of $\ida=(r_1,\ldots,r_k)$, where the case $k=1$ was explicitly established in the proof of \Cref{prop:completion-on-homs}.
For $k>1$, denote $\ida'=(r_1,\ldots,r_{k-1})$, by $d'$ the sum of the degrees of the generators for $\ida'$, by $d_k$ the degree of $r_k$, and
\[
M_{m,l} := \Hom_{\category{T}}(\Sigma^{md'+ld_k-k}(x\kos\ida'^{[m]})\kos r_k^l,y).
\]
Since the system in the statement is cofinal in $(M_{m,l})_{m,l\in\mathbb{N}}$, it suffices to prove the Mittag-Leffler property for $(M_{m,l})_{m,l\in\mathbb{N}}$. The inductive hypothesis says that $(M_{m,l})_{m\ge 1}$ is Mittag-Leffler for each $l\ge 1$, and also that the inverse system
\[
  \Big(\colim_m M_{m,l}\Big)_{l\ge 1}\cong
	\Big(\Hom_{\category{T}}\big((\Gamma_{\ida'}x)\kos r_k^l,y\big)\Big)_{l\ge 1}
\]
is Mittag-Leffler. So given any $m_0,l_0\in\mathbb{N}$, there are by \Cref{lem:ML-and-lim1}:
\begin{itemize}
\item $l_1\ge l_0$ such that image of $\limit_{m,l}M_{m,l}\to\limit_m M_{m,l_0}$ equals the image of $\limit_{m}M_{m,l_1}\to\limit_m M_{m,l_0}$, and
\item $m_1\ge m_0$ such that the image of $\limit_{m}M_{m,l_1}\to M_{m_0,l_1}$ equals the image of $M_{m_1,l_1}\to M_{m_0,l_0}$.
\end{itemize}
It follows that the image of $\limit_{m,l}M_{m,l}\to M_{m_0,l_0}$ coincides with the image of $M_{m_1,l_1}\to M_{m_0,l_0}$, so $(M_{m,l})_{m,l\in\mathbb{N}}$ is Mittag-Leffler as required.
\end{proof}

\begin{remark}
One may ask whether $\Hom_{\category{T}}(c,\Gamma_\ida d)$ and $\Gamma_\ida \Hom_{\category{T}}(c,d)$, where the second $\Gamma_\ida$ stands for the torsion radical $\Gamma_\ida\colon \catmod{R} \to \catmod{R}$,
compare in a similar way. Unfortunately, the situation is more complicated here, since unlike the completion functor, the torsion radical is \emph{not} exact on finitely generated modules. If $\ida=(r)$ is principal, a similar argument as the one for \Cref{prop:completion-on-homs} only gives us a short exact sequence
		\[
				0 \to 
				H^1_\ida \Hom_{\category{T}}(\Sigma^{-1}c,d) \to
				\Hom_{\category{T}}(c,\Gamma_\ida d) \to
				\Gamma_\ida \Hom_{\category{T}}(c,d) \to
				0,
		\]
where $H^1_\ida\colon \catmod{R} \to \catmod{R}$ is the first local cohomology (i.e.\ the first right derived functor of $\Gamma_\ida$). If the ideal $\ida$ is not principal, the situation gets even more complicated as also higher local cohomologies come into play.
\end{remark}

\subsection{Thick generation for dualizable objects}

We again fix a tensor-tri\-an\-gu\-la\-ted, compactly-rigidly generated category $\category{T}$, with a Noetherian action of $R$ and a homogeneous ideal $\ida\subset R$. Our aim here is characterize in terms of torsion compact objects when $x\in\thick(g)$ for a pair $x,g\in(\Gamma\category{T})^d$.

We will need two preparatory lemmas. The first one allows to detect vanishing of morphisms in $(\Gamma_\ida\category{T})^d$ in terms of their tensors in $(\Gamma_\ida\category{T})^c$.

\begin{lemma}\label{lemma:map-tensored-compacts-zero}
    Let $f \colon x \to y$ be a homomorphism in $(\Gamma_{\ida}\category{T})^d$. Then the following are equivalent:
\begin{enumerate}[label=(\alph*)]
\item $f\otimes c=0$ for each $c\in(\Gamma_\ida\category{T})^c$;
\item $f\otimes (\mathbb{1}\kos\ida^{[m]})=0$ for each $m\ge 0$;
\item $f=0$.
\end{enumerate}
\end{lemma}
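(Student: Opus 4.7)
My approach is to prove the three-way equivalence by the cycle (c) $\Rightarrow$ (a) $\Rightarrow$ (b) $\Rightarrow$ (c). The first implication is immediate. For (a) $\Rightarrow$ (b), I will only need to observe that each Koszul object $\mathbb{1}\kos\ida^{[m]}$ belongs to $(\Gamma_\ida\category{T})^c$: it is in $\category{T}^c$ as an iterated cone of endomorphisms of $\mathbb{1}$, and it is $\ida$-torsion because the generators $r_i^m$ cut out the same closed set $V(\ida^{[m]})=V(\ida)$, so \Cref{prop:compactKoszul} applies.

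The heart of the lemma is the implication (b) $\Rightarrow$ (c). Here my plan is to invoke \Cref{cor:koszul-colimit}, which realises $x$ as the honest categorical colimit $\colim_m \Sigma^{md-k} x \kos \ida^{[m]}$ in $(\Gamma_\ida\category{T})^d$. By the universal property of this colimit, the $\comp{R}_\ida$-module $\Hom(x,y)$ identifies with the inverse limit $\limit_m \Hom(\Sigma^{md-k}x\kos\ida^{[m]}, y)$, and the element $f$ corresponds to the compatible family of composites $(f \circ \phi_m^x)_m$, where $\phi_m^x \colon \Sigma^{md-k}x\kos\ida^{[m]} \to x$ is the structural cocone map from \Cref{constr:transitionKoszul}. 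It will therefore suffice to check that each composite $f\circ\phi_m^x$ vanishes.

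For this last step I plan to use the functoriality of the Koszul construction recorded in \S\ref{subsec:Koszul-obj}: the assignment $z \mapsto (\Sigma^{md-k} z\kos\ida^{[m]} \to z)$ is natural in $z$, so the square with $\phi_m^x$ and $\phi_m^y$ along the horizontal edges, $f$ on the right vertical, and $\Sigma^{md-k}(f\otimes(\mathbb{1}\kos\ida^{[m]}))$ on the left vertical commutes. Hypothesis (b) forces this left vertical map to be zero, and then $f\circ\phi_m^x = \phi_m^y\circ\Sigma^{md-k}(f\otimes(\mathbb{1}\kos\ida^{[m]})) = 0$ for every $m$, whence $f=0$. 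I do not foresee any serious obstacle: all of the heavy lifting is packaged into \Cref{cor:koszul-colimit} (which in turn relies on \Cref{prop:completion-on-homs} and the finite-generation result \Cref{thm:finite-gen-hom-dualizables}), and once that colimit presentation is in hand the remainder is a one-line naturality argument.
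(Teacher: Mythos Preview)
Your proposal is correct and follows essentially the same route as the paper's proof: both arguments establish (b) $\Rightarrow$ (c) by using the naturality of the Koszul cocone maps from \S\ref{subsec:Koszul-obj} to rewrite $f\circ\phi_m^x$ as $\phi_m^y\circ\big(\Sigma^{md-k}(f\otimes(\mathbb{1}\kos\ida^{[m]}))\big)=0$, and then invoke \Cref{cor:koszul-colimit} to conclude $f=0$. Your write-up is in fact slightly more explicit than the paper's on (a) $\Rightarrow$ (b), where you spell out why $\mathbb{1}\kos\ida^{[m]}\in(\Gamma_\ida\category{T})^c$.
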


\begin{proof}
The first condition obviously implies the second one.
If $f$ is such that $f\otimes (\mathbb{1}\kos\ida^{[m]})=0$ for each $m\ge 0$, then \Cref{constr:transitionKoszul} yields a direct system of maps $f\otimes (\mathbb{1}\kos\ida^{[m]})\colon x\kos\ida^{[m]}\to y\kos\ida^{[m]}$ in $\Gamma_\ida\category{T}$ with a cocone $f$, and all the compositions $x\kos\ida^{[m]}\to y\kos\ida^{[m]}\to y$ vanish by assumption. So the compositions $x\kos\ida^{[m]}\to x\to y$ vanish as well, and so does $f$ by Corollary~\ref{cor:koszul-colimit}.
\end{proof}

The second lemma shows how properties of the sequence from \Cref{constr:genGammaGen} translate to Koszul complexes.

\begin{lemma} \label{lemma:ghosts-to-koszul}
Suppose that $\idb=(b_1,\ldots,b_k)\subset R$ is a homogeneous ideal generated by a sequence of $k$ elements of total degree $d$. If we have an object $g$ and a sequence of maps
\[
    x_0 \xrightarrow{\phi_0} x_1 \xrightarrow{\phi_1} x_2 \xrightarrow{\phi_2} \cdots \xrightarrow{\phi_{2^{2k}-1}} x_{2^{2k}}
\]
in $\Gamma_\ida\category{T}$ such that $\Hom(g,\phi_i)=0$ for each $i$, then also
\[
    \Hom(g\kos\idb,(\phi_{2^{2k}-1}\circ\cdots\phi_1\circ\phi_0)\otimes\mathbb{1}\kos\idb) = 0.
\]
\end{lemma}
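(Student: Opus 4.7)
The plan is to dualize the factor $\mathbb{1} \kos \idb$ on the source side, turning the claim into a vanishing statement that the ghost lemma \Cref{lemma:constrGenGamma}~\ref{item:genGammaGen-orthogonal} handles directly. Writing $\psi = \phi_{2^{2k}-1} \circ \cdots \circ \phi_0$, the assertion amounts to showing that every graded map $g \kos \idb \to x_0 \otimes \mathbb{1} \kos \idb$ is killed by post-composition with $\psi \otimes \mathbb{1} \kos \idb$.

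First I would record the Spanier--Whitehead dual of the Koszul unit: an easy induction on $k$ from the defining triangle $\Sigma^{-|b_i|}\mathbb{1} \xrightarrow{b_i} \mathbb{1} \to \mathbb{1} \kos b_i$ gives $(\mathbb{1} \kos \idb)^{\mathrm{SW}} \cong \Sigma^{d-k}(\mathbb{1} \kos \idb)$. Using the tensor--hom adjunction together with the standard isomorphism $g \kos \idb \cong g \otimes \mathbb{1} \kos \idb$, this yields
\[
  \Hom\bigl(g \kos \idb,\, \psi \otimes \mathbb{1} \kos \idb\bigr)
  \;\cong\; \Hom\bigl(g \kos \idb \otimes (\mathbb{1} \kos \idb)^{\mathrm{SW}},\, \psi\bigr)
  \;\cong\; \Hom\bigl(\Sigma^{d-k}\, g \kos \idb \kos \idb,\, \psi\bigr),
\]
so it suffices to prove vanishing of the last module.

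Now $g \kos \idb \kos \idb = g \kos (b_1,\ldots,b_k,b_1,\ldots,b_k)$ is an iterated Koszul object on a generating sequence of length $2k$, hence belongs to $\thick^{2^{2k}-1}(g)$ by \Cref{lemma:koszul-generation-time}, and a fortiori to $\Loc^{2^{2k}-1}(g)$. The hypothesis $\Hom(g, \phi_i)=0$ read in the graded sense is equivalent to $\Hom(\thick^0(g), \phi_i)=0$, which upgrades to $\Hom(\Loc^0(g), \phi_i)=0$ by a routine coproduct argument. Since the proof of \Cref{lemma:constrGenGamma}~\ref{item:genGammaGen-orthogonal} depends only on this orthogonality, we may apply it with $i=0$, $j = 2^{2k}-1$ to conclude $\Hom(\Loc^{2^{2k}-1}(g), \psi) = 0$. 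Combined with the previous step, this yields the desired vanishing.

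The only real subtlety is bookkeeping: the exponent $2^{2k}$ in the statement is tuned precisely so that the doubled Koszul object lives in $\thick^{2^{2k}-1}(g)$ while the $2^{2k}$ successive $g$-ghosts collectively kill $\Hom$ out of $\Loc^{2^{2k}-1}(g)$. Beyond this matching of exponents and the self-duality calculation for $\mathbb{1} \kos \idb$, no conceptual obstacle arises.
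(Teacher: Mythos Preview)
Your proof is correct and uses the same two ingredients as the paper---the self-duality $(\mathbb{1}\kos\idb)^{\mathrm{SW}}\cong\Sigma^{d-k}\mathbb{1}\kos\idb$ and the ghost lemma (\autocite[Proposition~3.3]{christensenIdeals1998})---but assembles them differently. The paper moves the factor $\mathbb{1}\kos\idb$ from the \emph{source} to the target, so that $\Hom(g\kos\idb,\phi_{i+2^k-1}\cdots\phi_i)\cong\Hom(\Sigma^{k-d}g,(\phi_{i+2^k-1}\cdots\phi_i)\otimes\mathbb{1}\kos\idb)$; it first uses $g\kos\idb\in\thick^{2^k-1}(g)$ to see that each block of $2^k$ consecutive $\phi_i$'s is a $g\kos\idb$-ghost, then reinterprets these blocks (tensored with $\mathbb{1}\kos\idb$) as $\Sigma^{k-d}g$-ghosts, and applies the ghost lemma a second time using $g\kos\idb\in\thick^{2^k-1}(\Sigma^{k-d}g)$. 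You instead move the factor $\mathbb{1}\kos\idb$ from the \emph{target} to the source, producing the doubled Koszul object $g\kos\idb\kos\idb\in\thick^{2^{2k}-1}(g)$, and then a single application of the ghost lemma suffices. Your route is a little more direct; the paper's route makes the factorization $2^{2k}=2^k\cdot 2^k$ visible and perhaps clarifies why the exponent is what it is. (Incidentally, since $g\kos\idb\kos\idb$ already lies in $\thick^{2^{2k}-1}(g)$, you do not actually need the upgrade from $\thick^0$ to $\Loc^0$.)
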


\begin{proof}
Since $g\kos\idb\in\thick^{2^k-1}(g)$ by \Cref{lemma:koszul-generation-time}, \autocite[Proposition~3.3]{christensenIdeals1998} implies that $\Hom(g\kos\idb,\phi_{i+2^k-1}\circ\cdots\circ\phi_i)=0$ for each $0\le i\le 2^k(2^k-1)$.
Further, note that $\mathbb{1}\kos r\cong\Sigma^{1-e}\SW{(\mathbb{1}\kos r)}$ for each $r\in R$ of degree $e$. By induction on $k$, we get that $\mathbb{1}\kos\idb\cong\Sigma^{k-d}\SW{(\mathbb{1}\kos\idb)}$ and so
\[
    \Hom(g\kos\idb,\phi_{i+2^k-1}\circ\cdots\circ\phi_i)\cong
    \Hom(\Sigma^{k-d}g,(\phi_{i+2^k-1}\circ\cdots\circ\phi_i)\otimes\mathbb{1}\kos\idb)=0
\]
for each $0\le i\le 2^k(2^k-1)$. The conclusion follows by another application of~\autocite[Proposition~3.3]{christensenIdeals1998}, using that $g\kos\idb\in\thick^{2^k-1}(\Sigma^{k-d}g)$.
\end{proof}

Now we can prove the main result of the section.

\begin{theorem} \label{thm:genDual}
Let $\category{T}$ be an {R}-linear, Noetherian, tensor-triangulated, compactly-rigidly generated category and let $\ida\subset R$ be a homogeneous ideal. Then the following are equivalent for $x,g\in(\Gamma_\ida\category{T})^d)$:
\begin{enumerate}[label=(\roman*)]
\item\label{item:dual-gen-in-Td} $x\in\thick(g)$ in $(\Gamma_\ida\category{T})^d$;
\item\label{item:dual-gen-in-Tc} there exist $n\ge 0$ such that for each $m>0$, we have $x\kos\ida^{[m]}\in\thick^n(g\kos\ida^{[m]})$ in $(\Gamma_\ida\category{T})^c$. 
\end{enumerate}
If $g$ is of the form $g\cong\Gamma_\ida c$ for $c\in\category{T}^c$, then the conditions are further equivalent to
\begin{enumerate}[resume, label=(\roman*)]
\item\label{item:dual-gen-tor-Tc} there exist $n\ge 0$ such that for each $m>0$, $x\kos\ida^{[m]}\in\thick^n(c)$ in~$\category{T}^c$.
\end{enumerate}
\end{theorem}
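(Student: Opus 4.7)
The plan is to prove (i)$\Leftrightarrow$(ii) in general, and then, when $g\cong\Gamma_\ida c$, to close the cycle via (i)$\Rightarrow$(iii)$\Rightarrow$(ii). Write $k$ for the length of the chosen generating sequence of $\ida$. For (i)$\Rightarrow$(ii), simply tensor the containment $x\in\thick^n(g)$ with $\mathbb{1}\kos\ida^{[m]}$: since $-\otimes\mathbb{1}\kos\ida^{[m]}$ is an exact functor preserving finite sums and direct summands, and since $\mathbb{1}\kos\ida^{[m]}$ is compact in $\Gamma_\ida\category{T}$ (so $g\kos\ida^{[m]}$ is itself compact by \Cref{prop:compactRigid} and \Cref{prop:compactKoszul}), this immediately yields $x\kos\ida^{[m]}\in\thick^n(g\kos\ida^{[m]})$ with the same~$n$.

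The main work goes into (ii)$\Rightarrow$(i). I would apply \Cref{constr:genGammaGen} inside the $\comp{R}$-linear category $(\Gamma_\ida\category{T})^d$ (whose Hom-modules are finitely generated over $\comp{R}$ by \Cref{thm:finite-gen-hom-dualizables}) to obtain a sequence $x=x_0\xrightarrow{\phi_0}x_1\xrightarrow{\phi_1}\cdots$ in $(\Gamma_\ida\category{T})^d$ with each $\phi_i$ a $g$-ghost. Set $N=(n+1)\cdot 2^{2k}$ and $\Phi=\phi_{N-1}\circ\cdots\circ\phi_0$. Grouping the $\phi_i$'s into $n+1$ blocks of size $2^{2k}$ and applying \Cref{lemma:ghosts-to-koszul} to each block shows that $\Phi\otimes\mathbb{1}\kos\ida^{[m]}$ decomposes as a composition of $n+1$ consecutive $g\kos\ida^{[m]}$-ghosts. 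Since its source $x\kos\ida^{[m]}$ lies in $\thick^n(g\kos\ida^{[m]})$ by~(ii), the Christensen ghost lemma~\autocite[Proposition~3.3]{christensenIdeals1998} forces $\Phi\otimes\mathbb{1}\kos\ida^{[m]}=0$; since this holds uniformly in $m$, \Cref{lemma:map-tensored-compacts-zero} yields $\Phi=0$. Then by \Cref{lemma:constrGenGamma}~\ref{item:genGammaGen-thick}, $\cone(\Phi)\in\thick^N(g)$, and as $\Phi=0$ the object $x$ is a direct summand of $\Sigma^{-1}\cone(\Phi)$, whence $x\in\thick(g)$. The main obstacle is arranging the ghost counts, the Koszul-tensoring of ghosts, and the uniformity in $m$ so that everything matches up.

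When $g\cong\Gamma_\ida c$, the essential observation is $\Gamma_\ida c\otimes\mathbb{1}\kos\ida^{[m]}\cong c\kos\ida^{[m]}$: the cone of the coreflection $\Gamma_\ida c\to c$ lies in $L_\ida\category{T}$, and tensoring it with the $\ida$-torsion object $\mathbb{1}\kos\ida^{[m]}$ lands in $L_\ida\category{T}\cap\Gamma_\ida\category{T}=0$. Combined with (i)$\Rightarrow$(ii), the generation $c\kos\ida^{[m]}\in\thick^{2^k-1}(c)$ from \Cref{lemma:koszul-generation-time}, and the standard composition rule $\thick^a(\thick^b(-))\subseteq\thick^{(a+1)(b+1)-1}(-)$, this gives (i)$\Rightarrow$(iii) uniformly in $m$. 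For the reverse direction (iii)$\Rightarrow$(ii), apply the hypothesis with $m'=m(2^k-1)$ and tensor the containment $x\kos\ida^{[m(2^k-1)]}\in\thick^n(c)$ with $\mathbb{1}\kos\ida^{[m]}$ to obtain $x\otimes\mathbb{1}\kos\ida^{[m]}\otimes\mathbb{1}\kos\ida^{[m(2^k-1)]}\in\thick^n(c\kos\ida^{[m]})$. By \Cref{lemma:koszul-split} applied to $y=x\kos\ida^{[m]}$ (on which $\ida^{[m]}$ acts by zero) with $N=2^k-1$, the object $x\kos\ida^{[m]}$ is a split summand of this tensored object, proving~(ii) with the same~$n$.
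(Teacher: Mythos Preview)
Your argument for (i)$\Leftrightarrow$(ii) is correct and follows the paper's approach: tensoring with $\mathbb{1}\kos\ida^{[m]}$ for one direction, and the ghost-sequence from \Cref{constr:genGammaGen} combined with \Cref{lemma:ghosts-to-koszul} and \Cref{lemma:map-tensored-compacts-zero} for the other. Likewise, your (i)$\Rightarrow$(iii) via (ii) and the identification $g\kos\ida^{[m]}\cong c\kos\ida^{[m]}$ is exactly what the paper does.

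There is a genuine gap in (iii)$\Rightarrow$(ii). You assert that $\ida^{[m]}$ acts by zero on $y=x\kos\ida^{[m]}$ in order to invoke \Cref{lemma:koszul-split}, but this is false in general: in an arbitrary tensor-triangulated category an element $r$ need not annihilate $\mathbb{1}\kos r$ (the classical instance is $2\cdot\id_{\mathbb{S}/2}\neq 0$ in the stable homotopy category, and the paper's framework explicitly includes ring spectra); only $r^2$ is guaranteed to vanish on $\mathbb{1}\kos r$. Consequently your uniform choice $m'=m(2^k-1)$ does not give the splitting you need.

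The repair, which is what the paper actually does, is to let the auxiliary exponent depend on $m$. Since $x\kos\ida^{[m]}\in(\Gamma_\ida\category{T})^c$, \Cref{prop:compactKoszul} furnishes some $l'=l'(m)$ with $\ida^{[l']}\cdot\id_{x\kos\ida^{[m]}}=0$; putting $l=(2^k-1)l'$, \Cref{lemma:koszul-split} now legitimately exhibits $x\kos\ida^{[m]}$ as a summand of $(x\kos\ida^{[m]})\kos\ida^{[l]}\cong(x\kos\ida^{[l]})\kos\ida^{[m]}$, and tensoring the containment $x\kos\ida^{[l]}\in\thick^n(c)$ from (iii) with $\mathbb{1}\kos\ida^{[m]}$ yields (ii). The crucial point is that although $l$ varies with $m$, the level $n$ in (iii) is fixed, so the resulting level in (ii) is uniform as well.
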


\begin{proof}
Suppose that $y\in\thick(x)$; then $y\in\thick^n(x)$ for some $n\ge 0$.
Since $-\otimes\mathbb{1}\kos\ida^{[m]}\colon(\Gamma_\ida\category{T})^d\to(\Gamma_\ida\category{T})^c$ is an exact functor for each $m>0$, we get the implication $\ref{item:dual-gen-in-Td}\implies\ref{item:dual-gen-in-Tc}$.

Conversely, assume condition~$\ref{item:dual-gen-in-Tc}$. We will apply \Cref{constr:genGammaGen} to the $\comp{R}$-linear triangulated category $\Gamma_\ida\category{T}$. Clearly, the category $\category{F}$ there contains $(\Gamma_\ida\category{T})^d$ by \Cref{thm:finite-gen-hom-dualizables} and the constructed sequence
\[
    x=x_0 \xrightarrow{\phi_0} x_1 \xrightarrow{\phi_1} x_2 \xrightarrow{\phi_2} \cdots
\]
belongs to $(\Gamma_\ida\category{T})^d$. If $k$ is the length of a chosen generating sequence for $\ida$, \Cref{lemma:ghosts-to-koszul} tells us that 
\[
\Hom(g\kos\ida^{[m]},(\phi_{i+2^{2k}-1}\circ\cdots\phi_{i+1}\circ\phi_i)\otimes\mathbb{1}\kos\ida^{[m]}) = 0
\]
for each $i\ge 0$ and $m\ge 1$. As $x\kos\ida^{[m]}\in\thick^n(g\kos\ida^{[m]})$ by assumption, \autocite[Proposition~3.3]{christensenIdeals1998} implies that
\[
\Hom(x\kos\ida^{[m]},(\phi_{2^{2k}n-1}\circ\cdots\phi_1\circ\phi_0)\otimes\mathbb{1}\kos\ida^{[m]}) = 0,
\]
or equivalently that $(\phi_{2^{2k}n-1}\circ\cdots\phi_1\circ\phi_0)\otimes\mathbb{1}\kos\ida^{[m]}=0$ for each $m\ge 1$.
Consequently, $\phi_{2^{2k}n-1}\circ\cdots\phi_1\circ\phi_0=0$ by \Cref{lemma:map-tensored-compacts-zero}, and $x\in\thick^{2^{2k}n-1}(g)$ by \Cref{lemma:constrGenGamma}~\ref{item:genGammaGen-thick}. Hence $\ref{item:dual-gen-in-Tc}\implies\ref{item:dual-gen-in-Td}$.

In order to prove $\ref{item:dual-gen-in-Tc}\implies\ref{item:dual-gen-tor-Tc}$, notice that if $g\cong\Gamma_\ida c$ and $\ida$ has a chosen generating sequence of length $k$, then we get by the same argument as in the proof of \Cref{lemma:ghosts-to-koszul} that
\[
  g\kos\ida^{[m]}\cong(\Gamma_\ida c)\kos\ida^{[m]}\cong\Gamma_\ida(c\kos\ida^{[m]})\cong c\kos\ida^{[m]}\in\thick^{2^k-1}(c).
\]
It follows that~$\ref{item:dual-gen-in-Tc}$ implies that $x\kos\ida^{[m]}\in\thick^{(n+1)(2^k-1)-1}(c)$ for each $m\ge 0$.

Finally, assume $\ref{item:dual-gen-tor-Tc}$ and that $g\cong\Gamma_\ida c$, so that $g\kos\ida^{[m]}\cong c\kos\ida^{[m]}$ for each $m$. Given any particular $m$, we have $x\kos\ida^{[m]}\in(\Gamma_\ida\category{T})^c$, so there is some $l'>0$ such that $\ida^{[l']}$ acts by zero on $x\kos\ida^{[m]}$. Then, by \Cref{lemma:koszul-split}, $x\kos\ida^{[m]}$ is a direct summand of $(x\kos\ida^{[m]})\kos\ida^{[l]}\cong x\otimes(\mathbb{1}\kos\ida^{[m]})\otimes(\mathbb{1}\kos\ida^{[l]})\cong (x\kos\ida^{[l]})\kos\ida^{[m]}$ for $l=(2^k-1)l'$. Since $x\kos\ida^{[l]}\in\thick^n(c)$ by assumption, we have $(x\kos\ida^{[l]})\kos\ida^{[m]}\in\thick^n(c\kos\ida^{[m]})$, and the above shows that $x\kos\ida^{[m]}\in\thick^n(c\kos\ida^{[m]})$. Hence $\ref{item:dual-gen-in-Tc}$ follows.
\end{proof}

\begin{remark}
If $R$ is an ordinary commutative Noetherian ring, $\category{T}=\category{D}(\catMod R)$ and $g=\Gamma_\ida R$, Balmer and Sanders \autocite{balmerSandersPerfectComplexesCompletion2024} proved that $x\in\thick(g)$ for any $x\in(\Gamma_\ida\category{T})^d$. The idea of their proof, rephrased in our terminology, is that they proved condition~$\ref{item:dual-gen-tor-Tc}$ of \Cref{thm:genDual} for any given $x$. The choice of $n$ in their case was the width of the (necessarily perfect) complex $x\otimes_R^\mathbf{L}R/\ida\in\category{D}(\catMod(R/\ida))$.
\end{remark}

\section{Reconstruction of dualizable objects from compact objects}
\label{sec:reconstruction}

Throughout all of~\Cref{sec:reconstruction}, we fix a tensor-tri\-an\-gu\-la\-ted, compactly-rigidly generated category $\category{T}$, with a Noetherian action of a Noetherian graded ring $R$, and we further fix a homogeneous ideal $\ida$ of $R$. We only be interested in the subcategory $\Gamma_\ida\category{T}$ of $\category{T}$.
We will show that the categories of torsion compact and torsion dualizable objects determine each other as triangulated closed symmetric monoidal categories (with the nuance that the tensor unit is not compact) in a way formally similar to~\cite{neemanMetrics2020}. As the category of torsion compact objects is a subcategory of the one of torsion dualizable objects, the main problem is to reconstruct the latter category from the former one.

Despite the formal similarity of the results, our techniques are completely different from~\autocite{neemanMetrics2020}. Our categories potentially admit no nontrivial $t$-structures, there cannot be any translation invariant metric associated to our completion in the sense of~\autocite[Heuristic~9]{neemanMetrics2020}, and torsion dualizable objects are typically not compactly supported with respect to our ``Cauchy sequences'' in the sense of \autocite[Definition~4]{neemanMetrics2020}.
So our class of examples seems to be of a rather different nature: we use the monoidal structure in an essential way and where the compactly supported objects would induce eventually stable inverse systems, we only get Mittag-Leffler ones by \Cref{cor:koszul-ML}.

\subsection{Compacts versus dualizables as additive categories}

Following~\autocite[\S2.5]{letzBrownRepresentabilityTriangulated2023}, we will denote by $\catMod(\Gamma_{\ida}\category{T})^{c}$ the category of graded $R$-linear (i.e.\ enriched over the category of graded modules $\catMod R$) contravariant functors
\[ (\Gamma_{\ida}\category{T})^{c,\mathrm{op}}\longrightarrow\catMod R. \]
Note that since $\id_x\in\Hom_{\Gamma_\ida\category{T}^c}(x,x)$ is annihilated by some power of $\ida$ for each $x\in(\Gamma_{\ida}\category{T})^c$, the value $F(x)$ must be an $\ida$-power torsion module for each $F\in\catMod(\Gamma_{\ida}\category{T})^{c}$.
We also have the associated $R$-linear restricted Yoneda functor
\begin{equation} \label{eq:restricted-graded-Yoneda}
  h \colon \Gamma_{\ida}\category{T} \to \catMod(\Gamma_{\ida}\category{T})^{c}, \quad
  h(x) = \Hom_{\Gamma_\ida\category{T}}(-, x)_{|(\Gamma_{\ida}\category{T})^{c}}.
\end{equation}

A key point for the reconstruction of $(\Gamma_\ida\category{T})^d$ from $(\Gamma_\ida\category{T})^c$ is the following proposition. The situation is formally similar to the one studied in~\autocite[Proposition 5.2.8]{krauseHomologicalRT2022}, where one completes the perfect derived category of a coherent ring to its bounded derived category.

\begin{proposition}\label{prop:yoneda-restricted-fully-faithful}
     The restriction of the functor $h \colon \Gamma_{\ida}\category{T} \to \catMod(\Gamma_{\ida}\category{T})^c$ to dualizable objects is fully faithful.
\end{proposition}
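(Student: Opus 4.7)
The plan is to realize the natural map
\[
    h \colon \Hom_{\Gamma_\ida\category{T}}(x,y)\to\Hom_{\catMod(\Gamma_\ida\category{T})^c}(h(x),h(y))
\]
as an isomorphism by expressing both sides as the same limit indexed over a Koszul tower of compact objects. Fix $x,y\in(\Gamma_\ida\category{T})^d$, choose a generating sequence of length $k$ for $\ida$ with total degree $d$, and let $y_m=\Sigma^{md-k}x\kos\ida^{[m]}$ be the terms of \Cref{constr:transitionKoszul}. The key observation is that each $y_m$ lies in $(\Gamma_\ida\category{T})^c$: indeed $y_m\cong x\otimes(\Sigma^{md-k}\mathbb{1}\kos\ida^{[m]})$, the second factor belongs to $(\Gamma_\ida\category{T})^c$ by \Cref{prop:compactKoszul}, and the tensor product of a dualizable object with a compact one is compact by \Cref{prop:compactRigid}~\ref{item:prodCR} applied inside $\Gamma_\ida\category{T}$. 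Since $x$ is already $\ida$-torsion, \Cref{prop:torsionColimit} simultaneously supplies the presentation $x\cong\hocolim_m y_m$ in $\Gamma_\ida\category{T}$.

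On the source side, \Cref{cor:koszul-colimit} tells us that $x$ is in fact the classical categorical colimit of $(y_m)$ inside the full subcategory $(\Gamma_\ida\category{T})^d\subset\Gamma_\ida\category{T}$, and therefore
\[
    \Hom_{\Gamma_\ida\category{T}}(x,y)\;\cong\;\limit_m\Hom_{\Gamma_\ida\category{T}}(y_m,y).
\]
On the target side, I will first show that $h(x)\cong\colim_m h(y_m)$ in $\catMod(\Gamma_\ida\category{T})^c$. Colimits in this functor category are computed pointwise in $\catMod R$, so the claim reduces to checking that for every $c\in(\Gamma_\ida\category{T})^c$ the canonical map $\colim_m\Hom_{\Gamma_\ida\category{T}}(c,y_m)\to\Hom_{\Gamma_\ida\category{T}}(c,x)$ is an isomorphism, which is immediate from the compactness of $c$ in $\Gamma_\ida\category{T}$ applied to the hocolim presentation of $x$. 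The universal property of $\colim$ in the functor category then gives $\Hom(h(x),h(y))\cong\limit_m\Hom(h(y_m),h(y))$, and because each $y_m$ is compact the functor $h(y_m)$ is representable, so Yoneda's lemma delivers $\Hom(h(y_m),h(y))\cong h(y)(y_m)=\Hom_{\Gamma_\ida\category{T}}(y_m,y)$.

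To finish, one verifies that the two identifications with $\limit_m\Hom_{\Gamma_\ida\category{T}}(y_m,y)$ are compatible with the natural map induced by $h$. Both are given by precomposition with the cocone maps $y_m\to x$, and functoriality of $h$ together with the standard description of Yoneda make the required triangle commute, so $h$ acts as the identity on $\limit_m\Hom_{\Gamma_\ida\category{T}}(y_m,y)$ and is in particular fully faithful on $(\Gamma_\ida\category{T})^d$. I do not anticipate a genuine obstacle here: the one step that is not purely formal is the compactness of each $y_m$ in $\Gamma_\ida\category{T}$, and this is handed to us by the ``dualizable tensor compact is compact'' principle once we factor $y_m$ as a tensor product.
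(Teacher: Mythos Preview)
Your proof is correct and follows essentially the same approach as the paper: express $x$ as a colimit of its compact Koszul truncations $y_m$, use \Cref{cor:koszul-colimit} to identify $\Hom(x,y)$ with $\limit_m\Hom(y_m,y)$, and then match this with $\Hom(h(x),h(y))$ via the identification $h(x)\cong\colim_m h(y_m)$ plus Yoneda. Your treatment is actually marginally more direct than the paper's, which also expands $y$ as a colimit of compacts and passes through $\limit_m\colim_l\Hom(y_m,z_l)$ before invoking Yoneda between compacts; you instead apply the Yoneda lemma in its $\Hom(h(y_m),G)\cong G(y_m)$ form immediately, which saves a step.
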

\begin{proof}
    Let $x, y \in (\Gamma_{\ida}\category{T})^d$. By Yoneda's lemma, $h$ is fully faithful on $(\Gamma_{\ida}\category{T})^c$.
Then, by \Cref{cor:koszul-colimit}
    \begin{align*}        
        \Hom(x,y) 
        &\cong \limit_m \Hom(\Sigma^{md - k}x \kos \ida^{[m]}, y) \\ 
        &\cong \limit_m \colim_l \Hom(\Sigma^{md-k}x \kos \ida^{[m]}, \Sigma^{ld-k}y \kos \ida^{[l]}) \\
        &\cong \limit_m \colim_l \Hom(h(\Sigma^{md - k}x \kos \ida^{[m]}), h(\Sigma^{ld - k}y \kos \ida^{[l]})) \\ 
        &\cong \limit_m \Hom(h(\Sigma^{md - k}x \kos \ida^{[m]}), \colim_l h(\Sigma^{ld - k}y \kos \ida^{[l]})) \\
        &\cong \Hom(\colim_m h(\Sigma^{md - k}x \kos \ida^{[m]}), \colim_l h(\Sigma^{ld - k}y \kos \ida^{[l]})) \\
        &\cong \Hom(h(x), h(y)).
				\qedhere
    \end{align*}
\end{proof}

While we will devote next subsections to describing the essential image of $h_{|(\Gamma_{\ida}\category{T})^d}$, it is easy to recover $(\Gamma_{\ida}\category{T})^c$ inside $(\Gamma_{\ida}\category{T})^d$ as finitely presentable objects.

\begin{corollary}\label{cor:dualizable-to-compacts}
The following are equivalent for an object $x\in(\Gamma_{\ida}\category{T})^d$:
\begin{enumerate}
\item $x$ is compact in $\Gamma_\ida\category{T}$;
\item $\Hom(x,-)\colon(\Gamma_{\ida}\category{T})^d\to\catmod R$ preserves all direct limits that exist in $(\Gamma_{\ida}\category{T})^d$.
\end{enumerate}
\end{corollary}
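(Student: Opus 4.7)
The plan is to treat each direction separately, using the fully faithful Yoneda embedding $h\colon(\Gamma_\ida\category{T})^d\hookrightarrow\catMod(\Gamma_\ida\category{T})^c$ from \Cref{prop:yoneda-restricted-fully-faithful} as the main tool. I expect (2)$\Rightarrow$(1) to be the easier direction: I will apply the hypothesis (2) to the canonical Koszul colimit presentation $x\cong\colim_m\Sigma^{md-k}x\kos\ida^{[m]}$ in $(\Gamma_\ida\category{T})^d$ furnished by \Cref{cor:koszul-colimit}. This will yield $\Hom(x,x)\cong\colim_m\Hom(x,\Sigma^{md-k}x\kos\ida^{[m]})$, forcing $\id_x$ to factor through some Koszul term $\Sigma^{md-k}x\kos\ida^{[m]}$, which lies in $(\Gamma_\ida\category{T})^c$ by \Cref{prop:compactKoszul}. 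This exhibits $x$ as a retract of a compact object, and since $(\Gamma_\ida\category{T})^c$ is thick in $\Gamma_\ida\category{T}$, I conclude $x\in(\Gamma_\ida\category{T})^c$.

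For (1)$\Rightarrow$(2), let $x\in(\Gamma_\ida\category{T})^c$ and let $F\colon I\to(\Gamma_\ida\category{T})^d$ be a direct system whose colimit $y$ exists in $(\Gamma_\ida\category{T})^d$. Since $h(x)$ is a representable functor on $(\Gamma_\ida\category{T})^c$, it is categorically compact projective in $\catMod(\Gamma_\ida\category{T})^c$: the evaluation $\Hom_\catMod(h(x),-)$ preserves all colimits there. Combined with the full faithfulness of $h$, it will suffice to show that $h$ sends $y=\colim_i F(i)$ to the pointwise colimit $\colim_i h(F(i))$ in $\catMod(\Gamma_\ida\category{T})^c$. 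Evaluating pointwise, this amounts to proving $\Hom(c,y)\cong\colim_i\Hom(c,F(i))$ for every $c\in(\Gamma_\ida\category{T})^c$, from which the statement (2) follows upon specializing to $c=x$.

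The plan for this pointwise statement is to compare $y$ with the sequential homotopy colimit $z=\hocolim_i F(i)$ formed in $\Gamma_\ida\category{T}$ via the canonical comparison map $\alpha\colon z\to y$. Since each $c\in(\Gamma_\ida\category{T})^c$ is compact in $\Gamma_\ida\category{T}$, one has $\Hom(c,z)\cong\colim_i\Hom(c,F(i))$, so the needed identity reduces to $\alpha$ being an isomorphism. I expect the main obstacle to be exactly this: showing $\alpha$ is an isomorphism in $\Gamma_\ida\category{T}$. The strategy will be to combine the universal property $\Hom(y,w)\cong\lim_i\Hom(F(i),w)$ for $w\in(\Gamma_\ida\category{T})^d$ with the Milnor exact sequence $0\to\lim^1_i\Hom(\Sigma F(i),w)\to\Hom(z,w)\to\lim_i\Hom(F(i),w)\to 0$, reducing the question to vanishing of the $\lim^1$-correction. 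The plan here is to establish a Mittag-Leffler property for the inverse systems $(\Hom(F(i),w))_i$, extending \Cref{cor:koszul-ML} beyond the Koszul setting by leveraging the finite generation of each $\Hom(F(i),w)$ over the Noetherian ring $\comp{R}$ provided by \Cref{thm:finite-gen-hom-dualizables} together with the structural constraint on $(\Hom(F(i),w))_i$ imposed by representability of its inverse limit by $\Hom(y,w)$ in $(\Gamma_\ida\category{T})^d$.
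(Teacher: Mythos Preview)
Your $(2)\Rightarrow(1)$ is correct and identical to the paper's: apply the hypothesis to the Koszul presentation from \Cref{cor:koszul-colimit} and factor $\id_x$ through a compact term.

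For $(1)\Rightarrow(2)$ the paper is much terser---it just says $h(x)$ is finitely presentable projective and invokes \Cref{prop:yoneda-restricted-fully-faithful}---and you correctly isolate the implicit step: one must know that $h$ carries an arbitrary colimit existing in $(\Gamma_\ida\category{T})^d$ to the pointwise colimit in $\catMod(\Gamma_\ida\category{T})^c$. But your plan to supply this has a genuine gap. You propose to show that the comparison $\alpha\colon z=\hocolim_iF(i)\to y$ is an isomorphism by verifying that $\Hom(\alpha,w)$ is an isomorphism for every $w\in(\Gamma_\ida\category{T})^d$. This tests the wrong variable: detecting that $\alpha$ is an isomorphism in the compactly generated category $\Gamma_\ida\category{T}$ requires $\Hom(c,\alpha)$ to be an isomorphism for all compact $c$, which is exactly the statement you set out to prove. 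Knowing only that $\Hom(\alpha,w)$ is an isomorphism for all dualizable $w$ merely says the cofiber of $\alpha$ is left-orthogonal to $(\Gamma_\ida\category{T})^c$, and this does not force it to vanish---the paper itself emphasizes (see \Cref{definition:compactly-cogenerated} and the examples following it) that compacts typically fail to cogenerate, e.g.\ in $\category{KInj}(\field G)$. In addition, your extension of \Cref{cor:koszul-ML} to arbitrary directed systems is only asserted (``structural constraint imposed by representability'' is not an argument), and your use of the Milnor sequence already restricts you to sequential diagrams.
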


\begin{proof}
If $x$ is compact, then $h(x)$ is finitely presentable (even finitely generated projective) in $\catMod(\Gamma_\ida\category{T})^c$, and so $\Hom(c,-)$ preserves all existing direct limits in $(\Gamma_{\ida}\category{T})^d$ by \Cref{prop:yoneda-restricted-fully-faithful}.
Conversely, we have $x\cong\colim_m\Sigma^{md - k}x \kos \ida^{[m]}$ in $(\Gamma_{\ida}\category{T})^d$ by \Cref{cor:koszul-colimit}. So if $\Hom(x,-)$ preserves direct limits, then $x$ must be a summand of one of the $\Sigma^{md - k}x \kos \ida^{[m]}$ and, in particular, compact.
\end{proof}

\subsection{Reconstructing the closed monoidal structure}

Our next step is to lift the tensor product from $\Gamma_\ida\category{T}$ to $\catMod(\Gamma_{\ida}\category{T})^{c}$ (cp.~\autocite[\S A.13]{balmerKrauseStevensonSmashingFrame2020}). 
For that purpose, we will need 
\begin{definition}
    Let $F, G$ be in $\catMod(\Gamma_{\ida}\category{T})^{c}$. The \emph{Day convolution} of $F$ and $G$ is defined as
    \[
        (F \boxtimes G)(d) = \int^{c_1, c_2 \in (\Gamma_{\ida}\category{T})^c} F(c_1) \otimes_R G(c_2) \otimes_R \Hom_{\category{T}}(d, c_1 \otimes c_2)
    \]
    This assignment extends to a bifunctor over $\catMod R$,
    \[
        \boxtimes \colon \catMod(\Gamma_{\ida}\category{T})^{c} \times \catMod(\Gamma_{\ida}\category{T})^{c} \to \catMod(\Gamma_{\ida}\category{T})^{c}.
    \]
\end{definition}

\begin{remark}
    Since a coend is a colimit and the tensor product on $\catMod(R)$ preserves small colimits in each variable, the Day convolution bifunctor $\boxtimes$ also preserves small colimits in each variable.
		
		In fact, as the target category in which we are taking the coend is $\catMod R$, we have also another explicit formula for $(F\boxtimes G)(d)$ as the smallest quotient graded $R$-module of the direct sum
		\[ \bigoplus_{c_1, c_2 \in (\Gamma_{\ida}\category{T})^c} F(c_1) \otimes_R G(c_2) \otimes_R \Hom_{\category{T}}(d, c_1 \otimes c_2), \]
		where for each pair of maps $v_1\colon c_1'\to c_1$ and $v_2\colon c_2'\to c_2$, and an element $x\otimes y\otimes u\in F(c_1)\otimes_R G(c_2)\otimes_R \Hom_\category{T}(d,c_1'\otimes c_2')$, we identify
				\[ F(v_1)(x)\otimes F(v_2)(y) \otimes u \sim x\otimes y\otimes ((v_1\otimes v_2)\circ u). \]
\end{remark}

A basic property of the co-end is the following ``dual'' version of the Yoneda lemma. Although we state it in a general form, using elements of enriched category theory from~\autocite{kellyEnrichedCT2005}, we only need it in a much more down-to-the-earth situation where $\category{V}=\catMod R$.

\begin{lemma}\label{lemma:coend-representables}
    Let $(\category{V},\otimes,\mathbb{1})$ be a bicomplete closed symmetric monoidal category and $\category{C}$ be a small $\category{V}$-enriched category. For any $\category{V}$-functor $F \colon \category{C} \to \category{V}$, there is an isomorphism, $\category{V}$-natural both in $F$ and $b \in \category{C}$:
    \[
        F(b) \cong \int^{a \in \category{C}} \intHom_{\category C}(a,b) \otimes F(a)
    \]
    Dually, for any functor $G \colon \category{C}^{\mathrm{op}} \to \category{V}$, there is an isomorphism, $\category{V}$-natural both in $G$ and $b \in \category{C}^{\mathrm{op}}$:
    \[
        G(b) \cong \int^{a \in \category{C}} \intHom_{\category C}(b,a) \otimes G(a)
    \]
\end{lemma}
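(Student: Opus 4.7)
The plan is to check that both sides corepresent the same $\mathcal V$-functor in the variable $X \in \mathcal V$ and then invoke Yoneda in $\mathcal V$. This is a standard instance of the enriched co-Yoneda identity, so I would keep the verification brief and refer to \autocite{kellyEnrichedCT2005} for the enriched formalism.

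First I would fix $X \in \mathcal V$ and combine the universal property of the coend with the tensor--hom adjunction in $\mathcal V$ to obtain natural isomorphisms
\begin{align*}
\intHom_{\mathcal V}\!\left(\int^{a}\intHom_{\mathcal{C}}(a,b) \otimes F(a),\, X\right)
& \cong \int_{a} \intHom_{\mathcal V}\!\left(\intHom_{\mathcal{C}}(a,b) \otimes F(a),\, X\right) \\
& \cong \int_{a} \intHom_{\mathcal V}\!\left(\intHom_{\mathcal{C}}(a,b),\, \intHom_{\mathcal V}(F(a), X)\right).
\end{align*}
By definition, the last end is the $\mathcal V$-object of $\mathcal V$-natural transformations from $\intHom_{\mathcal C}(-, b)$ to $\intHom_{\mathcal V}(F(-), X)$, both viewed as $\mathcal V$-functors $\mathcal{C}^{\mathrm{op}} \to \mathcal V$.

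Next I would apply the enriched Yoneda lemma (the strong end form) to this object of natural transformations, which identifies it with $\intHom_{\mathcal V}(F(b), X)$. The composite bijection is natural in $X \in \mathcal V$, so one further application of the ordinary Yoneda embedding of $\mathcal V$ delivers the required isomorphism $\int^{a}\intHom_{\mathcal{C}}(a,b) \otimes F(a) \cong F(b)$. Naturality in $F$ and $b$ is inherited from the naturality of every intermediate step.

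For the second (``dual'') formula I would simply apply the first statement to the $\mathcal V$-enriched opposite category $\mathcal{C}^{\mathrm{op}}$, using that $\intHom_{\mathcal{C}^{\mathrm{op}}}(a, b) = \intHom_{\mathcal{C}}(b, a)$ and that a $\mathcal V$-functor $\mathcal{C}^{\mathrm{op}} \to \mathcal V$ is a covariant $\mathcal V$-functor on $\mathcal{C}^{\mathrm{op}}$. I do not anticipate any substantive obstacle: the proof is entirely routine enriched category theory, and the only point requiring care is the bookkeeping of coherences in the end/coend calculus, which is already packaged for us by \autocite{kellyEnrichedCT2005}.
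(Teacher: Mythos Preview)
Your proof is correct and follows essentially the same route as the paper, which simply declares the result standard and points to the $\category{V}$-enriched Yoneda lemma in \autocite[\S2.4]{kellyEnrichedCT2005} as the key input. You have written out the details of that standard argument (coend--end duality, tensor--hom adjunction, then enriched Yoneda), which is exactly what the paper's one-line proof is abbreviating.
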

\begin{proof}
    This is standard. A version for $(\category{V},\otimes)=(\category{Set},\times)$ is a part of~\autocite[Proposition 2.2.1]{loregianCoendCalculus2021}, but the same proof applies in general using the $\category{V}$-enriched Yoneda lemma~\autocite[\S2.4]{kellyEnrichedCT2005}.
\end{proof}

\begin{remark} \label{remark:coYoneda-explicit}
The isomorphisms from the latter proposition can be made explicit. We will discuss only the first case as the second in similar.
For each $a,b\in\category{V}$, the homomorphism of the corresponding Hom-objects $\intHom_{\category{C}}(a,b) \to \intHom_{\category{V}}(F(a),F(b))$ has an adjoint form, the evaluation morphism
\[
\varepsilon_{a,b}\colon\intHom_{\category{C}}(a,b) \otimes F(a)\to F(b).
\]
Jointly, the evaluation morphisms form a $\category{V}$-natural cowedge (cf.\ \autocite[\S1.8]{kellyEnrichedCT2005}), inducing a morphism $\int^{a \in \category{C}} \intHom_{\category C}(a,b) \otimes F(a)\to F(b)$.
The inverse to this map is also explicit. One tensors the unit map $\eta_b\colon \mathbb{1}\to\intHom_{\category C}(b,b)$ with $F(b)$, which gives rise to
\[ F(b) \cong \mathbb{1}\otimes F(b) \to \intHom_{\category C}(b,b)\otimes F(b) \to \int^{a \in \category{C}} \intHom_{\category C}(a,b)\otimes F(a). \]
\end{remark}

As an immediate corollary, we obtain well-known properties of the Day convolution: it is symmetric and associative up to natural transformations satisfying the usual coherence axioms, and the restricted Yoneda functor~\eqref{eq:restricted-graded-Yoneda} is (non-unitary) strong monoidal. Getting back to our setting, we obtain the following formula.

\begin{proposition} \label{prop:day-conv-dualizable}
    For any dualizable torsion object $x \in (\Gamma_{\ida}\category{T})^d$ and any $F\in\catMod(\Gamma_{\ida}\category{T})^c$, we have an isomorphism
    \[
        h(x) \boxtimes F \cong F(\SW{x} \otimes -),
    \]
	  natural (in the $\catMod R$-enriched sense) both in $x$ and $F$.
\end{proposition}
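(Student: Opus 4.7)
The plan is to compute the coend defining $(h(x) \boxtimes F)(d)$ for $d \in (\Gamma_\ida\category{T})^c$ by decomposing it via Fubini, using the dualizability of the objects appearing, and applying the contravariant co-Yoneda lemma (\Cref{lemma:coend-representables}) twice. Before starting, I would check that $\SW{x} \otimes d$ belongs to $(\Gamma_\ida\category{T})^c$, so that $F(\SW{x} \otimes -)$ is a well-defined object of $\catMod(\Gamma_\ida\category{T})^c$: since $\SW{x}$ is dualizable in $\Gamma_\ida\category{T}$ and $d$ is compact there, their tensor product---which coincides with the one in $\category{T}$ by \Cref{prop:tensorCompTor}\ref{item:tensorCompTor-torsion}---is compact by \Cref{prop:compactRigid}\ref{item:prodCR}.

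Next I would write $(h(x) \boxtimes F)(d)$ as the iterated coend
\[
\int^{c_2} F(c_2) \otimes_R \int^{c_1} \Hom(c_1, x) \otimes_R \Hom(d, c_1 \otimes c_2).
\]
Every compact object of $\Gamma_\ida\category{T}$ is dualizable in $\Gamma_\ida\category{T}$, since $(\Gamma_\ida\category{T})^c = \thick(c \kos \ida,\; c \in \category{T}^c)$ by \Cref{prop:compactKoszul} and each $c \kos \ida$ is torsion-dualizable by \Cref{prop:tensorCompTor}\ref{item:tensorCompTor-dualizables}. Using dualizability of $c_2$ I rewrite $\Hom(d, c_1 \otimes c_2) \cong \Hom(d \otimes \SW{c_2}, c_1)$. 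The inner coend then has exactly the shape required for contravariant co-Yoneda applied to $h(x)$ at the object $d \otimes \SW{c_2} \in (\Gamma_\ida\category{T})^c$, and it collapses to $h(x)(d \otimes \SW{c_2}) = \Hom(d \otimes \SW{c_2}, x)$.

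The third step is to exploit dualizability of $x$ itself to transform the integrand via the natural adjunction chain
\[
\Hom(d \otimes \SW{c_2}, x) \cong \Hom(d, c_2 \otimes x) \cong \Hom(\SW{x} \otimes d, c_2),
\]
where the first isomorphism uses $\intHom(\SW{c_2}, -) \cong c_2 \otimes -$ and the second uses $\intHom(\SW{x}, -) \cong x \otimes -$ together with symmetry of the monoidal product. This yields $(h(x) \boxtimes F)(d) \cong \int^{c_2} F(c_2) \otimes_R \Hom(\SW{x} \otimes d, c_2)$. A second application of contravariant co-Yoneda---this time to $F$ at $\SW{x} \otimes d \in (\Gamma_\ida\category{T})^c$---delivers the desired $F(\SW{x} \otimes d)$.

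Naturality is automatic: the Fubini reduction, both invocations of co-Yoneda, and each dualizability adjunction are $\catMod R$-natural in their parameters, so the composite is natural in $F$ and in $x$ (through the contravariant functor $\SW{(-)}$). I anticipate no serious obstacle; the only care needed is the bookkeeping to ensure that at each step the object at which we evaluate $h(x)$ or $F$ lies in $(\Gamma_\ida\category{T})^c$, so that the enriched co-Yoneda formula applies.
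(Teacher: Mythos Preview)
Your argument is correct and, in fact, more direct than the paper's. The paper proceeds in two stages: it first proves the isomorphism only for \emph{compact} $x$ by collapsing the $c_1$-coend via covariant co-Yoneda (which requires $x\in(\Gamma_\ida\category{T})^c$), and then extends to dualizable $x$ by writing $x\cong\colim_m\Sigma^{md-k}x\kos\ida^{[m]}$ and carefully checking that the right-hand side $F(\SW{x}\otimes-)$ is the colimit of $F(\SW{(\Sigma^{md-k}x\kos\ida^{[m]})}\otimes-)$; this last step needs a nontrivial splitting argument via \Cref{lemma:koszul-split} together with the observation that $F$ takes values in $\ida$-power torsion modules. Your key idea is different: by first using dualizability of the \emph{compact} variable $c_2$ to rewrite $\Hom(d,c_1\otimes c_2)\cong\Hom(d\otimes\SW{c_2},c_1)$, the inner $c_1$-coend becomes an instance of contravariant co-Yoneda for $G=h(x)$, which is valid for any $x$ whatsoever. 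Dualizability of $x$ then enters only to ensure $\SW{x}\otimes d\in(\Gamma_\ida\category{T})^c$ so that the second co-Yoneda applies. This bypasses the colimit step entirely. The paper's longer route does yield an explicit description of the comparison map as a cowedge, which is occasionally useful, but for the bare proposition your approach is cleaner.
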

\begin{proof}
    If $x\in(\Gamma_{\ida}\category{T})^c$ is compact, we have for any other compact object $c \in (\Gamma_{\ida}\category{T})^c$ a chain of natural isomorphisms, where the first and the last one are using \Cref{lemma:coend-representables} and the symmetry of the tensor product while the second one comes from the dualizablity of $c$:
    \begin{align*}
        (h(x) \boxtimes F)(c) &= \int^{c_1, c_2 \in (\Gamma_{\ida}\category{T})^c}\Hom_\category{T}(c_1, x) \otimes_R F(c_2) \otimes_R \Hom_\category{T}(c, c_1 \otimes c_2) \\
        &\cong \int^{c_2 \in (\Gamma_{\ida}\category{T})^c} F(c_2) \otimes_R \Hom_\category{T}(c, x \otimes c_2) \\
        &\cong \int^{c_2 \in (\Gamma_{\ida}\category{T})^c} F(c_2) \otimes_R \Hom_\category{T}(\SW{x} \otimes c, c_2) \cong F(\SW{x} \otimes c).
    \end{align*}
		This establishes the natural isomorphism for $x \in (\Gamma_{\ida}\category{T})^c$.
		
		Using \Cref{remark:coYoneda-explicit}, we get an explicit form of the isomorphism and a stronger statement. Namely, there is a natural transformation $\alpha_{x,F}\colon h(x) \boxtimes F \to F(\SW{x} \otimes -)$ defined for any $x\in(\Gamma_\ida\category{T})^d$ and $F\in\catMod(\Gamma_{\ida}\category{T})^c$, and this transformation is bijective if $x$ is compact. To describe $\alpha_{x,F}$ explicitly, it is uniquely given by a cowedge
		\[ \Hom_\category{T}(c_1, x) \otimes_R F(c_2) \otimes_R \Hom_\category{T}(c, c_1 \otimes c_2) \to F(\SW{x}\otimes c), \quad c\in(\Gamma_\ida\category{T})^c. \]
		If $u\otimes y\otimes v\in\Hom_\category{T}(c_1, x) \otimes_R F(c_2) \otimes_R \Hom_\category{T}(c, c_1 \otimes c_2)$, we denote by $\beta_{u\otimes v}$ the composition
		\[
		  \begin{tikzcd}[column sep=huge]
			  \SW{x}\otimes c \ar[r, "\id_{\SW{x}}\otimes v"] &
				\SW{x}\otimes c_1\otimes c_2 \ar[r, "\id_{\SW{x}}\otimes u\otimes \id_{c_2}"] &
				\SW{x}\otimes x\otimes c_2 \ar[r, "\varepsilon_x\otimes \id_{c_2}"] &
				c_2.
			\end{tikzcd}
		\]
	  in $(\Gamma_\ida\category{T})^c$, where $\varepsilon_{x}\colon \SW{x}\otimes x\to \Gamma_\ida\mathbb{1}$ is the canonical isomorphism. Now we define the cowedge so that we send $u\otimes y\otimes v$ to $F(\beta_{u\otimes v})(y)\in F(\SW{x}\otimes c)$.
		
		Next, given $x\in(\Gamma_\ida\category{T})^d$, we can express it in $(\Gamma_\ida\category{T})^d$ as $x \cong \colim_i \Sigma^{id-k}x \kos \ida^{[i]}$ by~\Cref{cor:koszul-colimit}, and this clearly induces an isomorphism
		\[ \colim_i \big(h(\Sigma^{id-k}x \kos \ida^{[i]})\boxtimes F\big) \overset{\sim}\to h(x)\boxtimes F. \] 
		In order to prove that $\alpha_{x,F}$ from the last paragraph is an isomorphism, it suffices to check that the colimiting cocone $(\Sigma^{id-k}x \kos \ida^{[i]} \to x)$ in $(\Gamma_\ida\category{T})^d$ also induces an isomorphism
		\begin{equation} \label{eq:SW-system-for-F}
		  \colim_i \Big(F\big(\SW{(\Sigma^{id-k}x \kos \ida^{[i]})}\otimes-\big)\Big) \overset{\sim}\to F(\SW{x}\otimes-).
		\end{equation}
					
    To see this, note that for any $c\in(\Gamma_\ida\category{T})^c$ and for $n'$ large enough (depending on $c$), the ideal $\ida^{[n']}$ acts by zero on $x\otimes\SW{c}$. Hence, by \Cref{lemma:koszul-split}, the tensor product $\Sigma^{nd - k}(x \kos \ida^{[n]}) \otimes \SW{c}\to x\otimes\SW{c}$ is a split epimorphism for each $n\ge(2^k-1)n'$. Moreover, by \Cref{constr:transitionKoszul}, if we denote by $z_n\to\Sigma^{nd - k}(x \kos \ida^{[n]})$ the kernels of the split epimorphisms in $(\Gamma_\ida\category{T})^c$, the induced maps $\phi_n\colon z_n\to z_{n+1}$ belong to $\ida\cdot\Hom(z_n,z_{n+1})$. Then we have split monomorphisms
    \[
		  \SW{x} \otimes c \cong 
			\SW{(x\otimes\SW{c})} \to
		  \SW{\big(\Sigma^{nd - k}(x\kos \ida^{[n]}) \otimes \SW{c}\big)} \cong
		  \SW{(\Sigma^{nd - k}x \kos \ida^{[n]})} \otimes c
    \]
and when we apply $F$, we obtain a direct system of split short exact sequences $\catMod R$,
\[
  0 \to F(z_n) \to
  F(\SW{(\Sigma^{nd - k}x \kos \ida^{[n]})} \otimes c) \to
	F(\SW{x} \otimes c) \to 0.
\]
Passing to a direct limit, we obtain a short exact sequence in $\catMod R$,
\[
  0 \to \colim_i F(\SW{z_i}) \to
  \colim_i F(\SW{(\Sigma^{id - k}x \kos \ida^{[i]})} \otimes c) \to
	\colim_i F(\SW{x} \otimes c) \to 0.
\]
Moreover, in the leftmost direct system
\[
  \begin{tikzcd}
     F(\SW{z_n}) \arrow[r, "\phi_{n*}"] &
     F(\SW{z_{n+1}}) \arrow[r, "\phi_{n+1*}"] &
     F(\SW{z_{n+2}}) \arrow[r, "\phi_{n+2*}"] &
		 \ldots,
	\end{tikzcd}
\]
we have $\phi_{i*}\in\ida\cdot\Hom_R(F(\SW{z_i}),F(\SW{z_{i+1}}))$ by construction. Since $F$ takes values in $\ida$-torsion modules, no non-zero element survives in this colimit and, hence, the morphism~\eqref{eq:SW-system-for-F} is an isomorphism as required.
\end{proof}

As a consequence, the Day convolution tensor product is unital, although the unit may not be finitely presentable in $\catMod(\Gamma_{\ida}\category{T})^{c}$, and we can promote the functor from \Cref{prop:yoneda-restricted-fully-faithful} to a strong monoidal functor.

\begin{proposition}\label{prop:yoneda-strong-monoidal}
    The Day convolution bifunctor $\boxtimes$ induces a closed symmetric monoidal structure on $\catMod(\Gamma_{\ida}\category{T})^{c}$ with unit
    \[
        \tilde{\mathbb{1}} := h(\Gamma_{\ida}\mathbb{1}) \cong \colim_{n} h(\Sigma^{nd - k}\mathbb{1} \kos \ida^{[n]}). 
    \]
		and the internal Hom given by $[F,G](c)=\Hom(F(\SW{c}\otimes-),G)$.
		Moreover, the restricted Yoneda functor
    \[
        h \colon (\Gamma_{\ida}\category{T}, \otimes, \Gamma_\ida\mathbb{1}) \to (\catMod(\Gamma_{\ida}\category{T})^c, \boxtimes, \tilde{\mathbb{1}})
    \]
		becomes a strong monoidal functor.
\end{proposition}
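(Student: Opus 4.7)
The plan is to reduce the whole proposition to the content of \Cref{prop:day-conv-dualizable} together with standard Day convolution formalism; the latter handles associativity, symmetry and coherence, and the former provides the two places where the lack of a compact unit in $(\Gamma_\ida\category{T})^c$ would otherwise obstruct the argument. The symmetric structure (associators and braidings, pentagon and hexagon) is a direct consequence of the coend definition together with the fact that $\otimes$ on $\Gamma_\ida\category{T}$ is symmetric monoidal; this part goes through exactly as in the standard presentation, e.g.\ \autocite[\S6]{loregianCoendCalculus2021}, and crucially does not need $(\Gamma_\ida\category{T})^c$ to possess a unit.

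The key nontrivial point is the unit axiom for $\tilde{\mathbb{1}} = h(\Gamma_\ida\mathbb{1})$. I would apply \Cref{prop:day-conv-dualizable} with the dualizable object $x = \Gamma_\ida\mathbb{1}$, which is self-dual since it is the tensor unit of $\Gamma_\ida\category{T}$. This yields a natural isomorphism $\tilde{\mathbb{1}} \boxtimes F \cong F(\Gamma_\ida\mathbb{1} \otimes -)$, and the right-hand side is naturally isomorphic to $F$ because $\Gamma_\ida\mathbb{1} \otimes c \cong c$ for every $c \in (\Gamma_\ida\category{T})^c$. The triangle identities and coherence of this unitor with the associators are inherited from the corresponding identities in $(\Gamma_\ida\category{T}, \otimes, \Gamma_\ida\mathbb{1})$, using the naturality clause of \Cref{prop:day-conv-dualizable}.

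For the closed structure, I would set $[F,G](c) = \Hom(F(\SW{c}\otimes -), G)$ and verify the adjunction $\Hom(F \boxtimes G, H) \cong \Hom(F, [G, H])$. On representables $F = h(c)$ this is immediate: the left-hand side equals $\Hom(G(\SW{c}\otimes -), H) = [G, H](c)$ by \Cref{prop:day-conv-dualizable}, while the right-hand side equals the same by Yoneda. The general case follows because every $F \in \catMod(\Gamma_\ida\category{T})^c$ is a colimit of representables and both sides convert such colimits in $F$ into limits: the left-hand side because $\boxtimes$ preserves colimits in each variable (and $\Hom$ converts them to limits), the right-hand side by construction of $[G, H]$. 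For the strong monoidality of $h$, I would argue for $x \in \Gamma_\ida\category{T}$ and $y \in (\Gamma_\ida\category{T})^d$ via
\[
  h(x) \boxtimes h(y) \cong h(y) \boxtimes h(x) \cong h(x)(\SW{y} \otimes -) \cong \Hom_{\Gamma_\ida\category{T}}(-, y \otimes x) = h(x \otimes y),
\]
where the first isomorphism is the symmetry of $\boxtimes$, the second is \Cref{prop:day-conv-dualizable}, and the third uses dualizability of $y$. Extending to arbitrary $y \in \Gamma_\ida\category{T}$ is done by presenting $y$ as a filtered colimit of dualizables (using \Cref{cor:koszul-colimit} on $\Gamma_\ida(-)$ applied to a compact approximation of $y$) and observing that both $h(-) \boxtimes h(x)$ and $h(- \otimes x)$ send this colimit to the same object, again by the colimit-preservation of $\boxtimes$.

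The main obstacle is precisely the non-compactness of $\Gamma_\ida\mathbb{1}$, which prevents a direct appeal to the classical Day convolution theorem for small symmetric monoidal categories with unit. Identifying $\tilde{\mathbb{1}}$ as the correct unit, and verifying the associated triangle identity in a coherent way, is the step where the preceding work on Koszul-type filtrations and the crucial formula of \Cref{prop:day-conv-dualizable} are being leveraged; everything else is essentially formal once this is in place.
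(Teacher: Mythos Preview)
Your approach matches the paper's almost exactly: standard Day convolution for associativity and symmetry, \Cref{prop:day-conv-dualizable} with $x=\Gamma_\ida\mathbb{1}$ for the unit, the adjunction for $[-,-]$ checked on representables $F=h(c)$ and extended by colimits in the first variable, and strong monoidality via $h(x)\boxtimes h(y)\cong h(y)(\SW{x}\otimes-)\cong h(x\otimes y)$ followed by a colimit extension.

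The one slip is in that last extension step. Your appeal to \Cref{cor:koszul-colimit} to present an \emph{arbitrary} $y\in\Gamma_\ida\category{T}$ as a filtered colimit of dualizables does not work: that corollary is stated only for $y\in(\Gamma_\ida\category{T})^d$, and the phrase ``compact approximation of $y$'' does not resolve this (the Koszul objects $y\kos\ida^{[m]}$ are not compact unless $y$ is). The paper instead uses the canonical presentation $h(y)\cong\colim_{c\to y}h(c)$ indexed over the slice category $(\Gamma_\ida\category{T})^c/y$, which is available for any $y$ in a compactly generated category and immediately reduces the claim to the compact case already handled by \Cref{prop:day-conv-dualizable}. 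With this replacement your argument goes through.
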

\begin{proof}
    The symmetry and associativity are classical properties of Day convolution and has been discussed above. We just have to check that the $\tilde{\mathbb{1}}$ is indeed the tensor unit. 
    Recall that $\SW{\Gamma_{\ida}\mathbb{1}} \cong \Gamma_{\ida}\mathbb{1}$, so by \Cref{prop:day-conv-dualizable},
    for any $F \in \catMod(\Gamma_{\ida}\category{T})^c$,
    \[
        \tilde{\mathbb{1}} \boxtimes F \cong F(\SW{\Gamma_{\ida}\mathbb{1}} \otimes - ) \cong F(\Gamma_{\ida}\mathbb{1} \otimes - ) \cong F.
    \]
		The required coherence properties for this isomorphism (see e.g.\ \autocite[\S1.4]{kellyEnrichedCT2005}) follow from those for $\otimes$ on $(\Gamma_\ida\category{T})^d$ (see also \Cref{remark:approximate-unitality} below). We also have the following chain of isomorphisms given by \Cref{prop:day-conv-dualizable}, the definition of $[G,H]$ and Yoneda's lemma, respectively, 
		\[
		  \Hom(h(c)\boxtimes G,H)
			\cong \Hom(G(\SW{c}\otimes-),H)
			\cong [G,H](c)
			\cong \Hom(h(c),[G,H]),
    \]
which is natural in $c\in(\Gamma_\ida\category{T})^{c,\mathrm{op}}$, $G\in(\catMod(\Gamma_\ida\category{T})^{c})^{\mathrm{op}}$ and $H\in\catMod(\Gamma_\ida\category{T})^{c}$.
By taking colimits in the first variable, this uniquely extends to a natural isomorphism $\Hom(F\boxtimes G,H)\cong\Hom(F,[G,H])$, showing that $[-,-]$ indeed is the internal Hom for $(\catMod(\Gamma_\ida\category{T})^{c},\boxtimes,\tilde{\mathbb{1}})$.

    Regarding the moreover part, we have $h(\Gamma_{\ida}\mathbb{1}) = \tilde{\mathbb{1}}$ tautologically and the fact that there is a natural isomorphism $h(x)\boxtimes h(y)\cong h(x\otimes y)$ for each pair $x,y\in\Gamma_\ida\category{T}$ follows as in~\autocite[Proposition~A.14]{balmerKrauseStevensonSmashingFrame2020}. That is, we have a natural isomorphism
    \[
        h(x) \boxtimes h(y) \cong h(y)(\SW{x} \otimes -) \cong h(x \otimes y),
    \]    
	  by \Cref{prop:day-conv-dualizable} if $x,y$ are compacts (or even dualizable), and it remains to use that for any $x\in\Gamma_{\ida}\category{T}$, there is a canonical isomorphism $h(x)\cong\colim_{c\to x}h(c)$, where the colimit is indexed by the slice category $(\Gamma_\ida\category{T})^c/x$. We again leave checking the coherence axioms to the reader.
\end{proof}

\begin{remark}\label{remark:approximate-unitality}
This shows that if we wish to reconstruct the tensor unit of $(\Gamma_\ida\category{T})^d$ from $(\Gamma_\ida\category{T})^c$, we need to remember, a direct system of ``approximate units'' in $(\Gamma_\ida\category{T})^c$ such as
\[ 
  \begin{tikzcd}
	  \Sigma^{d-k} \mathbb{1}\kos\ida
		  \arrow[r, "\phi_1"] &
	  \Sigma^{2d-k} \mathbb{1}\kos\ida^{[2]}
		  \arrow[r, "\phi_2"] &
	  \Sigma^{3d-k} \mathbb{1}\kos\ida^{[3]}
		  \arrow[r, "\phi_3"] &
		\cdots,
  \end{tikzcd}
\]
whose colimit in $(\Gamma_\ida\category{T})^d$ is $\Gamma_\ida\mathbb{1}$. Then we can for each $c\in(\Gamma_\ida\category{T})^c$ tensor this system by $c$, and \Cref{prop:torsionColimit} and \Cref{cor:koszul-ML} give us a colimiting cocone
\[
(\Sigma^{md-k} \mathbb{1}\kos\ida^{[m]}\otimes c \to
\Gamma_\ida\mathbb{1}\otimes c \overset{\sim}\to c)_{m\ge 1},
\]
which is functorial in $c$ and which can be viewed a sequence of ``approximate left unitality constraints'' for $c$. Clearly, we also have a similar sequence of ``approximate right unitality constraints'' which differ from the left ones only by composition with the symmetry constraints, and they satisfy the necessary coherence which says that
$c\otimes \Sigma^{md-k} \mathbb{1}\kos\ida^{[m]}\otimes c' \overset{\sim}\to c\otimes c'$
is the same map (up to the associativity), independently of whether we use the left or the right approximate unitality.
We can provisionally call this structure along on $((\Gamma_\ida\category{T})^c,\otimes)$ an \emph{approximate unital structure}.

Note that if we have a coherently associative and commutative bifunctor $\otimes\colon \category{C}\times\category{C}\to\category{C}$, and we have two units $\mathbb{1},\mathbb{1}'$ along with their coherent unitality constraints, they are isomorphic via $\mathbb{1}\cong\mathbb{1}\otimes\mathbb{1}'\cong\mathbb{1}'$. However, our approximate unital structure by no means enjoys such strong uniqueness. For two different ``approximate units'' $(u_1\to u_2\to \cdots)$ and $(v_1\to v_2\to \cdots)$ in $(\Gamma_\ida\category{T})^c$, we only get $\colim_m h(u_m)\cong\colim h(v_m)$ in $\catMod(\Gamma_\ida\category{T})^c$.
\end{remark}

\subsection{Dualizable functors}

In this part, we describe the essential image of the full strongly monoidal embedding $h \colon (\Gamma_{\ida}\category{T})^d \to \catMod(\Gamma_{\ida}\category{T})^c$ in a way which is intrinsic to $(\catMod(\Gamma_{\ida}\category{T})^c,\boxtimes,\tilde{\mathbb{1}})$.

\begin{proposition}\label{prop:essential-image-dualizable}
    Let $F \in \catMod(\Gamma_{\ida}\category{T})^c$. Then the following assertions are equivalent:
    \begin{enumerate}[label=(\roman*)]
   \item\label{item:eid-image-dualizable} $F \cong h(d)$ for some $d \in (\Gamma_{\ida}\category{T})^d$;
	  \item\label{item:eid-dualizable-functor} $F$ is dualizable in $(\catMod(\Gamma_{\ida}\category{T})^c,\boxtimes,\tilde{\mathbb{1}})$;
    \item\label{item:eid-boxtimes-compacts} for any compact object $c \in (\Gamma_{\ida}\category{T})^c$, $h(c) \boxtimes F \cong h(c')$ for some compact object $c' \in (\Gamma_{\ida}\category{T})^c$;
    \item\label{item:eid-boxtimes-standard-compacts} for any $m > 0$, $h(\mathbb{1} \kos \ida^{[m]})\boxtimes F \cong h(c')$ for some compact object $c' \in (\Gamma_{\ida}\category{T})^c$.
		\end{enumerate}
\end{proposition}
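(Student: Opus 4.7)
The plan is to prove the equivalences through the cycle (i)~$\Rightarrow$~(ii)~$\Rightarrow$~(iv)~$\Rightarrow$~(i), into which (iii) slots via the obvious implications (i)~$\Rightarrow$~(iii)~$\Rightarrow$~(iv); note that (iii)~$\Rightarrow$~(iv) is immediate by taking $c = \mathbb{1}\kos\ida^{[m]}$.

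For (i)~$\Rightarrow$~(ii) and (i)~$\Rightarrow$~(iii), I would appeal to \Cref{prop:yoneda-strong-monoidal}, which makes $h$ strong symmetric monoidal. Strong monoidal functors preserve dualizable objects, so (ii) follows at once. For (iii), the isomorphism $h(c) \boxtimes h(d) \cong h(c \otimes d)$ combined with \Cref{prop:compactRigid}~\ref{item:prodCR} (compact $\otimes$ dualizable is compact in $\Gamma_\ida\category{T}$) shows $h(c) \boxtimes F$ is representable by a compact object.

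For (ii)~$\Rightarrow$~(iv), I would prove that each $F_m := F \boxtimes h(\mathbb{1}\kos\ida^{[m]})$ is \emph{tiny} in $\catMod(\Gamma_\ida\category{T})^c$, in the sense that $\Hom(F_m, -)$ preserves all colimits. Dualizability of $F$ yields $[F, -] \cong F^\vee \boxtimes -$, which is cocontinuous by the coend definition of $\boxtimes$; then the identification
\[
\Hom\bigl(F \boxtimes h(\mathbb{1}\kos\ida^{[m]}),\, G\bigr)
  \cong \Hom\bigl(h(\mathbb{1}\kos\ida^{[m]}),\, [F, G]\bigr)
  \cong [F, G](\mathbb{1}\kos\ida^{[m]}),
\]
using the $\boxtimes$-$[-,-]$ adjunction and Yoneda, expresses $\Hom(F_m, -)$ as the composite of $[F,-]$ with evaluation at $\mathbb{1}\kos\ida^{[m]}$, both of which preserve colimits. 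Since $\{h(c) : c \in (\Gamma_\ida\category{T})^c\}$ is a set of tiny generators of $\catMod(\Gamma_\ida\category{T})^c$ and $(\Gamma_\ida\category{T})^c$ is idempotent complete, every tiny object is representable, so $F_m \cong h(c'_m)$ for some $c'_m \in (\Gamma_\ida\category{T})^c$.

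The main obstacle is (iv)~$\Rightarrow$~(i). Using $\tilde{\mathbb{1}} \cong \colim_m h(\Sigma^{md-k}(\mathbb{1}\kos\ida^{[m]}))$ from \Cref{prop:yoneda-strong-monoidal} together with cocontinuity of $\boxtimes$, I would write $F \cong F \boxtimes \tilde{\mathbb{1}} \cong \colim_m h(d_m)$ with $d_m := \Sigma^{md-k}c'_m \in (\Gamma_\ida\category{T})^c$, and lift the transition maps to $(\Gamma_\ida\category{T})^c$ by the full faithfulness of $h$ on compacts. Set $d := \hocolim_m d_m$ in $\Gamma_\ida\category{T}$. Compactness of each $d_m$ then gives $h(d)(c) \cong \colim_m \Hom_\category{T}(c, d_m) \cong F(c)$, whence $h(d) \cong F$. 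The most delicate step is to verify that $d$ is dualizable: the hypothesis together with \Cref{prop:day-conv-dualizable} and Yoneda produces an isomorphism $(\mathbb{1}\kos\ida^{[m]}) \otimes d \cong c'_m$ in $\Gamma_\ida\category{T}$ for every $m$, so in particular $(\mathbb{1}\kos\ida) \otimes d$ is compact. Since $(\Gamma_\ida\category{T})^c = \thick(c \otimes \mathbb{1}\kos\ida : c \in \category{T}^c)$ by \Cref{prop:compactKoszul}, the functor $d \otimes -$ preserves compactness on $(\Gamma_\ida\category{T})^c$, and \Cref{prop:compactRigid}~\ref{item:prodbyCC} applied to $\Gamma_\ida\category{T}$---which is rigidly generated because $(\Gamma_\ida\category{T})^c \subset (\Gamma_\ida\category{T})^d$ by \Cref{prop:tensorCompTor}~\ref{item:tensorCompTor-dualizables}---concludes that $d \in (\Gamma_\ida\category{T})^d$.
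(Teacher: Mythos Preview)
Your proof is correct and follows essentially the same approach as the paper's. The paper argues via the chain $(\mathrm{i})\Rightarrow(\mathrm{ii})\Rightarrow(\mathrm{iii})\Rightarrow(\mathrm{iv})\Rightarrow(\mathrm{i})$, whereas you route through $(\mathrm{ii})\Rightarrow(\mathrm{iv})$ and treat $(\mathrm{i})\Rightarrow(\mathrm{iii})$ separately; the underlying arguments (tininess from dualizability, reconstruction of $d$ as a homotopy colimit, and dualizability of $d$ via \Cref{prop:compactRigid}\ref{item:prodbyCC}) are the same, and in the step $(\mathrm{iv})\Rightarrow(\mathrm{i})$ you are in fact more explicit than the paper about why $d\otimes-$ preserves compactness on all of $(\Gamma_\ida\category{T})^c$.
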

\begin{proof}
    The implication $\ref{item:eid-image-dualizable} \implies \ref{item:eid-dualizable-functor}$ follows from the fact that $h$ is strong monoidal by \Cref{prop:yoneda-strong-monoidal} and $\ref{item:eid-boxtimes-compacts} \implies \ref{item:eid-boxtimes-standard-compacts}$ is clear.

    $\ref{item:eid-dualizable-functor}\implies\ref{item:eid-boxtimes-compacts}$
		Suppose that $F$ is dualizable. Thus $[F,-]\cong G\boxtimes-$ for the Spanier-Whitehead dual $G\in\catMod(\Gamma_{\ida}\category{T})^c$ and, as $G\boxtimes-$ preserves all colimits, so does the functor
  \[
	  \Hom(h(c)\boxtimes F,-)\cong\Hom(h(c),[F,-])\colon \catMod(\Gamma_{\ida}\category{T})^c \to \catMod R.
	\]
for each $c\in(\Gamma_\ida\category{T})^c$.
It follows that $h(c)\boxtimes F$ is finitely generated projective in $\catMod(\Gamma_{\ida}\category{T})^c$ for each $c\in(\Gamma_\ida\category{T})^c$.

    $\ref{item:eid-boxtimes-standard-compacts}\implies\ref{item:eid-image-dualizable}$
		Assume that $F$ satisfies condition $\ref{item:eid-boxtimes-standard-compacts}$. For each $m > 0$, fix $c_m \in (\Gamma_{\ida}\category{T})^c$ such that $h(c_m) \cong h(\mathbb{1} \kos \ida^{[m]})\boxtimes F$. By Yoneda's lemma, the maps $\Sigma^{md - k}\mathbb{1} \kos \ida^{[m]} \to \Sigma^{(m+1)d - k}\mathbb{1} \kos \ida^{[m+1]}$ from \Cref{constr:transitionKoszul} for $x=\mathbb{1}$ induce maps $\Sigma^{md - k}c_m \to \Sigma^{(m+1)d - k}c_{m+1}$. Then
    \[
        F \cong \tilde{\mathbb{1}} \boxtimes F \cong
				\colim_m h(\Sigma^{md - k}\mathbb{1} \kos \ida^{[m]}) \boxtimes F \cong
				\colim_m h(\Sigma^{md - k}c_m) \cong h(\hocolim_m \Sigma^{md - k}c_n)
    \]
    Hence it suffices to prove that $d := \hocolim_n\Sigma^{nd - k}c_n$ is dualizable.
		To that end, for any $c\in(\Gamma_{\ida}\category{T})^c$ we have isomorphisms 
		\[ h(c\otimes d)\cong h(c)\boxtimes h(d) \cong h(c)\boxtimes F, \]
		using \Cref{prop:yoneda-strong-monoidal}. So $h(c')\cong h(c\otimes d)$ for some $c'$ compact and, by Yoneda's lemma, this can be lifted to an isomorphism $c'\cong c\otimes d$. It follows from \Cref{prop:compactRigid} that $d$ is dualizable.
\end{proof}

\begin{corollary}\label{cor:essential-image-dualizable}
The restricted Yoneda functor $h\colon \Gamma_\ida\category{T}\to\catMod(\Gamma_\ida\category{T})$ induces an equivalence of closed monoidal categories of dualizable objects,
\[
  h_{|(\Gamma_\ida\category{T})^d}\colon
	\big((\Gamma_\ida\category{T})^d,\otimes,\Gamma_\ida\mathbb{1}\big) \overset{\sim}\longrightarrow
	\big((\catMod(\Gamma_\ida\category{T})^c)^d,\boxtimes,\tilde{\mathbb{1}}\big).
\]
\end{corollary}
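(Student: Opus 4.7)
The plan is to assemble the corollary from the three preceding results: \Cref{prop:yoneda-restricted-fully-faithful}, \Cref{prop:yoneda-strong-monoidal}, and \Cref{prop:essential-image-dualizable}. Each of these contributes one ingredient of an equivalence of closed symmetric monoidal categories, so very little new work remains.

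First I would check that $h$ restricts to a functor $(\Gamma_\ida\category{T})^d\to(\catMod(\Gamma_\ida\category{T})^c)^d$. By \Cref{prop:yoneda-strong-monoidal}, $h$ is strong monoidal, so it preserves dualizable objects: if $x\in(\Gamma_\ida\category{T})^d$ with Spanier--Whitehead dual $\SW{x}$, then $h(\SW{x})$ is a Spanier--Whitehead dual of $h(x)$ with respect to $\boxtimes$, since $h(\SW{x})\boxtimes h(x)\cong h(\SW{x}\otimes x)\cong h(\Gamma_\ida\mathbb{1})=\tilde{\mathbb{1}}$ and the required evaluation/coevaluation maps are images of those for $x$. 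Hence the restriction is well defined.

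Next, fully faithfulness of $h_{|(\Gamma_\ida\category{T})^d}$ is exactly the content of \Cref{prop:yoneda-restricted-fully-faithful}, and essential surjectivity onto the dualizable objects of $\catMod(\Gamma_\ida\category{T})^c$ is precisely the implication $\ref{item:eid-dualizable-functor}\Rightarrow\ref{item:eid-image-dualizable}$ of \Cref{prop:essential-image-dualizable}. Combined, these give an equivalence of categories
\[
    h_{|(\Gamma_\ida\category{T})^d}\colon (\Gamma_\ida\category{T})^d \overset{\sim}\to (\catMod(\Gamma_\ida\category{T})^c)^d.
\]
The strong monoidality of this equivalence (preservation of $\otimes$ and of the unit $\Gamma_\ida\mathbb{1}\mapsto\tilde{\mathbb{1}}$ together with the required coherence constraints) is again supplied by \Cref{prop:yoneda-strong-monoidal}.

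Finally, once we have a strong monoidal equivalence of symmetric monoidal categories, compatibility with the internal Hom is automatic on the dualizable subcategory: for dualizable objects the internal Hom is given by $\intHom(x,y)\cong \SW{x}\otimes y$, and both the tensor product and the Spanier--Whitehead dual are preserved by $h_{|(\Gamma_\ida\category{T})^d}$ (the latter by the first paragraph above). The only subtlety worth spelling out, and the mildly delicate point of the argument, is that $\tilde{\mathbb{1}}=h(\Gamma_\ida\mathbb{1})$ is not compact in $\catMod(\Gamma_\ida\category{T})^c$, so the monoidal structure on the target has to be understood in the ``approximate unital'' sense of \Cref{remark:approximate-unitality}; but this is exactly the structure that was shown to be transported by $h$ in \Cref{prop:yoneda-strong-monoidal}, so no extra verification is required.
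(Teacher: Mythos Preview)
Your proof is correct and is precisely the assembly the paper intends; the paper states this corollary without proof, treating it as immediate from \Cref{prop:yoneda-restricted-fully-faithful}, \Cref{prop:yoneda-strong-monoidal}, and \Cref{prop:essential-image-dualizable}, and your write-up is the natural expansion of that implicit argument.

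One small inaccuracy in your final paragraph: the monoidal structure on $\catMod(\Gamma_\ida\category{T})^c$ is \emph{genuinely} unital with unit $\tilde{\mathbb{1}}$, as established in \Cref{prop:yoneda-strong-monoidal}; the ``approximate unitality'' of \Cref{remark:approximate-unitality} concerns the smaller category $(\Gamma_\ida\category{T})^c$, not the module category. So no subtlety of that kind arises here, and you can simply drop that remark.
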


\subsection{Reconstructing the triangulated structure}

Finally, we will take up the most subtle part, which is reconstructing the triangulated structure of $(\Gamma_\ida\category{T})^d$ from that of $(\Gamma_\ida\category{T})^c$. The shift functor is the obvious one induced by the graded shift in $\catMod R$ and it will turn out that the distinguished triangles in $(\Gamma_\ida\category{T})^d$ are colimits of distinguished triangles in $(\Gamma_\ida\category{T})^c$. 
Our arguments will use the following technical lemma.

\begin{lemma}\label{lemma:sheaf-ext-sequence}
Let $(M_0\to M_1\to M_2\to\cdots)$ be a sequential direct system in an AB5 abelian category $\category{A}$ and let $N\in\category{A}$. Then we have a short exact sequence of abelian groups
\[
0 \to \limit\nolimits^{(1)}_{i}\Hom_\category{A}(M_i,N)
\to \Ext^1_\category{A}(\colim_i M_{i},N)
\to \limit\nolimits_{i}\Ext^1_\category{A}(M_i,N) \to 0
\]
\end{lemma}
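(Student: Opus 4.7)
The plan is to use a telescope / Milnor-type argument. Since $\category{A}$ is \textbf{AB5}, it is in particular \textbf{AB4}, and the filtered colimit $M := \colim_i M_i$ fits into the canonical \emph{telescope} short exact sequence
\[
0 \to \bigoplus_{i\in\mathbb{N}} M_i \xrightarrow{\id - \phi} \bigoplus_{i\in\mathbb{N}} M_i \to M \to 0,
\]
where $\phi$ is the direct sum of the transition maps $M_i \to M_{i+1}$ pre-composed with the canonical inclusions. This is the abelian-category analogue of the triangle defining a sequential homotopy colimit used elsewhere in the paper.

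Next, I would apply $\Hom_\category{A}(-, N)$ to this short exact sequence. Using that $\Hom_\category{A}(\bigoplus_i M_i, N) \cong \prod_i \Hom_\category{A}(M_i, N)$ and $\Ext^1_\category{A}(\bigoplus_i M_i, N) \cong \prod_i \Ext^1_\category{A}(M_i, N)$ (the latter is classical for Yoneda Ext in the first variable, viewing extension classes componentwise), and noting that, under these identifications, $(\id - \phi)^*$ becomes exactly the transition map whose kernel computes $\limit_i$ and whose cokernel computes $\limit_i^{(1)}$ of the corresponding inverse system, one obtains a six-term exact sequence
\[
\prod_i \Hom(M_i,N) \xrightarrow{\id-\phi^*} \prod_i \Hom(M_i,N) \to \Ext^1(M,N) \to \prod_i \Ext^1(M_i,N) \xrightarrow{\id-\phi^*} \prod_i \Ext^1(M_i,N).
\]

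Splicing the cokernel of the left-hand $\id-\phi^*$ (which is $\limit^{(1)}_i \Hom_\category{A}(M_i,N)$) and the kernel of the right-hand $\id-\phi^*$ (which is $\limit_i \Ext^1_\category{A}(M_i,N)$) into the middle three terms yields exactly the asserted short exact sequence. The main technical point to verify is the identification $\Ext^1_\category{A}(\bigoplus_i M_i, N) \cong \prod_i \Ext^1_\category{A}(M_i, N)$, which is the one place where some care is required in a general AB5 abelian category; I would handle it via the Yoneda description of $\Ext^1$, where a class over $\bigoplus_i M_i$ is uniquely determined by its pullbacks along each summand inclusion $M_j \hookrightarrow \bigoplus_i M_i$.
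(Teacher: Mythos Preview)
Your proof is correct and matches the paper's approach exactly: apply $\Hom_\category{A}(-,N)$ to the telescope short exact sequence and read off the result from the induced long exact sequence, using $\Ext^1_\category{A}(\bigoplus_i M_i,N)\cong\prod_i\Ext^1_\category{A}(M_i,N)$. The paper justifies this last isomorphism by invoking exactness of coproducts in $\category{A}$; note that your Yoneda sketch as written (``uniquely determined by its pullbacks'') only gives injectivity of the comparison map, and for surjectivity one must assemble a family of extensions into a single extension over $\bigoplus_i M_i$, which is where the AB5 hypothesis is genuinely used.
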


\begin{proof}
Consider the exact sequence $0 \to \bigoplus_{i\ge 0}M_i \to \bigoplus_{i\ge 0}M_i \to \colim M_i \to 0$ which defines the direct limit and apply $\Hom_\category{A}(-,N)$. The desired short exact sequence is induced by the resulting long exact sequence around the term $\Ext^1_\category{A}(\colim_i M_i,N)$. One should also note that coproducts are exact in $\category{A}$, and so the contravariant functor $\Ext^1_\category{A}(-,N)$ sends coproducts in $\category{A}$ to products of abelian groups.
\end{proof}

\begin{notation} \label{notation:reconstruction-tria}
In view of \Cref{prop:yoneda-restricted-fully-faithful}, one can extend $h_{|(\Gamma_{\ida}\category{T})^d}\colon (\Gamma_{\ida}\category{T})^d\to\catMod(\Gamma_{\ida}\category{T})^c$ to a fully exact functor
\[
\tilde{h}\colon \catmod(\Gamma_{\ida}\category{T})^d\longrightarrow\catMod(\Gamma_{\ida}\category{T})^c,
\]
where $(\Gamma_{\ida}\category{T})^d$ is the category of $R$-linear finitely presentable functors $(\Gamma_{\ida}\category{T})^{d,\mathrm{op}}\to\catMod R$. This is an instance of \autocite[Lemma 2.1]{krauseTelescope2000}.

For brevity, we also put $U_m:=h(\Sigma^{md-k}\mathbb{1}\kos\ida^{[m]})$ for each $m$ and denote by $\psi_m\colon U_m\to\tilde{\mathbb{1}}$ the morphisms induced by~\eqref{eq:Koszul-telescope} (so that $U_1\to U_2\to \cdots$ is an ``approximate unit'' in $\catmod(\Gamma_\ida\category{T})^c$ in the sense of \Cref{remark:approximate-unitality}).
\end{notation}

\begin{proposition}\label{prop:ext-complete}
Let $M,N\in\catmod(\Gamma_\ida\category{T})^d$. Then the direct system $U_1\to U_2\to U_3\to\cdots$ from \Cref{notation:reconstruction-tria} induces an isomorphism of abelian groups
\[
  \Ext^1(\tilde{h}(M),\tilde{h}(N))\overset{\sim}\to
	\limit_m\Ext^1(U_m\boxtimes\tilde{h}(M),\tilde{h}(N)).
\]
\end{proposition}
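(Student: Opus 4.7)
The plan is to apply \Cref{lemma:sheaf-ext-sequence} to the sequential direct system $(U_m \boxtimes \tilde{h}(M))_m$ in the AB5 abelian category $\catMod(\Gamma_\ida\category{T})^c$, with target $\tilde{h}(N)$. Since Day convolution preserves colimits in each variable and $\tilde{\mathbb{1}} = \colim_m U_m$ is the monoidal unit by \Cref{prop:yoneda-strong-monoidal}, the colimit of this system is $\tilde{\mathbb{1}} \boxtimes \tilde{h}(M) \cong \tilde{h}(M)$. The lemma thus yields the short exact sequence
\[
0 \to \limit\nolimits^{(1)}_m \Hom(U_m \boxtimes \tilde{h}(M), \tilde{h}(N)) \to \Ext^1(\tilde{h}(M), \tilde{h}(N)) \to \limit_m \Ext^1(U_m \boxtimes \tilde{h}(M), \tilde{h}(N)) \to 0,
\]
reducing the claim to the vanishing of the leftmost $\limit^{(1)}$ term. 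By \Cref{lem:ML-and-lim1}, this will follow once I verify that the inverse system $\bigl(\Hom(U_m \boxtimes \tilde{h}(M), \tilde{h}(N))\bigr)_m$ is Mittag-Leffler.

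I would first handle the case where $M = h(x)$ is representable, with $x \in (\Gamma_\ida\category{T})^d$. Strong monoidality of $h$ (\Cref{prop:yoneda-strong-monoidal}) gives $U_m \boxtimes h(x) \cong h(\Sigma^{md-k} x \kos \ida^{[m]})$. Since $\mathbb{1} \kos \ida^{[m]}$ is compact by \Cref{prop:compactKoszul} and $x$ is dualizable, their tensor product $\Sigma^{md-k} x \kos \ida^{[m]}$ is compact in $\Gamma_\ida\category{T}$ by \Cref{prop:compactRigid}, so $U_m \boxtimes h(x)$ is a representable functor on a compact object, hence projective in $\catMod(\Gamma_\ida\category{T})^c$. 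Fixing a presentation $h(y_1) \to h(y_0) \to \tilde{h}(N) \to 0$ and using the exactness of $\Hom(U_m \boxtimes h(x), -)$ together with the full faithfulness of $h$ on $(\Gamma_\ida\category{T})^d$ (\Cref{prop:yoneda-restricted-fully-faithful}), the system $\Hom(U_m \boxtimes h(x), \tilde{h}(N))$ is exhibited as a quotient of $\Hom_{\Gamma_\ida\category{T}}(\Sigma^{md-k} x \kos \ida^{[m]}, y_0)$. The latter is Mittag-Leffler by \Cref{cor:koszul-ML}, so its quotient is Mittag-Leffler by \Cref{lem:ML-closure}(a).

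For general $M \in \catmod(\Gamma_\ida\category{T})^d$, I would use full faithfulness of $\tilde{h}$ (\Cref{prop:yoneda-restricted-fully-faithful}) to identify the target Hom system with
\[
\Hom_{\catmod(\Gamma_\ida\category{T})^d}\bigl(\Sigma^{md-k} M \otimes (\mathbb{1} \kos \ida^{[m]}),\, N\bigr),
\]
where $\otimes$ denotes the Day-convolution extension of the monoidal structure of $(\Gamma_\ida\category{T})^d$ to $\catmod(\Gamma_\ida\category{T})^d$. The main obstacle I anticipate is that the naive strategy of applying a finite presentation $h(x_1) \to h(x_0) \to \tilde{h}(M) \to 0$ only displays the system as the kernel of a morphism between two Mittag-Leffler systems, and the Mittag-Leffler property does not pass to subsystems in general. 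My plan to overcome this is to repeat the inductive double-indexing argument from the proof of \Cref{cor:koszul-ML} directly inside the abelian category $\catmod(\Gamma_\ida\category{T})^d$, which by \Cref{thm:finite-gen-hom-dualizables} is enriched in finitely generated $\comp{R}$-modules. The essential inputs of that proof---Noetherianity of $\comp{R}$, finite generation of Hom-modules between dualizable objects, and the stabilization of annihilators in finitely generated Noetherian modules---remain available in the abelian enrichment, so the argument transports from the triangulated to the abelian setting; the technical point requiring most care is replacing the distinguished triangles used to set up the Koszul inverse systems in the proof of \Cref{cor:koszul-ML} by the appropriate short exact sequences coming from the abelian structure of $\catmod(\Gamma_\ida\category{T})^d$.
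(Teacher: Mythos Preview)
Your reduction via \Cref{lemma:sheaf-ext-sequence} to the Mittag--Leffler property of the inverse system $\bigl(\Hom(U_m \boxtimes \tilde{h}(M), \tilde{h}(N))\bigr)_m$ is exactly right, and your treatment of the representable case $M = h(x)$ is clean and correct.

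The gap is in the passage to general $M$. You correctly diagnose that a presentation of $M$ only exhibits the system as a \emph{kernel} of a map between Mittag--Leffler systems, which gives nothing. But your proposed fix---rerunning the induction from the proof of \Cref{cor:koszul-ML} inside the abelian category $\catmod(\Gamma_\ida\category{T})^d$---is not substantiated. That argument hinges on decomposing $\Hom(x\kos r^m,y)$, via the distinguished \emph{triangle} defining $x\kos r^m$, into a surjective tower $M/r^mM$ and a pro-zero tower of $r$-torsion submodules. When $x$ is replaced by a non-representable finitely presented functor there is no evident analogue of that triangle, and the vague appeal to ``Noetherianity and finite generation of Hom-modules'' does not by itself manufacture the required filtration. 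This step would need a genuinely new argument.

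The paper sidesteps the issue by using the tensor--Hom adjunction in $\catMod(\Gamma_\ida\category{T})^c$ to rewrite
\[
\Hom\bigl(U_m\boxtimes\tilde{h}(M),\tilde{h}(N)\bigr)\;\cong\;\Hom(U_m,F),\qquad F=[\tilde{h}(M),\tilde{h}(N)],
\]
so that all the complexity of $M$ is absorbed into a single target object. The remaining point is that $F$ still lies in the essential image of $\tilde{h}$: taking a projective presentation of $M$ and an injective copresentation of $N$ in $\catmod(\Gamma_\ida\category{T})^d$ reduces this to the representable case, where $[h(x),h(y)]\cong h\bigl(\intHom_{\Gamma_\ida\category{T}}(x,y)\bigr)$ by \Cref{cor:essential-image-dualizable}. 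Once $F\cong\tilde{h}(L)$, one chooses a surjection $h(x)\twoheadrightarrow F$ and finishes exactly as in your representable case: $\Hom(U_m,h(x))$ is Mittag--Leffler by \Cref{cor:koszul-ML}, and $\Hom(U_m,F)$ is its quotient since $U_m$ is projective. The kernel obstruction never arises.
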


\begin{proof}
In view of \Cref{lemma:sheaf-ext-sequence}, it suffices to prove that
\[
  \limit\nolimits_{m}^{(1)} \Hom(U_m\boxtimes\tilde{h}(M),\tilde{h}(N))=0.
\]
By adjunction, this is the same as proving that $\limit\nolimits_{m}^{(1)}\Hom(U_m,F)=0$ for $F=[\tilde{h}(M),\tilde{h}(N)]\in\catMod(\Gamma_{\ida}\category{T})^c$.

We claim that $F\cong \tilde{h}(L)$ for some $L\in\catmod(\Gamma_\ida\category{T})^d$. Indeed, since $\tilde{h}$ is a full exact embedding, there is an exact sequence $h(x_1)\to h(x_0)\to\tilde{h}(M)\to 0$ with $x_0,x_1\in(\Gamma_\ida\category{T})^d$, which induces an exact sequence
\[
  0 \to [\tilde{h}(M),\tilde{h}(N)] \to [h(x_0),\tilde{h}(N)] \to [h(x_1),\tilde{h}(N)].
\]
So it is sufficient to show that $[h(x),\tilde{h}(N)]$ is in the essential image of $\tilde{h}$ for each $x\in(\Gamma_\ida\category{T})^d$. Similarly, we have an exact sequence $0\to\tilde{h}(N)\to h(y^0)\to h(y^1)$ with $y^0,y^1\in(\Gamma_\ida\category{T})^d$, which induces an exact sequence 
\[
  0 \to [h(x),\tilde{h}(N)] \to [h(x),h(y^0)] \to [h(x),h(y^1)].
\]
So we are left with proving that $[h(x),h(y)]$ is in the essential image of $\tilde{h}$ for each $x,y\in(\Gamma_\ida\category{T})^d$. However, we have $[h(x),h(y)]\cong h\big(\intHom_{\Gamma_\ida\category{T}}(x,y)\big)$ by \Cref{cor:essential-image-dualizable}. This proves the claim.

Finally, we can choose a surjection $h(x)\to F$ for some $x\in(\Gamma_\ida\category{T})^d$, which for each $m\ge 1$ induces a surjection
\[
  \Hom_{(\Gamma_\ida\category{T})^d}(\Sigma^{md-k}\mathbb{1}\kos\ida^{[m]},x) \cong\Hom(U_m,h(x)) \to \Hom(U_m,F),
\]
since the $U_m$ are finitely generated projective in $\catMod(\Gamma_\ida\category{T})^c$.
As the inverse system on the left is Mittag-Leffler by \Cref{cor:koszul-ML}, so is the one on the right by~\Cref{lem:ML-closure}, and the conclusion that $\limit\nolimits_{m}^{(1)}\Hom(U_m,F)=0$ follows from \Cref{lem:ML-and-lim1}.
\end{proof}

For the sake of completeness, we also record the following consequence.

\begin{corollary}\label{cor:tilde-h-ext}
The essential image of the functor $\tilde{h}$ is extension closed.
\end{corollary}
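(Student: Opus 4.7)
The plan is to produce, for any short exact sequence $0\to\tilde h(M)\to F\to\tilde h(N)\to 0$ in $\catMod(\Gamma_\ida\category{T})^c$ with $M,N\in\catmod(\Gamma_\ida\category{T})^d$, an explicit two-term presentation of $F$ by representables of dualizable objects. By the full faithfulness and exactness of $\tilde h$, such a presentation will lift along $\tilde h$ to a presentation of some $L\in\catmod(\Gamma_\ida\category{T})^d$ with $\tilde h(L)\cong F$.

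The crux of the argument is the following Ext-vanishing: for every $y\in(\Gamma_\ida\category{T})^d$ and $L\in\catmod(\Gamma_\ida\category{T})^d$,
\[
\Ext^1_{\catMod(\Gamma_\ida\category{T})^c}(h(y),\tilde h(L))=0.
\]
Identifying $h(y)$ with $\tilde h(y)$, \Cref{prop:ext-complete} rewrites this as $\varprojlim_m\Ext^1(U_m\boxtimes h(y),\tilde h(L))$. The strong monoidality of $h$ from \Cref{prop:yoneda-strong-monoidal} gives $U_m\boxtimes h(y)\cong h(u_m\otimes y)$, and $u_m\otimes y$ is compact as the tensor of a compact and a dualizable object (\Cref{prop:compactRigid}). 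Hence $h(u_m\otimes y)$ is projective in $\catMod(\Gamma_\ida\category{T})^c$, so each term in the inverse limit vanishes. This step is the main obstacle: once it is available the rest is a routine horseshoe construction, but the vanishing itself rests on the Mittag-Leffler property of Koszul systems (\Cref{cor:koszul-ML}) packaged in \Cref{prop:ext-complete}.

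With the vanishing established, fix presentations $z_1\to z_0\to M\to 0$ and $y_1\to y_0\to N\to 0$ in $\catmod(\Gamma_\ida\category{T})^d$ with $y_i,z_i\in(\Gamma_\ida\category{T})^d$, and apply $\tilde h$ to get the corresponding presentations in $\catMod(\Gamma_\ida\category{T})^c$. The vanishing gives $\Ext^1(h(y_0),\tilde h(M))=0$, so the surjection $h(y_0)\to\tilde h(N)$ lifts to $F$, and the horseshoe lemma produces an epimorphism $h(z_0\oplus y_0)\twoheadrightarrow F$ whose kernel $F'$ fits in a short exact sequence $0\to K_M\to F'\to K_N\to 0$, where $K_M,K_N$ are the kernels of the two chosen presentations. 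Since $\tilde h$ is fully faithful and exact, both $K_M$ and $K_N$ lie in its essential image.

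Iterating, choose surjections $h(z'_1)\twoheadrightarrow K_M$ and $h(y'_1)\twoheadrightarrow K_N$ with $z'_1,y'_1\in(\Gamma_\ida\category{T})^d$ coming from presentations of the corresponding objects in $\catmod(\Gamma_\ida\category{T})^d$. Applying the vanishing once more yields $\Ext^1(h(y'_1),K_M)=0$, so another horseshoe produces a surjection $h(z'_1\oplus y'_1)\twoheadrightarrow F'$. Composing with $F'\hookrightarrow h(z_0\oplus y_0)$ gives the desired presentation
\[
h(z'_1\oplus y'_1)\to h(z_0\oplus y_0)\to F\to 0,
\]
and by full faithfulness and exactness of $\tilde h$, this lifts to a presentation of an object $L\in\catmod(\Gamma_\ida\category{T})^d$ with $\tilde h(L)\cong F$, completing the proof.
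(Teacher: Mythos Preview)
Your proof is correct and rests on the same key step as the paper's: the vanishing $\Ext^1(h(y),\tilde h(L))=0$ for $y\in(\Gamma_\ida\category{T})^d$, obtained from \Cref{prop:ext-complete} together with the projectivity of $h(u_m\otimes y)$ in $\catMod(\Gamma_\ida\category{T})^c$. The paper draws the conclusion slightly differently: rather than building an explicit horseshoe presentation of the middle term, it observes that the vanishing forces the comparison map $\Ext^1(M,N)\to\Ext^1(\tilde h(M),\tilde h(N))$ to be bijective (using that representables are both projective and injective in $\catmod(\Gamma_\ida\category{T})^d$, so $\Ext^1$ on either side is computed by the same Hom-complex), which immediately gives extension closure. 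Your route is a little longer but more explicit; the paper's is terser but leaves the reduction as an exercise in standard dimension-shifting. Either way, the substance is the same.
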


\begin{proof}
It is equivalent to show that $\Ext^1(M,N)\to\Ext^1(\tilde{h}(M),\tilde{h}(N))$ is bijective for each $M,N\in\catmod(\Gamma_\ida\category{T})^d$. Since representable functors are both projective and injective in $\catmod(\Gamma_\ida\category{T})^d$, we can compute $\Ext^1$ using projective and injective resolutions in $\catmod(\Gamma_\ida\category{T})^d$. Furthermore, since $\tilde{h}$ is exact, our problem boils down to showing that $\Ext^1(h(x),h(y))=0$ in $\catMod(\Gamma_\ida\category{T})^c$ for each $x,y\in(\Gamma_\ida\category{T})^d$. However, we then have
\begin{align*}    
  \Ext^1(h(x),h(y))\overset{\sim}\to &
	\limit_m\Ext^1\!\big(U_m\boxtimes h(x),h(y)\big) \\
	&\cong \limit_m\Ext^1\!\big(h(\Sigma^{md-k}\mathbb{1}\kos\ida^{[m]}\otimes x),h(y)\big) = 0
\end{align*}

since all $h(\Sigma^{md-k}\mathbb{1}\kos\ida^{[m]}\otimes x)\in\catMod(\Gamma_\ida\category{T})^c$ are finitely generated projective.
\end{proof}

The Day convolution product $U_m\boxtimes-$ is exact since $U_m$ is dualizable in $\catMod(\Gamma_\ida\category{T})^c$ by \Cref{cor:essential-image-dualizable} (or directly by \Cref{prop:day-conv-dualizable}). In particular, for each $F,G\in\catMod(\Gamma_\ida\category{T})^c$, the Day convolution products with $U_m$ induce a homomorphism of abelian groups
\begin{equation} \label{eq:ext-embedding}
  \zeta_{F,G}\colon \Ext^1(F,G) \to \prod_{m\ge 0} \Ext^1\big(U_m\boxtimes F, U_m\boxtimes G).
\end{equation}

\begin{lemma}\label{lem:ext-box-injection}
If $F,G$ are in the essential image of the functor $\tilde{h}\colon \catmod(\Gamma_\ida\category{T})^d\to\catMod(\Gamma_\ida\category{T})^c$ from \Cref{notation:reconstruction-tria}, then the above map $\zeta_{F,G}$ is injective.
\end{lemma}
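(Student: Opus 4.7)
The plan is to reduce the injectivity to \Cref{prop:ext-complete} via bifunctoriality of Day convolution. Writing $F=\tilde{h}(M)$ and $G=\tilde{h}(N)$, I will represent a class in $\Ext^1(F,G)$ by a short exact sequence $\eta\colon 0\to G\to E\to F\to 0$ in $\catMod(\Gamma_\ida\category{T})^c$. The starting observation is that the structural maps $\psi_m\colon U_m\to\tilde{\mathbb{1}}$, together with the unit isomorphism $\tilde{\mathbb{1}}\boxtimes -\cong -$, yield natural morphisms $\psi_m\boxtimes F\colon U_m\boxtimes F\to F$ and $\psi_m\boxtimes G\colon U_m\boxtimes G\to G$; I will use them to compare $\zeta_{F,G}$ with the system appearing in \Cref{prop:ext-complete}.

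Next I will establish the key compatibility: bifunctoriality of $\boxtimes$ applied to $\eta$ and to $\psi_m$ produces a morphism of short exact sequences from $U_m\boxtimes\eta$ to $\eta$, which translates into the identity
\[
(\psi_m\boxtimes G)_*(U_m\boxtimes\eta)=(\psi_m\boxtimes F)^*\eta
\]
in $\Ext^1(U_m\boxtimes F,\,G)$. This is obtained from a direct diagram chase (or, equivalently, from the standard cokernel/kernel description of pushforward and pullback along a morphism of extensions). Since the pushforward of any split short exact sequence remains split, the hypothesis $\zeta_{F,G}([\eta])=0$ (i.e., $U_m\boxtimes\eta$ is split for every $m$) immediately gives $(\psi_m\boxtimes F)^*[\eta]=0$ in $\Ext^1(U_m\boxtimes F,\,G)$ for each $m$.

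To finish, I will observe that the family of pullback maps $((\psi_m\boxtimes F)^*)_m$ assembles into the canonical map
\[
\Ext^1(F,G)\longrightarrow \limit_m\Ext^1(U_m\boxtimes F,\,G),
\]
which by \Cref{prop:ext-complete} is an isomorphism. From the previous step this map sends $[\eta]$ to zero, hence $[\eta]=0$ in $\Ext^1(F,G)$, proving injectivity of $\zeta_{F,G}$.

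The only step that requires genuine care is the compatibility identity $(\psi_m\boxtimes G)_*(U_m\boxtimes\eta)=(\psi_m\boxtimes F)^*\eta$; everything else is formal once this is in place. I do not expect any real obstacle here because the identity is a completely general property of any bifunctor exact in each variable (applied to the morphism $\psi_m\boxtimes -$ of functors), but it is the point where the proof commits to a specific interpretation of $\zeta_{F,G}$ that allows \Cref{prop:ext-complete} to be invoked.
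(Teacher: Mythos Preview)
Your proof is correct and follows essentially the same argument as the paper: both use the morphism of short exact sequences $U_m\boxtimes\eta\to\eta$ induced by $\psi_m$ to obtain the identity $(\psi_m\boxtimes G)_*(U_m\boxtimes\eta)=(\psi_m\boxtimes F)^*\eta$, deduce that $(\psi_m\boxtimes F)^*[\eta]=0$ from the splitting of $U_m\boxtimes\eta$, and then invoke \Cref{prop:ext-complete}. The paper displays the intermediate extension $\theta_m$ explicitly in a three-row diagram, whereas you phrase the same step as a bifunctoriality identity, but there is no substantive difference.
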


\begin{proof}
Suppose that $\varepsilon\colon 0\to G \to E \to F\to 0$ is such that $U_m\boxtimes\varepsilon$ splits for each $m\ge 1$. Then each $\psi_m\colon U_m\to\tilde{\mathbb{1}}$ yields a morphism of short exact sequences $U_m\boxtimes\varepsilon\to\varepsilon$, which in turn gives rise to a diagram
\[
  \begin{tikzcd}
		U_m\boxtimes\varepsilon\colon & 0 \ar[r] & U_m\boxtimes G \ar[r] \ar[d] & U_m\boxtimes E \ar[r] \ar[d] & U_m\boxtimes F \ar[r] \ar[d, equal] & 0
	  \\
		\theta_m\colon & 0 \ar[r] & G \ar[r] \ar[d, equal] & H_m \ar[r] \ar[d] & U_m\boxtimes F \ar[r] \ar[d, "\psi_m\boxtimes F"] & 0
	  \\
	  \varepsilon\colon & 0 \ar[r] & G \ar[r] & E \ar[r] & F \ar[r] & 0
	\end{tikzcd}
\]
Then also each $\theta_m$ splits, so $\Ext^1(G,\psi_m\boxtimes F)([\varepsilon])=0$ for each $m$, where $[\varepsilon]\in\Ext^1(G,F)$ denotes the element given by $\varepsilon$. Hence $[\varepsilon]=0$ by \Cref{prop:ext-complete}.
\end{proof}

Finally, we obtain the description of distinguished triangles in $(\Gamma_\ida\category{T})^d$ in terms of those in $(\Gamma_\ida\category{T})^c$.

\begin{proposition}\label{prop:koszul-triangles}
Let $x\to y\to z\to \Sigma x$ be a diagram in $(\Gamma_\ida\category{T})^d$. Then it is a distinguished triangle in $(\Gamma
_\ida\category{T})^d$ if and only if
\[
(\mathbb{1}\kos\ida^{[m]})\otimes x\to (\mathbb{1}\kos\ida^{[m]})\otimes y\to (\mathbb{1}\kos\ida^{[m]})\otimes z\to (\mathbb{1}\kos\ida^{[m]})\otimes \Sigma x\cong\Sigma(\mathbb{1}\kos\ida^{[m]}\otimes x)
\]
is a distinguished triangle in $(\Gamma_\ida\category{T})^c$ for each $m\ge 1$.
\end{proposition}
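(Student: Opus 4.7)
The functor $(\mathbb{1}\kos\ida^{[m]})\otimes(-)$ on $\Gamma_\ida\category{T}$ is exact, so it sends any distinguished triangle in $(\Gamma_\ida\category{T})^d\subset\Gamma_\ida\category{T}$ to a distinguished triangle in $\Gamma_\ida\category{T}$; by \Cref{prop:compactRigid}\ref{item:prodCR}, the result lands in $(\Gamma_\ida\category{T})^c$ since the tensor product of the compact object $\mathbb{1}\kos\ida^{[m]}$ with a dualizable one is compact.

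\textbf{Backward direction.} Assume each tensored candidate is distinguished in $(\Gamma_\ida\category{T})^c$. Write the given diagram as $x\xrightarrow{u}y\xrightarrow{v}z\xrightarrow{w}\Sigma x$ and complete $u$ to a distinguished triangle $x\xrightarrow{u}y\xrightarrow{v'}z'\xrightarrow{w'}\Sigma x$ in $\Gamma_\ida\category{T}$; since $(\Gamma_\ida\category{T})^d$ is a thick subcategory of $\Gamma_\ida\category{T}$, the object $z'$ is automatically dualizable. The consecutive compositions $vu$, $wv$ and $(\Sigma u)w$ of the candidate all vanish after tensoring with every $\mathbb{1}\kos\ida^{[m]}$ (being consecutive maps in the tensored distinguished triangle), hence vanish on the nose by \Cref{lemma:map-tensored-compacts-zero}. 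The vanishing $vu=0$ combined with the long exact sequence obtained by applying $\Hom(-,z)$ to the $z'$-triangle produces a morphism $\alpha\colon z'\to z$ with $\alpha v'=v$.

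\textbf{Upgrading $\alpha$ and concluding.} The map $w\alpha-w'\colon z'\to\Sigma x$ kills $v'$ (since $wv=0=w'v'$), hence factors as $\eta\circ w'$ for some $\eta\in\emorp(\Sigma x)$ by the long exact sequence $\Hom(-,\Sigma x)$ of the rotated $z'$-triangle. Replacing $\alpha$ by $\alpha+\xi\circ w'$ for a suitable $\xi\colon\Sigma x\to z$ preserves the identity $\alpha v'=v$ and modifies $\eta$ by $w\circ\xi$, so we need to realize $\eta+w\xi$ as $-\tau\circ\Sigma u$ for some $\tau\colon\Sigma y\to\Sigma x$; a diagram chase using $(\Sigma u)w=0$ together with the exact sequences of the $z'$-triangle and its rotations provides such $\xi$ and $\tau$, and after adjustment we have $w\alpha=w'$. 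Now $(\id_x,\id_y,\alpha,\id_{\Sigma x})$ is a morphism of candidate triangles, and tensoring with each $\mathbb{1}\kos\ida^{[m]}$ gives a morphism between two distinguished triangles in $(\Gamma_\ida\category{T})^c$ whose outer components are identities, so the triangulated five-lemma forces $(\mathbb{1}\kos\ida^{[m]})\otimes\alpha$ to be an isomorphism for every $m$. Hence $(\mathbb{1}\kos\ida^{[m]})\otimes\cone(\alpha)=0$ for each $m$, and \Cref{lemma:map-tensored-compacts-zero} applied to $\id_{\cone(\alpha)}$ forces $\cone(\alpha)=0$. Thus $\alpha$ is an isomorphism exhibiting the candidate as isomorphic to the distinguished $z'$-triangle, so the candidate is itself distinguished. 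The main obstacle is the diagram chase that upgrades $\alpha$ to a morphism of triangles; the rest of the argument is a straightforward assembly of the tensor-preservation properties and \Cref{lemma:map-tensored-compacts-zero}.
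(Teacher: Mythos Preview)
The forward direction and the overall strategy of the backward direction are fine, but there is a genuine gap in the step you call ``Upgrading $\alpha$''. You claim that a diagram chase using $(\Sigma u)w=0$ and the exact sequences of the $z'$-triangle produces $\xi$ and $\tau$ with $\eta+w\xi=-\tau\circ\Sigma u$; equivalently, you need $\eta$ to lie in
\[
\im\!\big(w_*\colon\Hom(\Sigma x,z)\to\emorp(\Sigma x)\big)
\;+\;
\im\!\big((\Sigma u)^*\colon\Hom(\Sigma y,\Sigma x)\to\emorp(\Sigma x)\big).
\]
Nothing in the hypotheses forces this. The only relations you have established are $vu=wv=(\Sigma u)w=0$, and these merely say that the candidate is a \emph{pre}-triangle; they do not give exactness of $\Hom(-,?)$ or $\Hom(?,-)$ applied to the candidate, so no diagram chase produces the required factorization. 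To see concretely why the upgrade can fail, take $u=0$: then the distinguished triangle splits as $z'\cong y\oplus\Sigma x$ with $w'=\mathrm{pr}_2$, the element $\eta$ becomes $w\beta-1$ for an arbitrary $\beta\colon\Sigma x\to z$, and your condition reduces to finding a section of $w$, which you have no reason to possess before knowing the candidate is distinguished. Without the upgrade, the map $(\id,\id,\alpha)$ is \emph{not} a morphism of triangles (the third square need not commute), so you cannot invoke the five-lemma, and $\alpha\otimes(\mathbb{1}\kos\ida^{[m]})$ is not forced to be an isomorphism.

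The paper's proof circumvents this by transporting the problem to the abelian category $\catMod(\Gamma_\ida\category{T})^c$: both $\tau$ and the genuine triangle $\tau'$ induce short exact sequences $\varepsilon,\varepsilon'$ with the same ends, and the question becomes whether $[\varepsilon]=[\varepsilon']$ in an $\Ext^1$ group. The comparison map
\[
\zeta\colon\Ext^1(F,G)\longrightarrow\prod_{m\ge 1}\Ext^1(U_m\boxtimes F,U_m\boxtimes G)
\]
is shown to be injective using the Mittag-Leffler property of the Koszul inverse systems (\Cref{cor:koszul-ML}) via \Cref{prop:ext-complete}. That Mittag-Leffler input is precisely what replaces the missing diagram chase: it is the mechanism that lets one pass from ``the obstruction vanishes after every $U_m\boxtimes(-)$'' to ``the obstruction vanishes''. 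Your argument never invokes this, and some ingredient of that strength appears to be necessary.
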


\begin{proof}
The `only if' part is clear since $-\otimes-$ is assumed to be exact in both variables.

For the 'if' part, we transport the tensor-triangulated structure from $(\Gamma_\ida\category{T})^d$ to the category of dualizable objects in $\catMod(\Gamma_\ida\category{T})^c$ using the monoidal equivalence \Cref{cor:essential-image-dualizable} and prove the statement there.  We can also assume that the shift functor $\Sigma$ on $\catMod(\Gamma_\ida\category{T})^c$ is given simply by a graded shift on $\catMod R$.
Using \Cref{notation:reconstruction-tria}, suppose that
\[
  \begin{tikzcd}
	  \tau\colon \quad h(x) \ar[r, "u"] & h(y) \ar[r, "v"] & h(z) \ar[r, "w"] & \Sigma h(x)
	\end{tikzcd}
\]
is a diagram in $\catMod(\Gamma_\ida\category{T})^c$ such that
\[
  \begin{tikzcd}
	  U_m\boxtimes \tau\colon \quad U_m\boxtimes h(x) \ar[r, "U_m\boxtimes u"] & U_m\boxtimes h(y) \ar[r, "U_m\boxtimes v"] & U_m\boxtimes h(z) \ar[r, "U_m\boxtimes w"] & \Sigma U_m\boxtimes h(x)
	\end{tikzcd}
\]
is a distinguished triangle for each $m\ge 1$ (making the identification $U_m\boxtimes \Sigma h(x)\cong \Sigma U_m\boxtimes h(x)$ implicit). In particular, $U_m\boxtimes\tau$ is an exact sequence in $\catMod(\Gamma_\ida\category{T})^c$ for each $m$. Now, recall that we have a direct system $U_1\to U_2\to\cdots$ in $\catMod(\Gamma_\ida\category{T})^c$ with a colimiting cocone $(\psi_m\colon U_m \to \tilde{\mathbb{1}})$, so using the unitality constraint, $\tau$ identifies with $\colim_m(U_m\boxtimes \tau)$ in $\catMod(\Gamma_\ida\category{T})^c$. In particular, $\tau$ is also an exact sequence in $\catMod(\Gamma_\ida\category{T})^c$.

Since we know that $(\Gamma_\ida\category{T})^d$ is a triangulated category, we can complete $h(u)$ to a distinguished triangle
\[
  \begin{tikzcd}
	  \tau'\colon \quad h(x) \ar[r, "u"] & h(y) \ar[r, "v'"] & h(z') \ar[r, "w'"] & \Sigma h(x),
	\end{tikzcd}
\]
giving rise to a similar direct system of distinguished triangles $U_m\boxtimes\tau'$ whose colimit is $\tau'$. For each single $m\ge 1$, the distinguished triangles $U_m\boxtimes\tau$ and $U_m\boxtimes\tau'$ must be isomorphic, i.e.\ we have a commutative diagram of the form
\begin{equation} \label{eq:local-comparison-tria}
  \begin{tikzcd}
	  U_m\boxtimes h(x) \ar[r, "U_m\boxtimes u"] \ar[d, equal] & U_m\boxtimes h(y) \ar[r, "U_m\boxtimes v"] \ar[d, equal] & U_m\boxtimes h(z) \ar[r, "U_m\boxtimes w"] \ar[d, "\sim"] & \Sigma U_m\boxtimes h(x)\phantom{.} \ar[d, equal]
    \\
	  U_m\boxtimes h(x) \ar[r, "U_m\boxtimes u"] & U_m\boxtimes h(y) \ar[r, "U_m\boxtimes v'"] & U_m\boxtimes h(z') \ar[r, "U_m\boxtimes w'"] & \Sigma U_m\boxtimes h(x).
	\end{tikzcd}
\end{equation}

Finally, let us reinterpret the above isomorphisms. Denoting $G=\coker(u)$ and $F=\ker(\Sigma u)$, the sequence $\tau$ induces a short exact sequence
\[
\varepsilon\colon \quad 0 \to G \xrightarrow{\bar{v}} h(z) \xrightarrow{\bar{w}} F \to 0
\]
and $\tau'$ induces a similar sequence
\[
\varepsilon'\colon \quad 0 \to G \xrightarrow{\bar{v}'} h(z') \xrightarrow{\bar{w}'} F \to 0.
\]
So both $\varepsilon$ and $\varepsilon'$ produce elements $[\varepsilon], [\varepsilon']\in\Ext^1(F,G)$ in $\catMod(\Gamma_\ida\category{T})^c$. Now, \eqref{eq:local-comparison-tria} precisely tells us that the images of $[\varepsilon]$ and $[\varepsilon']$ under the map~\eqref{eq:ext-embedding} are equal. It follows from \Cref{lem:ext-box-injection} that $[\varepsilon]=[\varepsilon']$, and this in turn implies the existence a commutative diagram of the form
\[
  \begin{tikzcd}
	  h(x) \ar[r, "u"] \ar[d, equal] & h(y) \ar[r, "v"] \ar[d, equal] & h(z) \ar[r, "w"] \ar[d, "\sim"] & \Sigma h(x) \ar[d, equal]
		\\
	  h(x) \ar[r, "u"] & h(y) \ar[r, "v'"] & h(z') \ar[r, "w'"] & \Sigma h(x).
	\end{tikzcd}
\]
Hence $\tau$ must be a distinguished triangle since $\tau'$ was chosen as such.
\end{proof}

We conclude by summarizing the results in a section in a compact form.

\begin{theorem}\label{thm:reconstruction}
Let $\category{T}$ be an $R$-linear, Noetherian, tensor-tri\-an\-gu\-lated, compactly-rigidly generated category, and fix a homogeneous ideal $\ida$ of $R$.

Then we can, up to equivalence, recover the tensor-triangulated $\comp{R}$-linear category $(\Gamma_\ida\category{T})^d$ from the approximate unital (in the sense of \Cref{remark:approximate-unitality}) $R$-linear tensor-triangulated category $(\Gamma_\ida\category{T})^c$.

Conversely, $(\Gamma_\ida\category{T})^c$ is precisely the full triangulated subcategory of $(\Gamma_\ida\category{T})^d$ consisting of finitely presentable objects (i.e.\ those $x\in(\Gamma_\ida\category{T})^d$ such that the functor $\Hom(x,-)\colon (\Gamma_\ida\category{T})^d\to\catmod R$ preserves all existing direct limits).
\end{theorem}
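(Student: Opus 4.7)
The plan is to assemble this as a summary theorem collecting the results of the section. I will address the two assertions separately and then spell out carefully what ``recover'' means.

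First, for the converse assertion (the easier one), I would simply invoke \Cref{cor:dualizable-to-compacts}: its statement is exactly that an object $x\in(\Gamma_\ida\category{T})^d$ is compact in $\Gamma_\ida\category{T}$ if and only if $\Hom(x,-)\colon(\Gamma_\ida\category{T})^d\to\catmod R$ preserves all direct limits that exist in $(\Gamma_\ida\category{T})^d$. Combined with \Cref{prop:tensorCompTor}\ref{item:tensorCompTor-torsion}, which identifies $(\Gamma_\ida\category{T})^c$ with the compacts in $\Gamma_\ida\category{T}$, this gives the claimed description. It is worth checking that these ``finitely presentable'' objects indeed form a triangulated subcategory, which is clear since $(\Gamma_\ida\category{T})^c$ is thick in $\Gamma_\ida\category{T}$.

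For the main reconstruction assertion, the strategy is to use $\catMod(\Gamma_\ida\category{T})^c$ as an intermediate category that is manifestly built only from $(\Gamma_\ida\category{T})^c$ and its $R$-enrichment. Starting from the approximate unital $R$-linear tensor-triangulated structure on $(\Gamma_\ida\category{T})^c$, I would first form the category $\catMod(\Gamma_\ida\category{T})^c$ of $R$-linear contravariant functors to $\catMod R$. Then I would equip it with the Day convolution $\boxtimes$ (whose definition depends only on $\otimes$ on $(\Gamma_\ida\category{T})^c$) and observe via \Cref{prop:yoneda-strong-monoidal} that the chosen approximate units $u_1\to u_2\to\cdots$ yield a tensor unit $\tilde{\mathbb{1}}=\colim_m h(u_m)$. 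Applying \Cref{cor:essential-image-dualizable}, the restricted Yoneda functor then identifies $(\Gamma_\ida\category{T})^d$, as a closed symmetric monoidal category, with the full subcategory of $\boxtimes$-dualizable objects in $\catMod(\Gamma_\ida\category{T})^c$; since the latter is intrinsic to $((\Gamma_\ida\category{T})^c,\otimes,\{u_m\})$, this reconstructs the underlying monoidal additive category.

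Next I would recover the triangulated structure: the shift is inherited from the graded shift on $\catMod R$ (equivalently, from the suspension on $(\Gamma_\ida\category{T})^c$, which matches under $h$), and \Cref{prop:koszul-triangles} characterizes the distinguished triangles in $(\Gamma_\ida\category{T})^d$ as precisely those diagrams whose Koszul tensor products $-\otimes(\mathbb{1}\kos\ida^{[m]})$ are distinguished in $(\Gamma_\ida\category{T})^c$ for every $m\ge 1$. Since these Koszul objects and their tensor products lie in $(\Gamma_\ida\category{T})^c$, this criterion is expressible purely in terms of the source data. Finally, the $\comp{R}$-linear enrichment is automatic: by \Cref{cor:tPTcDC}\ref{item:completeDerivedComplete} all Hom-groups in $\Gamma_\ida\category{T}$ (and in particular in $(\Gamma_\ida\category{T})^d$) are derived $\ida$-complete, so the $R$-action extends uniquely to an $\comp{R}$-action by \Cref{lemma:derivedComplete}; equivalently, \Cref{corollary:completion-on-homs-emorph} identifies $\emorp(\tilde{\mathbb{1}})\cong\emorp(\Gamma_\ida\mathbb{1})$ with $\emorp(\mathbb{1})\otimes_R\comp{R}_\ida$.

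No genuine obstacle remains, since each step is either already proved or a direct citation. The only subtlety worth emphasizing is that the approximate unital datum is truly necessary: without a specified colimit presentation of the unit, the Day convolution $\boxtimes$ on $\catMod(\Gamma_\ida\category{T})^c$ lacks a strict tensor unit (\Cref{remark:approximate-unitality}), so the notion of dualizable object — and hence the reconstructed category — depends on this additional piece of structure. I would close the proof by noting that different choices of approximate units yield canonically isomorphic units $\tilde{\mathbb{1}}$ in $\catMod(\Gamma_\ida\category{T})^c$ (any two colimiting systems for $\Gamma_\ida\mathbb{1}$ give isomorphic colimits under $h$), so the reconstruction is independent of the choice, up to equivalence.
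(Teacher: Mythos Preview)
Your proposal is correct and follows essentially the same approach as the paper: the paper's proof is a terse summary invoking \Cref{cor:essential-image-dualizable} for the monoidal reconstruction, \Cref{prop:koszul-triangles} for the triangulated structure, and \Cref{cor:dualizable-to-compacts} for the converse, and you have spelled out exactly these steps in more detail. Your additional remarks on the $\comp{R}$-enrichment and on the role of the approximate unital datum are accurate elaborations that the paper leaves implicit.
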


\begin{proof}
The (closed) symmetric monoidal category $(\Gamma_\ida\category{T})^d$ can be reconstructed from $(\Gamma_\ida\category{T})^c$ using \Cref{cor:essential-image-dualizable}, and the triangulation on $(\Gamma_\ida\category{T})^d$ is determined by~\Cref{prop:koszul-triangles}. The converse reconstruction of $(\Gamma_\ida\category{T})^c$ from $(\Gamma_\ida\category{T})^d$ was given in \Cref{cor:dualizable-to-compacts}.
\end{proof}

\section{Consequences of strong generation}
\label{sec:strong-gen}

Now we will focus on compactly-rigidly generated Noetherian $R$-linear categories $\category{T}$ such that the category of compact objects $\category{T}^c$ admits a strong generator (recall \Cref{definition:strong-generator}). As explained in \Cref{example:strongly-generated=regular-finite-dim}, this can be viewed as a regularity condition in the context of commutative Noetherian rings.

\subsection{Strong generators for torsion and complete dualizable objects}

First, we are going to prove that if $\category{T}^c$ has a strong generator, then so have it the categories of dualizable $\ida$-torsion and dualizable $\ida$-complete objects. So the slogan is that if $\category{T}^c$ is ``regular'', so is the small tt-completion $(\Gamma_\ida\category{T})^d\simeq(\Lambda^\ida\category{T})^d$.
Note that, in contrast, the smaller categories of compact objects $(\Gamma_\ida\category{T})^c$ and $(\Lambda^\ida\category{T})^c$ are typically \emph{not} strongly generated due to \autocite[Theorem~4.1]{steenStevensonStrongGenerators2015}.

\begin{theorem}\label{thm:strongGenDual}
    Let $\category{T}$ be an $R$-linear, Noetherian, tensor-triangulated, compactly-rigidly generated category. Let $g$ be a strong generator of $\category{T}^c$. Then $\Gamma_{\ida}g$ is a strong generator of $(\Gamma_{\ida}\category{T})^d$ and $\Lambda^{\ida}g$ is a strong generator of $(\Lambda^{\ida}\category{T})^d$.
\end{theorem}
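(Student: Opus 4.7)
The plan is first to reduce to a single claim via the monoidal equivalence. By the discussion in \S\ref{subsec:torsion-and-completion}, the functors $\Gamma_\ida$ and $\Lambda^\ida$ restrict to mutually inverse monoidal equivalences between $\Gamma_\ida\category{T}$ and $\Lambda^\ida\category{T}$, and the natural isomorphism $\Lambda^\ida\Gamma_\ida \cong \Lambda^\ida$ identifies $\Lambda^\ida g$ with the image of $\Gamma_\ida g$ under the restricted equivalence $(\Gamma_\ida\category{T})^d \simeq (\Lambda^\ida\category{T})^d$. Since equivalences of triangulated categories preserve strong generation with the same integer, it is enough to prove that $\Gamma_\ida g$ strongly generates $(\Gamma_\ida\category{T})^d$.

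The strategy is then to verify condition~\ref{item:dual-gen-tor-Tc} of \Cref{thm:genDual} with a uniform constant, applying the theorem with its ``$g$'' set to $\Gamma_\ida g$ and its ``$c$'' set to our original $g$. Fix $D_0$ with $\thick^{D_0}(g) = \category{T}^c$, and let $k$ be the length of our chosen generating sequence for $\ida$. For any $x \in (\Gamma_\ida\category{T})^d$ and any $m \geq 1$, the Koszul object $x \kos \ida^{[m]} = x \otimes (\mathbb{1} \kos \ida^{[m]})$ is the tensor product of a dualizable object of $\Gamma_\ida\category{T}$ with a compact one (the compactness of $\mathbb{1} \kos \ida^{[m]}$ in $\Gamma_\ida\category{T}$ comes from \Cref{prop:compactKoszul}), so it is compact in $\Gamma_\ida\category{T}$ by \Cref{prop:compactRigid}\ref{item:prodCR}. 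In particular, $x \kos \ida^{[m]} \in (\Gamma_\ida\category{T})^c \subset \category{T}^c = \thick^{D_0}(g)$, with the constant $D_0$ independent of both $x$ and $m$.

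The final step is to track numerical constants through the proof of \Cref{thm:genDual}. The implication \ref{item:dual-gen-tor-Tc}$\Rightarrow$\ref{item:dual-gen-in-Tc} there preserves the integer $n$, and the implication \ref{item:dual-gen-in-Tc}$\Rightarrow$\ref{item:dual-gen-in-Td} is established quantitatively as $x \in \thick^{2^{2k}n-1}(\Gamma_\ida g)$. Feeding in $n = D_0$ yields the uniform inclusion
\[
(\Gamma_\ida\category{T})^d \subset \thick^{2^{2k}D_0 - 1}(\Gamma_\ida g),
\]
which is the desired statement.

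The conceptual difficulty---translating a uniform finite-generation bound for the Koszul truncations $x \kos \ida^{[m]}$ in $\category{T}^c$ into a uniform bound for $x$ itself in $(\Gamma_\ida\category{T})^d$---has already been absorbed into \Cref{thm:genDual}. The only new observation needed is that strong generation of $\category{T}^c$ by $g$ forces the hypothesis of that theorem to hold with a constant independent of the dualizable object $x$, simply because every $x \kos \ida^{[m]}$ lies in the fixed thickening $\thick^{D_0}(g) = \category{T}^c$.
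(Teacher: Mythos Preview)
Your proof is correct and matches the paper's first argument: the paper explicitly notes that the result is an immediate consequence of \Cref{thm:genDual} since condition~\ref{item:dual-gen-tor-Tc} there is automatic from strong generation, and your proposal simply spells out the quantitative constants through that deduction. The paper also supplies a second, more direct argument via \Cref{prop:finHomGen} (bypassing \Cref{thm:genDual} entirely) that yields the sharper bound $\thick^{2D+1}(\Gamma_\ida g) = (\Gamma_\ida\category{T})^d$, independent of the number $k$ of generators of~$\ida$.
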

\begin{proof}
    Since the equivalence of categories $\Lambda^{\ida} \colon \Gamma_{\ida}\category{T} \xrightarrow{\sim} \Lambda^{\ida}\category{T}$ maps the subcategories $\thick(\Gamma_{\ida}g)$ to $\thick(\Lambda^{\ida}g)$ and $(\Gamma_{\ida}g)^d$ to $(\Lambda^{\ida}g)^d$, it suffices to prove the version for $(\Gamma_{\ida}g)^d$.
     This is then an immediate consequence of \Cref{thm:genDual}, since condition~$\ref{item:dual-gen-tor-Tc}$ there is always satisfied by the strong generation assumption.
		
		However, we can also give a more direct and streamlined argument.
		Fix $D \in \mathbb{N}$ such that $\thick^D(g) = \category{T}^c$.
		Then for any $c \in (\Gamma_{\ida}\category{T})^c$, we have $c \in \thick^D(g)$ and, by applying the exact functor $\Gamma_{\ida}$, also
    \[
        c \cong \Gamma_{\ida}c \in \thick^D(\Gamma_{\ida}g).
    \]
    Hence $(\Gamma_{\ida}\category{T})^c \subset \thick^D(\Gamma_{\ida}g)$.
    Now fix $d \in (\Gamma_{\ida}\category{T})^d$ and recall that $\Gamma_{\ida}\category{T}$ has a canonical action of $\comp{R}$ by \Cref{remark:homFromTorsionIsComplete}. Then for any $k \geq 1$,
    \[
        d \kos \ida^{[k]} \in (\Gamma_{\ida}\category{T})^c \subset \thick^D(\Gamma_{\ida}g)
    \]
    Since $d\cong\Gamma_\ida d$ is a homotopy colimit of certain shifts of $d \kos \ida^{[k]}$ by \Cref{prop:torsionColimit}, the defining triangle for the homotopy colimit shows that $d \in \Loc^{2D+1}(\Gamma_{\ida}g)$. Hence, by \Cref{prop:finHomGen} applied to the $\comp{R}$-linear category $\Gamma_\ida\category{T}$, the thick subcategory $\category{U}=(\Gamma_\ida\category{T})^d$ and the object $\Gamma_{\ida}g\in\category{U}$, we obtain that
    \[
        \thick^{2D+1}(\Gamma_{\ida}g) = \thick(\Gamma_{\ida}g) = (\Gamma_{\ida}\category{T})^d.
    \]
    Here we also used that $\Hom_{\Gamma_\ida\category{T}}(\Gamma_\ida g, d')$ is a finitely generated $\comp{R}$-module for any $d'\in\category{U}$ by \Cref{cor:finiteGenOverCompR}.
\end{proof}

\subsection{Characterization of dualizable objects via Hom-groups}

Putting all the previous results together, we obtain several characterizations of dualizable objects in $\Gamma_{\ida}\category{T}$ and $\Lambda^{\ida}\category{T}$ in terms of Hom-groups from and to compacts.
First of all, in a strictly Noetherian category (in particular in a $\category{T}$ where the category of compact objects is strongly generated, recall \Cref{prop:strictlyRightNoetherian}), the dualizable objects are even characterized by finite generatedness of homomorphism $R$-modules.
This can be viewed as a partial converse to \Cref{prop:dualNoethComp} and \Cref{cor:dualNoethTor}.

\begin{proposition}\label{prop:strictRightNoethDual}
    Assume that $\category{T}$ is strictly Noetherian.
    Then:
    \begin{enumerate}[label=(\alph*)]
    \item\label{item:sRNoethDualComp} Let $x \in \Lambda^{\ida}\category{T}$ be an object such that, for any compact object $c \in (\Lambda^{\ida}\category{T})^c$, $\Hom_{\category{T}}(c, x)$ is Noetherian over $R$. Then $x$ is dualizable in $\Lambda^{\ida}\category{T}$. 
    \item\label{item:sRNoethDualTor} Let $x \in \Gamma_{\ida}\category{T}$ be an object such that, for any compact object $c \in (\Gamma_{\ida}\category{T})^c$, $\Hom_{\category{T}}(c, x)$ is Noetherian over $R$. Then $x$ is dualizable in $\Gamma_{\ida}\category{T}$. 
    \end{enumerate}
\end{proposition}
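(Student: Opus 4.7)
The plan is to apply the criterion \Cref{prop:compactRigid}~\ref{item:prodbyCC} inside each of $\Gamma_{\ida}\category{T}$ and $\Lambda^{\ida}\category{T}$: in a compactly-rigidly generated category, an object is dualizable as soon as tensoring with it preserves compactness. Both ambient categories are compactly generated by \Cref{prop:tensorCompTor}, and they are rigidly generated because every compact is dualizable. Indeed, by \Cref{prop:compactKoszul} every object of $(\Gamma_{\ida}\category{T})^c=(\Lambda^{\ida}\category{T})^c$ lies in the thick closure of the Koszul objects $c\kos\ida$ with $c\in\category{T}^c$, each of which is dualizable in $\Gamma_{\ida}\category{T}$ (and in $\Lambda^{\ida}\category{T}$) by \Cref{prop:tensorCompTor}~\ref{item:tensorCompTor-dualizables}, and the class of dualizable objects is thick.

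For part~\ref{item:sRNoethDualTor}, fix $x\in\Gamma_{\ida}\category{T}$ satisfying the hypothesis and an arbitrary compact $c\in(\Gamma_{\ida}\category{T})^c$; the goal is that $c\otimes x\in(\Gamma_{\ida}\category{T})^c=\Gamma_{\ida}\category{T}\cap\category{T}^c$. Torsion-ness is automatic since $\Gamma_{\ida}\category{T}$ is a tensor ideal. For compactness in $\category{T}$, the strict Noetherianness of $\category{T}$ reduces the problem to showing that $\Hom_{\category{T}}(d,c\otimes x)$ is Noetherian over $R$ for every $d\in\category{T}^c$. Dualizability of $c$ in $\category{T}$ yields the adjunction isomorphism
\[
    \Hom_{\category{T}}(d,c\otimes x)\cong\Hom_{\category{T}}(d\otimes\SW{c},x),
\]
and the essential observation, already present in the proof of \Cref{prop:dualNoethComp} via \Cref{prop:compactKoszul}, is that $\SW{c}$ again lies in $(\Gamma_{\ida}\category{T})^c$. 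Hence $d\otimes\SW{c}\in(\Gamma_{\ida}\category{T})^c$ and the hypothesis on $x$ applies directly.

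For part~\ref{item:sRNoethDualComp}, the argument reduces to part~\ref{item:sRNoethDualTor}. Given $x\in\Lambda^{\ida}\category{T}$ and $c\in(\Lambda^{\ida}\category{T})^c$, \Cref{prop:compactKoszul} identifies $(\Lambda^{\ida}\category{T})^c=(\Gamma_{\ida}\category{T})^c$ as subcategories of $\category{T}$, so $c$ is in particular torsion and $c\otimes x$ lies in $\Gamma_{\ida}\category{T}$. The same adjunction argument, applied to the torsion compact $d\otimes\SW{c}$, shows that $c\otimes x\in(\Gamma_{\ida}\category{T})^c$, which is simultaneously a subcategory of $\Lambda^{\ida}\category{T}$; consequently $c\otimes_{\Lambda^{\ida}\category{T}}x=\Lambda^{\ida}(c\otimes x)\cong c\otimes x$ belongs to $(\Lambda^{\ida}\category{T})^c$, as required. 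No serious obstacle is anticipated: the only delicate points are that Spanier-Whitehead duals of torsion compacts remain torsion compact and that these compacts are simultaneously torsion and complete, both direct consequences of \Cref{prop:compactKoszul}.
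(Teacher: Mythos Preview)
Your proof is correct and follows essentially the same strategy as the paper: apply \Cref{prop:compactRigid}~\ref{item:prodbyCC}, use dualizability of $c$ in $\category{T}$ to rewrite $\Hom_{\category{T}}(d,c\otimes x)\cong\Hom_{\category{T}}(d\otimes\SW{c},x)$, observe that $\SW{c}$ stays in $(\Gamma_{\ida}\category{T})^c$, and invoke strict Noetherianness. The only cosmetic difference is the order: you prove~\ref{item:sRNoethDualTor} first and deduce~\ref{item:sRNoethDualComp} by noting $c\otimes x\in(\Gamma_{\ida}\category{T})^c\subset\Lambda^{\ida}\category{T}$, whereas the paper proves~\ref{item:sRNoethDualComp} first (computing $\Hom_{\category{T}}(c',\Lambda^{\ida}(c\otimes x))$ via the adjunction $\Gamma_{\ida}\dashv\Lambda^{\ida}$) and then deduces~\ref{item:sRNoethDualTor} via the monoidal equivalence $\Lambda^{\ida}\colon\Gamma_{\ida}\category{T}\to\Lambda^{\ida}\category{T}$.
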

\begin{proof}
    We first prove \ref{item:sRNoethDualComp}. Let $c \in (\Lambda^{\ida}\category{T})^c = (\Gamma_{\ida}\category{T})^c$. By \Cref{prop:compactRigid}, it suffices to show that $c \otimes_{\Lambda^{\ida}\category{T}} x$ is compact. To this end, suppose that $c' \in \category{T}^c$. Then 
    \[
        \Hom_{\category{T}}(c', c \otimes_{\Lambda^{\ida}\category{T}} x) =
				\Hom_{\category{T}}(c', \Lambda^{\ida}(c \otimes x)) \cong
				\Hom_{\category{T}}(\Gamma_{\ida} c', c \otimes x),
    \]
where the first equality comes from \Cref{prop:tensorCompTor}\,\ref{item:tensorCompTor-complete} and the second isomorphism from the adjunction $\Gamma_{\ida} \dashv \Lambda^{\ida}$. Since $c$ is dualizable both in $\Gamma_\ida\category{T}$ and $\category{T}$ and the Spanier-Whitehead duals coincide, we further have
    \[
				\Hom_{\category{T}}(\Gamma_{\ida} c', c \otimes x) \cong
				\Hom_{\category{T}}((\Gamma_{\ida} c') \otimes \SW{c}, x) \cong
				\Hom_{\category{T}}(c' \otimes \SW{c}, x),
    \]
		where the last isomorphism follows from the fact that $\SW{c}\in\Gamma_\ida\category{T}$.
    In particular, $\Hom_{\category{T}}(c', c \otimes_{\Lambda^{\ida}\category{T}} x)$ is Noetherian over $R$, since $c' \otimes \SW{c}$ is in $(\Gamma_{\ida}\category{T})^c=(\Lambda^{\ida}\category{T})^c$. As $c'$ was chosen arbitrarily, our hypothesis on $\category{T}$ then implies that $c \otimes_{\Lambda^{\ida}\category{T}} x$ is compact.

    We now turn to \ref{item:sRNoethDualTor}. Note that $\Lambda^{\ida}x$ satisfies the hypothesis of \ref{item:sRNoethDualComp}, hence is dualizable in $\Lambda^{\ida}\category{T}$. Hence $x$ is dualizable in the equivalent category $\Gamma_{\ida}\category{T}$.
\end{proof}

As for the converse to the other halves of \Cref{prop:dualNoethComp} and \Cref{cor:dualNoethTor}, we can prove it only for some categories $\category{T}$, namely those which are compactly cogenerated in the sense of \Cref{definition:compactly-cogenerated} (see also \Cref{examples:compactly-cogenerated}).

\begin{proposition}\label{prop:strictLeftNoethDual}
    Assume that $\category{T}$ is compactly cogenerated.
    Then:
    \begin{enumerate}[label=(\alph*)]
    \item\label{item:sLNoethDualComp} Let $x \in \Lambda^{\ida}\category{T}$ be an object such that, for any compact object $c \in (\Lambda^{\ida}\category{T})^c$, $\Hom_{\category{T}}(x, c)$ is Noetherian over $R$. Then $x$ is dualizable in $\Lambda^{\ida}\category{T}$. 
    \item\label{item:sLNoethDualTor} Let $x \in \Gamma_{\ida}\category{T}$ be an object such that, for any compact object $c \in (\Gamma_{\ida}\category{T})^c$, $\Hom_{\category{T}}(x, c)$ is Noetherian over $R$. Then $x$ is dualizable in $\Gamma_{\ida}\category{T}$. 
    \end{enumerate}
\end{proposition}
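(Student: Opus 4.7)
The plan is to prove part~\ref{item:sLNoethDualTor} directly and then reduce part~\ref{item:sLNoethDualComp} to it through the monoidal equivalence $\Lambda^{\ida}\colon\Gamma_{\ida}\category{T}\xrightarrow{\sim}\Lambda^{\ida}\category{T}$.

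For part~\ref{item:sLNoethDualTor}, I would fix $x\in\Gamma_{\ida}\category{T}$ satisfying the hypothesis. Since $\Gamma_{\ida}\category{T}$ is compactly generated and, by \Cref{prop:tensorCompTor}\,\ref{item:tensorCompTor-dualizables}, also rigidly generated, \Cref{prop:compactRigid}\,\ref{item:prodbyCC} reduces the problem to showing that $c\otimes x$ is compact in $\Gamma_{\ida}\category{T}$ for every $c\in(\Gamma_{\ida}\category{T})^{c}$. Using the identity $(\Gamma_{\ida}\category{T})^{c}=\Gamma_{\ida}\category{T}\cap\category{T}^{c}$ from \Cref{prop:compactKoszul} together with the tensor-ideal property of $\Gamma_{\ida}\category{T}$, this becomes the question whether $c\otimes x\in\category{T}^{c}$. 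Here the compactly cogenerated assumption makes the orthogonality condition in \Cref{corollary:compactness-by-cogeneration} vacuous, so it suffices to check that $\Hom_{\category{T}}(c\otimes x,g)$ is Noetherian over $R$ for a strong generator $g$ of $\category{T}^{c}$. Using dualizability of $c$ in $\category{T}$ and the fact that its Spanier-Whitehead dual in $\category{T}$ coincides with that in $\Gamma_{\ida}\category{T}$ (cf.\ the proof of \Cref{prop:dualNoethComp}), I would rewrite
\[
\Hom_{\category{T}}(c\otimes x,g)\cong\Hom_{\category{T}}(x,\SW{c}\otimes g).
\]
Since $\SW{c}\in\Gamma_{\ida}\category{T}$ and $\SW{c}\otimes g\in\category{T}^{c}$ by \Cref{prop:compactRigid}\,\ref{item:prodCR}, the object $\SW{c}\otimes g$ belongs to $(\Gamma_{\ida}\category{T})^{c}$, and the hypothesis on $x$ applied to this object delivers the Noetherianity I need.

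For part~\ref{item:sLNoethDualComp}, I would set $y:=\Gamma_{\ida}x$ for a given $x\in\Lambda^{\ida}\category{T}$. For any $c\in(\Gamma_{\ida}\category{T})^{c}=(\Lambda^{\ida}\category{T})^{c}$, \Cref{prop:compactKoszul} ensures that $c$ lies in $\Lambda^{\ida}\category{T}$ as well (so $\Lambda^{\ida}c\cong c$), and the adjunction $\Gamma_{\ida}\dashv\Lambda^{\ida}$ gives
\[
\Hom_{\category{T}}(y,c)\cong\Hom_{\category{T}}(x,\Lambda^{\ida}c)\cong\Hom_{\category{T}}(x,c),
\]
which is Noetherian by hypothesis. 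Part~\ref{item:sLNoethDualTor} then yields dualizability of $y$ in $\Gamma_{\ida}\category{T}$, and transport along the monoidal equivalence produces $x\cong\Lambda^{\ida}y$ dualizable in $\Lambda^{\ida}\category{T}$.

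The main subtlety lies in the bookkeeping around Spanier-Whitehead duals across the three subcategories and in verifying that compacts of $\Gamma_{\ida}\category{T}$ also sit inside $\Lambda^{\ida}\category{T}$; once these points are in place, the argument is governed by the compactness detection criterion \Cref{corollary:compactness-by-cogeneration}, whose applicability is precisely what the compact cogeneration hypothesis is designed to unlock.
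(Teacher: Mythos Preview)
Your proof is correct and follows essentially the same strategy as the paper's, which simply says the argument is dual to that of \Cref{prop:strictRightNoethDual} with \Cref{corollary:compactness-by-cogeneration} replacing strict Noetherianity. The only cosmetic difference is that you establish part~\ref{item:sLNoethDualTor} first and reduce~\ref{item:sLNoethDualComp} to it via the monoidal equivalence, whereas the paper's template does the reverse; your order is arguably slightly cleaner since the tensor product in $\Gamma_{\ida}\category{T}$ is just the restriction of $\otimes$ from~$\category{T}$.
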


\begin{proof}
The argument is completely dual to the one for \Cref{prop:strictRightNoethDual}. Only instead of strict Noetherianity, we use \Cref{corollary:compactness-by-cogeneration}.
\end{proof}

Now we can give a characterization of torsion dualizable objects. If $R$ is graded local and $\ida\subset R$ is the maximal graded ideal, there are also other equivalent conditions by \autocite[Proposition 7.7]{bensonLocallyDualisableModular2024}.

\begin{theorem}\label{thm:characDualizableTor}
    Let $\category{T}$ be an $R$-linear, Noetherian, tensor-triangulated, compactly-rigidly generated category. Assume $\category{T}^c$ has a strong generator $g$. Then the following conditions are equivalent for $x$ in $\Gamma_{\ida}\category{T}$ and a graded ideal $\ida\subset R$:
    \begin{enumerate}[label=(\roman*)]
        \item\label{item:cDTDualizable} $x$ is dualizable in $\Gamma_{\ida}\category{T}$;
        \item\label{item:cDTFinGenR} for any compact object $c \in (\Gamma_{\ida}\category{T})^c$, $\Hom_{\category{T}}(c, x)$ is a finitely generated $R$-module;
        \item\label{item:cDTThick} $x \in \thick(\Gamma_{\ida}g)$.
    \end{enumerate}
    If, moreover, $\category{T}$ is compactly cogenerated, the statements are also equivalent to:
    \begin{enumerate}[label=(\roman*)]
		    \setcounter{enumi}{3}
        \item\label{item:cDTFinGenRb} for any compact object $c \in (\Gamma_{\ida}\category{T})^c$, $\Hom_{\category{T}}(x, c)$ is a finitely generated $R$-module;
        \item\label{item:cDTFinGenRComp} for any compact object $c \in \category{T}^c$, $\Hom_{\category{T}}(x, c)$ is finitely generated as an $\comp{R}$-module.
    \end{enumerate}
\end{theorem}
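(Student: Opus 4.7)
The plan is to establish the equivalences by a cycle $\ref{item:cDTDualizable}\Rightarrow\ref{item:cDTFinGenR}\Rightarrow\ref{item:cDTDualizable}$ together with $\ref{item:cDTDualizable}\Leftrightarrow\ref{item:cDTThick}$, and then add $\ref{item:cDTFinGenRb}$ and $\ref{item:cDTFinGenRComp}$ under the compactly cogenerated hypothesis. Everything is already essentially assembled in the preceding subsections, so the proof will be mostly a matter of invoking the right results in the right order.

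For $\ref{item:cDTDualizable}\Rightarrow\ref{item:cDTFinGenR}$ I would cite \Cref{cor:dualNoethTor} directly: a dualizable $x\in(\Gamma_\ida\category{T})^d$ has Noetherian $\Hom_\category{T}(c,x)$ over $R$ for every $c\in(\Gamma_\ida\category{T})^c$. For $\ref{item:cDTFinGenR}\Rightarrow\ref{item:cDTDualizable}$ I use the strong generator hypothesis twice: first via \Cref{prop:strictlyRightNoetherian}, which says that $\category{T}^c$ admitting a strong generator forces $\category{T}$ to be strictly Noetherian, and then via \Cref{prop:strictRightNoethDual}\,\ref{item:sRNoethDualTor}, which in a strictly Noetherian $\category{T}$ upgrades objectwise Noetherianity of $\Hom_\category{T}(c,-)$ on $(\Gamma_\ida\category{T})^c$ to dualizability in $\Gamma_\ida\category{T}$.

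The equivalence $\ref{item:cDTDualizable}\Leftrightarrow\ref{item:cDTThick}$ is the immediate combination of two facts. On the one hand, $\Gamma_\ida g$ is dualizable in $\Gamma_\ida\category{T}$ by \Cref{prop:tensorCompTor}\,\ref{item:tensorCompTor-dualizables} (since $g\in\category{T}^c\subset\category{T}^d$), so $\thick(\Gamma_\ida g)\subset(\Gamma_\ida\category{T})^d$. On the other hand, by \Cref{thm:strongGenDual}, $\Gamma_\ida g$ is even a strong generator of $(\Gamma_\ida\category{T})^d$, which gives the reverse inclusion $(\Gamma_\ida\category{T})^d\subset\thick(\Gamma_\ida g)$.

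For the compactly cogenerated addendum, the equivalence $\ref{item:cDTDualizable}\Leftrightarrow\ref{item:cDTFinGenRb}$ is formally dual to $\ref{item:cDTDualizable}\Leftrightarrow\ref{item:cDTFinGenR}$: the implication $\ref{item:cDTDualizable}\Rightarrow\ref{item:cDTFinGenRb}$ is again \Cref{cor:dualNoethTor}, and the reverse implication is \Cref{prop:strictLeftNoethDual}\,\ref{item:sLNoethDualTor}, which is precisely where compact cogeneration enters. Finally, $\ref{item:cDTFinGenRb}\Leftrightarrow\ref{item:cDTFinGenRComp}$ is a direct application of \Cref{prop:torAllCompactNoeth}. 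I don't expect any real obstacle here: all the hard work (strong generation passing to $(\Gamma_\ida\category{T})^d$, the Nakayama argument for derived complete modules, and the compact-object recognition via cogeneration) has been carried out earlier, and the present theorem is the bookkeeping that assembles those ingredients.
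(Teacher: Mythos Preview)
Your proposal is correct and follows essentially the same approach as the paper: the paper likewise cites \Cref{cor:dualNoethTor} for $\ref{item:cDTDualizable}\Rightarrow\ref{item:cDTFinGenR}$ and $\ref{item:cDTDualizable}\Rightarrow\ref{item:cDTFinGenRb}$, uses \Cref{prop:strictlyRightNoetherian} plus \Cref{prop:strictRightNoethDual} for $\ref{item:cDTFinGenR}\Rightarrow\ref{item:cDTDualizable}$, invokes \Cref{prop:tensorCompTor} and \Cref{thm:strongGenDual} for $\ref{item:cDTDualizable}\Leftrightarrow\ref{item:cDTThick}$, and then \Cref{prop:torAllCompactNoeth} and \Cref{prop:strictLeftNoethDual} for the compactly cogenerated addendum.
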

\begin{proof}
    The implications $\ref{item:cDTDualizable} \Rightarrow \ref{item:cDTFinGenR}$ and $\ref{item:cDTDualizable} \Rightarrow \ref{item:cDTFinGenRb}$ are given by \Cref{cor:dualNoethTor}. Since $\category{T}$ is compactly generated with a strong generator of compacts, it is strictly Noetherian, thus \Cref{prop:strictRightNoethDual} provides the converse implication $\ref{item:cDTFinGenR} \Rightarrow \ref{item:cDTDualizable}$. Statement $\ref{item:cDTThick}$ implies $\ref{item:cDTDualizable}$ by \Cref{prop:tensorCompTor}. The converse implication $\ref{item:cDTDualizable} \Rightarrow \ref{item:cDTThick}$ is \Cref{thm:strongGenDual}.
		
		The equivalence $\ref{item:cDTFinGenRb} \Leftrightarrow \ref{item:cDTFinGenRComp}$ comes from \Cref{prop:torAllCompactNoeth}. 
    When $\category{T}$ is compactly cogenerated, the implication $\ref{item:cDTFinGenRb} \Rightarrow \ref{item:cDTDualizable}$ holds by \Cref{prop:strictLeftNoethDual}.
\end{proof}

The dual characterizations hold for dualizable complete objects in $\Lambda^{\ida}\category{T}$; compare to \autocite[Proposition 7.10]{bensonLocallyDualisableModular2024}. 

\begin{theorem}\label{thm:characDualizableComp}
    Let $\category{T}$ be a $R$-linear, Noetherian, tensor-triangulated, compactly-rigidly generated category. Assume $\category{T}^c$ has a strong generator $g$.
Then the following conditions are equivalent for $x$ in $\Gamma_{\ida}\category{T}$ and a graded ideal $\ida\subset R$:
    \begin{enumerate}[label=(\roman*)]
    \item\label{item:cDCDualizable} x is dualizable in $\Lambda^{\ida}\category{T}$;
    \item\label{item:cDCFinGenR} for any compact object $c \in (\Lambda^{\ida}\category{T})^c$, $\Hom_{\category{T}}(c, x)$ is a finitely generated $R$-module;
    \item\label{item:cDCFinGenRComp} for any compact object $c \in \category{T}^c$, $\Hom_{\category{T}}(c, x)$ is a finitely generated $\comp{R}$-module;
    \item\label{item:cDCThick} $x \in \thick(\Lambda^{\ida}g)$. 
    \end{enumerate}
    If, moreover, $\category{T}$ is compactly cogenerated, the statements are also equivalent to:
    \begin{enumerate}[label=(\roman*)]
		\setcounter{enumi}{4}
    \item\label{item:cDCFinGenRb} for any compact object $c \in (\Lambda^{\ida}\category{T})^c$, $\Hom_{\category{T}}(x, c)$ is a finitely generated $R$-module;
    \end{enumerate}
\end{theorem}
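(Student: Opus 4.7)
The proof will closely parallel that of \Cref{thm:characDualizableTor}, with \Cref{prop:complAllCompactNoeth} playing the role that \Cref{prop:torAllCompactNoeth} played in the torsion case. The plan is to establish the cycle $\ref{item:cDCDualizable} \Rightarrow \ref{item:cDCFinGenR} \Leftrightarrow \ref{item:cDCFinGenRComp} \Rightarrow \ref{item:cDCDualizable}$ and $\ref{item:cDCThick} \Leftrightarrow \ref{item:cDCDualizable}$, then conclude with $\ref{item:cDCFinGenRb}$ under the extra hypothesis.

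First, $\ref{item:cDCDualizable} \Rightarrow \ref{item:cDCFinGenR}$ is exactly the first half of \Cref{prop:dualNoethComp}, and the equivalence $\ref{item:cDCFinGenR} \Leftrightarrow \ref{item:cDCFinGenRComp}$ is \Cref{prop:complAllCompactNoeth}. For the converse implication $\ref{item:cDCFinGenR} \Rightarrow \ref{item:cDCDualizable}$, note that the hypothesis that $\category{T}^c$ admits a strong generator combined with \Cref{prop:strictlyRightNoetherian} ensures that $\category{T}$ is strictly Noetherian, so we can apply \Cref{prop:strictRightNoethDual}\,\ref{item:sRNoethDualComp}. For the thick generation equivalence, $\ref{item:cDCThick} \Rightarrow \ref{item:cDCDualizable}$ follows from the fact that $\Lambda^{\ida}g$ is dualizable in $\Lambda^{\ida}\category{T}$ by \Cref{prop:tensorCompTor}\,\ref{item:tensorCompTor-dualizables}, so $\thick(\Lambda^{\ida}g) \subset (\Lambda^{\ida}\category{T})^d$; conversely $\ref{item:cDCDualizable} \Rightarrow \ref{item:cDCThick}$ is exactly the complete half of \Cref{thm:strongGenDual}.

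For the final equivalence $\ref{item:cDCFinGenRb} \Leftrightarrow \ref{item:cDCDualizable}$ under the additional assumption that $\category{T}$ is compactly cogenerated, the direction $\ref{item:cDCDualizable} \Rightarrow \ref{item:cDCFinGenRb}$ is the second half of \Cref{prop:dualNoethComp}, while $\ref{item:cDCFinGenRb} \Rightarrow \ref{item:cDCDualizable}$ is a direct application of \Cref{prop:strictLeftNoethDual}\,\ref{item:sLNoethDualComp}.

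There is no genuine obstacle here: all the substantive work has been carried out in \S\ref{sec:torClosedIdeal} and \Cref{thm:strongGenDual}, and the theorem merely assembles those results on the complete side. The only point worth flagging is a minor typo in the statement, where the objects $x$ should be taken in $\Lambda^{\ida}\category{T}$ rather than $\Gamma_{\ida}\category{T}$; this is clear from the content of the conditions and the parallel with \Cref{thm:characDualizableTor}.
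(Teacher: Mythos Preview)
Your proposal is correct and follows essentially the same approach as the paper's proof: both assemble the same implications from \Cref{prop:dualNoethComp}, \Cref{prop:complAllCompactNoeth}, \Cref{prop:strictlyRightNoetherian}, \Cref{prop:strictRightNoethDual}, \Cref{prop:tensorCompTor}, \Cref{thm:strongGenDual}, and \Cref{prop:strictLeftNoethDual} in the same logical structure. Your observation about the typo (that $x$ should lie in $\Lambda^{\ida}\category{T}$) is also correct.
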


\begin{proof}
    The implications $\ref{item:cDCDualizable} \Rightarrow \ref{item:cDCFinGenR}$ and $\ref{item:cDCDualizable} \Rightarrow \ref{item:cDCFinGenRb}$ are given by \Cref{prop:dualNoethComp}. Since $\category{T}$ is compactly generated with a strong generator of compacts, it is strictly Noetherian, thus \Cref{prop:strictRightNoethDual} provides the converse implication $\ref{item:cDCFinGenR} \Rightarrow \ref{item:cDCDualizable}$.
		The equivalence $\ref{item:cDCFinGenR} \Leftrightarrow \ref{item:cDCFinGenRComp}$ comes from \Cref{prop:complAllCompactNoeth}. 
		Statement $\ref{item:cDCDualizable}$ implies $\ref{item:cDCThick}$ by \Cref{thm:strongGenDual}, while the converse implication $\ref{item:cDCDualizable} \Rightarrow \ref{item:cDCThick}$ is given by \Cref{prop:tensorCompTor}.
		
    When $\category{T}$ is compactly cogenerated, the implication $\ref{item:cDCFinGenRb} \Rightarrow \ref{item:cDCDualizable}$ holds by \Cref{prop:strictLeftNoethDual}.
\end{proof}

\subsection{Representability theorems}

In the presence of strong generator, we also have another way to recover (as $R$-linear categories) torsion dualizable modules from the torsion compact ones and vice versa, complementary to~\S\ref{prop:completion-on-homs}. This is closely related to representability theorems of Bondal and Van den Bergh \autocite[Theorem~1.3 and Appendix~A]{bondalVanDenBergh2003}, Rouquier~\autocite[Corollary~7.50]{rouquierDimensions2008} and Neeman~\autocite{neeman2Brown2025} and makes the relation between $(\Gamma_\ida\category{T})^c$ and $(\Gamma_\ida\category{T})^d$ looks similar as the one between $\category{D}^\per(X)$ and $\category{D}^\bnd(\category{coh}X)$ for a nice enough Noetherian scheme. However, our methods are very different as, for example unlike \autocite[Theorem 0.4]{neeman2Brown2025}, we cannot expect the existence of non-trivial $t$-structures since some power of the suspension functor in our setting may well be isomorphic to the identity.

Let again $\category{T}$ be a compactly-rigidly generated tensor-triangulated Noetherian $R$-linear category such that $\category{T}^c$ admits a strong generator. Let $\ida\subset R$ be a homogeneous ideal and denote by $\category{tors}R\subset\catmod R$ the full subcategory of finitely generated modules which are annihilated by some power of $\ida$. We will prove that the graded $R$-linear (= enriched over $\catMod R$ in the sense of \autocite{kellyEnrichedCT2005}) bifunctor
\begin{equation}\label{eq:perfect-pairing}
  \Hom_\category{T}(-,-)\colon (\Gamma_\ida\category{T})^{c,\mathrm{op}}\times (\Gamma_\ida\category{T})^d \to \category{tors} R
\end{equation}
(which is well-defined by \Cref{cor:dualNoethTor}) is a ``perfect pairing''. In the following theorem, we will denote the graded $R$-linear category of graded $R$-linear functors between two graded $R$-linear categories $\category{C},\category{D}$ by $\category{Fun}_R(\category{C},\category{D})$.

\begin{theorem}\label{thm:perfectPairing}
The following holds in the setting of the previous paragraph:

\begin{enumerate}[label=(\arabic*)]
\item\label{item:repres-dual} The restricted Yoneda functor induced by~\eqref{eq:perfect-pairing},
\begin{align*}
  h\colon (\Gamma_\ida\category{T})^d &\to \category{Fun}_R((\Gamma_\ida\category{T})^{c,\mathrm{op}}, \category{tors} R),
  \\
	d &\mapsto \Hom_\category{T}(-,d)_{|(\Gamma_\ida\category{T})^{c}},
\end{align*}
is fully faithful, and the essential image consists precisely of the cohomological functors. In particular, each cohomological graded $R$-linear functor $H\colon (\Gamma_\ida\category{T})^{c,\mathrm{op}}\to \category{tors} R$ is representable by a unique (up to a canonical isomorphism) object $d\in (\Gamma_\ida\category{T})^d$.

\item\label{item:repres-comp} The restricted Yoneda functor induced by~\eqref{eq:perfect-pairing},
\begin{align*}
  \tilde{h}\colon (\Gamma_\ida\category{T})^{c,\mathrm{op}} &\to \category{Fun}_R((\Gamma_\ida\category{T})^d, \category{tors} R),
  \\
	c &\mapsto \Hom_\category{T}(c,-)_{|(\Gamma_\ida\category{T})^{d}},
\end{align*}
is fully faithful, and the essential image consists precisely of the homological functors. In particular, each homological graded $R$-linear functor $\tilde{H}\colon (\Gamma_\ida\category{T})^d\to \category{tors} R$ is corepresentable by a unique (up to a canonical isomorphism) object $c\in (\Gamma_\ida\category{T})^c$.
\end{enumerate}
\end{theorem}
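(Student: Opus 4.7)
The plan is to treat parts (1) and (2) in parallel. In each case, well-definedness of the pairing valued in $\category{tors} R$ combines \Cref{cor:dualNoethTor} (finite generation over $R$) with \Cref{thm:charPTandDC}\ref{item:charPowerTorsion} or \Cref{cor:tPTcDC}\ref{item:torsionPowerTorsion} ($\ida$-power torsion of Hom-groups out of compacts into $\Gamma_{\ida}\category{T}$). Full faithfulness in (1) is immediate from \Cref{prop:yoneda-restricted-fully-faithful} restricted to $(\Gamma_{\ida}\category{T})^d$, and in (2) it follows from the classical enriched Yoneda lemma applied to the inclusion $(\Gamma_{\ida}\category{T})^c\hookrightarrow(\Gamma_{\ida}\category{T})^d$. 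The substantial content of the theorem is therefore the identification of the essential image with the class of (co)homological functors.

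For the essential image in (1), given a cohomological graded $R$-linear $H\colon(\Gamma_{\ida}\category{T})^{c,\mathrm{op}}\to\category{tors} R$, I would first compose with the inclusion $\category{tors} R\hookrightarrow\catMod R$ and invoke Neeman's Brown representability theorem for the compactly generated category $\Gamma_{\ida}\category{T}$ to produce an object $y\in\Gamma_{\ida}\category{T}$ together with a natural isomorphism $H\cong\Hom_{\Gamma_{\ida}\category{T}}(-,y)_{|(\Gamma_{\ida}\category{T})^c}$. Because each value $H(c)\in\category{tors} R$ is finitely generated over $R$, the equivalence \Cref{thm:characDualizableTor}~\ref{item:cDTFinGenR}$\Leftrightarrow$\ref{item:cDTDualizable} then forces $y\in(\Gamma_{\ida}\category{T})^d$, and uniqueness follows from fully faithfulness.

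For the essential image in (2), given a homological graded $R$-linear $\tilde{H}\colon(\Gamma_{\ida}\category{T})^d\to\category{tors} R$, I would precompose with the coreflection $\Gamma_{\ida}\colon\category{T}^c\to(\Gamma_{\ida}\category{T})^d$ to obtain an objectwise finitely generated $R$-linear homological functor $\tilde{H}\circ\Gamma_{\ida}\colon\category{T}^c\to\catmod R$. The Bondal--Van den Bergh--Letz representability theorem (as used in the proof of \Cref{prop:compact-reflection}) then yields $c_0\in\category{T}^c$ and a natural isomorphism $\tilde{H}(\Gamma_{\ida}-)\cong\Hom_{\category{T}}(c_0,-)_{|\category{T}^c}$. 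Taking this at $c_0$, the module $\Hom_{\category{T}}(c_0,c_0)\cong\tilde{H}(\Gamma_{\ida}c_0)$ lies in $\category{tors} R$, so some power $\ida^n$ annihilates $\id_{c_0}$, which by \Cref{prop:compactKoszul} places $c_0$ in $(\Gamma_{\ida}\category{T})^c$ and, in particular, gives $\Gamma_{\ida}c_0\cong c_0$. The element of $\tilde{H}(c_0)$ corresponding to $\id_{c_0}$ under the isomorphism then determines, by Yoneda, a natural transformation $\phi\colon\Hom_{\category{T}}(c_0,-)\to\tilde{H}(-)$ of $R$-linear homological functors on $(\Gamma_{\ida}\category{T})^d$, which by construction agrees with the established isomorphism on $\Gamma_{\ida}(\category{T}^c)$.

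The step I expect to be the main obstacle is verifying that $\phi$ is an isomorphism on all of $(\Gamma_{\ida}\category{T})^d$, and not merely on $\Gamma_{\ida}(\category{T}^c)$. My strategy would be to observe that the full subcategory of $(\Gamma_{\ida}\category{T})^d$ on which $\phi$ is an isomorphism is closed under shifts, direct summands and triangles (both sides being homological, one applies the five lemma to the long exact sequences), so it is thick; it contains $\Gamma_{\ida}g$, and therefore equals all of $(\Gamma_{\ida}\category{T})^d$ by \Cref{thm:strongGenDual}, which gives $\thick(\Gamma_{\ida}g)=(\Gamma_{\ida}\category{T})^d$. This is precisely the point at which the strong generator hypothesis becomes indispensable; without it one could only propagate corepresentability as far as the thick hull of $\Gamma_{\ida}(\category{T}^c)$ inside $(\Gamma_{\ida}\category{T})^d$, which is in general a proper subcategory.
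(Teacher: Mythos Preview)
Your proof is correct but follows a genuinely different route from the paper in both parts.

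For part~(1), the paper does not invoke representability in the big category $\Gamma_\ida\category{T}$ at all. Instead it uses the Day convolution machinery: by \Cref{prop:day-conv-dualizable} and \Cref{prop:essential-image-dualizable}, it suffices to show that $H(\SW{c}\otimes-)$ is representable by a torsion compact object for each $c\in(\Gamma_\ida\category{T})^c$. Since compact~$\otimes$~dualizable is compact, this functor extends to $(\Gamma_\ida\category{T})^{d,\mathrm{op}}$, and Letz's representability on the strongly generated category $(\Gamma_\ida\category{T})^d$ yields a dualizable representative $x$; then $\emorp(x)\cong H(\SW{c}\otimes x)\in\category{tors}R$ and \Cref{lemma:koszul-split} force $x$ to be compact. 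Your route via \Cref{thm:characDualizableTor} is more economical, but your citation is imprecise: Neeman's Brown representability concerns functors on the \emph{big} category taking coproducts to products, whereas what you need is that the restricted Yoneda $\Gamma_\ida\category{T}\to\catMod(\Gamma_\ida\category{T})^c$ is essentially surjective onto cohomological functors. This is true for compactly generated categories (see e.g.\ Beligiannis or Krause on purity), but it is a different statement.

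For part~(2), the paper applies Letz's theorem \emph{directly} to $(\Gamma_\ida\category{T})^d$ (strongly generated by \Cref{thm:strongGenDual}, with finitely generated Homs over $\comp{R}$ by \Cref{thm:finite-gen-hom-dualizables}), obtaining $y\in(\Gamma_\ida\category{T})^d$ with $\tilde{H}\cong\Hom(y,-)$ in one stroke; then $\emorp(y)\cong\tilde{H}(y)\in\category{tors}R$ and the Koszul-summand trick give $y\in(\Gamma_\ida\category{T})^c$. Your detour through $\category{T}^c$ works, but the paper's approach avoids both the precomposition with $\Gamma_\ida$ and the closing thickness argument, so it is shorter and more uniform with part~(1).
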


\begin{proof}
The functor $h$ is fully faithful by \Cref{prop:yoneda-restricted-fully-faithful}, while $\tilde{h}$ is fully faithful by the (enriched) Yoneda lemma since $(\Gamma_\ida\category{T})^c\subset(\Gamma_\ida\category{T})^d$. Clearly the essential images of both $h$ and $\tilde{h}$ are contained in the full subcategories of (co)homological functors. So we are left with proving the (co)representability parts of~\ref{item:repres-dual} and~\ref{item:repres-comp}.

Let us focus on part~\ref{item:repres-dual} first. If $H$ is a cohomological functor as above, it suffices to prove by~\Cref{prop:day-conv-dualizable,prop:essential-image-dualizable} that for each $c\in(\Gamma_\ida\category{T})^{c}$, the functor $h(c)\boxtimes H\cong H(\SW{c}\otimes-)\colon (\Gamma_\ida\category{T})^{c,\mathrm{op}}\to\category{tors}R$ is representable by a torsion compact object. However, the latter functor is in fact well-defined on the category of all dualizable objects by \Cref{prop:compactRigid}, so we have
\[
  (\Gamma_\ida\category{T})^{d,\mathrm{op}} \xrightarrow{\SW{c}\otimes-}
	(\Gamma_\ida\category{T})^{c,\mathrm{op}} \xrightarrow{\quad F\quad}
	\category{tors}R \subset \catmod R.
\]
Now $(\Gamma_\ida\category{T})^{d,\mathrm{op}}$ is strongly generated by \Cref{thm:strongGenDual}, and so there is some $x\in (\Gamma_\ida\category{T})^d$ such that $H(\SW{c}\otimes-)\cong\Hom_\category{T}(-,x)$ on $(\Gamma_\ida\category{T})^d$ by \autocite[Theorem 2.7]{letzBrownRepresentabilityTriangulated2023}. Finally, since $\ida^{[n']}$ annihilates $\emorp(x)\cong H(\SW{c}\otimes x)$ for large enough $n'$, it follows from \Cref{lemma:koszul-split} that $x$ is a direct summand of $x\kos\ida^{[n]}$ for a potentially even larger $n$, so $x$ is compact.

Finally, we prove the representability in~\ref{item:repres-comp}. If $\tilde{H}\colon (\Gamma_\ida\category{T})^d\to \category{tors} R$ is homological, we can again invoke the strong generation of $(\Gamma_\ida\category{T})^{d,\mathrm{op}}$ and \autocite[Theorem 2.7]{letzBrownRepresentabilityTriangulated2023} to obtain $y\in(\Gamma_\ida\category{T})^d$ such that $\tilde{H}\cong\Hom_\category{T}(y,-)$. Now $\emorp(y)\cong\tilde{H}(y)$ is $\ida$-power torsion, which implies that $y\in(\Gamma_\ida\category{T})^c$ by the same argument as in the previous paragraph.
\end{proof}

\subsection{Local regularity}\label{subsec:local-regularity}

Let $\category{T}$ be an $R$-linear, Noetherian, compactly-rigidly generated category. The \emph{local cohomology} at point $\ideal{p} \in \Spec R$ is the functor $\Gamma_{\mathcal{V}(\ideal{p})}L_{\mathcal{Z}(\ideal{p})}$ with
\[
    \mathcal{Z}(\ideal{p}) =\{ \ideal{q} \in \Spec R \: | \: \ideal{q} \nsubseteq \ideal{p} \} \qquad \mathcal{V}(\ideal{p}) = \{ \ideal{q} \in \Spec R \: | \: \ideal{p} \subset \ideal{q} \}
\]
The category $\category{T}$ is \emph{locally regular} if there is a compact generator $g$ of $\category{T}$ such that for each $\ideal{p} \in \Spec R$, the category $\thick(L_{\mathcal{Z}(\ideal{p})}\Gamma_{\mathcal{V}(\ideal{p})}g)$ is strongly generated~\autocite[Definition 7.1]{bensonLocallyDualisableModular2024}.

This relatively technical definition is crucial for the arguments in \autocite{bensonLocallyDualisableModular2024}.
Here we provide a simple criterion to check whether a category is locally regular. For instance, by applying it to $\category{KInj}(\field G)$, with $\field$ a field and $G$ a finite group, we recover \autocite[Theorem 8.3]{bensonLocallyDualisableModular2024}, which had been proved using more sophisticated arguments, and our argument applies equally well to finite group schemes.

\begin{proposition}\label{prop:stongGenCompactLocalRegularity}
    Assume that $\category{T}^c$ is strongly generated. Then $\category{T}$ is locally regular.
\end{proposition}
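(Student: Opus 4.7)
The plan is to take $g$ itself as the compact generator witnessing local regularity. Fix $\ideal{p}\in\Spec R$, set $\Psi:=L_{\mathcal{Z}(\ideal{p})}\Gamma_{\mathcal{V}(\ideal{p})}$ and $h:=\Psi g$. Since $\Psi$ is a composition of two exact functors arising from the recollements for $\mathcal{V}(\ideal{p})$ and $\mathcal{Z}(\ideal{p})$, it is an exact, additive and idempotent endofunctor of $\category{T}$. Fix $D$ with $\thick^D(g)=\category{T}^c$; the goal is then to produce a uniform bound $N$ with $\thick^N(h)=\thick(h)$.

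The first step is to transport strong generation along $\Psi$. A straightforward induction on $n$ using exactness and additivity of $\Psi$ yields $\Psi(\thick^n(g))\subseteq\thick^n(h)$ for every $n$; specializing to $n=D$ gives the crucial inclusion
\[
\Psi(\category{T}^c)\subseteq\thick^D(h).
\]
Since $h\in\Psi(\category{T}^c)\subseteq\thick(h)$, one obtains the equality of thick subcategories $\thick(h)=\thick(\Psi(\category{T}^c))$. It then suffices to show that $\Psi(\category{T}^c)$ is already closed under the triangulated operations of $\category{T}$, for this would force $\thick(h)=\Psi(\category{T}^c)\subseteq\thick^D(h)$, exhibiting $h$ as a strong generator of $\thick(h)$ with generation time at most $D$.

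The main obstacle will be verifying closure of $\Psi(\category{T}^c)$ under cones: given $a,b\in\category{T}^c$ and a morphism $\varphi\colon\Psi a\to\Psi b$ in $\category{T}$, one must realize $\cone(\varphi)$ as $\Psi c$ for some $c\in\category{T}^c$. Closure under shifts and finite direct sums is immediate from additivity of $\Psi$, while closure under summands follows from idempotence of $\Psi$ together with thickness of $\category{T}^c$ in $\category{T}$. To handle the cone case, I would compute $\Hom_\category{T}(\Psi a,\Psi b)$ via the Koszul-colimit presentation of $\Gamma_{\mathcal{V}(\ideal{p})}$ from \Cref{constr:transitionKoszul} combined with the standard description of $L_{\mathcal{Z}(\ideal{p})}$ as inversion of elements outside $\ideal{p}$; by the Noetherian hypothesis this should reduce $\varphi$ to a genuine morphism between compact Koszul approximations $a\otimes\mathbb{1}\kos\ideal{p}^{[m]}\to b\otimes\mathbb{1}\kos\ideal{p}^{[m]}$ (after inverting a suitable element), whose cone is manifestly in $\Psi(\category{T}^c)$. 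A safer fallback, should the direct lift prove delicate, is to argue $\thick(h)\subseteq\Loc^{D'}(h)$ for some $D'$ depending only on $D$ via a homotopy-colimit argument in the spirit of \Cref{thm:strongGenDual}, then conclude $\thick(h)=\thick^{D'}(h)$ by applying \Cref{prop:finHomGen} to the $\comp{R}_{\ideal{p}}$-linear category $\Psi\category{T}$, using that $\Hom_\category{T}(h,y)$ is finitely generated for $y\in\thick(h)$.
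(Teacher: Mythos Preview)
Your main approach cannot succeed: the class $\Psi(\category{T}^c)$ is \emph{not} closed under cones in general, so the ``main obstacle'' you identify is a genuine wall, not merely a technical difficulty. Closure under cones (together with the easy closure properties) would force $\Psi(\category{T}^c)=\thick(h)=(\Gamma_{\ideal p}\category{L})^d$, i.e.\ essential surjectivity of the completion functor $\category{T}^c\to(\Lambda^{\ideal p}\category{L})^d$. This fails already for $\category{T}=\category{D}(\mathbb{Q}[x,y])$ at $\ideal p=(x,y)$: the perfect $\mathbb{Q}[[x,y]]$-complex $\mathbb{Q}[[x,y]]/(y-\sum_{n\ge1}x^n/n!)$ has $0$th Fitting ideal $(y-e^x+1)$, which is not extended from $\mathbb{Q}[x,y]$ since $e^x-1$ is transcendental over $\mathbb{Q}(x)$; hence it is not in the essential image of base change. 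Your proposed lift of $\varphi$ to a morphism of Koszul approximations cannot work either: by \Cref{prop:completion-on-homs}, $\Hom(\Psi a,\Psi b)$ is the $\ideal p$-adic \emph{completion} of $\Hom(a,b)_{\ideal p}$, an inverse limit over the Koszul tower, and its elements do not arise from any finite stage.

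Your fallback, on the other hand, is exactly the paper's argument, just with the proof of \Cref{thm:strongGenDual} inlined rather than cited. The paper proceeds by first passing to $\category{L}=L_{\mathcal{Z}(\ideal p)}\category{T}$, observing that $\category{L}$ is $R_{\ideal p}$-linear, Noetherian and compactly-rigidly generated with $L_{\mathcal{Z}(\ideal p)}g$ a strong generator of $\category{L}^c$ (since $\category{T}^c\to\category{L}^c$ is essentially surjective up to summands), and then applying \Cref{thm:strongGenDual} to $\category{L}$ and the maximal ideal $\ideal pR_{\ideal p}$. This immediately gives that $\Gamma_{\ideal p}L_{\mathcal{Z}(\ideal p)}g$ strongly generates $(\Gamma_{\ideal p}\category{L})^d=\thick(h)$. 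Your fallback would reprove this by showing $(\Gamma_{\ideal p}\category{L})^c\subseteq\thick^D(h)$, then using the Koszul homotopy-colimit presentation and \Cref{prop:finHomGen}; that is precisely the content of the proof of \Cref{thm:strongGenDual}.
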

\begin{proof}
    Fix $\ideal{p} \in \Spec R$. Note that by the discussion in \autocite[\S2]{bensonSpectrumLocalDualisable2025}, the category $\category{L} = L_{\mathcal{Z}(\ideal{p})}\category{T}$ is $R_{\ideal{p}}$-linear, Noetherian (over $R_{\ideal{p}}$) and compactly-rigidly generated. Moreover, if $g$ is a strong generator of $\category{T}^c$, $L_{\mathcal{Z}(\ideal{p})}g$ is a strong generator of $\category{L}^c$ since the functor $\category{T}^c\to\category{L}^c$ is essentially surjective up to summands (see e.g.\ \autocite[\S4]{balmerFaviTelescope2011}). Hence $\thick(\Gamma_{\mathcal{V}(\ideal{p})}L_{\mathcal{Z}(\ideal{p})}g)$ is precisely the class of dualizable objects of $\Gamma_{\mathcal{V}(\ideal{p})}\category{L}$ and it is strongly generated by \Cref{thm:strongGenDual}.
\end{proof}

\subsection{Case study: finite group algebras}
\label{subsec:KInjkG}

Let $G$ be a finite group. We consider its group algebra $\field G$ over a field $\field$. The cohomology ring $R=H^*(G, \field)$ is known to be Noetherian \autocite{EvensCohomologyRing1961}. The homotopy category of injective $\field G$-modules $\category{KInj}(\field G)$, endowed with the canonical action of $H^*(G, \field)$, is then Noetherian, tensor-triangulated and compactly-rigidly generated; see \autocite[\S10]{bensonLocalCohomologySupport2008}. The category of compacts can be identified with $\category{D}^\bnd(\catmod(\field G))$, the bounded category of finitely generated modules over $\field G$, via the functor
\begin{equation}\label{eq:inj-res-functor}
\mathbf{i}\colon \category{D}^\bnd(\catmod(\field G)) \to \category{KInj}(\field G)
\end{equation}
which assigns to a complex $x\in\category{D}^\bnd(\catmod(\field G))$ an injective resolution $\mathbf{i}x$ (see~\autocite[Proposition~2.3]{krauseStableDerived2005}).
An injective resolution $\mathbf{i}g$ of the direct sum $g$ of all simple modules is a strong generator of the compact objects.

Note that the graded ring $H^*(G; \field)$ is concentrated in non-negative degrees, with $H^0(G; \field) \cong \field$. Thus, any proper ideal $\ida$ is concentrated in positive degrees, and any module whose degree is bounded below is $\ida$-complete. As a consequence, we can observe that the dualizable $\ida$-complete objects of $\category{KInj}(\field G)$ are precisely the compacts, independent of $\ida$.
This generalizes \autocite[Example~3.29]{balmerSandersTateIVT2025} by showing that, following the philosophy there, $\category{KInj}(\field G)$ should be considered as `$\ida$-complete' with respect to any proper homogeneous ideal $\ida$, not just the maximal one.

\begin{proposition}\label{prop:complete-independent}
Let $G$ be a finite group, $\field$ a field and $\ida\subset H^*(G, \field)$ a any proper ideal. Then
\[
    (\Lambda^{\ida}\category{KInj}(\field G))^d = \thick(\mathbf{i}g) = \operatorname{\mathbf{i}}\category{D}^\bnd(\catmod(\field G)).
\]
\end{proposition}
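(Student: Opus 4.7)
The strategy is to show that every compact object of $\category{T} = \category{KInj}(\field G)$ is already $\ida$-complete; once this is done, \Cref{thm:characDualizableComp} will close the proof.

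The key technical lemma I would first establish is the one hinted at in the paragraph preceding the statement: if $R$ is graded-commutative with $R^{<0}=0$ and $R^0$ a field, and $\ida\subsetneq R$ is a homogeneous ideal (necessarily contained in $R^{>0}$), then every bounded-below graded $R$-module $M$ is $\ida$-complete. This is a direct degree count: if $M_i=0$ for all $i<-N$ and the chosen generators of $\ida$ have degrees $\geq k>0$, then $(\ida^n M)_d=0$ as soon as $nk>d+N$, so in each degree the inverse system $(M/\ida^n M)_d$ stabilizes to $M_d$, yielding the isomorphism $M\overset{\sim}\to \limit_n M/\ida^n M$.

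Next I would observe that under the identification $\category{T}^c\simeq\category{D}^\bnd(\catmod(\field G))$ via $\mathbf{i}$, we have $\Hom_\category{T}^*(\mathbf{i}x,\mathbf{i}y)\cong\Ext^*_{\field G}(x,y)$, and a projective-resolution argument shows this is bounded below in degree for bounded complexes $x,y$ of finitely generated $\field G$-modules (explicitly, if $x$ lives in cohomological degrees $[p_1,p_2]$ and $y$ in $[q_1,q_2]$, then $\Ext^n_{\field G}(x,y)=0$ for $n<q_1-p_2$). Combining the two facts, $\Hom_\category{T}(c,d)$ is $\ida$-complete (in particular derived $\ida$-complete) for all $c,d\in\category{T}^c$, so by \Cref{thm:charPTandDC}~\ref{item:charDerivedComplete} every compact object lies in $\Lambda^{\ida}\category{T}$; in particular $\Lambda^{\ida}\mathbf{i}g\cong\mathbf{i}g$.

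The conclusion is then immediate from \Cref{thm:characDualizableComp}~\ref{item:cDCThick}: $(\Lambda^{\ida}\category{T})^d=\thick(\Lambda^{\ida}\mathbf{i}g)=\thick(\mathbf{i}g)$. Since $g$ is the sum of all simple $\field G$-modules and every finitely generated $\field G$-module admits a finite composition series with simple factors, $\thick(g)=\category{D}^\bnd(\catmod(\field G))$, whence $\thick(\mathbf{i}g)=\mathbf{i}\category{D}^\bnd(\catmod(\field G))$, completing the chain of equalities. The only slightly subtle point is the degree-counting lemma; the rest is a formal consequence of the machinery of \S\ref{sec:torClosedIdeal} and \S\ref{sec:strong-gen}, so I do not anticipate any real obstacle.
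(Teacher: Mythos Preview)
Your proposal is correct and follows essentially the same route as the paper's proof: both use the bounded-below degree argument to show the relevant Hom-modules are $\ida$-complete, invoke \Cref{thm:charPTandDC}\,\ref{item:charDerivedComplete} to conclude $\mathbf{i}g\cong\Lambda^{\ida}\mathbf{i}g$, and then appeal to the strong-generation machinery (the paper cites \Cref{thm:strongGenDual} directly, you cite its consequence \Cref{thm:characDualizableComp}). Your write-up is simply more explicit about the degree count and the final identification $\thick(\mathbf{i}g)=\mathbf{i}\category{D}^\bnd(\catmod(\field G))$, which the paper leaves implicit.
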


\begin{remark}
Let us stress that the category of compact objects $(\Lambda^\ida)$ is typically smaller as
\[
(\Lambda^\ida\category{KInj}(\field G))^c=
(\Gamma_\ida\category{KInj}(\field G))^c=
\operatorname{\mathbf{i}}\category{D}^\bnd(\catmod(\field G))\cap\Gamma_\ida\category{KInj}(\field G);
\]
recall \Cref{remark:torsion-compacts}.
\end{remark}

\begin{proof}[Proof of \Cref{prop:complete-independent}]
It follows from the discussion just before the lemma that the $R$-module $\Hom(\mathbf{i}g, \mathbf{i}g) \cong \bigoplus_{s,s' \text{ simple}} \Ext^*_{\field G}(s, s')$ is $\ida$-complete. Hence $\Hom(\mathbf{i}c, \mathbf{i}g)$ is $\ida$-complete for any $c\in\category{D}^\bnd(\catmod(\field G))$ and so $g\cong\Lambda^{\ida}g$ by \Cref{thm:charPTandDC}\,\ref{item:charDerivedComplete}. We conclude by \Cref{thm:strongGenDual}.
\end{proof}

We also deduce a characterization of dualizable $\ida$-torsion objects as an immediate consequence of \Cref{thm:characDualizableTor}.

\begin{proposition}
    Let $\ida$ be a homogeneous ideal of $H^*(G, \field)$ and $x$ be an object of $\Gamma_{\ida}\category{KInj}(\field G)$. The following propositions are equivalent:
    \begin{enumerate}
    \item $x$ is dualizable in $\Gamma_{\ida}\category{KInj}(\field G)$;
    \item for any compact object $c \in (\Gamma_{\ida}\category{KInj}(\field G))^c$, $\Hom_{\category{KInj}(\field G)}(c, x)$ is a finitely generated $H^*(G, \field)$-module;
    \item $x \in \thick(\Gamma_{\ida}(\mathbf{i}s), \, s \text{ simple } \field G\text{-module})$.
    \end{enumerate}
\end{proposition}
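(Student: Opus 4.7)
The proof will be essentially an immediate application of \Cref{thm:characDualizableTor} to $\category{T}=\category{KInj}(\field G)$ with acting ring $R=H^*(G,\field)$. The plan is to verify that the hypotheses of that theorem hold in this setting, apply it, and then observe that condition~(3) of the present proposition is merely a reformulation of condition~\ref{item:cDTThick} of \Cref{thm:characDualizableTor}.

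First, the setup of \S\ref{subsec:KInjkG} already records that $\category{KInj}(\field G)$ is $R$-linear, Noetherian, tensor-triangulated and compactly-rigidly generated. Moreover, it is noted there that if $g_0=\bigoplus_{s} s$ is the direct sum over all isomorphism classes of simple $\field G$-modules (a finite direct sum, since $\field G$ is finite-dimensional), then $\mathbf{i}g_0$ is a strong generator of $\category{T}^c\cong\category{D}^\bnd(\catmod \field G)$. So \Cref{thm:characDualizableTor} applies with $g=\mathbf{i}g_0$ and immediately yields the equivalence of~(1), (2), and the statement $x\in\thick(\Gamma_\ida\mathbf{i}g_0)$.

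It remains only to identify $\thick(\Gamma_\ida\mathbf{i}g_0)$ with $\thick\bigl(\Gamma_\ida(\mathbf{i}s)\colon s\text{ simple}\bigr)$. This follows because there are only finitely many isomorphism classes of simple $\field G$-modules $s_1,\dots,s_n$, so $\mathbf{i}g_0\cong\mathbf{i}s_1\oplus\cdots\oplus\mathbf{i}s_n$ and, by exactness and additivity of $\Gamma_\ida$,
\[
\Gamma_\ida(\mathbf{i}g_0)\cong\Gamma_\ida(\mathbf{i}s_1)\oplus\cdots\oplus\Gamma_\ida(\mathbf{i}s_n).
\]
Each $\Gamma_\ida(\mathbf{i}s_i)$ is therefore a direct summand of $\Gamma_\ida(\mathbf{i}g_0)$, and conversely $\Gamma_\ida(\mathbf{i}g_0)$ lies in $\thick\bigl(\Gamma_\ida(\mathbf{i}s_i)\bigr)_{i=1}^n$; hence the two thick subcategories coincide.

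There is no real obstacle: the substance of the proof is entirely contained in \Cref{thm:characDualizableTor}, and the only thing to check beyond its hypotheses is the (trivial) compatibility of taking thick closures with finite direct sums, which is possible precisely because $\field G$ has only finitely many simple modules up to isomorphism.
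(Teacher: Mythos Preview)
Your proof is correct and follows exactly the approach of the paper, which states that the proposition is ``an immediate consequence of \Cref{thm:characDualizableTor}.'' Your additional remark identifying $\thick(\Gamma_\ida\mathbf{i}g_0)$ with $\thick(\Gamma_\ida(\mathbf{i}s),\, s\text{ simple})$ via the finite direct sum decomposition is the only detail needed beyond a direct citation, and you handle it correctly.
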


Since $(\Gamma_{\ida}\category{KInj}(\field G))^d\simeq(\Lambda^{\ida}\category{KInj}(\field G))^d$, we obtain for any closed subset $V \subset \Spec H^*(G; \field)$ a subcategory $(\Gamma_V\category{KInj}(\field G))^d$ of $\category{KInj}(\field G)$, equivalent as a tensor-triangulated category to $\category{D}^\bnd(\catmod(\field G))$. Distinct closed subsets yield distinct subcategories.
Let us illustrate this phenomenon on an example.

\begin{example}
Let $G$ be a finite group, $\field$ a field and $\ideal{m}=H^{>0}(G, \field)$ be the unique maximal graded ideal. Then $(\Gamma_\ideal{m}\category{KInj}(\field G))^c=\thick(\field G)$ and
\[
L_\ideal{m}\category{KInj}(\field G)=\category{KInj}_\mathrm{ac}(\field G),
\]
the category of acyclic complexes of injectives. It follows that $\Gamma_\ideal{m}\category{KInj}(\field G)$ consist of complexes of injectives that are \emph{left} Hom-orthogonal to the acyclic complexes, while $\Lambda^\ideal{m}\category{KInj}(\field G)$ consist of complexes of injectives that are \emph{right} Hom-orthogonal to $\category{KInj}_\mathrm{ac}(\field G)$. So $\Gamma_\ideal{m}\category{KInj}(\field G)$ consists of what it usually called the homotopy projective complexes, while $\Lambda^\ideal{m}\category{KInj}(\field G)$ is the class of homotopy injective complexes (and both the categories are equivalent to $\category{D}(\catMod\field G)$).

Turning to dualizable objects, we know from \Cref{prop:complete-independent} that dualizable $\ideal{m}$-complete objects are precisely the compacts, i.e. the essential image of the functor~\eqref{eq:inj-res-functor}.
On the other hand, the $\ideal{m}$-torsion dualizable objects coincide with the essential image of the functor
\[
\mathbf{p}\colon \category{D}^\bnd(\catmod(\field G)) \to \category{KInj}(\field G),
\]
which sends a bounded complex $x\in\category{D}^\bnd(\catmod(\field G))$ to a projective resolution $\mathbf{p}x$ of~$x$.
\end{example}

\printbibliography

\end{document}